\pdfoutput=1
\documentclass[10pt]{article}

\usepackage[T1]{fontenc}
\usepackage[utf8]{inputenc}
\usepackage{lmodern}
\usepackage[activate={true,nocompatibility},final]{microtype}

\usepackage[margin=1in]{geometry}

\usepackage{authblk}

\usepackage{amsmath}
\usepackage{amssymb,amsfonts}
\usepackage{mathtools}
\usepackage{bm}
\usepackage{nicefrac}
\allowdisplaybreaks

\usepackage{amsthm}
\usepackage{thm-restate}

\usepackage{graphicx}
\graphicspath{{figures/}{./}}
\usepackage{booktabs}
\usepackage{multirow,makecell,array}
\usepackage[font=small,labelfont=bf]{caption}
\usepackage{subcaption}
\usepackage{float}
\usepackage{algorithm}
\usepackage{algpseudocode}

\usepackage{enumitem}
\setlist[itemize]{leftmargin=2.2em,itemsep=2pt,topsep=2pt}
\setlist[enumerate]{leftmargin=2.2em,itemsep=2pt,topsep=2pt}

\usepackage[table]{xcolor}
\definecolor{LinkColor}{rgb}{0.10,0.40,0.75}
\definecolor{CiteColor}{rgb}{0.70,0.25,0.20}
\definecolor{UrlColor} {rgb}{0.20,0.50,0.50}
\definecolor{OurRowColor}{gray}{0.92}

\usepackage{tikz}
\usetikzlibrary{positioning,calc,arrows.meta}

\usepackage{url}
\usepackage{hyperref}
\hypersetup{
  colorlinks=true,
  linkcolor=LinkColor,
  citecolor=CiteColor,
  urlcolor=UrlColor,
  breaklinks=true,
  bookmarksnumbered=true,
}
\usepackage{bookmark}
\usepackage[capitalise,nameinlink,noabbrev,sort&compress]{cleveref}

\numberwithin{equation}{section}

\newcommand{\R}{\mathbb{R}}

\newcommand{\eps}{\varepsilon}

\DeclareMathOperator{\diag}{diag}

\DeclareMathOperator{\supp}{supp}

\DeclareMathOperator{\poly}{poly}

\DeclareMathOperator*{\argmax}{arg\,max}
\DeclareMathOperator*{\argmin}{arg\,min}

\DeclarePairedDelimiter{\norm}{\lVert}{\rVert}

\DeclarePairedDelimiterX{\inner}[2]{\langle}{\rangle}{#1,#2}

\newcommand{\fnorm}[1]{\norm{#1}_{F}}

\newcommand{\Sym}{\mathbb{S}}
\newcommand{\Tr}{\operatorname{Tr}}

\newcommand{\nnz}{\operatorname{nnz}}
\newcommand{\opt}{\star}

\DeclareMathOperator{\relint}{relint}
\DeclareMathOperator{\polylog}{polylog}
\DeclareMathOperator{\aff}{aff}
\renewcommand{\norm}[1]{\left\lVert#1\right\rVert}
\renewcommand{\inner}[1]{\left\langle#1\right\rangle}

\newcommand{\gsc}{\gamma_{\mathrm{sc}}}
\newcommand{\pmin}{p^\opt_{\min}}
\newcommand{\muopt}{\mu_\opt}
\newcommand{\Lopt}{\widehat{L}_\opt}
\newcommand{\Cnull}{\mathcal{C}_0}
\newcommand{\locnorm}[2]{\norm{#1}_{#2}}
\newcommand{\dist}{\operatorname{dist}}
\newcommand{\wsc}{\zeta}
\newcommand{\wscs}{\zeta_*}

\theoremstyle{plain}
\newtheorem{theorem}{Theorem}[section]
\newtheorem{proposition}[theorem]{Proposition}
\newtheorem{lemma}[theorem]{Lemma}
\newtheorem{corollary}[theorem]{Corollary}

\newtheorem{fact}[theorem]{Fact}

\theoremstyle{definition}
\newtheorem{definition}[theorem]{Definition}
\newtheorem{assumption}[theorem]{Assumption}

\newtheorem{problem}[theorem]{Open Problem}

\theoremstyle{remark}
\newtheorem{remark}[theorem]{Remark}

\crefname{theorem}{Theorem}{Theorems}          \Crefname{theorem}{Theorem}{Theorems}
\crefname{proposition}{Proposition}{Propositions}
\Crefname{proposition}{Proposition}{Propositions}
\crefname{lemma}{Lemma}{Lemmas}                \Crefname{lemma}{Lemma}{Lemmas}
\crefname{corollary}{Corollary}{Corollaries}   \Crefname{corollary}{Corollary}{Corollaries}
\crefname{conjecture}{Conjecture}{Conjectures} \Crefname{conjecture}{Conjecture}{Conjectures}
\crefname{fact}{Fact}{Facts}                   \Crefname{fact}{Fact}{Facts}
\crefname{definition}{Definition}{Definitions} \Crefname{definition}{Definition}{Definitions}
\crefname{assumption}{Assumption}{Assumptions} \Crefname{assumption}{Assumption}{Assumptions}
\crefname{example}{Example}{Examples}          \Crefname{example}{Example}{Examples}
\crefname{problem}{Open Problem}{Open Problems} \Crefname{problem}{Open Problem}{Open Problems}
\crefname{remark}{Remark}{Remarks}             \Crefname{remark}{Remark}{Remarks}
\crefname{claim}{Claim}{Claims}                \Crefname{claim}{Claim}{Claims}
\crefname{algorithm}{Algorithm}{Algorithms}    \Crefname{algorithm}{Algorithm}{Algorithms}

\title{Beyond Averaging in John Ellipsoid Approximation:\\[0.25em]
  High-Accuracy Algorithms in the Leverage-Score Model}
\author{
{\normalsize Xiaoyu Li$^{1}$\thanks{\texttt{xiaoyu.li2@unsw.edu.au}} \qquad
Junwei Yu$^{2}$\thanks{\texttt{yujunwei04@berkeley.edu}} \qquad
Jiaojiao Jiang$^{1}$\thanks{\texttt{jiaojiao.jiang@unsw.edu.au}} \qquad
Junbin Gao$^{3}$\thanks{\texttt{junbin.gao@sydney.edu.au}} \qquad
Andi Han$^{3}$\thanks{\texttt{andi.han@sydney.edu.au}}}\\
{\small $^{1}$University of New South Wales \quad $^{2}$University of California, Berkeley \quad $^{3}$University of Sydney}
}
\date{}

\begin{document}
\maketitle

\begin{abstract}
The John ellipsoid of a symmetric polytope $P=\{\mathbf{x}\in\R^d:\norm{\mathbf{A}\mathbf{x}}_\infty\le1\}$,
$\mathbf{A}\in\R^{n\times d}$, is computed by a long line of leverage-score algorithms, from Cohen, Cousins,
Lee and Yang (COLT 2019)~\cite{cohen2019} to its successors~\cite{woodruff2025,cao2025}, all reaching a
$(1+\eps)$-approximation in $\Theta(\eps^{-1}\log(n/d))$ iterations. We separate this complexity into
three costs the modern line conflates (\emph{certification}, \emph{identification}, and
\emph{accuracy}) and locate the historical $\eps^{-1}$ in the first alone. In the equivalent
D-optimal-design form $\min_{\mathbf{p}\in\Delta_n}-\log\det(\sum_ip_i\mathbf{a}_i\mathbf{a}_i^\top)$, the
leverage-score oracle is exactly the first-order oracle and the $(1+\eps)$-John guarantee the
Frank--Wolfe gap $g(\mathbf{p})\le\eps d$; through this dictionary the costs come apart. The $\eps^{-1}$ is a
\emph{certification} artifact: the uniform average of the iterates, the certificate used throughout the
line, has gap exactly $\Theta(1/T)$, however cheap each iteration is made. Pointed instead at the last
iterate the same oracle is fast: a warm-started accelerated method reaches the guarantee in
$C(\mathbf{A})+O(\sqrt\kappa\,\log(1/\eps))$ queries after an $\eps$-independent setup $C(\mathbf{A})$, and
once the optimal face is identified the facial problem is an unconstrained self-concordant minimization
whose Hessian the oracle recovers \emph{exactly}, so damped Newton needs only $O(\log\log(1/\eps))$
steps, for a total of $C(\mathbf{A})+O(d^2\log\log(1/\eps))$ queries. The accuracy dependence is thus
doubly logarithmic after an $\eps$-independent, condition-dependent setup; the open problem is the
remaining \emph{identification} cost (a condition-free bound on reaching the optimal face) and lower
bounds. Accuracy is not the obstruction.

\end{abstract}

\clearpage
\tableofcontents
\clearpage

\section{Introduction}
\label{sec:intro}

Let $\mathbf{A}\in\R^{n\times d}$ have rows $\mathbf{a}_1,\dots,\mathbf{a}_n$ and rank $d$, and let
$P=\{\mathbf{x}\in\R^d:|\mathbf{a}_i^\top \mathbf{x}|\le1,\ i\in[n]\}$ be the associated centrally symmetric polytope. The
\emph{John ellipsoid} of $P$ is its maximum-volume inscribed ellipsoid. It is a foundational
primitive: it rounds a convex body to near-isotropic position (an affine image with near-identity
covariance); it preconditions the Vaidya and John random walks used for sampling and volume
computation~\cite{chen2018,gustafson2018}; it centers cutting-plane methods~\cite{lsw2015}; and, in its
dual guise as \emph{D-/G-optimal design}, it is the basic object of optimal experimental
design~\cite{pukelsheim2006} and of $\ell_\infty$ regression---the John ellipsoid is exactly the
$\ell_\infty$ \emph{Lewis-weight} ellipsoid~\cite{cohen2015}. We study the iteration complexity of computing a $(1+\eps)$-approximate
John ellipsoid in the standard \emph{leverage-score oracle} model, in which one ``query'' returns, for
a chosen weighting $\mathbf{p}$ of the constraints, the vector of leverage scores
$v_i(\mathbf{p})=\mathbf{a}_i^\top\big(\sum_j p_j\mathbf{a}_j\mathbf{a}_j^\top\big)^{-1}\mathbf{a}_i$. This is the cost model of essentially every
modern algorithm for the problem, since one leverage-score computation dominates the cost of one
iteration.

\paragraph{The $\eps^{-1}$ barrier.}
A $(1+\eps)$-approximate John ellipsoid of a symmetric polytope is captured by a convex program in $n$
weights, equivalent to continuous D-optimal design and to $\ell_\infty$ Lewis-weight computation
(\Cref{sec:prelim}). The modern algorithms solve it by a first-order iteration whose core step is a
leverage-score computation, and they share a single iteration count:
Cohen--Cousins--Lee--Yang~\cite{cohen2019} (henceforth CCLY) use $T=O(\eps^{-1}\log(n/d))$ iterations
of a multiplicative-weights fixed point; the input-sparsity/treewidth algorithm~\cite{cao2025} and the
lazy-update/streaming algorithm~\cite{woodruff2025} inherit \emph{the same}
$O(\eps^{-1}\log(n/d))$ iteration count, improving only the per-iteration cost
(\Cref{tab:results}). The $\eps^{-1}$ has not been improved in this line.

This is puzzling, because both \emph{slower-per-step} and \emph{higher-order} classical methods for the
same program already avoid the $\eps^{-1}$. On the first-order side, the \emph{classical} Frank--Wolfe
(Wolfe--Atwood) algorithms converge linearly: Ahipa\c{s}ao\u{g}lu--Sun--Todd~\cite{ahipasaoglu2008}
proved asymptotic linear convergence of the away-step variant, and Zhao~\cite{zhao2023} made it global,
with a rate governed by a facial condition number. At the other extreme, second-order interior-point
methods achieve high accuracy outright: in our notation ($n$ constraints, dimension $d$),
Nesterov--Nemirovskii~\cite{nesterov1994} compute a $(1+\eps)$-John ellipsoid in
$O\big(n^{2.5}(d^2+n)\log(n/\eps)\big)$ time, and Nemirovski~\cite{nemirovski1999} and
Anstreicher~\cite{anstreicher2002} in $O\big(n^{3.5}\log(1/\eps)\big)$ time~\cite{cohen2019,todd2016}.
These have the desired $\log(1/\eps)$ dependence, but at an $n^{3.5}$ cost far worse than the
near-linear-in-$nd$ first-order methods, and they solve a Newton system per step rather than calling
the cheap leverage-score oracle. So neither linear convergence nor $\log(1/\eps)$ accuracy is new in
itself---indeed away-step Frank--Wolfe already beats $\eps^{-1}$ \emph{within} the
first-order/leverage-score model. The puzzle runs the other way: \emph{why does the leverage-score
fixed-point line (CCLY and its input-sparsity and streaming successors, the very methods that drive the
per-iteration cost down to near-linear) remain stuck at $\eps^{-1}$, when the slower-per-step away-step
methods do not?} We resolve this, and then push the other way: we determine how cheap the accuracy
dependence itself can be made in this oracle. The answer is: \emph{almost free}---doubly logarithmic in
$1/\eps$, with all condition numbers exiled either into an $\eps$-independent setup term or inside the
double logarithm.

\paragraph{Our answer: three costs, one dictionary.}
The $\eps^{-1}$ buys a convenient \emph{certificate}, and it is not necessary. Two structural facts
(\Cref{prop:dict}) form the dictionary that makes this visible. First, in the D-optimal form
$f(\mathbf{p})=-\log\det \mathbf{M}(\mathbf{p})$ with $\mathbf{M}(\mathbf{p})=\sum_i p_i\mathbf{a}_i\mathbf{a}_i^\top$, the gradient is $\nabla f(\mathbf{p})_i=-v_i(\mathbf{p})$: the
leverage-score oracle \emph{is} the first-order oracle. Second, the $(1+\eps)$-John guarantee
$\max_i v_i(\mathbf{p})\le(1+\eps)d$ is exactly the Frank--Wolfe duality gap $g(\mathbf{p})=\max_i v_i(\mathbf{p})-d$ of $f$ over
the simplex (and, by Kiefer--Wolfowitz, the G-optimal-design criterion). Through this dictionary the cost
of a leverage-score algorithm separates into three pieces that the modern line runs together:
\[
\underbrace{\text{certification}}_{\text{reading off the gap}}
\;+\;
\underbrace{\text{identification}}_{\text{reaching the optimal face}}
\;+\;
\underbrace{\text{accuracy}}_{\text{driving the gap to }\eps d}\,,
\]
and the historical $\eps^{-1}$ lives entirely in the first. With the dictionary, the CCLY fixed point is a
multiplicative-weights iteration ($p_i\propto p_iv_i(\mathbf{p})/d$), in fact the classical
\emph{multiplicative algorithm} of the design literature (\Cref{rem:titterington}), and its analysis
certifies the gap of the \emph{uniform running average} of the iterates via Jensen's inequality, which is
$O(1/T)$. We show this certificate, not the problem or the oracle, is what costs $\eps^{-1}$: the uniform
average of these iterates is intrinsically $\Theta(1/T)$, while the last iterate is geometric and, once
the optimal face is identified, even \emph{quadratically} convergent at no change of oracle. The accuracy
cost thus has three regimes: uniform averaging spends $1/\eps$, an accelerated first-order method
$\log(1/\eps)$, and a facial self-concordant phase $\log\log(1/\eps)$. What remains is
\emph{identification} (reaching the right face), which is $\eps$-independent but does depend on the
instance's conditioning, and is exactly where we locate the open problem.

\subsection{Our results}
\label{sec:results}

Throughout, $\Delta_n=\{\mathbf{p}\in\R^n_{\ge0}:\sum_ip_i=1\}$, $v_i(\mathbf{p})=\mathbf{a}_i^\top \mathbf{M}(\mathbf{p})^{-1}\mathbf{a}_i$, and a
\emph{$(1+\eps)$-John ellipsoid} is a $\mathbf{p}\in\Delta_n$ with $\max_iv_i(\mathbf{p})\le(1+\eps)d$ (\Cref{def:je}).
Our first result locates the $\eps^{-1}$ precisely in the averaging step.

\begin{theorem}[Averaging barrier; informal, see \Cref{thm:avg}]
\label{thm:avg-intro}
On an explicit, perfectly conditioned $4\times2$ instance $\mathbf{A}_\opt$, the \emph{uniform} running
average of the CCLY leverage-score iterates has Frank--Wolfe gap exactly $\Theta(1/T)$. Hence any
algorithm that certifies the $(1+\eps)$-John guarantee from this average needs $\Omega(1/\eps)$
iterations: the $\eps^{-1}$ is a property of the averaged certificate, not of the problem or the oracle.
\end{theorem}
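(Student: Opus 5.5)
The plan is to exhibit an instance on which the CCLY map $p_i\mapsto p_i v_i(\mathbf{p})/d$ (the multiplicative algorithm of \Cref{rem:titterington}) has a last iterate $\mathbf{p}^{(t)}$ that converges \emph{geometrically} to the optimum $\mathbf{p}^\opt$. For such an instance the uniform average takes the form
\[
\bar{\mathbf{p}}_T=\mathbf{p}^\opt+\tfrac1T\mathbf{S}_T,
\qquad
\mathbf{S}_T=\sum_t(\mathbf{p}^{(t)}-\mathbf{p}^\opt),
\]
where $\mathbf{S}_T\to\mathbf{S}_\infty\neq0$ because the tail of the sum is summable. Everything then reduces to a first-order expansion of the gap $g(\mathbf{p})=\max_iv_i(\mathbf{p})-d$ around $\mathbf{p}^\opt$. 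Since $v_i(\mathbf{p}^\opt)=d$ on the support, we get
\[
g(\bar{\mathbf{p}}_T)=\tfrac1T\max_{i\in\supp}\inner{\nabla v_i(\mathbf{p}^\opt),\mathbf{S}_\infty}+O(1/T^2),
\qquad
\nabla v_i(\mathbf{p})_j=-(\mathbf{a}_i^\top \mathbf{M}^{-1}\mathbf{a}_j)^2 .
\]
The upper bound $O(1/T)$ follows from smoothness of $v_i$ near the well-conditioned $\mathbf{M}(\mathbf{p}^\opt)$; it is also the standard Jensen bound from the CCLY analysis. The lower bound requires the leading coefficient to be strictly positive.

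\textbf{Choice of instance.} I would first try the simplest candidate, $\mathbf{A}_\opt$ with rows $\mathbf{e}_1,\mathbf{e}_2,\mathbf{e}_2,\mathbf{e}_2$. Here $\mathbf{M}(\mathbf{p})=\diag(p_1,\,p_2+p_3+p_4)$. The optimum has $p_1=\tfrac12$, giving $\mathbf{M}^\opt=\tfrac12 I$, which is perfectly conditioned. From the uniform start $\mathbf{p}^{(0)}=\tfrac14\mathbf 1$ we have $v_1=4$, so one CCLY step sends $p_1\mapsto\tfrac14\cdot 4/2=\tfrac12$. Every later iterate is then the fixed point $(\tfrac12,\tfrac16,\tfrac16,\tfrac16)$. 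Averaging over $t=0,\dots,T-1$ gives $\bar p_1=\tfrac12-\tfrac1{4T}$, and therefore
\[
g(\bar{\mathbf{p}}_T)=\frac1{\bar p_1}-2=\frac{1}{T-\tfrac12}=\Theta(1/T)
\]
exactly. This makes the contrast explicit: the last iterate is exact after one step, while the averaged certificate is stuck at $\Theta(1/T)$.

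\textbf{Main obstacle: the averaging convention.} The step I expect to be delicate is matching the exact averaging convention of CCLY. If their average excludes $\mathbf{p}^{(0)}$, the one-step instance above has zero gap and proves nothing. As a fallback I would keep the template but pick four genuinely distinct unit rows, for instance $\mathbf{e}_1,\mathbf{e}_2,(\mathbf{e}_1\pm \mathbf{e}_2)/\sqrt2$, together with a non-symmetric start (or a slightly perturbed version of that configuration). The goal is that the iteration contracts to $\mathbf{p}^\opt$ at a rate $\rho\in(0,1)$ without ever landing on it.

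For the fallback instance I would carry out three steps:
\begin{itemize}
\item Compute the Jacobian of the CCLY map at $\mathbf{p}^\opt$ restricted to the simplex tangent space, and show its spectral radius is some $\rho<1$. This gives the geometric convergence.
\item Obtain $\mathbf{S}_\infty$ in closed form, or at least its sign pattern, by exploiting the symmetry of the instance so that the iteration collapses to a one-dimensional recursion.
\item Verify $\max_i\inner{\nabla v_i(\mathbf{p}^\opt),\mathbf{S}_\infty}>0$. This holds because $\mathbf{S}_\infty$ is a nonzero zero-sum vector and a monotone recursion keeps its deviations one-signed.
\end{itemize}

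\textbf{Consequence.} Once $g(\bar{\mathbf{p}}_T)\ge c/T$ is established, any method that certifies $\max_iv_i\le(1+\eps)d$ via this average needs $T\ge c/(\eps d)=\Omega(1/\eps)$ iterations, which is the claim.
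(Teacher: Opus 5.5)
\textbf{Your first example.} Your arithmetic is correct, and since \Cref{thm:ccly} averages from $\mathbf{p}^{(1)}=\tfrac1n\mathbf 1$, it does give $g(\bar{\mathbf{p}}^{(T)})=2/(2T-1)$. But it is not an instance of the theorem being proved. With $\mathbf{e}_2$ repeated three times, the optimal designs form the whole triangle $\{p_1=\tfrac12,\ p_2+p_3+p_4=\tfrac12\}$. The contact matrices are dependent, so the facial Hessian is singular on $T^\opt$ (e.g.\ along $(0,1,-1,0)$). Hence \Cref{def:nondeg} fails and $\kappa=\infty$. ``Perfectly conditioned'' in \Cref{thm:avg} means a unique nondegenerate optimum with $\kappa=O(1)$, not merely $\mathbf{M}^\opt\propto\mathbf{I}$. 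Moreover, your $\Theta(1/T)$ comes from a single non-optimal point in the average: drop it and the gap is $0$. This is exactly the convention-fragility you flag. It does not exhibit the actual mechanism, which is an iteration that converges geometrically but never lands, so that its uniform average lags by $\mathbf{C}/T$.

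\textbf{Your fallback.} This is where the substance would have to be, and it fails at concrete steps.
\begin{itemize}
\item For rows $\mathbf{e}_1,\mathbf{e}_2,(\mathbf{e}_1\pm\mathbf{e}_2)/\sqrt2$, the uniform start already gives $\mathbf{M}=\tfrac12\mathbf{I}$ and all $v_i=2$. So the CCLY iterates are stationary, and a different start is no longer the CCLY iteration.
\item The optimum is again non-unique: the segment $p_3=p_4=s$, $p_1=p_2=\tfrac12-s$ consists entirely of fixed points of the multiplicative map. Hence the Jacobian has eigenvalue $1$ on the simplex tangent, and ``spectral radius $<1$'' is false.
\item Even where a spectral bound holds it is only local. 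You also need global convergence to $\mathbf{p}^\opt$; the paper gets this from the classical monotonicity of \Cref{rem:titterington} plus uniqueness.
\item Most importantly, your justification of the key inequality is wrong. A nonzero zero-sum $\mathbf{S}_\infty$ does not force $\max_i\inner{\nabla v_i(\mathbf{p}^\opt),\mathbf{S}_\infty}>0$. At a fully supported optimum this linearized gap vanishes on the flat directions $\sum_i\delta_i\mathbf{a}_i\mathbf{a}_i^\top=0$. At an optimum with an inactive row it can even be negative: on the paper's $\mathbf{A}_\opt$, $\boldsymbol{\delta}=(0,-1,\tfrac12,\tfrac12)$ is zero-sum with $F(\boldsymbol{\delta})=-\tfrac14$. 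The ``one-signed monotone recursion'' claim is neither proved nor sufficient.
\end{itemize}

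\textbf{The missing idea.} Use a nondegenerate instance with a \emph{strictly inactive} row. Then $C_2=\sum_tp^{(t)}_2>0$ makes $\mathbf{C}$ a nonzero element of the feasible cone $K$ automatically, and this survives dropping any fixed prefix from the average. Strict positivity of $F$ on $K\setminus\{0\}$ becomes a finite sign check (\Cref{lem:Fpos}: summing the three contact inequalities forces $\delta_2\le0$, hence $\boldsymbol{\delta}=0$). R-linear convergence then follows from Ostrowski's theorem at the exact spectral radius $\tfrac34$.
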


Suffix or weighted averaging, or periodic restart, escape this barrier and recover the geometric
last-iterate rate; the obstruction is specific to averaging uniformly over \emph{all} iterates.

Our second result shows the same oracle, used without averaging, breaks the barrier, and sharpens the
rate. Away-step Frank--Wolfe already attains $\log(1/\eps)$ in this model, but \emph{unaccelerated} (rate
$\kappa_\Phi$~\cite{zhao2023}); we give the first \emph{accelerated} leverage-score method, with the
$\sqrt\kappa$ dependence characteristic of optimal first-order optimization. Here and throughout,
$\kappa=\Lopt/\muopt$ is the \emph{facial condition number at the optimum} (\Cref{def:kappa}).

\begin{theorem}[Accelerated algorithm; informal, see \Cref{thm:upper}]
\label{thm:upper-intro}
On any instance with a nondegenerate optimal design, a warm-started \emph{accelerated} leverage-score
method computes a $(1+\eps)$-John ellipsoid in $C(\mathbf{A})+O\big(\sqrt{\kappa}\,\log(1/\eps)\big)$ queries,
where $C(\mathbf{A})$ is the $\eps$-independent cost of reaching the optimal face and $\kappa$ is the facial
condition number there. Since $\kappa=O(d^4)\,\kappa_\Phi$, this is never worse than the unaccelerated
away-step rate $\kappa_\Phi\log(1/\eps)$ of~\cite{zhao2023} by more than a $\poly(d)$ factor, and is
quadratically better when $\kappa\asymp\kappa_\Phi$.
\end{theorem}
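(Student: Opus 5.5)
Since this is the informal version of \Cref{thm:upper}, the plan proves the formal statement in two phases and then compares $\kappa$ with $\kappa_\Phi$. The dictionary of \Cref{prop:dict} turns the task into minimizing $f(\mathbf{p})=-\log\det\mathbf{M}(\mathbf{p})$ over $\Delta_n$, with first-order oracle $\nabla f(\mathbf{p})=-v(\mathbf{p})$ and stopping criterion $g(\mathbf{p})=\max_i v_i(\mathbf{p})-d\le\eps d$.

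\emph{Phase one (identification).} Let $S=\supp(\mathbf{p}^\opt)$. Nondegeneracy means strict complementarity: there is $\gsc>0$ with $v_i(\mathbf{p}^\opt)\le d-\gsc$ for every $i\notin S$. I would run any globally convergent oracle method, for example CCLY or away-step Frank--Wolfe, until the iterate lies in a small neighborhood of $\mathbf{p}^\opt$. Two facts are needed there:
\begin{itemize}
\item leverage scores are continuous in $\mathbf{p}$, so inactive coordinates keep their $\gsc/2$ margin;
\item $p_i\ge\pmin/2$ on $S$.
\end{itemize}
Then a projected or zeroing step onto the face $F_S$ identifies it. The number of queries for this depends on $\gsc$, $\pmin$ and $\mathbf{A}$, but not on $\eps$; this defines $C(\mathbf{A})$.

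\emph{Phase two (accelerated facial phase).} Restrict to the relative interior of $F_S$, an affine set near $\mathbf{p}^\opt$. Inside a neighborhood $\mathcal{N}$ whose size is set by self-concordance of $-\log\det$ (Dikin-type radius), $f$ is $\mu$-strongly convex and $L$-smooth along $\aff F_S$, with $\mu\ge\muopt/2$ and $L\le 2\Lopt$ (\Cref{def:kappa}). I would run Nesterov's constant-momentum method with Euclidean projection onto $\aff F_S\cap\R^n_{\ge0}$. Its iterates stay in $\mathcal{N}$ because the standard Lyapunov function $f-f^\opt+\tfrac{\mu}{2}\|z-\mathbf{p}^\opt\|^2$ is nonincreasing, which bounds the distance to $\mathbf{p}^\opt$. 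It also never touches a facet, since coordinates in $S$ stay above $\pmin/2$. This gives
\[
f(\mathbf{p}_t)-f^\opt\le 2\,(1-1/\sqrt{\kappa})^t\,(f(\mathbf{p}_0)-f^\opt).
\]

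\emph{Converting to the John guarantee.} Convert suboptimality to the gap through
\[
g(\mathbf{p})\le\max_i|v_i(\mathbf{p})-v_i(\mathbf{p}^\opt)|+\big(\max_i v_i(\mathbf{p}^\opt)-d\big),
\]
where the second term is $0$ by Kiefer--Wolfowitz. Lipschitzness of $v$ near $\mathbf{p}^\opt$, which follows from $\mathbf{M}(\mathbf{p})\succeq\tfrac12\mathbf{M}(\mathbf{p}^\opt)$, gives
\[
|v_i(\mathbf{p})-v_i(\mathbf{p}^\opt)|\lesssim\poly(d)\,\|\mathbf{p}-\mathbf{p}^\opt\|\lesssim\poly(d)\sqrt{(f-f^\opt)/\mu}.
\]
The inactive coordinates $v_i$ also stay below $d$. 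Hence $O(\sqrt\kappa\log(\poly(d,\kappa)/\eps))$ further queries suffice, and the $\log\poly(d,\kappa)$ part is absorbed into $C(\mathbf{A})$.

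\emph{Comparison with $\kappa_\Phi$.} Relate the Hessian $\nabla^2 f(\mathbf{p})_{ij}=(\mathbf{a}_i^\top\mathbf{M}^{-1}\mathbf{a}_j)^2$ restricted to the tangent space of $F_S$ to the facial (pyramidal-width) quantities in Zhao's $\kappa_\Phi$. This gives $\muopt\gtrsim\mu_\Phi/d^2$ and $\Lopt\lesssim d^2L_\Phi$, since entries are bounded by $v_iv_j\le d^2$ on the support. So $\kappa\le O(d^4)\kappa_\Phi$ and $\sqrt{\kappa}\le O(d^2)\sqrt{\kappa_\Phi}\le O(d^2)\kappa_\Phi$.

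\emph{Main obstacle.} I expect the hardest step to be the quantitative identification bound: showing that phase one reaches a neighborhood small enough both to fix the face and to lie inside the self-concordant region $\mathcal{N}$, with an explicit $\eps$-free cost. My first attempt would use the geometric rate of away-step Frank--Wolfe from~\cite{zhao2023} together with a gap-to-distance bound through $\gsc$. I would accept a condition-dependent $C(\mathbf{A})$, as the theorem allows.
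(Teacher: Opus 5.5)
Your route follows the paper's outline: $\eps$-free identification by away-step Frank--Wolfe, an accelerated projected method on $\Delta_{S^\opt}$ with constants localized by self-concordance, and a gap conversion through Lipschitz leverage scores plus strong convexity. Your variants are legitimate: constant-momentum Nesterov instead of restarted FISTA, and bounding $g(\mathbf{p})$ by $\max_{i\in[n]}|v_i(\mathbf{p})-v_i(\mathbf{p}^\opt)|$ using $(\mathbf{a}_i^\top\mathbf{M}^{-1}\mathbf{a}_j)^2\le v_iv_j$.

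The genuine gap is the comparison $\kappa=O(d^4)\kappa_\Phi$. Your entry bound gives only the easy half, $\Lopt\le m\,d^2\le\binom{d+1}{2}d^2$. The substantive half is a \emph{lower} bound $\muopt\gtrsim\Phi^2$ on the smallest eigenvalue of $\widehat{\mathbf{H}}(\mathbf{p}^\opt)$ on $T^\opt$. Saying ``relate the Hessian to the pyramidal-width quantities'' restates this goal rather than proving it. Also, $\kappa_\Phi=1/(\mu_R\Phi^2)$ carries no separate smoothness constant, so the $\mu_\Phi,L_\Phi$ you compare against are undefined.

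The missing idea is \Cref{lem:coupling}. Set $\mathbf{c}_i=(\mathbf{M}^\opt)^{-1/2}\mathbf{a}_i\mathbf{a}_i^\top(\mathbf{M}^\opt)^{-1/2}$ and take a unit $\boldsymbol{\delta}\in T^\opt$ with $\fnorm{\sum_{i\in S^\opt}\delta_i\mathbf{c}_i}=\sqrt{\muopt}$. Split $\boldsymbol{\delta}=\boldsymbol{\delta}^+-\boldsymbol{\delta}^-$ and normalize by $s=\tfrac12\norm{\boldsymbol{\delta}}_1\ge\tfrac12$. This gives a point of $\mathrm{conv}\{\mathbf{c}_i:\delta_i>0\}$ and a point of $\mathrm{conv}\{\mathbf{c}_i:\delta_i<0\}$ at Frobenius distance at most $2\sqrt{\muopt}$. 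That bounds $\Phi$ only because $\mathrm{conv}\{\mathbf{c}_i:\delta_i>0\}$ is a face of the \emph{whole} design polytope. The reason is that $\Tr\mathbf{c}_i=v_i(\mathbf{p}^\opt)\le d$, with equality exactly on $S^\opt$ (Kiefer--Wolfowitz plus strict complementarity), so the trace functional exposes the contact simplex. Linear independence of the contact matrices then makes every vertex subset of that simplex a face. Without this exposed-face step nothing ties a near-dependence among the contacts to $\Phi$. So $\muopt\ge\Phi^2/4$, and hence $\kappa\le4\binom{d+1}{2}d^2/\Phi^2\le2\binom{d+1}{2}d^2\kappa_\Phi$, remains unproved.

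The accelerated phase also rests on a circular step. You cannot argue that the iterates stay in $\mathcal{N}$ because the Lyapunov function is nonincreasing. The decrease at step $k{+}1$ uses the descent lemma on $[\mathbf{y}_k,\mathbf{x}_{k+1}]$, so it already assumes $\mathbf{x}_{k+1}\in\mathcal{N}$. Outside $\mathcal{N}$ you have no smoothness constant, and $h$ blows up where $\mathbf{M}$ degenerates. You need an a priori bound on the new prox point before the potential is available. For example, nonexpansiveness of the projection and $\nabla h(\mathbf{p}^\opt)|_{T^\opt}=0$ give $\norm{\mathbf{x}_{k+1}-\mathbf{y}_k}\le\dist(\mathbf{y}_k,\Delta_{S^\opt})+\tfrac1L\norm{\nabla h(\mathbf{y}_k)}\le2\norm{\mathbf{y}_k-\mathbf{p}^\opt}$; this is exactly the induction in \Cref{lem:fistaloc}.

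Second, the potential controls only the \emph{Euclidean} distance, $\norm{\mathbf{x}_k-\mathbf{p}^\opt}_2^2\lesssim c/\mu$. But $\mathcal{N}$ is a Dikin ellipsoid, with $\varrho\le\sqrt{\Lopt}\,\norm{\cdot-\mathbf{p}^\opt}_2$. So the handoff tolerance must shrink with $\kappa$, not merely ensure $\mathbf{p}_0\in\mathcal{N}$: roughly $c\lesssim1/\kappa$ for your scheme, while the paper's FISTA analysis takes $c\le c_1/\kappa^3$. The same caveat applies to your claim that coordinates in $S$ stay above $\pmin/2$ at the extrapolated points, which may leave the simplex; only $\mathbf{M}(\mathbf{y}_k)\succ0$ is actually needed.

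These repairs cost only logarithmic factors inside $C(\mathbf{A})$, so the statement survives. But they, not the identification step you flag as the main obstacle (which the paper imports via \Cref{thm:zhao,thm:bomze}), are where the work of this proof lies.
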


Our third result is the new headline: once the optimal face is identified, the \emph{accuracy phase} of
the computation is condition-free, and quadratically convergent---in the \emph{same} oracle. The key
structural fact is that the restriction of $f$ to the affine hull of the optimal face is a closed
self-concordant function whose \emph{unconstrained} minimizer over that affine hull is the optimal
design (\Cref{lem:min}): under strict complementarity, the simplex constraints are invisible to the
facial problem, so the classical, affine-invariant Newton theory applies with no condition numbers in
its rate. A single rank-one update identity (\Cref{prop:smrecover}) makes each Newton step
implementable with $O(d^2)$ leverage-score queries: the facial Hessian
$\big((\mathbf{a}_i^\top\mathbf{M}(\mathbf{p})^{-1}\mathbf{a}_j)^2\big)_{i,j}$ is recovered \emph{exactly}, not approximately, from
leverage queries at $|S^\opt|$ perturbed designs.

\begin{theorem}[Facial Newton phase; informal, see \Cref{thm:newton}]
\label{thm:newton-intro}
On any instance with a nondegenerate optimal design, once the optimal face is identified the accuracy
phase is condition-free: damped Newton reaches the $(1+\eps)$-John guarantee in
$O\big(\log\log(1/\eps)\big)$ steps, with \emph{no} condition number outside the double logarithm.
Each step costs $O(d^2)$ leverage-score queries, since a rank-one identity (\Cref{prop:smrecover})
recovers the facial Hessian \emph{exactly} from the oracle; the total is
$C(\mathbf{A})+O\big(d^2\log\log(1/\eps)\big)$ queries.
\end{theorem}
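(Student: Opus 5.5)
The proof has three ingredients, taken in order.

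\emph{Step 1: the facial problem is a self-concordant minimization.} Let $S^\opt=\supp(\mathbf{p}^\opt)$ and let $F=\{\mathbf{p}\in\aff(\Delta_{S^\opt})\}$ be the affine hull of the optimal face. Parametrize $F$ by $\mathbf{p}=\mathbf{p}^\opt+\mathbf{U}\mathbf{y}$, where $\mathbf{U}$ is a basis of $\{\mathbf{u}\in\R^{S^\opt}:\sum_i u_i=0\}$. Let $\phi(\mathbf{y})=-\log\det \mathbf{M}(\mathbf{p}^\opt+\mathbf{U}\mathbf{y})$ on the open set where $\mathbf{M}\succ0$.

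\begin{itemize}
\item $\phi$ is standard self-concordant and closed, because $-\log\det$ composed with an affine map into the PSD cone has these properties.
\item By \Cref{lem:min}, $\mathbf{y}=0$ is its unconstrained minimizer. Strict complementarity gives $v_i(\mathbf{p}^\opt)=d$ on $S^\opt$, so $\nabla\phi(0)=0$.
\item Nondegeneracy gives $\nabla^2\phi(0)\succ0$. This requires $\{\mathbf{a}_i\mathbf{a}_i^\top\}_{i\in S^\opt}$ to be linearly independent modulo the sum constraint, and that is what ``nondegenerate optimal design'' supplies.
\end{itemize}

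Once identification has happened, meaning the iterate lies in $F$ near $\mathbf{p}^\opt$ at cost $C(\mathbf{A})$, I run damped Newton on $\phi$. The classical affine-invariant analysis (Nesterov--Nemirovskii) then applies. The damped phase costs $O(\phi(\mathbf{y}_0)-\phi(0))$ steps, and this is $\eps$-independent, so it is absorbed into $C(\mathbf{A})$ or into an $O(1)$ term under a warm start. Once the Newton decrement satisfies $\lambda<1/4$, we have $\lambda_{k+1}\le 2\lambda_k^2$, so $\lambda\le\delta$ is reached after $O(\log\log(1/\delta))$ steps. No condition number appears in this rate.

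\emph{Step 2: translate the decrement into the John guarantee.} I need $g(\mathbf{p})=\max_i v_i(\mathbf{p})-d\le\eps d$ for all $i\in[n]$, including the inactive indices $i\notin S^\opt$. I split the indices into two groups.

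\begin{itemize}
\item \textbf{Inside $S^\opt$.} The facial gradient is $\mathbf{U}^\top(-\mathbf{v}_{S^\opt})$. Together with $\sum_{i\in S^\opt} p_iv_i=d$, this shows that $v_i-d$ on $S^\opt$ is controlled by the dual norm of the gradient, which is $\lambda$, up to Hessian scale. To keep constants out of the rate, I would use the local-norm bound $|v_i-d|\le \|\mathbf{e}_i-\mathbf{p}\|_{\mathbf{p}}\cdot\lambda$. Here $\|\mathbf{e}_i-\mathbf{p}\|_{\mathbf{p}}^2\le 2v_i^2+\dots=O(d^2)$, because Hessian entries are $(\mathbf{a}_i^\top\mathbf{M}^{-1}\mathbf{a}_j)^2\le v_iv_j$. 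This bound is affine invariant.
\item \textbf{Outside $S^\opt$.} Strict complementarity gives $v_i(\mathbf{p}^\opt)\le d-\gamma$ for some gap $\gamma>0$. Near $\mathbf{p}^\opt$, self-concordance gives $\mathbf{M}(\mathbf{p})^{-1}\preceq(1-r)^{-2}\mathbf{M}(\mathbf{p}^\opt)^{-1}$, where $r=\|\mathbf{y}\|_0$ is the local distance, and $r\lesssim\lambda$ when $\lambda<1/4$. Hence $v_i(\mathbf{p})<d$ once $\lambda\lesssim\gamma/d$.
\end{itemize}

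So it suffices to take $\delta=\min(c\,\eps/d,\,c\gamma/d)$. The resulting $\log\log(d/(\eps\gamma))$ places the condition dependence only inside the double logarithm (or in the setup), as the statement claims.

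\emph{Step 3: per-step oracle cost.} The Newton step needs the gradient and the facial Hessian $H_{ij}=(\mathbf{a}_i^\top\mathbf{M}^{-1}\mathbf{a}_j)^2$ for $i,j\in S^\opt$.

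\begin{itemize}
\item The gradient is one query, since it is $-\mathbf{v}$.
\item For the Hessian I invoke \Cref{prop:smrecover}. Query the leverage scores at $\mathbf{p}+t\mathbf{e}_j$. Sherman--Morrison gives
\[
v_i(\mathbf{p}+t\mathbf{e}_j)=v_i(\mathbf{p})-\frac{t\,(\mathbf{a}_i^\top\mathbf{M}^{-1}\mathbf{a}_j)^2}{1+t\,v_j(\mathbf{p})},
\]
so column $j$ of $H$ is recovered exactly from a single query.
\item Since $|S^\opt|\le d(d+1)/2$ by Carathéodory on symmetric matrices, each step costs $O(d^2)$ queries plus linear algebra.
\end{itemize}

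Summing the three steps gives $C(\mathbf{A})+O(d^2\log\log(1/\eps))$.

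\emph{Main obstacle.} The hardest step is Step 2: making the stopping criterion on $\lambda$ imply the full-simplex gap bound without a condition number leaking outside the double logarithm. The inactive coordinates are invisible to the facial problem, so their slack $\gamma$ must enter only through $\delta$. I would first try to show that the local-norm ball of radius $O(\lambda)$ keeps every $v_i$, $i\notin S^\opt$, below $d$ for $\lambda\le c\gamma/d$. If that fails, I would fold the requirement $\lambda\le c\gamma/d$ into the identification cost $C(\mathbf{A})$.
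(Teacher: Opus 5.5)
Your Steps 1 and 3 match the paper. The facial restriction is closed and self-concordant with unconstrained minimizer $\mathbf{p}^\opt$ (\Cref{lem:min}). Damped steps are followed by $\lambda^+\le2\lambda^2$. \Cref{prop:smrecover} gives each Hessian column exactly from one query, so each step costs $m+1=O(d^2)$ queries.

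Step 2 takes a genuinely different route. The paper never lets the inactive indices or feasibility become live during the Newton phase. The handoff lands in $\Cnull$, whose radius $\bar r\le\min\{\gsc/(2d),\pmin\sqrt{\muopt}/2\}$ is built into $c_0$. Since $h$ decreases monotonically, every iterate stays in $\Cnull$, and \Cref{lem:c0} then gives $v_j<d-\gsc/2$ off $S^\opt$ and $p_i\ge\pmin/2$ on $S^\opt$ for free. The paper controls the contact scores only indirectly, through $h-f^\opt\le\frac23\lambda^2$ and the Euclidean conversion of \Cref{lem:gapconv}. That is why $\Lopt\kappa$ appears in $\Theta_{\mathrm N}$.

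Your dual-norm bound is sharper on the contact side. For $i\in S^\opt$ one has $\mathbf{e}_i-\mathbf{p}\in T^\opt$ and $\langle\nabla h(\mathbf{p}),\mathbf{e}_i-\mathbf{p}\rangle=d-v_i(\mathbf{p})$. Moreover, exactly, $\norm{\mathbf{e}_i-\mathbf{p}}_{\mathbf{p}}^2=\fnorm{\mathbf{b}_i\mathbf{b}_i^\top-\mathbf{I}}^2=(v_i-1)^2+d-1$, where $\mathbf{b}_i=\mathbf{M}(\mathbf{p})^{-1/2}\mathbf{a}_i$. This exact form removes the circularity in your ``$\le2v_i^2+\dots=O(d^2)$'', which presupposes $v_i=O(d)$. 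It yields $v_i-d\le2\lambda d/(1-\lambda)$, so $\lambda\le\frac38\eps$ already suffices, with no instance constant at all. Combined with the paper's $\Cnull$ warm start, this would replace $\Theta_{\mathrm N}$ in \Cref{thm:newton} by an absolute constant.

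Your non-contact argument is also sound once ``$r\lesssim\lambda$'' is justified. For example, $\wsc(\varrho)\le h-f^\opt\le\wscs(\lambda)$ by \Cref{lem:min}(iii) and \Cref{lem:newtonstep}\ref{it:subopt}, which gives $\varrho\le\sqrt{8/3}\,\lambda$ for $\lambda\le\frac14$. The effect is to move $d/\gsc$ from the setup into the double logarithm.

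What your argument omits is feasibility of the output. A $(1+\eps)$-John ellipsoid is a point of $\Delta_n$ (\Cref{def:je}). The outer inclusion $P\subseteq\sqrt d\,Q$ rests on $\sum_ip_i(\mathbf{a}_i^\top\mathbf{x})^2\le\sum_ip_i$, which needs $\mathbf{p}\ge0$. You run Newton on the open set $\{\mathbf{M}\succ0\}$ of the affine hull, which contains weight vectors with negative entries. Nothing in your argument shows the returned point is nonnegative, so passing $\max_iv_i\le(1+\eps)d$ there does not yield the John guarantee.

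There are two ways to close this:
\begin{itemize}
\item Use the paper's invariance: start in $\Cnull$, which \Cref{lem:identify} delivers, and note that $h$ never increases and $\Cnull\subseteq\relint\Delta_{S^\opt}$.
\item Add a third threshold $\varrho<\pmin\sqrt{\muopt}$, i.e.\ $\lambda\lesssim\pmin\sqrt{\muopt}$. Via \eqref{eq:rho-id} this forces $\norm{\mathbf{p}-\mathbf{p}^\opt}_2\le\varrho/\sqrt{\muopt}<\pmin$, and it too lands inside the double logarithm.
\end{itemize}
Relatedly, stop on the checkable test ($g\le\eps d$ and $\mathbf{p}\ge0$) rather than on $\lambda\le\delta$, since the latter needs $\gsc,\pmin,\muopt$ as advice. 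Use the $\lambda$-analysis only to bound when the test must pass.
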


\begin{corollary}[The three rates; \Cref{cor:total,cor:sep}]
\label{cor:rates-intro}
On nondegenerate instances, a $(1+\eps)$-John ellipsoid is computable in
$C(\mathbf{A})+O\big(d^2\log\log(1/\eps)\big)$ leverage-score queries. On the explicit instance
$\mathbf{A}_\opt$ (for which $\kappa=O(1)$ and $C(\mathbf{A}_\opt)=O(1)$), the three certification styles are
separated: uniform averaging needs $\Theta(\eps^{-1})$ queries, the accelerated last iterate
$O(\log(1/\eps))$, and the facial Newton phase $O(\log\log(1/\eps))$ iterations.
\end{corollary}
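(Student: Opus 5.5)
The corollary combines the three preceding results, so the plan is to assemble them rather than prove anything new, and then check the instance-specific constants on $\mathbf{A}_\opt$.

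\textbf{General bound.} For the general $C(\mathbf{A})+O(d^2\log\log(1/\eps))$ statement I would run a two-phase algorithm.
\begin{enumerate}
\item Spend the $\eps$-independent setup $C(\mathbf{A})$ of \Cref{thm:upper-intro}: the warm-started accelerated method run until the optimal face $S^\opt$ is identified. Its iterate is then also inside the region where damped Newton on the facial function is in its quadratic regime. To get there I would run the accelerated phase for an additional $O(\sqrt\kappa\log(1/\delta_0))$ queries, with $\delta_0$ a fixed Newton-decrement threshold (say $1/4$) that does not depend on $\eps$. This cost is absorbed into $C(\mathbf{A})$.
\item Invoke \Cref{thm:newton-intro}. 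By \Cref{lem:min}, the facial restriction of $f$ is self-concordant and has its unconstrained minimizer at the optimal design. So the classical damped-Newton bound gives $O(\log\log(1/\eps))$ steps to Newton decrement $\lambda\lesssim\sqrt{\eps d}$.
\item Convert the decrement bound to the gap $g(\mathbf{p})\le\eps d$ through the dictionary of \Cref{prop:dict}.
\item Multiply by the $O(d^2)$ per-step query cost from \Cref{prop:smrecover}.
\end{enumerate}

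\textbf{The main obstacle} is step 3, translating a facial Newton decrement into the full Frank--Wolfe gap $\max_i v_i-d$, which ranges over all $n$ coordinates and not only those in $S^\opt$. On the face, the decrement controls the spread of $v_i$ for $i\in S^\opt$ around its mean, and that mean equals $d$ because $\sum_i p_iv_i=d$. Off the face, strict complementarity gives $v_i(\mathbf{p}^\opt)<d$ with a positive margin. I would use self-concordance (Dikin ellipsoid bounds on $\mathbf{M}(\mathbf{p})^{-1}$) to show that $v_i$ moves by at most $O(\lambda)$ relative to the optimum, so off-face indices stay below $d$ once $\lambda$ is below a margin-dependent constant. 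That requirement is again $\eps$-independent and can be folded into $C(\mathbf{A})$.

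\textbf{Separation on $\mathbf{A}_\opt$.} I would take the explicit $4\times 2$ instance of \Cref{thm:avg-intro}. Its symmetry should make the optimal design uniform and supported on all four rows, which means the simplex is the optimal face and $C(\mathbf{A}_\opt)=O(1)$. I would verify $\kappa=O(1)$ by computing the facial Hessian $\bigl((\mathbf{a}_i^\top\mathbf{M}^{-1}\mathbf{a}_j)^2\bigr)$ at the uniform design and bounding its eigenvalues on the tangent space $\{\mathbf{u}:\sum_i u_i=0\}$. The three rates then follow directly:
\begin{itemize}
\item the $\Theta(\eps^{-1})$ lower bound for uniform averaging is \Cref{thm:avg-intro}, since $\Theta(1/T)\le\eps$ forces $T=\Omega(1/\eps)$;
\item the $O(\log(1/\eps))$ rate is \Cref{thm:upper-intro} with $\kappa=O(1)$;
\item the $O(\log\log(1/\eps))$ iteration count is \Cref{thm:newton-intro}, where $d^2=4$ is a constant.
\end{itemize}
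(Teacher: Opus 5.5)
Your general-case plan is close to the paper's (warm start and identification by \Cref{lem:identify}, then \Cref{thm:newton}, times the $m+1$ queries of \Cref{prop:smrecover}). The $\mathbf{A}_\opt$ part, however, rests on a false fact.

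The optimum of $\mathbf{A}_\opt$ is neither uniform nor fully supported. It is $\mathbf{p}^\opt=(\tfrac13,0,\tfrac13,\tfrac13)$, with $\mathbf{M}^\opt=\diag(2,\tfrac23)$ and $\mathbf{v}(\mathbf{p}^\opt)=(2,\tfrac32,2,2)$. So row $2$ is strictly inactive ($\gsc=\tfrac12$) and $S^\opt=\{1,3,4\}$; at the uniform design one gets $v_1=\tfrac83>2$. A fully supported optimum is in fact impossible under \Cref{def:nondeg} here: four rank-one matrices in the $3$-dimensional space $\Sym^2$ are linearly dependent. Your description would therefore make the instance degenerate, and \Cref{thm:upper,thm:newton}, which you invoke for the $O(\log(1/\eps))$ and $O(\log\log(1/\eps))$ rates, would not apply. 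The averaging barrier also needs the inactive coordinate: $C_2=\sum_tp^{(t)}_2>0$ is what makes $\mathbf{C}\neq0$ in \Cref{thm:avg}.

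The correct verification is that of \Cref{sec:instance}. The three contact matrices are independent and $\widehat{\mathbf{H}}(\mathbf{p}^\opt)=3\mathbf{I}_3+\mathbf 1\mathbf 1^\top$. Hence $\muopt=3$ on the two-dimensional face tangent $T^\opt$, $\Lopt=6$, and $\kappa=2$, all computed at $\mathbf{p}^\opt$, not at the uniform design. Likewise $C(\mathbf{A}_\opt)=O(1)$ holds not because identification is unnecessary (row $2$ must be dropped), but because $\mathbf{A}_\opt$ is one fixed instance, so the $\eps$-independent count of \Cref{lem:identify} is an absolute number.

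There are also smaller points in the general bound.

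\textbf{The threshold.} Your threshold $\lambda\lesssim\sqrt{\eps d}$ only controls the objective gap, since $h-f^\opt\le\tfrac23\lambda^2$. The John guarantee is the first-order quantity $g$, and by your own step 3 $g=O(\lambda)$. So you need $\lambda\lesssim\eps$ up to instance factors. This is harmless, since it costs one more squaring.

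\textbf{The accelerated detour.} It is both unnecessary and misattributed. The face is identified by the away-step Frank--Wolfe loop, not by FISTA, which presupposes the face and needs the stronger tolerance $c\le c_1/\kappa^3$. The paper starts Newton at tolerance $c_0$. Each damped step lowers $h$ by at least $\wsc(\tfrac14)$, and $c_0\le\wsc(\tfrac12)<4\wsc(\tfrac14)$, so at most three damped steps precede the quadratic regime.

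\textbf{Off-face scores.} Since $h$ decreases monotonically, every iterate stays in $\Cnull$. There \Cref{lem:c0}\ref{it:c0-slack} already keeps all non-contact scores below $d-\gsc/2$, so no margin-dependent condition on $\lambda$ is needed.

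\textbf{Your on-face conversion.} This is actually sharper than the paper's \Cref{lem:gapconv}. For $i,j\in S^\opt$ one has $v_i-v_j\le\lambda\norm{\mathbf{e}_i-\mathbf{e}_j}_{\mathbf{p}}\le\lambda\sqrt{v_i^2+v_j^2}\le2\sqrt2\,d\,\lambda$ on $\Cnull$. Together with $\min_{j\in S^\opt}v_j\le d$, this gives a condition-free stopping threshold. The paper's route through $h-f^\opt$ and strong convexity instead puts $\sqrt{\Lopt\kappa}$ inside the double logarithm.
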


\begin{table}[t]
\centering
\renewcommand{\arraystretch}{1.12}
\small
\resizebox{\textwidth}{!}{%
\begin{tabular}{@{}lllll@{}}
\toprule
Algorithm & \#Leverage-score queries & Per-query cost & Certifies via & Assump./Notes\\
\midrule
\cite{khachiyan1996} & $O(d/\eps)$ & $O(nd^{\omega-1})$ & FW gap & ---\\
\cite{kumar2005,todd2007} & $O(d/\eps{+}d\log d)$ & $O(nd^{\omega-1})$ & FW gap & ---\\
\cite{ahipasaoglu2008} & linear & $O(nd^{\omega-1})$ & last iterate & local$^\ddagger$\\
\cite{nesterov1994,anstreicher2002} & $O(\sqrt n\log(1/\eps))$ & $\widetilde{O}(n^{3})$ (Newton) & high accuracy & Newton steps$^\dagger$\\
\cite{cohen2019} & $O(\eps^{-1}\log(n/d))$ & $O(nd^{\omega-1})$ & averaged iterate & ---\\
\cite{cao2025} & $O(\eps^{-1}\log(n/d))$ & $\widetilde{O}(\eps^{-1}nd)$ (sketched) & averaged iterate & ---\\
\cite{woodruff2025} & $O(\eps^{-1}\log(n/d))$ & $\widetilde{O}(nd)$ (amortized) & averaged iterate & ---\\
\cite{zhao2023} & $O(\kappa_\Phi\log(1/\eps))$ & $O(nd^{\omega-1})$ & last iterate (FW gap) & ---\\
\rowcolor{OurRowColor}\textbf{This work} (accelerated phase) & $C(\mathbf{A}){+}O(\sqrt\kappa\log(\Theta/\eps))$ & $O(nd^{\omega-1})$ & last iterate (FW gap) & nondeg.\ optimum$^{*}$\\
\rowcolor{OurRowColor}\textbf{This work} (Newton phase) & $C(\mathbf{A}){+}O(d^2\log\log(\Theta/\eps))$ & $O(nd^{\omega-1})$ & last iterate (FW gap) & nondeg.\ optimum$^{*}$\\
\bottomrule
\end{tabular}}
\caption{Complexity of a $(1+\eps)$-John ellipsoid; $n$ constraints, dimension $d$, dense regime $n\gg d$. We list the \emph{query count} and the \emph{per-query cost} separately; their product is the total running time (the format of~\cite[Table~1]{woodruff2025}). The final column gives each method's instance assumption beyond full rank, with brief notes (``---'' $=$ none). One leverage-score query costs $O(nd^{\omega-1})$ for dense $\mathbf{A}$; the input-sparsity~\cite{cao2025} and lazy-update~\cite{woodruff2025} algorithms cut this per-query cost (to sketched $\widetilde{O}(\eps^{-1}nd)$ and amortized $\widetilde{O}(nd)$, respectively) but keep the same $O(\eps^{-1}\log(n/d))$ query count. That count is $\eps^{-1}$ \emph{exactly} when certifying from the \emph{uniform} average of these fixed-point iterates (\Cref{thm:avg-intro}), however cheap each query is made; $\kappa,\kappa_\Phi$ are condition numbers (\Cref{thm:upper-intro,thm:zhao}) related by $\kappa=O(d^4)\kappa_\Phi$ (\Cref{thm:domination})---both are $\Theta(1/\Phi^2)$ in the facial distance $\Phi$ and blow up as the optimum nears a lower-dimensional face, so acceleration improves the \emph{exponent} ($\kappa_\Phi\!\to\!\sqrt\kappa$), not the instance dependence. $C(\mathbf{A})$ is $\eps$-independent but condition-dependent, and $\Theta$ is an explicit instance polynomial appearing only inside (double) logarithms. Both ``this work'' rows use \emph{exact} leverage scores (the approximate oracles of~\cite{cao2025,woodruff2025} need a separate noise-stability analysis, \Cref{sec:discussion}). $^{*}$Nondegenerate optimal design (\Cref{def:nondeg}), required only by our two rows. $^\dagger$Interior-point methods reach $\log(1/\eps)$ accuracy but lie \emph{outside} the leverage-score/first-order model: the entry counts Newton iterations, not oracle queries (each of the $O(\sqrt n)$ steps solves an $\widetilde{O}(n^{3})$ system, total $O(n^{3.5}\log(1/\eps))$~\cite{anstreicher2002,nesterov1994}). Our Newton phase instead consumes only leverage-score queries, in the $\le\binom{d+1}{2}$ identified contact coordinates. $^\ddagger$``Linear'' convergence here is \emph{local}: the geometric rate holds only near the optimum, and~\cite{ahipasaoglu2008} bound neither the rate constant nor the steps to reach that regime, so there is no global count; the \emph{global} form is Zhao's $O(\kappa_\Phi\log(1/\eps))$.}
\label{tab:results}
\end{table}

\paragraph{Discussion of the results.}
\Cref{thm:avg-intro,thm:upper-intro,thm:newton-intro} are honest about what is and is not new. ``The
last iterate is fast'' for D-optimal design is known since~\cite{ahipasaoglu2008}, and \cite{zhao2023}
already gives a global linear rate for the Frank--Wolfe gap---which, by \Cref{prop:dict}\ref{it:fw},
\emph{is} the G-optimal/John guarantee. Newton methods for the design problem are also classical as
\emph{algorithms}~\cite{todd2016}; what we are not aware of in prior work is a \emph{query-complexity}
statement of the present form: a $\log\log(1/\eps)$ accuracy phase, driven by leverage-score queries
alone, with all condition dependence confined to the setup term and the inner double logarithm. The
conceptual contributions are: (i) the dictionary of \Cref{prop:dict}; (ii) the explicit $\Theta(1/T)$
averaging lower bound of \Cref{thm:avg-intro}, which pinpoints the certification rule (not the
per-iteration cost) as the source of the historical $\eps^{-1}$---so the input-sparsity and streaming
speedups~\cite{cao2025,woodruff2025} leave an \emph{avoidable} $\eps^{-1}$ on the table; (iii) the
acceleration $\kappa_\Phi\to\sqrt\kappa$ with the uniform comparison $\kappa=O(d^4)\kappa_\Phi$; and (iv) the
observation that under strict complementarity the facial problem is an \emph{unconstrained}
self-concordant minimization whose Hessian the leverage-score oracle can reconstruct exactly
(\Cref{lem:min,prop:smrecover}), which converts the classical affine-invariant Newton theory into a
condition-free accuracy phase in the original oracle. What remains open is exactly the
\emph{setup}: a condition-free bound on identifying the optimal face, or a first-order lower bound
(\Cref{sec:discussion}).

\subsection{Technical overview}
\label{sec:overview}

We sketch the four components: the dictionary, the averaging lower bound, the facial geometry that
powers both upper bounds, and the two facial phases.

\paragraph{The dictionary (\Cref{sec:prelim}).}
Write the John-ellipsoid program as $\min_{\mathbf{p}\in\Delta_n}f(\mathbf{p})$, $f(\mathbf{p})=-\log\det \mathbf{M}(\mathbf{p})$. Differentiating
$\log\det$ gives $\nabla f(\mathbf{p})_i=-\mathbf{a}_i^\top \mathbf{M}(\mathbf{p})^{-1}\mathbf{a}_i=-v_i(\mathbf{p})$, so a single leverage-score query
returns the full gradient. Over the simplex, the Frank--Wolfe gap
$g(\mathbf{p})=\max_{\mathbf{q}\in\Delta_n}\inner{-\nabla f(\mathbf{p}),\mathbf{q}-\mathbf{p}}$ evaluates to $\max_iv_i(\mathbf{p})-d$ (using the identity
$\sum_ip_iv_i(\mathbf{p})=d$), which is exactly the constraint violation in the $(1+\eps)$-John guarantee and,
by the Kiefer--Wolfowitz equivalence (\Cref{thm:kw}), the G-optimal-design criterion. Finally, the
Hessian is the entrywise square $\nabla^2f(\mathbf{p})_{ij}=(\mathbf{a}_i^\top \mathbf{M}(\mathbf{p})^{-1}\mathbf{a}_j)^2$, the Gram matrix of the
rank-one symmetric tensors $\{\mathbf{M}(\mathbf{p})^{-1/2}\mathbf{a}_i\}^{\otimes2}$; it is positive semidefinite of rank at most
$\binom{d+1}{2}$. These four facts let us reason about the standard algorithms as first-order methods
and about the John guarantee as a duality gap.

\paragraph{Averaging is $\Theta(1/T)$ (\Cref{sec:avg}).}
Why is the whole leverage-score line stuck at $\eps^{-1}$? Because it certifies. The CCLY analysis
bounds the gap of the running average $\bar{\mathbf{p}}^{(T)}$ by Jensen's inequality applied to the convex maps
$\mathbf{p}\mapsto\log v_i(\mathbf{p})$, giving $g(\bar{\mathbf{p}}^{(T)})=O(1/T)$. We show this is tight, and for an elementary
reason: averaging destroys a linear rate. If the iterates converge linearly to $\mathbf{p}^\opt$, then
$\sum_{t\ge1}(\mathbf{p}^{(t)}-\mathbf{p}^\opt)$ converges to some $\mathbf{C}$, and the uniform average satisfies
$\bar{\mathbf{p}}^{(T)}-\mathbf{p}^\opt=\frac1T\mathbf{C}+o(1/T)=\Theta(1/T)$ when $\mathbf{C}\neq0$ (\Cref{lem:avg})---the average
lags its own tail by a full factor of $T$.
To convert iterate error into gap, we linearize: at a nondegenerate optimum the gap is, to first order,
$g(\mathbf{p}^\opt+\boldsymbol{\delta})=\max_{i\in S^\opt}\inner{\nabla v_i(\mathbf{p}^\opt),\boldsymbol{\delta}}+O(\norm{\boldsymbol{\delta}}^2)$ over the
feasible cone (\Cref{lem:gaplin}). On the explicit instance $\mathbf{A}_\opt$ of \Cref{sec:instance} the
linearized gap $F$ is \emph{provably} positive on the feasible cone (a four-line sign
argument, \Cref{lem:Fpos}, with no numerics), and the iterates converge R-linearly (the fixed-point map
is the classical multiplicative algorithm, whose monotone convergence is
known~\cite{titterington1976,yu2010}, and whose Jacobian spectral radius at the optimum we compute in
closed form). Evaluating at $\boldsymbol{\delta}=\bar{\mathbf{p}}^{(T)}-\mathbf{p}^\opt=\Theta(1/T)$ gives
$g(\bar{\mathbf{p}}^{(T)})=\Theta(1/T)$: the $\eps^{-1}$ is the price of the average, not of the oracle.

\paragraph{The facial geometry, quantitatively (\Cref{sec:face}).}
Both upper bounds run on the optimal face $F^\opt$ (the constraints with $v_i(\mathbf{p}^\opt)=d$), and both need
the same three facts. (1) $f$ is \emph{not} strongly convex on $\Delta_n$ (its Hessian has rank at most
$\binom{d+1}{2}$), but the rank-deficiency is an exact, harmless overparametrization: $f$, the leverage
scores, and the gap are constant along $\ker\nabla^2f$, which is independent of $\mathbf{p}$ (\Cref{lem:flat}).
(2) On the optimal face, nondegeneracy makes the reduced Hessian positive definite at \emph{every}
point, not merely at $\mathbf{p}^\opt$ (\Cref{lem:faceSC}). (3) Most importantly, $-\log\det$ is
self-concordant, and we exploit this \emph{quantitatively} rather than through compactness: an
elementary sandwich (\Cref{lem:Msand,lem:hesssand}; the proofs are Frobenius-vs-spectral norm
arithmetic plus one integration) shows that within the Dikin ball of radius $\tfrac12$ around
$\mathbf{p}^\opt$, the information matrix, every leverage score, and the full facial Hessian block are all
within absolute constant factors of their values at $\mathbf{p}^\opt$. Combined with the lower bound
$f\ge f^\opt+\gsc\cdot(\text{inactive mass})+\wsc(\text{Dikin radius})$ (\Cref{lem:lower}), a single
explicit constant $c_0$ (\Cref{def:c0}) guarantees that the whole sublevel set
$\{f\le f^\opt+c_0\}$ on the face's affine hull sits strictly inside the simplex, keeps all non-contact
leverage scores below $d-\gsc/2$, and carries uniform smoothness/strong-convexity constants
$4\Lopt$ and $\tfrac49\muopt$. Every compactness argument in our earlier reasoning is thereby replaced
by explicit constants, which is also what makes the domination bound $\kappa=O(d^4)\kappa_\Phi$
(\Cref{thm:domination}) clean.

\paragraph{Identification, then the accuracy phases (\Cref{sec:upper,sec:newton}).}
This is the identification cost and the two accuracy phases. To \emph{reach} the face we use the classical
away-step Frank--Wolfe: it drives the gap below $c_0$ in
$O(d^2/c_0)$ $\eps$-independent steps (\Cref{thm:wa}) and identifies the active set under strict
complementarity (\Cref{thm:bomze}); a detectable stopping rule (\Cref{lem:identify}) certifies the
handoff. This is the whole of $C(\mathbf{A})$: $\eps$-independent, but condition-dependent. On the face, we give
two interchangeable accuracy phases, both faster in $\eps$ than averaging. The \emph{accelerated} phase runs restarted
FISTA on the face simplex: the localization above supplies the smoothness/strong-convexity constants,
the standard FISTA potential function doubles as a proof that the trajectory never leaves the region
where those constants are valid (\Cref{app:fista}), and restarting converts the $O(1/k^2)$ rate into
$(\sqrt\kappa\log\tfrac1\eps)$-type geometric decay. The \emph{Newton} phase is structurally simpler
and faster in $\eps$: under strict complementarity the optimal design is the \emph{unconstrained}
minimizer of the closed self-concordant function $h=f|_{\aff F^\opt}$ (\Cref{lem:min}), so damped
Newton enjoys the classical two-phase behavior: at most three damped steps (the budget $c_0\le
\wsc(\tfrac12)$ funds no more, each costing $\wsc(\tfrac14)$), then quadratic convergence
$2\lambda_{j+1}\le(2\lambda_j)^2$ of the Newton decrement, whose constants
are \emph{absolute} by affine invariance. The John guarantee itself is the stopping rule (it is the
Frank--Wolfe gap, read off the same leverage-score query that supplies the gradient). The only
remaining question is how to get the facial Hessian from the oracle, and here a one-line
Sherman--Morrison identity gives an exact answer:
\[
\big(\mathbf{a}_i^\top\mathbf{M}(\mathbf{p})^{-1}\mathbf{a}_j\big)^2
=\big(v_i(\mathbf{p})-v_i(\mathbf{p}+t\mathbf{e}_j)\big)\cdot\frac{1+t\,v_j(\mathbf{p})}{t}\qquad(t>0),
\]
so $|S^\opt|+1\le\binom{d+1}{2}+1$ leverage queries (at genuine designs, after renormalization) deliver
the exact Newton system (\Cref{prop:smrecover}). No second-order oracle is ever consulted.

\subsection{Related work}
\label{sec:related}

\paragraph{John ellipsoid / D-optimal design.} The equivalence of the maximum-volume inscribed ellipsoid
of a symmetric polytope, the minimum-volume enclosing ellipsoid of the rows, D-optimal design, and
$\ell_\infty$ Lewis weights is classical~\cite{todd2016,pukelsheim2006}. Algorithmically, the
Frank--Wolfe / Wolfe--Atwood family~\cite{khachiyan1996,kumar2005,todd2007,ahipasaoglu2008,zhao2023}
and the leverage-score fixed point~\cite{cohen2019} and its
descendants~\cite{cao2025,woodruff2025,song2024dp,song2024quantum} are the two main lines; see
\Cref{tab:results}. The fixed-point map itself predates this line by four decades: it is the
multiplicative algorithm of Titterington~\cite{titterington1976}, whose monotone convergence for the
D-criterion is classical~\cite{titterington1976,yu2010}; the modern contribution of the CCLY line is
the near-linear per-iteration cost and the averaged certificate, and the latter is exactly what we
show costs $\eps^{-1}$.

\paragraph{Second-order / interior-point methods.} A separate, older line attains high accuracy via
Newton-type interior-point methods. In our notation,
Nesterov--Nemirovskii~\cite{nesterov1994} compute a $(1+\eps)$-John ellipsoid in
$O\big(n^{2.5}(d^2+n)\log(n/\eps)\big)$ time; Khachiyan--Todd~\cite{khachiyantodd1993} and then
Nemirovski~\cite{nemirovski1999} and Anstreicher~\cite{anstreicher2002} reduce this to
$O\big(n^{3.5}\log(1/\eps)\big)$ (we follow the survey of these bounds
in~\cite[\S1.2]{cohen2019},~\cite{todd2016}). These already enjoy the $\log(1/\eps)$ dependence, but
each step solves a Newton system in all $n$ weight variables, and they do not fit the leverage-score
oracle. Local Newton-type refinements for the design problem are also discussed
in the optimization literature (see~\cite[Ch.~3]{todd2016} and references); our point is the
\emph{query-complexity} form of the statement: after identification, the Newton system in the
$\le\binom{d+1}{2}$ \emph{contact} coordinates is exactly reconstructible from $O(d^2)$ leverage-score
queries (\Cref{prop:smrecover}), so quadratic convergence costs nothing beyond the original oracle.
Rank-one update formulas of Sherman--Morrison type are themselves standard tools in this
literature~\cite{todd2016}; the use we make of them, as an exact \emph{oracle simulation} of the
facial Hessian, appears to be new.

\paragraph{Acceleration and its limits.} Lu--Freund--Nesterov~\cite{lu2018} place
D-optimal design in the \emph{relatively-smooth} framework (smoothness measured against a reference
convex function---here the Burg entropy $-\sum_i\log(\cdot)$, the scalar analogue of $-\log\det$---rather
than in Euclidean norm), obtaining the $O(1/\eps)$ rate of the corresponding Bregman-gradient
(``NoLips'') method; Dragomir, Taylor, d'Aspremont and
Bolte~\cite{dragomir2021} prove this rate is \emph{optimal} for \emph{Bregman} first-order methods
(mirror-descent-type methods built from that reference function), and
Hanzely--Richt\'arik--Xiao~\cite{hanzely2021} show that the relevant \emph{triangle-scaling exponent}
(which quantifies how much such a method can be accelerated) precludes a uniform Bregman speedup. These
results govern the
\emph{Bregman/mirror} oracle; our acceleration is \emph{Euclidean} projected gradient on the face and is
not constrained by them. The empirical $O(1/k^2)$ of Gutman--Pe\~na~\cite{gutman2018} is itself a
Bregman observation, consistent with~\cite{dragomir2021} only through the triangle-scaling loophole,
and is distinct from the phenomenon here.

\paragraph{Averaging vs.\ last iterate.} That averaging degrades
the last-iterate rate of (strongly) convex optimization is well known in general; \Cref{thm:avg-intro}
is, to our knowledge, the first statement that this is the specific mechanism keeping the
leverage-score line at $\eps^{-1}$.

\section{Preliminaries}
\label{sec:prelim}

\paragraph{Notation.} $[n]=\{1,\dots,n\}$; $\Sym^d$ is the space of $d\times d$ symmetric matrices,
of dimension $\binom{d+1}{2}$, with trace inner product $\inner{\mathbf{X},\mathbf{Y}}=\Tr(\mathbf{X}\mathbf{Y})$ and Frobenius
norm $\fnorm{\cdot}$; $\Sym^d_{\succ0}$ is its positive-definite cone; for $\mathbf{a}\in\R^d$,
$\mathbf{a}^{\otimes2}=\mathbf{a}\mathbf{a}^\top\in\Sym^d$. $\Delta_n=\{\mathbf{p}\ge0:\sum_ip_i=1\}$ and $\relint\Delta_n$ its relative
interior; for $S\subseteq[n]$, $\Delta_S:=\{\mathbf{p}\in\Delta_n:\supp(\mathbf{p})\subseteq S\}$.
$\omega<2.372$~\cite{alman2025,vvw2024,almanwilliams2021} is the matrix-multiplication exponent, and $\nnz(\mathbf{A})$ the number of nonzero
entries of $\mathbf{A}$. For $\mathbf{p}\in\R^n$ we write
$\mathbf{M}(\mathbf{p})=\sum_ip_i\mathbf{a}_i\mathbf{a}_i^\top\in\Sym^d$, and whenever $\mathbf{M}(\mathbf{p})\succ0$ also
$v_i(\mathbf{p})=\mathbf{a}_i^\top \mathbf{M}(\mathbf{p})^{-1}\mathbf{a}_i$ and $f(\mathbf{p})=-\log\det \mathbf{M}(\mathbf{p})$. Two scalar functions recur:
\[
\wsc(t):=t-\log(1+t)\quad(t\ge0),\qquad \wscs(t):=-t-\log(1-t)\quad(t\in[0,1));
\]
both are increasing, convex, and vanish to second order at $0$; they are Legendre conjugates of one
another (\Cref{lem:legendre}).

\begin{definition}[$(1+\eps)$-John ellipsoid]
\label{def:je}
A weight $\mathbf{p}\in\Delta_n$ is a \emph{$(1+\eps)$-John ellipsoid} (equivalently a $(1+\eps)$-approximate
G-optimal design) if $\max_{i\in[n]}v_i(\mathbf{p})\le(1+\eps)d$. The induced ellipsoid
$Q=\{\mathbf{x}:\mathbf{x}^\top(d\,\mathbf{M}(\mathbf{p}))\,\mathbf{x}\le1\}$ then satisfies $\tfrac1{\sqrt{1+\eps}}Q\subseteq P\subseteq\sqrt
d\,Q$~\cite{cohen2019,todd2016}.
\end{definition}

\paragraph{The oracle / cost model.}
A \emph{leverage-score query} submits a weight vector $\mathbf{w}$ with $\mathbf{M}(\mathbf{w})\succ0$ and receives the
vector $\big(v_i(\mathbf{w})\big)_{i\in[n]}=\big(\mathbf{a}_i^\top\mathbf{M}(\mathbf{w})^{-1}\mathbf{a}_i\big)_{i\in[n]}$. For dense $\mathbf{A}$
one query costs $O(nd^{\omega-1})$ arithmetic, and this dominates the per-iteration cost of every
algorithm in \Cref{tab:results}; the input-sparsity and lazy-update lines~\cite{cao2025,woodruff2025}
reduce exactly this per-query cost. We therefore measure algorithms primarily in the number of
leverage-score queries, and account separately for any additional arithmetic. Leverage scores are
$(-1)$-homogeneous, $v_i(c\mathbf{w})=v_i(\mathbf{w})/c$ for $c>0$, so querying an unnormalized nonnegative weight
vector is equivalent to querying its normalization in $\Delta_n$; all queries made by our algorithms
are at nonnegative weights, except that the accelerated phase of \Cref{sec:upper} may evaluate at
extrapolated weights with small negative entries on the identified face (still satisfying
$\mathbf{M}(\mathbf{w})\succ0$, hence well defined at the same arithmetic cost); the Newton phase of
\Cref{sec:newton} queries genuine designs in $\Delta_n$ only.

\subsection{Leverage scores as a first-order oracle}
\label{sec:dict}
Everything that follows is read through one dictionary, built on the map
$\mathbf{p}\mapsto\mathbf{M}(\mathbf{p})=\sum_ip_i\mathbf{a}_i\mathbf{a}_i^\top$ and the objective $f(\mathbf{p})=-\log\det\mathbf{M}(\mathbf{p})$: the
leverage-score oracle is the \emph{gradient}, the $(1+\eps)$-John guarantee is a
\emph{duality gap}, and the second-order geometry is the \emph{Gram matrix of rank-one tensors}. This is
not bookkeeping: it is the dictionary in which the three costs of \Cref{sec:intro}
(certification, identification, accuracy) come apart, and we use it without further comment throughout.
The individual identities are standard~\cite{todd2016,cohen2019}; we restate them in our notation for
self-containedness.

\begin{proposition}[Dictionary]
\label{prop:dict}
For $\mathbf{p}$ with $\mathbf{p}\in\Delta_n$, $\mathbf{M}(\mathbf{p})\succ0$ (in particular for $\mathbf{p}\in\relint\Delta_n$):
\begin{enumerate}[label=\emph{(\alph*)},leftmargin=*,itemsep=1pt]
\item\label{it:grad} $\nabla f(\mathbf{p})_i=-v_i(\mathbf{p})$ \emph{(the leverage-score oracle is the first-order oracle).}
\item\label{it:trace} $\sum_i p_iv_i(\mathbf{p})=d$; hence $\max_iv_i(\mathbf{p})\ge d$, with equality iff $\mathbf{p}$ is optimal.
\item\label{it:fw} The Frank--Wolfe gap $g(\mathbf{p}):=\max_{\mathbf{q}\in\Delta_n}\inner{-\nabla f(\mathbf{p}),\mathbf{q}-\mathbf{p}}$ equals
$\max_iv_i(\mathbf{p})-d$, satisfies $g(\mathbf{p})\ge f(\mathbf{p})-f^\opt\ge0$, and $f(\mathbf{p})-f^\opt\le d\log(1+g(\mathbf{p})/d)\le g(\mathbf{p})$.
Thus \Cref{def:je} reads $g(\mathbf{p})\le\eps d$.
\item\label{it:hess} $\nabla^2f(\mathbf{p})_{ij}=(\mathbf{a}_i^\top \mathbf{M}(\mathbf{p})^{-1}\mathbf{a}_j)^2$, the Gram matrix of
$\{\mathbf{M}(\mathbf{p})^{-1/2}\mathbf{a}_i\}^{\otimes2}\subset\Sym^d$; it is positive semidefinite of rank at most
$\binom{d+1}{2}$, with kernel $\{\boldsymbol{\delta}:\sum_i\delta_i \mathbf{a}_i\mathbf{a}_i^\top=0\}$ independent of $\mathbf{p}$. More
precisely, for every $\boldsymbol{\delta}\in\R^n$,
\begin{equation}
\label{eq:hessform}
\boldsymbol{\delta}^\top\nabla^2f(\mathbf{p})\,\boldsymbol{\delta}
=\Big\lVert \mathbf{M}(\mathbf{p})^{-1/2}\Big(\textstyle\sum_i\delta_i\mathbf{a}_i\mathbf{a}_i^\top\Big)\mathbf{M}(\mathbf{p})^{-1/2}\Big\rVert_F^2 .
\end{equation}
\end{enumerate}
\end{proposition}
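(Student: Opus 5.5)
We prove the four items by direct matrix calculus on $\mathbf{M}(\mathbf{p})=\sum_ip_i\mathbf{a}_i\mathbf{a}_i^\top$, which is linear in $\mathbf{p}$, and then add one convexity argument for the gap bounds.

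\emph{Item \ref{it:grad}.} Since $\partial\mathbf{M}/\partial p_i=\mathbf{a}_i\mathbf{a}_i^\top$, Jacobi's formula $d\log\det\mathbf{M}=\Tr(\mathbf{M}^{-1}d\mathbf{M})$ gives
\[
\partial_if=-\Tr(\mathbf{M}^{-1}\mathbf{a}_i\mathbf{a}_i^\top)=-v_i(\mathbf{p}).
\]

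\emph{Item \ref{it:hess}.} Differentiate again using $d(\mathbf{M}^{-1})=-\mathbf{M}^{-1}(d\mathbf{M})\mathbf{M}^{-1}$. This gives $\partial_j v_i=-(\mathbf{a}_i^\top\mathbf{M}^{-1}\mathbf{a}_j)^2$, hence $\nabla^2f_{ij}=(\mathbf{a}_i^\top\mathbf{M}^{-1}\mathbf{a}_j)^2$. Put $\mathbf{b}_i=\mathbf{M}^{-1/2}\mathbf{a}_i$. Then $(\mathbf{b}_i^\top\mathbf{b}_j)^2=\inner{\mathbf{b}_i^{\otimes2},\mathbf{b}_j^{\otimes2}}$, so the Hessian is a Gram matrix of elements of $\Sym^d$. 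Expanding the quadratic form yields
\[
\boldsymbol{\delta}^\top\nabla^2f\,\boldsymbol{\delta}=\fnorm{\textstyle\sum_i\delta_i\mathbf{b}_i^{\otimes2}}^2=\fnorm{\mathbf{M}^{-1/2}\big(\textstyle\sum_i\delta_i\mathbf{a}_i\mathbf{a}_i^\top\big)\mathbf{M}^{-1/2}}^2,
\]
which is \eqref{eq:hessform}. Positive semidefiniteness follows. The rank bound holds because the Gram vectors lie in a space of dimension $\binom{d+1}{2}$. The form vanishes iff $\sum_i\delta_i\mathbf{a}_i\mathbf{a}_i^\top=0$, since conjugation by the invertible $\mathbf{M}^{-1/2}$ is injective; so the kernel does not depend on $\mathbf{p}$.

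\emph{Item \ref{it:trace}.} Compute $\sum_ip_iv_i=\Tr(\mathbf{M}^{-1}\sum_ip_i\mathbf{a}_i\mathbf{a}_i^\top)=\Tr(I_d)=d$. Because $\mathbf{p}\in\Delta_n$, the weighted average of the $v_i$ is $d$, so $\max_iv_i\ge d$. For the equality case, first derive \ref{it:fw}; then $\max_iv_i=d$ means $g=0$, so $f(\mathbf{p})\le f^\opt$ and $\mathbf{p}$ is optimal. Conversely, at an optimum $g(\mathbf{p}^\opt)=0$ by the standard first-order optimality condition for convex $f$ on the simplex (KKT), which is Kiefer--Wolfowitz (\Cref{thm:kw}).

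\emph{Item \ref{it:fw}.} The Frank--Wolfe objective is linear in $\mathbf{q}$, so its maximum is attained at a vertex $\mathbf{e}_i$. Using \ref{it:grad} and \ref{it:trace},
\[
g(\mathbf{p})=\max_i v_i-\inner{\mathbf{v},\mathbf{p}}=\max_iv_i-d .
\]
The bound $g\ge f-f^\opt$ is convexity: $f^\opt\ge f(\mathbf{p})+\inner{\nabla f(\mathbf{p}),\mathbf{p}^\opt-\mathbf{p}}\ge f(\mathbf{p})-g(\mathbf{p})$. Convexity of $f$ holds because $-\log\det$ is convex and $\mathbf{M}$ is linear; $f^\opt$ is attained by compactness, and $f\to\infty$ at singular $\mathbf{M}$.

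The sharper bound $f-f^\opt\le d\log(1+g/d)$ is the main step. Write $\mathbf{M}^\opt=\mathbf{M}(\mathbf{p}^\opt)$ and use concavity of $\log\det$ in the matrix argument:
\[
f(\mathbf{p})-f^\opt=\log\det(\mathbf{M}^{-1}\mathbf{M}^\opt).
\]
Let $\lambda_1,\dots,\lambda_d$ be the eigenvalues of $\mathbf{M}^{-1}\mathbf{M}^\opt$. By AM--GM, $\sum_k\log\lambda_k\le d\log\big(\tfrac1d\Tr(\mathbf{M}^{-1}\mathbf{M}^\opt)\big)$. Next,
\[
\Tr(\mathbf{M}^{-1}\mathbf{M}^\opt)=\sum_ip^\opt_iv_i(\mathbf{p})\le\max_iv_i(\mathbf{p})=d+g(\mathbf{p}).
\]
This gives $f-f^\opt\le d\log(1+g/d)$, and $\log(1+x)\le x$ finishes it. Finally, $g\le\eps d$ is a restatement of $\max_iv_i\le(1+\eps)d$.

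The only non-routine point is this AM--GM/trace step. The rest is bookkeeping, including that every $\mathbf{p}$ in the statement has $\mathbf{M}\succ0$.
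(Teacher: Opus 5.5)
Your proof is correct and follows the paper's argument essentially step for step: Jacobi's formula for (a), the trace identity for (b), and the inverse-derivative/Gram-matrix computation for (d). For (c) you likewise use vertex maximization, convexity, and the AM--GM bound $\log\det\mathbf{X}\le d\log(\tfrac1d\Tr\mathbf{X})$ together with $\Tr(\mathbf{M}(\mathbf{p})^{-1}\mathbf{M}^\opt)=\sum_ip^\opt_iv_i(\mathbf{p})\le d+g(\mathbf{p})$. The one difference is cosmetic: you settle the equality case of (b) through the gap sandwich of (c) and the variational inequality at the optimum, whereas the paper reads it off the simplex KKT conditions directly; your route is equally valid and not circular, since (c) uses only the trace identity from (b).
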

\noindent The proof is an elementary (if lengthy) matrix-calculus computation; we defer it to
\Cref{app:dict}. Identity \eqref{eq:hessform} expresses the Hessian quadratic form as the squared
Frobenius norm of a matrix \emph{measured in the geometry of $\mathbf{M}(\mathbf{p})$}; this is the self-concordance
of $-\log\det$ in disguise, and \Cref{sec:face} develops it quantitatively.

\subsection{Designs, nondegeneracy, and useful results}
\label{sec:designs}

\paragraph{Design terminology and nondegeneracy.}
Maximizing $\log\det \mathbf{M}(\mathbf{p})$ (equivalently, minimizing $f$) over $\Delta_n$ is \emph{D-optimal design};
minimizing the largest leverage score $\max_iv_i(\mathbf{p})$ is \emph{G-optimal design}; \Cref{thm:kw}
identifies these two with each other and with the John ellipsoid. The quantity
$v_i(\mathbf{p})=\mathbf{a}_i^\top \mathbf{M}(\mathbf{p})^{-1}\mathbf{a}_i$ is the (statistical) \emph{leverage score} of row $i$ under $\mathbf{p}$. Fix an
optimal design $\mathbf{p}^\opt$ and write $\mathbf{M}^\opt:=\mathbf{M}(\mathbf{p}^\opt)$ (which is the same for every optimal design,
by strict concavity of $\log\det$; \Cref{lem:unique}). The \emph{contact set}
$S^\opt:=\{i:v_i(\mathbf{p}^\opt)=d\}$ indexes the rows at which
the John ellipsoid touches $\partial P$ (the \emph{contact points}); the \emph{optimal face}
$F^\opt:=\Delta_{S^\opt}$ is the face of the simplex on which the optimum is
supported; and \emph{strict complementarity} is the (generic) condition $\supp(\mathbf{p}^\opt)=S^\opt$, with
slack $\gsc:=\min_{i\notin S^\opt}(d-v_i(\mathbf{p}^\opt))>0$.

\paragraph{The design polytope and its strata.}
The simplex $\Delta_n$ is a convenient parametrization, not the geometry of the problem. The map
$\mathbf{p}\mapsto\mathbf{M}(\mathbf{p})$ pushes $\Delta_n$ onto the \emph{design polytope}
$\mathcal K:=\mathrm{conv}\{\mathbf{a}_i\mathbf{a}_i^\top:i\in[n]\}\subset\Sym^d$, and it is the facial structure of
$\mathcal K$ that organizes what follows. The proper faces of $\mathcal K$ are the natural \emph{strata}
of the problem, and the contact set $S^\opt$ names the stratum on which the optimum lives: under strict
complementarity $\mathbf{M}^\opt=\sum_{i\in S^\opt}p^\opt_i\,\mathbf{a}_i\mathbf{a}_i^\top$ lies in the relative interior of the
exposed face $\mathrm{conv}\{\mathbf{a}_i\mathbf{a}_i^\top:i\in S^\opt\}$ (exposed by
$\mathbf{X}\mapsto\inner{\mathbf{X},(\mathbf{M}^\opt)^{-1}}$, since
$v_i(\mathbf{p}^\opt)=\inner{\mathbf{a}_i\mathbf{a}_i^\top,(\mathbf{M}^\opt)^{-1}}\le d$ with equality iff $i\in S^\opt$). The
nondegeneracy assumption below then says exactly that this stratum is \emph{smooth}: the optimum sits in
its relative interior, and the contact tensors are independent, so the stratum is a simplex on which the
pulled-back Hessian is nondegenerate modulo the harmless coordinate kernel of \Cref{lem:flat}. The same
facial geometry returns quantitatively as the facial distance $\Phi$ of $\mathcal K$ (\Cref{thm:zhao}),
the quantity the identification cost depends on.

\begin{assumption}[Nondegeneracy]
\label{def:nondeg}
The optimal design $\mathbf{p}^\opt$ is \emph{nondegenerate}: (i) strict complementarity holds, i.e.\
$\supp(\mathbf{p}^\opt)=S^\opt$ and $\gsc>0$; and (ii) the contact matrices $\{\mathbf{a}_i\mathbf{a}_i^\top\}_{i\in S^\opt}$
are linearly independent in $\Sym^d$ (which forces $m:=|S^\opt|\le\binom{d+1}{2}$). All upper-bound
results assume this; \Cref{lem:unique} shows it makes $\mathbf{p}^\opt$ the \emph{unique} optimal design. We
write $\pmin:=\min_{i\in S^\opt}p^\opt_i>0$.
\end{assumption}

\begin{remark}[How restrictive is nondegeneracy?]
\label{rem:nondeg}
Both conditions of \Cref{def:nondeg} are \emph{generic}: they hold for Lebesgue-almost-every
$\mathbf{A}\in\R^{n\times d}$, the exceptional set lying in a proper (measure-zero) algebraic subvariety.
Independence of the contact tensors $\{\mathbf{a}_i\mathbf{a}_i^\top\}_{i\in S^\opt}$ is a determinantal condition
on $\mathbf{A}$, and strict complementarity holds generically, as established for semidefinite programs at
large~\cite{alizadeh1997}. What \Cref{def:nondeg} excludes are the \emph{structured} instances (those
with exact symmetries or repeated rows) on which the optimal design may be non-unique or carry more
than $\binom{d+1}{2}$ contact points (linearly dependent contact matrices); these form a measure-zero
set but include several natural designs. A generic perturbation $\mathbf{A}\mapsto\mathbf{A}+\eta\mathbf{G}$ restores
nondegeneracy and moves the John ellipsoid by $O(\eta)$, but does not remove the assumption for free:
the slack $\gsc$ and the facial distance $\Phi$ degrade as $\eta\to0$, so the cost reappears inside the
setup term $C(\mathbf{A})$ (\Cref{rem:CA}) rather than the accuracy phase. Geometrically, these excluded
instances are exactly where the strata of $\mathcal K$ \emph{collide}: we treat that singular locus not as
an exception to be assumed away but as the degenerate stratum of the same object, and quantifying it
directly is \Cref{prob:degenerate}.
\end{remark}

\paragraph{Convex-optimization terminology.}
A differentiable convex function $h$ on a convex set is \emph{$L$-smooth} if $\nabla h$ is
$L$-Lipschitz there and \emph{$\mu$-strongly convex} if $h-\tfrac\mu2\norm{\cdot}_2^2$ is convex.
The \emph{Frank--Wolfe} (conditional-gradient) method minimizes $h$ over a polytope
using only a \emph{linear-minimization oracle} $\mathrm{LMO}(\mathbf{c})=\argmin_{\mathbf{x}}\inner{\mathbf{c},\mathbf{x}}$ over the
polytope, repeatedly stepping \emph{toward} the vertex $\mathrm{LMO}(\nabla h)$; over $\Delta_n$ a
vertex is an atom $\mathbf{e}_j$ (the rank-one term $\mathbf{a}_j\mathbf{a}_j^\top$), and $\mathrm{LMO}(\nabla f)=\argmax_iv_i$ by
\Cref{prop:dict}\ref{it:grad}. The \emph{away-step Frank--Wolfe} (\emph{Wolfe--Atwood}) variant may in
addition step \emph{away} from the worst currently-supported atom $\argmin_{i:p_i>0}v_i$, and perform a
\emph{drop step} removing an atom once its weight reaches $0$; these three move types are exactly the
branches of \Cref{alg:main}, and the away/drop steps are what let the method identify the optimal
support. The \emph{Frank--Wolfe gap} $g(\mathbf{p})$ (\Cref{prop:dict}\ref{it:fw}) is its standard optimality
certificate. The accelerated method we use on the face is \emph{FISTA} (with projection onto the face
simplex as its proximal step), restated as \Cref{thm:fista}; the second-order method is the classical
\emph{damped Newton} method for self-concordant functions, whose imported ingredients are collected in
\Cref{sec:face} and \Cref{app:sctoolkit}.

\paragraph{External results, restated.}
We now restate, in our notation, the external results invoked in the body. (Further imported facts
that are used only inside deferred proofs (the FISTA potential inequality and the local quadratic
convergence of Newton's method for self-concordant functions) are restated where used:
\Cref{thm:fista,thm:newtonquad}.)

\begin{theorem}[Kiefer--Wolfowitz equivalence~\cite{kiefer1960,pukelsheim2006}]
\label{thm:kw}
$g(\mathbf{p})=0$ $\iff$ $\mathbf{p}$ is D-optimal ($f(\mathbf{p})=f^\opt$) $\iff$ $\mathbf{p}$ is G-optimal ($\max_iv_i(\mathbf{p})=d$).
\end{theorem}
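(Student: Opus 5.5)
We prove the cycle $g(\mathbf{p})=0\Rightarrow f(\mathbf{p})=f^\opt\Rightarrow\max_iv_i(\mathbf{p})=d\Rightarrow g(\mathbf{p})=0$, using only the dictionary of \Cref{prop:dict} and convexity of $f$ on $\Delta_n$. We work with $\mathbf{p}\in\Delta_n$ having $\mathbf{M}(\mathbf{p})\succ0$, since otherwise $f(\mathbf{p})=+\infty$ and none of the three conditions can hold. First, $f$ is convex, because $\mathbf{p}\mapsto\mathbf{M}(\mathbf{p})$ is linear and $-\log\det$ is convex on $\Sym^d_{\succ0}$. Convexity gives, for every $\mathbf{q}\in\Delta_n$ with $\mathbf{M}(\mathbf{q})\succ0$,
\[
f(\mathbf{q})\ge f(\mathbf{p})+\inner{\nabla f(\mathbf{p}),\mathbf{q}-\mathbf{p}}\ge f(\mathbf{p})-g(\mathbf{p}).
\]
Taking the infimum over $\mathbf{q}$ yields $f(\mathbf{p})-f^\opt\le g(\mathbf{p})$. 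This inequality is also contained in \Cref{prop:dict}\ref{it:fw}. Hence $g(\mathbf{p})=0$ forces $f(\mathbf{p})=f^\opt$, so $\mathbf{p}$ is D-optimal.

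For the step from D-optimality to G-optimality, let $\mathbf{p}$ be a minimizer of $f$ over $\Delta_n$. We compare it with the Frank--Wolfe direction toward each vertex $\mathbf{e}_j$. Minimality gives the first-order condition
\[
0\le\frac{d}{dt}f(\mathbf{p}+t(\mathbf{e}_j-\mathbf{p}))\big|_{t=0^+}=\inner{\nabla f(\mathbf{p}),\mathbf{e}_j-\mathbf{p}}.
\]
The segment stays in $\Delta_n$, and $\mathbf{M}$ remains positive definite on it for small $t$, so this derivative is legitimate. By \Cref{prop:dict}\ref{it:grad},\ref{it:trace} the right-hand side equals $-v_j(\mathbf{p})+\sum_ip_iv_i(\mathbf{p})=d-v_j(\mathbf{p})$. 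Therefore $v_j(\mathbf{p})\le d$ for all $j$. The trace identity gives $\max_iv_i(\mathbf{p})\ge d$, so $\max_iv_i(\mathbf{p})=d$.

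Finally, the step from G-optimality back to a zero gap is immediate from the closed form $g(\mathbf{p})=\max_iv_i(\mathbf{p})-d$ in \Cref{prop:dict}\ref{it:fw}. That form holds because a linear function on $\Delta_n$ is maximized at a vertex, and the trace identity converts $\inner{\mathbf{v}(\mathbf{p}),\mathbf{q}-\mathbf{p}}$ into $\max_jv_j-d$.

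The only genuinely delicate point is existence and attainment of $f^\opt$. This is needed so that the phrase ``D-optimal'' refers to an actual minimizer and the first-order condition above is available. To secure it, note that $\mathbf{A}$ has rank $d$, so the uniform design $\mathbf{p}=\mathbf{1}/n$ has $\mathbf{M}\succ0$ and finite $f$. Moreover $f\to+\infty$ as $\mathbf{M}(\mathbf{p})$ approaches singularity, so the sublevel set $\{f\le f(\mathbf{1}/n)\}$ is a compact subset of $\Delta_n$. Hence a minimizer exists. With that settled, each implication is a one-line consequence of \Cref{prop:dict}.
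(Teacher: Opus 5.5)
Your argument is correct. The paper does not prove \Cref{thm:kw} itself; it imports it from~\cite{kiefer1960,pukelsheim2006}, and the same content is packaged into \Cref{prop:dict}\ref{it:trace}--\ref{it:fw}. The appendix proof of that proposition handles only the direction from G-optimality to optimality, by appealing to sufficiency of the first-order conditions (``equality forces $v_i(\mathbf{p})=d$ on $\supp(\mathbf{p})$ \dots\ i.e.\ optimality''). It leaves implicit the converse, that an optimal design has $v_j\le d$ for every $j$, which \Cref{lem:unique} and \Cref{lem:coupling} later rely on.

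Your cycle fills exactly that gap. You get D$\Rightarrow$G from the one-sided derivative toward each vertex $\mathbf{e}_j$, and G$\Rightarrow$D from the convexity bound $f(\mathbf{p})-f^\opt\le g(\mathbf{p})$ rather than from KKT sufficiency. It is also non-circular: from \Cref{prop:dict}\ref{it:trace} you use only the trace identity and $\max_iv_i\ge d$, not its ``equality iff optimal'' clause, which is the theorem itself. Beyond brevity, the paper's route adds the sharper AM--GM bound $f(\mathbf{p})-f^\opt\le d\log(1+g(\mathbf{p})/d)$, which the equivalence does not need.

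One small correction to your last paragraph: the existence argument is harmless but not load-bearing. For $g(\mathbf{p})=0\Rightarrow$ D-optimality, $f(\mathbf{p})-\inf f\le g(\mathbf{p})=0$ already shows the infimum is attained at $\mathbf{p}$. For D$\Rightarrow$G, the hypothesis $f(\mathbf{p})=f^\opt$ already makes $\mathbf{p}$ a minimizer with $\mathbf{M}(\mathbf{p})\succ0$, so the first-order condition is available without knowing in advance that minimizers exist. Existence matters only for the statement to be non-vacuous and for the paper's use of $\mathbf{p}^\opt$ elsewhere.
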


\begin{theorem}[CCLY averaged fixed point~{\cite[Thm.~3.2--3.3]{cohen2019}}]
\label{thm:ccly}
The iteration $p^{(t+1)}_i\propto p^{(t)}_iv_i(\mathbf{p}^{(t)})$ from $\mathbf{p}^{(1)}=\tfrac1n\mathbf1$, with averaged
output $\bar{\mathbf{p}}^{(T)}=\tfrac1T\sum_{t\le T}\mathbf{p}^{(t)}$, satisfies $g(\bar{\mathbf{p}}^{(T)})\le\eps d$ for
$T=O(\eps^{-1}\log(n/d))$, and is implementable in $O(\eps^{-1}\,nd^{\omega-1}\log(n/d))$ time.
\end{theorem}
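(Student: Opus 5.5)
The plan is to reproduce the CCLY argument in the dictionary of \Cref{prop:dict}, since the statement is imported from~\cite{cohen2019}. The iteration is the multiplicative update $p^{(t+1)}_i=p^{(t)}_iv_i(\mathbf{p}^{(t)})/d$. By \Cref{prop:dict}\ref{it:trace} the weights $p^{(t)}_iv_i(\mathbf{p}^{(t)})$ sum to $d$, so this rule already lies in $\Delta_n$ and needs no separate normalization. I would prove a telescoping bound on the log-weights and then transfer it to the average by convexity.

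\textbf{Step 1 (convexity).} For each $i$, the map $\mathbf{p}\mapsto\log v_i(\mathbf{p})=\log\big(\mathbf{a}_i^\top\mathbf{M}(\mathbf{p})^{-1}\mathbf{a}_i\big)$ is convex on $\{\mathbf{M}(\mathbf{p})\succ0\}$. I would get this from the variational formula $v_i(\mathbf{p})^{-1}=\min\{\mathbf{x}^\top\mathbf{M}(\mathbf{p})\mathbf{x}:\mathbf{a}_i^\top\mathbf{x}=1\}$. The right-hand side is a minimum of linear functions of $\mathbf{p}$, hence concave, so $1/v_i$ is concave and positive. Then $\log v_i=-\log(1/v_i)$ is convex, because it composes a convex nonincreasing function with a concave one. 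Jensen's inequality then gives
\[
\log v_i(\bar{\mathbf{p}}^{(T)})\le\tfrac1T\textstyle\sum_{t\le T}\log v_i(\mathbf{p}^{(t)}).
\]

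\textbf{Step 2 (telescoping).} From the update rule, $\log v_i(\mathbf{p}^{(t)})=\log d+\log p^{(t+1)}_i-\log p^{(t)}_i$. Summing over $t\le T$ gives
\[
\tfrac1T\textstyle\sum_t\log v_i(\mathbf{p}^{(t)})=\log d+\tfrac1T\log\big(p^{(T+1)}_i/p^{(1)}_i\big).
\]
The crude bound $p^{(T+1)}_i\le1$ only yields $\log v_i(\bar{\mathbf{p}})\le\log d+\tfrac{\log n}{T}$. That is already $T=O(\eps^{-1}\log n)$.

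\textbf{Step 3 (sharpening to $\log(n/d)$).} This is the main obstacle. The sharper bound needs $p^{(T+1)}_i\lesssim$ (something like $1/d$ after the first step), or an analysis that bounds $p^{(t)}_iv_i(\mathbf{p}^{(t)})$ directly. I would first use the leverage-score bound $v_i(\mathbf{p})\le 1/p_i$, valid since $\mathbf{M}(\mathbf{p})\succeq p_i\mathbf{a}_i\mathbf{a}_i^\top$, which gives $p^{(t+1)}_i=p^{(t)}_iv_i/d\le1/d$ for every $t\ge1$. Starting the telescoping sum from $t=2$ then gives $\tfrac1T\log(p^{(T+1)}_i/p^{(2)}_i)$. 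The numerator is at most $1/d$, but the denominator $p^{(2)}_i=v_i(\mathbf{1}/n)/(nd)$ could be small. I would handle this by dropping the first iterate from the average, or by using $\sum_i v_i(\mathbf{1}/n)=nd$ and treating rows with tiny $v_i$ separately. Such rows have small leverage under every later iterate too, roughly by the monotonicity of $\mathbf{p}^{(t)}$ on them, so they can never be the maximizer in $g$. This is the delicate part, and I would follow~\cite[Thm.~3.2]{cohen2019} closely here.

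\textbf{Conclusion.} With the sharpened bound, $\max_iv_i(\bar{\mathbf{p}}^{(T)})\le d\,\exp(O(\log(n/d)/T))\le(1+\eps)d$ once $T=C\eps^{-1}\log(n/d)$. By \Cref{prop:dict}\ref{it:fw} this is exactly $g(\bar{\mathbf{p}}^{(T)})\le\eps d$. The running time follows because each iteration makes one leverage-score query, costing $O(nd^{\omega-1})$ for forming and inverting $\mathbf{M}$ and evaluating $n$ quadratic forms. Maintaining the running average costs only $O(n)$ per step.
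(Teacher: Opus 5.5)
Your route is the CCLY argument the paper cites: Jensen's inequality for the convex maps $\mathbf{p}\mapsto\log v_i(\mathbf{p})$, then telescoping the multiplicative update. Steps~1--2 are correct, and the variational proof that $1/v_i$ is concave is a clean way to get the convexity. The gap is in Step~3, where you create an obstacle that is not there and then propose repairs that do not work.

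You never need to restart the telescoping at $t=2$. The sum in Step~2 runs from $t=1$, and its denominator is exactly $p^{(1)}_i=1/n$. The bound you already derived, $p^{(t+1)}_i=p^{(t)}_iv_i(\mathbf{p}^{(t)})/d\le1/d$ (from $p_iv_i(\mathbf{p})\le1$), applies to the numerator $p^{(T+1)}_i$ for every $T\ge1$. Hence
\[
\log v_i(\bar{\mathbf{p}}^{(T)})\;\le\;\log d+\frac1T\log\frac{p^{(T+1)}_i}{p^{(1)}_i}\;\le\;\log d+\frac1T\log\frac nd
\qquad\text{for every }i .
\]
This is precisely the CCLY estimate. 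In the normalization $\mathbf{w}=d\,\mathbf{p}$ it reads $w^{(1)}_i=d/n$ and $w^{(T+1)}_i\le1$, the latter being a leverage score. Taking $T\ge\log(n/d)/\log(1+\eps)=O(\eps^{-1}\log(n/d))$ finishes the proof.

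The workarounds you sketch should be discarded, not just shortened. First, dropping the first iterate changes the object $\bar{\mathbf{p}}^{(T)}$ whose gap the statement certifies. Second, the heuristic that rows with small initial $v_i$ keep small leverage and never attain $\max_iv_i$ is false.

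For a counterexample, take rows $\mathbf{e}_1,\dots,\mathbf{e}_d$ together with $k\gg d$ copies of $0.99\,\mathbf{e}_1$, so $n=d+k$.
\begin{itemize}
\item At the start, $v_1(\mathbf{1}/n)=n/(1+0.9801k)\approx1.02\ll d$. So $p_1$ first shrinks by a factor $\approx1.02/d$, which is exactly the small denominator you were worried about.
\item After one step, each $\mathbf{e}_l$ with $l\ge2$ carries weight $1/d$ (and keeps it), so the $\mathbf{e}_1$-direction carries total weight $p_1+s=1/d$, where $s>0$ is the mass on the copies.
\item Hence for all $t\ge2$ we have $v_1(\mathbf{p}^{(t)})=1/(p_1+0.9801\,s)>d$. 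Meanwhile the rows $\mathbf{e}_l$ ($l\ge2$) have leverage exactly $d$ and the copies have $0.9801\,v_1$.
\end{itemize}
So row~$1$ is the unique maximizer from $t=2$ on, and its weight is not monotone. The direct bound above handles this row with no special treatment.

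With that one-line replacement of Step~3 your proof is complete, and the running-time accounting is fine.
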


\begin{remark}[The fixed-point map is the classical multiplicative algorithm]
\label{rem:titterington}
On the simplex the normalization is exact---$\sum_ip_iv_i(\mathbf{p})=d$ always
(\Cref{prop:dict}\ref{it:trace}), so the CCLY iteration is precisely the map
$\Phi(\mathbf{p})=\big(p_iv_i(\mathbf{p})/d\big)_{i\in[n]}$, which maps $\Delta_n$ to $\Delta_n$. This is the
\emph{multiplicative algorithm} for D-optimal design introduced by
Titterington~\cite{titterington1976}; its monotonicity ($f(\Phi(\mathbf{p}))\le f(\mathbf{p})$) and convergence of
$f$-values to the optimum, from any positive start, are classical~\cite{titterington1976,yu2010}. We
use this in \Cref{thm:avg} to get R-linear convergence of the iterates on our explicit instance.
\end{remark}

\begin{theorem}[Wolfe--Atwood warm start~{\cite[Thm.~3.1, Alg.~4.1]{todd2007}; \cite{kumar2005}}]
\label{thm:wa}
The away-step Frank--Wolfe (Wolfe--Atwood) method, initialized by the Kumar--Y\i ld\i r\i m volumetric
rule, reaches $g(\mathbf{p})\le\eps' d$ in $O(d/\eps'+d\log d)$ iterations, each one leverage-score query. In
particular, for any constant $c>0$ it reaches $g(\mathbf{p})\le c$ in $O(d^2/c+d\log d)$ iterations,
independent of $\eps$.
\end{theorem}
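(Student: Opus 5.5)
The first assertion is a restatement of an external result, so the plan is to import it verbatim from \cite[Thm.~3.1, Alg.~4.1]{todd2007} (building on the initialization of \cite{kumar2005}) and check only that it matches our normalization. Todd--Y\i ld\i r\im\ measure progress by the largest leverage score relative to $d$. By \Cref{prop:dict}\ref{it:fw}, $g(\mathbf{p})=\max_iv_i(\mathbf{p})-d$, so their stopping criterion $\max_iv_i\le(1+\eps')d$ is exactly $g(\mathbf{p})\le\eps'd$. Each of their iterations uses one computation of $(v_i(\mathbf{p}))_i$, which is one leverage-score query; the away and drop steps use the same vector. This gives the $O(d/\eps'+d\log d)$ bound.

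For self-containedness I would also recall why that bound holds, in three steps.
\begin{enumerate}
\item The volumetric (Kumar--Y\i ld\i r\im) initialization selects at most $2d$ rows. It gives an initial design with $f(\mathbf{p}^{(0)})-f^\opt=O(d\log d)$.
\item Take a Frank--Wolfe or away step toward the atom $j$ with $v_j=(1+\delta)d$, using exact line search along $\mathbf{M}\mapsto(1-\tau)\mathbf{M}+\tau\mathbf{a}_j\mathbf{a}_j^\top$. The matrix determinant lemma gives a closed-form decrease of $f$ of order $\min(\delta,\delta^2)\cdot\Omega(1)$. Away steps obey the analogous bound, and drop steps are charged to preceding Frank--Wolfe steps; at most half the steps are drops.
\item Combine this decrease with the upper bound $f-f^\opt\le d\log(1+g/d)$ from \Cref{prop:dict}\ref{it:fw}. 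Bound the steps taken while $\delta\in(2^{-k-1}\eps_0,2^{-k}\eps_0]$. Summing over the resulting geometric sequence of phases gives $O(d\log d)$ steps to reach $\delta\le1$, plus $O(d/\eps')$ steps thereafter.
\end{enumerate}
The one point needing care is that the halving argument uses the per-step decrease together with $f-f^\opt=O(d\delta)$ at the start of a phase. Each phase then costs $O(d/\delta)$ steps, and the geometric sum is dominated by the last phase.

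The "in particular" clause is a substitution. Given a constant $c>0$, set $\eps':=c/d$. Then $g(\mathbf{p})\le\eps'd$ reads $g(\mathbf{p})\le c$, and the iteration bound becomes $O(d/(c/d)+d\log d)=O(d^2/c+d\log d)$. This involves only $d$ and $c$, not the target accuracy $\eps$ of \Cref{def:je}, so it is $\eps$-independent. I expect no real obstacle: the only thing to verify is that the cited theorem's constants are uniform in the instance, which they are because Todd--Y\i ld\i r\im's bound depends only on $d$ and $\eps'$.
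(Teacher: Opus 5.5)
Your proposal is correct and matches the paper, which also treats this as an imported Todd--Y\i ld\i r\i m result: it reads their criterion $\max_iv_i(\mathbf{p})\le(1+\eps')d$ as $g(\mathbf{p})\le\eps'd$ via \Cref{prop:dict}\ref{it:fw}, and obtains the second clause by substituting $\eps'=c/d$. Two small slips in your optional self-contained sketch are harmless. For $\delta\ge1$ the exact-line-search decrease is only $\Theta(\log(1+\delta))$ (Khachiyan's bound $\log(1+\delta)-\delta/(1+\delta)$), not $\Omega(\delta)$. Drop steps are bounded by the number of toward steps plus the initial support size ($\le 2d$), not by half of all steps. Neither matters, because the first phase needs only a constant decrease per step, and the extra $2d$ drops are absorbed in $O(d\log d)$.
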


The \emph{Kumar--Y\i ld\i r\i m volumetric rule} used above produces the initial design as follows: greedily
construct $d$ (near-)orthogonal directions and, along each, add the two rows of $\mathbf{A}$ that are extremal in
that direction; the uniform design $\mathbf{p}^{(0)}$ on these $\le2d$ rows already satisfies
$\mathrm{vol}(\text{its ellipsoid})\ge d^{-2d}\cdot\mathrm{vol}(\text{John ellipsoid})$, equivalently
$f(\mathbf{p}^{(0)})-f^\opt=O(d\log d)$, and is computed in $O(nd^2)$ time~\cite{kumar2005,todd2007}.

\begin{theorem}[Away-step linear convergence~{\cite[Thm.~4.1--4.2]{zhao2023}}; \cite{ahipasaoglu2008}]
\label{thm:zhao}
For D-optimal design, away-step Frank--Wolfe converges globally linearly in both the objective gap and
the Frank--Wolfe gap: $g(\mathbf{p}^{(k)})\le(1-\rho)^{k_{\mathrm{eff}}}\cdot O(g_0)$, where $g_0$ is the initial
gap, $k_{\mathrm{eff}}=\Theta(k)$, and $1/\rho=O(\kappa_\Phi)$. Equivalently, $g\le\eps d$ holds for all
$k\ge K_{\mathrm{gap}}(\eps)=O(\kappa_\Phi\log(1/\eps))$.
\end{theorem}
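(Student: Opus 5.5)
The proof follows the Lacoste-Julien--Jaggi template for away-step Frank--Wolfe under a facial-distance condition, transported to D-optimal design by working in matrix space. Write $f(\mathbf{p})=\varphi(\mathbf{M}(\mathbf{p}))$ with $\varphi(\mathbf{X})=-\log\det\mathbf{X}$. Here $\mathbf{M}$ is linear and maps $\Delta_n$ onto the design polytope $\mathcal K$. Since $f$ itself is not strongly convex (\Cref{prop:dict}\ref{it:hess}), the plan is to use strong convexity of $\varphi$ on the image rather than of $f$. This is exactly the "strongly convex composed with a linear map" setting in which the facial distance $\Phi$ of $\mathcal K$ replaces the pyramidal width.

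\textbf{Step 1: uniform constants on a sublevel set.} Away-step FW with exact line search is monotone, so all iterates stay in $\mathcal L_0=\{\mathbf{p}\in\Delta_n:f(\mathbf{p})\le f(\mathbf{p}^{(0)})\}$. On $\mathcal L_0$ the eigenvalues of $\mathbf{M}(\mathbf{p})$ are bounded on both sides.
\begin{itemize}
\item Upper bound: $\mathbf{M}(\mathbf{p})\preceq(\max_i\norm{\mathbf{a}_i}^2)\mathbf{I}$, so $\lambda_{\max}\le\Lambda:=\max_i\norm{\mathbf{a}_i}^2$.
\item Lower bound: $\det\mathbf{M}(\mathbf{p})\ge e^{-f(\mathbf{p}^{(0)})}$ gives $\lambda_{\min}\ge\lambda:=e^{-f(\mathbf{p}^{(0)})}/\Lambda^{d-1}$.
\end{itemize}
Hence on $\mathbf{M}(\mathcal L_0)$ the function $\varphi$ is $\lambda^{-2}$-smooth and $\Lambda^{-2}$-strongly convex in Frobenius norm, because $\nabla^2\varphi(\mathbf{X})[\mathbf{H},\mathbf{H}]=\fnorm{\mathbf{X}^{-1/2}\mathbf{H}\mathbf{X}^{-1/2}}^2$ by \eqref{eq:hessform}. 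We take $\kappa_\Phi$ to be $(\Lambda/\lambda)^2\cdot\mathrm{diam}(\mathcal K)^2/\Phi^2$, up to absolute constants. I expect this to be the main obstacle: the constants must be uniform over the whole trajectory even though the line search may move toward the boundary. With the warm start of \Cref{thm:wa}, $f(\mathbf{p}^{(0)})-f^\opt=O(d\log d)$, so $\lambda$ is controlled by $\det\mathbf{M}^\opt$ up to a $d^{O(d)}$-type factor. If that is too lossy, I would first run \Cref{thm:wa} to a constant gap and restart the constants there.

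\textbf{Step 2: geometric descent per good step.} Let $h_k=f(\mathbf{p}^{(k)})-f^\opt$ and let $\mathbf{d}_k$ be the pairwise direction $\mathbf{e}_{\mathrm{FW}}-\mathbf{e}_{\mathrm{away}}$. Define $\mathbf{D}_k=\mathbf{M}(\mathbf{d}_k)=\mathbf{a}_{\mathrm{FW}}\mathbf{a}_{\mathrm{FW}}^\top-\mathbf{a}_{\mathrm{away}}\mathbf{a}_{\mathrm{away}}^\top$.
\begin{itemize}
\item Facial-distance inequality: as in Beck--Shtern / Pe\~na--Rodr\'iguez, $\inner{-\nabla f(\mathbf{p}),\mathbf{d}_k}\ge\Phi\,\inner{-\nabla\varphi(\mathbf{M}),\mathbf{M}^\opt-\mathbf{M}}/\fnorm{\mathbf{M}^\opt-\mathbf{M}}$. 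This needs only the active atom set, so the non-uniqueness of $\mathbf{p}$ is harmless.
\item Strong convexity of $\varphi$ gives $h_k\le\inner{-\nabla\varphi,\mathbf{M}^\opt-\mathbf{M}}^2/(2\Lambda^{-2}\fnorm{\mathbf{M}^\opt-\mathbf{M}}^2)$.
\item Smoothness gives a line-search decrease of at least $\inner{-\nabla f,\mathbf{d}_k}^2/(2\lambda^{-2}\fnorm{\mathbf{D}_k}^2)$, unless the step is truncated.
\end{itemize}
Combining the three, every non-drop step satisfies $h_{k+1}\le(1-\rho)h_k$ with $1/\rho=O(\kappa_\Phi)$. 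Drop steps do not increase $h$ and cannot exceed the number of FW steps plus the initial support size. This yields $k_{\mathrm{eff}}\ge(k-|\supp\mathbf{p}^{(0)}|)/2=\Theta(k)$.

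\textbf{Step 3: from objective gap to Frank--Wolfe gap.} A FW or away step from $\mathbf{p}^{(k)}$ decreases $f$ by at least $\min\{g_k/2,\ g_k^2\lambda^2/(2\,\mathrm{diam}(\mathcal K)^2)\}$, using the same smoothness bound along the FW direction, whose gap is $g_k$. This decrease is at most $h_k$. Therefore $g_k\le\max\{2h_k,\ \sqrt{2\,\mathrm{diam}(\mathcal K)^2h_k}/\lambda\}$, and $g_k$ decays linearly with rate $(1-\rho)^{k_{\mathrm{eff}}/2}$. This rate is still of the form claimed after renaming $\rho$. Finally, $h_0\le g_0$ by \Cref{prop:dict}\ref{it:fw}. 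Solving $(1-\rho)^{k_{\mathrm{eff}}}O(g_0)\le\eps d$ gives $K_{\mathrm{gap}}(\eps)=O(\kappa_\Phi\log(1/\eps))$, with the $g_0$ and $d$ dependence absorbed into the logarithm.
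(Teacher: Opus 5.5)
The paper does not prove this statement; it imports it from Zhao's affine-invariant analysis. Your Lacoste-Julien--Jaggi chain is sound in outline, with two routine fixes:
\begin{itemize}
\item The sublevel-set worry you flag is harmless. By convexity of $\gamma\mapsto f(\mathbf{p}+\gamma\mathbf{d})$, the segment up to the trial step stays in the current sublevel set.
\item \Cref{alg:main} moves along the toward or away direction, not the pairwise one. This costs only a factor $2$, via $\max\{v_{j_+}-d,\ d-v_{j_-}\}\ge\tfrac12(v_{j_+}-v_{j_-})=\tfrac12\inner{-\nabla f(\mathbf{p}),\mathbf{d}_k}$.
\end{itemize}
The genuine gap is in what the chain proves. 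It gives $1/\rho=O\big((\Lambda/\lambda)^2\mathrm{diam}(\mathcal K)^2/\Phi_F^2\big)$, with Frobenius quantities, and you then \emph{define} $\kappa_\Phi$ to be this. But the paper fixes $\kappa_\Phi=1/(\mu_R\Phi^2)$, with $\Phi$ the facial distance in the local norm at $\mathbf{M}^\opt$, and your constant is not $O(\kappa_\Phi)$, for two reasons.

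First, consider $\mathbf{a}_i\mapsto\mathbf{T}\mathbf{a}_i$ with $\mathbf{T}$ invertible. Every leverage score is unchanged, hence every iterate and every line search. The $\mathbf{c}_i$ change only by an orthogonal congruence, so $\Phi$ and $\kappa_\Phi$ are unchanged. Yet $\Lambda/\lambda$, and indeed the true Frobenius condition number of $\nabla^2\varphi$ on $\mathbf{M}(\mathcal L_0)$, can be made arbitrarily large.

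Second, even in the best coordinates your determinant floor $\lambda\ge e^{-f(\mathbf{p}^{(0)})}/\Lambda^{d-1}$ is $d^{-O(d)}$. Your fallback of restarting at a constant gap does not fix this. The determinant argument still only gives $\lambda_{\min}\ge e^{-c}\det\mathbf{M}^\opt/\Lambda^{d-1}$, because it never uses that $\mathbf{M}(\mathbf{p})$ is close to $\mathbf{M}^\opt$.

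The two missing ingredients are affine invariance and self-concordant localization.
\begin{itemize}
\item \emph{Whitening.} Since the iterates do not depend on the coordinates, you may assume $\mathbf{M}^\opt=\mathbf{I}$. Then the Frobenius norm is the local norm at $\mathbf{M}^\opt$, your facial distance becomes the paper's $\Phi$, and the atoms are the $\mathbf{c}_i$, with $\fnorm{\mathbf{c}_i}=v_i(\mathbf{p}^\opt)\le d$.
\item \emph{Localization.} Replace the determinant floor by $f(\mathbf{p})-f^\opt\ge\wsc(\varrho(\mathbf{p}))$ on all of $\Delta_n$ (\Cref{lem:lower}; its linear term is nonnegative by Kiefer--Wolfowitz alone). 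Once $f-f^\opt\le\wsc(\tfrac12)$, every iterate and the relevant part of every line-search segment satisfy $\tfrac12\mathbf{I}\preceq\mathbf{M}\preceq\tfrac32\mathbf{I}$ (\Cref{lem:Msand}). This happens after an $\eps$-independent burn-in (e.g.\ \Cref{thm:wa}) and persists by monotonicity.
\item \emph{Consequences.} On that region $\varphi$ is $4$-smooth and $\tfrac49$-strongly convex, and all search directions have norm $O(d)$. Your Steps 2--3 then give $1/\rho=O(d^2/\Phi^2)=O(d^2\kappa_\Phi)$ (using $\mu_R\le\tfrac12$) and $g=O(d\sqrt{h})$.
\end{itemize}
That is exactly the $O(\kappa_\Phi\,\poly(d))$ form that \Cref{lem:identify} and \Cref{rem:CA} actually consume. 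It is still not the literal $O(\kappa_\Phi)$ from the first iterate. For that you must give up the global descent lemma entirely and bound each step's decrease with the self-concordant upper bound (\Cref{lem:scineq}\ref{it:scupper}) in the local norm at the current iterate. There, for instance, $\locnorm{\mathbf{a}_j\mathbf{a}_j^\top-\mathbf{M}(\mathbf{p})}{\mathbf{M}(\mathbf{p})}^2=v_j(\mathbf{p})^2-2v_j(\mathbf{p})+d$ is controlled by the leverage scores alone. That local-norm step analysis is what the imported result supplies.
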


Here the \emph{facial condition number} is $\kappa_\Phi:=1/(\mu_R\,\Phi^2)$, where $\mu_R\le\tfrac12$ is
a local strong-convexity constant of $f$ and $\Phi=\Phi(\mathbf{A})>0$ is the \emph{facial
distance}~\cite{pena2019} of the design polytope $\mathcal K$ (\Cref{sec:designs})
in the local norm at $\mathbf{M}^\opt$: the minimum, over its proper faces $\mathcal F$, of the distance between
$\mathcal F$ and the convex hull of the vertices not lying on $\mathcal F$ (a quantity shown
in~\cite{pena2019} to coincide with the \emph{pyramidal width} of~\cite{lacoste2015}). With this
normalization, the iteration count in \Cref{thm:zhao} is $O(\kappa_\Phi\log(1/\eps))$. The quantity $\Phi$ is
strictly positive for every fixed instance but tends to $0$ as the optimal design approaches a
lower-dimensional face, which is why \Cref{thm:zhao} is not a uniform bound.

\begin{theorem}[Active-set identification~{\cite{zhao2023}; cf.~\cite[Thm.~3.3]{bomze2020}}]
\label{thm:bomze}
Under strict complementarity, away-step Frank--Wolfe with exact line search (as in \Cref{alg:main})
on $-\log\det$ D-optimal design identifies the optimal face in finitely many iterations: there is a
$K_{\mathrm{id}}<\infty$ (depending on the instance, not on $\eps$) such that for all $k\ge
K_{\mathrm{id}}$ the iterate $\mathbf{p}^{(k)}$ is supported on $S^\opt$.
\end{theorem}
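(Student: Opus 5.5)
The statement is an imported result, so the plan is a reduction to the generic identification theory for Frank--Wolfe over the simplex (Bomze et al.) rather than a new argument. That theory has two ingredients: convergence of the iterates to the optimum, and strict complementarity, which gives a positive gradient gap between active and inactive coordinates near the optimum.

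\textbf{Step 1: iterate convergence.} By \Cref{thm:zhao} the gap satisfies $g(\mathbf{p}^{(k)})\to0$, and hence $f(\mathbf{p}^{(k)})\to f^\opt$ by \Cref{prop:dict}\ref{it:fw}. Strict concavity of $\log\det$ (\Cref{lem:unique}) gives $\mathbf{M}(\mathbf{p}^{(k)})\to\mathbf{M}^\opt$. Since leverage scores depend on $\mathbf{p}$ only through $\mathbf{M}(\mathbf{p})$, we get $v_i(\mathbf{p}^{(k)})\to v_i(\mathbf{p}^\opt)$ for every $i$. This is all we need: we never require convergence of $\mathbf{p}^{(k)}$ itself, which is fortunate because the iterates are determined only modulo $\ker\nabla^2 f$. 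Concretely, there is a $K_1$ such that for $k\ge K_1$,
\[
v_i(\mathbf{p}^{(k)})\le d-\gsc/2 \quad (i\notin S^\opt),
\qquad
v_j(\mathbf{p}^{(k)})\ge d-\gsc/4 \quad (j\in S^\opt).
\]

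\textbf{Step 2: the support stops growing off $S^\opt$.} For $k\ge K_1$, the Frank--Wolfe vertex is $\argmax_i v_i$, and it lies in $S^\opt$ because every inactive score is strictly smaller than every contact score. So no new inactive coordinate ever enters the support.

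\textbf{Step 3: the inactive mass is driven out.} Take any supported $i\notin S^\opt$. The away vertex is $\argmin_{i:p_i>0}v_i$, so it is an inactive atom whenever one is supported. The away direction $\mathbf{p}-\mathbf{e}_i$ has slope $v_i-d\le-\gsc/2$ in $f$, whereas the Frank--Wolfe direction has slope only $g\to0$. Hence for large $k$ the away step is selected. Exact line search along the away direction either reaches the maximal step, which is a drop step removing $i$, or stops in the interior. In the interior case, the first-order condition $\frac{d}{d\gamma}f=0$ would require the leverage score $v_i$ at the new point to equal $d$. But by the uniform estimate of Step 1 (valid on a neighbourhood of $\mathbf{M}^\opt$ in which all iterates and line segments lie), $v_i$ stays below $d-\gsc/2$. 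So the step must be a drop step. Each inactive atom is therefore removed after finitely many iterations, and by Step 2 none re-enters. This yields $K_{\mathrm{id}}$, which depends on $\gsc$ and the convergence speed, but not on $\eps$.

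\textbf{Main obstacle.} The delicate point is Step 3. A line segment $\mathbf{p}+\gamma(\mathbf{p}-\mathbf{e}_i)$ with $\gamma$ up to $p_i/(1-p_i)$ could a priori leave the neighbourhood of $\mathbf{M}^\opt$ where the scores are controlled. I would handle this by observing that the segment changes $\mathbf{M}$ by $O(p_i)$. I would then show that $p_i\to0$ for inactive $i$, using an inequality of the form
\[
f-f^\opt\ \ge\ \gsc\cdot(\text{inactive mass}),
\]
which is the same idea as \Cref{lem:lower}. Small inactive mass keeps the whole segment inside the controlled region.
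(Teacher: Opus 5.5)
Your argument is correct, and it takes a genuinely different route: the paper does not prove this statement but imports it from the in-setting analysis of \cite{zhao2023}. It does so precisely because the identification bound of \cite{bomze2020} presupposes a globally Lipschitz gradient, which $-\log\det$ lacks near $\partial\Delta_n$. The obstacle you flag is exactly that one, and your fix is sound. The penalty $f(\mathbf{p})-f^\opt\ge\gsc\sum_{i\notin S^\opt}p_i$ needs only convexity (it is the linear part of \Cref{lem:lower}). It drives every inactive weight to zero, hence also the $O(p_i)$ displacement of $\mathbf{M}$ along each away segment. So only continuity of the scores near $\mathbf{M}^\opt$ is needed, and your identity $\tfrac{d}{d\gamma}f(\mathbf{p}_\gamma)=(v_i(\mathbf{p}_\gamma)-d)/(1+\gamma)$ then excludes interior stops.

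Step~1 can be streamlined. The curvature term $\wsc(\varrho(\mathbf{p}))\le f(\mathbf{p})-f^\opt$ of \Cref{lem:lower} gives $\mathbf{M}(\mathbf{p}^{(k)})\to\mathbf{M}^\opt$ without a compactness argument. Moreover, $f(\mathbf{p}^{(k)})\to f^\opt$ already follows from \Cref{thm:wa} plus the monotonicity of $f$ under exact line search, so \Cref{thm:zhao} is not needed.

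Your route buys two things the citation does not.

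\emph{No strict complementarity.} It never uses $\supp(\mathbf{p}^\opt)=S^\opt$. Its only input is $\gsc>0$, which is automatic from \Cref{thm:kw} and the definition of $S^\opt$. So the containment $\supp(\mathbf{p}^{(k)})\subseteq S^\opt$ holds without strict complementarity; that hypothesis matters only for the equality that \Cref{lem:identify} extracts via $\pmin$.

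\emph{Explicit $K_{\mathrm{id}}$.} Your argument is easily made quantitative with the paper's own tools. All three of your conditions hold once $f-f^\opt\le\tau^*$ for an explicit $\tau^*=\Theta(\gsc^2/d^2)$:
\begin{enumerate}
\item score separation, by the argument of \Cref{lem:c0}\ref{it:c0-slack} run at radius $\gsc/(4d)$;
\item selection of the away step, since $g(\mathbf{p})\le d\varrho/(1-\varrho)\le\gsc/3$ by \Cref{lem:levcomp};
\item the drop, by the closed-form criterion $p_i/(1-p_i)\le(d-v_i)/(d(v_i-1))$ of \Cref{lem:linesearch} for $v_i>1$ (the case $v_i\le1$ always drops).
\end{enumerate}
These conditions persist by monotonicity of $f$. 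Hence $K_{\mathrm{id}}\le k_0+(n-|S^\opt|)$, where $k_0=O(d^4/\gsc^2+d\log d)$ by \Cref{thm:wa}. This is a $\kappa_\Phi$-free bound on that term of $C(\mathbf{A})$.

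The citation, in exchange, costs one line and keeps identification on the same footing as the linear-rate constants of \Cref{thm:zhao}.
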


The general active-set theory of away-step Frank--Wolfe is~\cite{bomze2020}; its quantitative bound
presupposes a globally Lipschitz gradient, which $-\log\det$ has only on the facial region, so for the
boundary-singular barrier we take identification from the in-setting analysis of~\cite{zhao2023}---see
\Cref{rem:CA}.

\section{The averaging barrier}
\label{sec:avg}

This is the \emph{certificate cost}. The leverage-score line certifies the John guarantee through
the running average $\bar{\mathbf{p}}^{(T)}$ (\Cref{thm:ccly}); we show this caps the rate at $\Theta(1/T)$
regardless of how cheap each iteration is made, and that the cap is the average's doing, not the
sequence's: \emph{the fixed point is not slow; the certificate is slow.} This section proves
\Cref{thm:avg-intro}. The proof has three independent ingredients, each
stated and proved in full: averaging destroys linear rates (\Cref{lem:avg}); the Frank--Wolfe gap is,
to first order, a positively homogeneous piecewise-linear function $F$ of the displacement from the
optimum (\Cref{lem:gaplin}); and on the explicit instance $\mathbf{A}_\opt$ the function $F$ is
\emph{provably} positive on the feasible cone (\Cref{lem:Fpos}) while the iterates converge R-linearly
with a nonzero summed displacement (\Cref{thm:avg}).

\begin{lemma}[Averaging destroys linear rates]
\label{lem:avg}
Let $\{\mathbf{p}^{(t)}\}_{t\ge1}\subset\Delta_n$ converge to $\mathbf{p}^\opt$ \emph{R-linearly}: there exist
$\rho\in(0,1)$ and $C_0<\infty$ with $\norm{\mathbf{p}^{(t)}-\mathbf{p}^\opt}\le C_0\rho^t$ for all $t$. Then
$\mathbf{C}:=\sum_{t\ge1}(\mathbf{p}^{(t)}-\mathbf{p}^\opt)$ converges absolutely, and the uniform running average
$\bar{\mathbf{p}}^{(T)}=\frac1T\sum_{t=1}^T\mathbf{p}^{(t)}$ satisfies $\bar{\mathbf{p}}^{(T)}-\mathbf{p}^\opt=\frac1T\mathbf{C}+o(1/T)$. In
particular, if $\mathbf{C}\neq0$ then $\norm{\bar{\mathbf{p}}^{(T)}-\mathbf{p}^\opt}=\Theta(1/T)$.
\end{lemma}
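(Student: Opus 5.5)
The plan is to work with the displacement sequence $\boldsymbol{\delta}^{(t)}:=\mathbf{p}^{(t)}-\mathbf{p}^\opt$ and reduce everything to elementary series estimates. First, absolute convergence of $\mathbf{C}$ follows from the majorant $\sum_{t\ge1}\norm{\boldsymbol{\delta}^{(t)}}\le C_0\sum_{t\ge1}\rho^t=C_0\rho/(1-\rho)<\infty$ and completeness of $\R^n$. (All norms on $\R^n$ are equivalent, so the choice of norm only changes $C_0$.)

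Next, write the average's displacement exactly as
\[
\bar{\mathbf{p}}^{(T)}-\mathbf{p}^\opt=\frac1T\sum_{t=1}^T\boldsymbol{\delta}^{(t)}=\frac1T\mathbf{C}-\frac1T\mathbf{R}_T,\qquad \mathbf{R}_T:=\sum_{t>T}\boldsymbol{\delta}^{(t)} .
\]
This uses $\sum_{t\le T}\mathbf{p}^\opt/T=\mathbf{p}^\opt$. The tail is bounded by the same geometric majorant, $\norm{\mathbf{R}_T}\le C_0\rho^{T+1}/(1-\rho)$, which tends to $0$. Hence $\mathbf{R}_T/T=o(1/T)$; in fact it is $O(\rho^T/T)$, exponentially smaller than $1/T$. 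This gives the expansion $\bar{\mathbf{p}}^{(T)}-\mathbf{p}^\opt=\frac1T\mathbf{C}+o(1/T)$.

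For the final claim, assume $\mathbf{C}\neq0$. The triangle inequality gives
\[
\Big|\,T\norm{\bar{\mathbf{p}}^{(T)}-\mathbf{p}^\opt}-\norm{\mathbf{C}}\,\Big|\le\norm{\mathbf{R}_T}\to0 .
\]
Choose $T_0$ with $\norm{\mathbf{R}_T}\le\norm{\mathbf{C}}/2$ for $T\ge T_0$. Then for all such $T$,
\[
\tfrac{1}{2}\norm{\mathbf{C}}/T\;\le\;\norm{\bar{\mathbf{p}}^{(T)}-\mathbf{p}^\opt}\;\le\;\tfrac32\norm{\mathbf{C}}/T ,
\]
which is the $\Theta(1/T)$ statement. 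The upper bound $O(1/T)$ in fact holds for every $T$, via $\norm{\mathbf{C}}+\norm{\mathbf{R}_T}\le 2C_0\rho/(1-\rho)$.

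There is no real obstacle here. The only point needing care is interpreting $\Theta(1/T)$ as an asymptotic statement, with the lower bound holding for $T\ge T_0$, and noting that $T_0$ depends on $\norm{\mathbf{C}}$ and on $C_0,\rho$. The genuinely hard part, verifying $\mathbf{C}\neq0$ on a specific instance, is deferred to \Cref{thm:avg}.
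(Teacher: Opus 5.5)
Your proposal is correct and follows the paper's proof essentially line for line. You use the same geometric majorant for absolute convergence, the same decomposition $\frac1T(\mathbf{C}-\mathbf{R}_T)$ with tail bound $C_0\rho^{T+1}/(1-\rho)$, and the same triangle-inequality lower bound; the only addition is that you make the $\Theta(1/T)$ constants explicit ($\tfrac12\norm{\mathbf{C}}/T$ and $\tfrac32\norm{\mathbf{C}}/T$ for $T\ge T_0$).
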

\begin{proof}
Absolute convergence is immediate: $\sum_{t\ge1}\norm{\mathbf{p}^{(t)}-\mathbf{p}^\opt}\le C_0\sum_{t\ge1}\rho^t=
\frac{C_0\rho}{1-\rho}<\infty$, so $\mathbf{C}$ is well defined. Write the partial sum as
$\mathbf{S}_T:=\sum_{t=1}^T(\mathbf{p}^{(t)}-\mathbf{p}^\opt)=\mathbf{C}-\mathbf{R}_T$ with tail $\mathbf{R}_T:=\sum_{t>T}(\mathbf{p}^{(t)}-\mathbf{p}^\opt)$, and
bound
\[
\norm{\mathbf{R}_T}\le\sum_{t>T}\norm{\mathbf{p}^{(t)}-\mathbf{p}^\opt}\le C_0\sum_{t>T}\rho^t=\frac{C_0\,\rho^{T+1}}{1-\rho}\to0 .
\]
Since $\bar{\mathbf{p}}^{(T)}-\mathbf{p}^\opt=\frac1T\mathbf{S}_T=\frac1T\mathbf{C}-\frac1T\mathbf{R}_T$ and $\frac1T\norm{\mathbf{R}_T}=O(\rho^{T}/T)=
o(1/T)$, we conclude $\bar{\mathbf{p}}^{(T)}-\mathbf{p}^\opt=\frac1T\mathbf{C}+o(1/T)$; if $\mathbf{C}\neq0$ this is $\Theta(1/T)$
(lower bound: $\norm{\frac1T\mathbf{C}+o(1/T)}\ge\frac{\norm{\mathbf{C}}}T-o(1/T)$).
\end{proof}

\begin{lemma}[The gap is linear on the feasible cone]
\label{lem:gaplin}
Let $\mathbf{p}^\opt$ be nondegenerate with contact set $S^\opt$ (\Cref{def:nondeg}), and let
$K=\{\boldsymbol{\delta}:\sum_i\delta_i=0,\ \delta_j\ge0\ \forall j\notin S^\opt\}$ be the feasible tangent cone.
Then for feasible $\mathbf{p}=\mathbf{p}^\opt+\boldsymbol{\delta}$ with $\norm{\boldsymbol{\delta}}$ small,
\[
g(\mathbf{p})=F(\boldsymbol{\delta})+O(\norm{\boldsymbol{\delta}}^2),\qquad F(\boldsymbol{\delta}):=\max_{i\in S^\opt}\inner{\nabla v_i(\mathbf{p}^\opt),\boldsymbol{\delta}},
\]
and $F(\boldsymbol{\delta})\ge0$ for all $\boldsymbol{\delta}\in K$.
\end{lemma}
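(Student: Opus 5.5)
The plan is to split $g(\mathbf{p})=\max_i v_i(\mathbf{p})-d$ (\Cref{prop:dict}\ref{it:fw}) into contact and non-contact indices, and to Taylor-expand the leverage scores around $\mathbf{p}^\opt$. Since $\mathbf{M}^\opt\succ0$ and $\mathbf{p}\mapsto v_i(\mathbf{p})=\mathbf{a}_i^\top\mathbf{M}(\mathbf{p})^{-1}\mathbf{a}_i$ is smooth (real-analytic) on a neighbourhood of $\mathbf{p}^\opt$, we have
\[
v_i(\mathbf{p}^\opt+\boldsymbol{\delta})=v_i(\mathbf{p}^\opt)+\inner{\nabla v_i(\mathbf{p}^\opt),\boldsymbol{\delta}}+O(\norm{\boldsymbol{\delta}}^2),
\]
uniformly over the finitely many $i$. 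For $i\notin S^\opt$, strict complementarity gives $v_i(\mathbf{p}^\opt)\le d-\gsc$. Hence, once $\norm{\boldsymbol{\delta}}$ is small enough that the linear and quadratic terms are below $\gsc/2$, these indices satisfy $v_i(\mathbf{p})-d\le-\gsc/2$. For $i\in S^\opt$ we have $v_i(\mathbf{p}^\opt)=d$, so $v_i(\mathbf{p})-d=\inner{\nabla v_i(\mathbf{p}^\opt),\boldsymbol{\delta}}+O(\norm{\boldsymbol{\delta}}^2)$. Taking the maximum over $S^\opt$ gives $\max_{i\in S^\opt}v_i(\mathbf{p})-d=F(\boldsymbol{\delta})+O(\norm{\boldsymbol{\delta}}^2)$, because the max of finitely many functions each perturbed by $O(\norm{\boldsymbol{\delta}}^2)$ is perturbed by at most that amount.

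It remains to show that the contact indices attain the overall maximum, i.e.\ that $\max_{i\in S^\opt}v_i(\mathbf{p})-d\ge-\gsc/2$ for small $\boldsymbol{\delta}$. This is immediate from \Cref{prop:dict}\ref{it:trace}: $\max_iv_i(\mathbf{p})\ge d$ always, and the non-contact indices are strictly below $d$. So the max is achieved on $S^\opt$, and $g(\mathbf{p})=F(\boldsymbol{\delta})+O(\norm{\boldsymbol{\delta}}^2)$.

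For the nonnegativity $F\ge0$ on $K$ I would use a scaling argument. Given $\boldsymbol{\delta}\in K$, the point $\mathbf{p}^\opt+s\boldsymbol{\delta}$ is feasible for small $s>0$: coordinates outside $S^\opt$ increase, coordinates in $S^\opt$ start at $p^\opt_i\ge\pmin>0$, and the sum is preserved. Then
\[
0\le g(\mathbf{p}^\opt+s\boldsymbol{\delta})=sF(\boldsymbol{\delta})+O(s^2),
\]
and dividing by $s$ and letting $s\downarrow0$ gives $F(\boldsymbol{\delta})\ge0$. An alternative direct check is available: \Cref{prop:dict}\ref{it:trace}, $\sum_ip_iv_i=d$, differentiated along $\boldsymbol{\delta}$ at $\mathbf{p}^\opt$ and combined with $\sum_i\delta_i=0$, gives $\sum_{i\in S^\opt}p^\opt_i\inner{\nabla v_i,\boldsymbol{\delta}}=-\sum_i\delta_iv_i(\mathbf{p}^\opt)\ge0$, since $\delta_i\ge0$ off $S^\opt$ where $v_i<d$. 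A positively weighted average of the directional derivatives is therefore nonnegative, so their max is too. I would include this second argument because it uses only first-order information.

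The main delicacy is uniformity. The $O(\norm{\boldsymbol{\delta}}^2)$ constant must be uniform over the ball, which follows from a bound on second derivatives of $v_i$ on a compact neighbourhood where $\mathbf{M}(\mathbf{p})\succeq\tfrac12\mathbf{M}^\opt$. The smallness threshold on $\norm{\boldsymbol{\delta}}$ then depends on $\gsc$ and that constant. Linear independence of the contact tensors (part (ii) of \Cref{def:nondeg}) is not needed for this lemma; only strict complementarity and $\mathbf{M}^\opt\succ0$ are used.
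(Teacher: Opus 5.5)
Your proof is correct and follows the paper's argument essentially step for step: a uniform Taylor expansion of the finitely many $v_i$, the slack $\gsc$ pushing the non-contact scores strictly below $d$ so that $\max_iv_i(\mathbf{p})\ge d$ (\Cref{prop:dict}\ref{it:trace}) forces the maximum onto $S^\opt$, and the scaling argument $0\le g(\mathbf{p}^\opt+s\boldsymbol{\delta})=sF(\boldsymbol{\delta})+O(s^2)$ to get $F\ge0$ on $K$. Your supplementary first-order check is also valid. Differentiating $\sum_ip_iv_i(\mathbf{p})\equiv d$, which holds on all of $\{\mathbf{M}(\mathbf{p})\succ0\}$, gives $\sum_{i\in S^\opt}p^\opt_i\inner{\nabla v_i(\mathbf{p}^\opt),\boldsymbol{\delta}}=\sum_{j\notin S^\opt}\delta_j\big(d-v_j(\mathbf{p}^\opt)\big)\ge0$ on $K$; this is the general form of the row-summation step the paper carries out by hand in \Cref{lem:Fpos}.
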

\begin{proof}
Each $v_i$ is smooth ($C^\infty$, indeed rational) on the open set $\{\mathbf{M}(\mathbf{p})\succ0\}\ni\mathbf{p}^\opt$, so
Taylor expansion at $\mathbf{p}^\opt$ gives, on a fixed ball around $\mathbf{p}^\opt$ and uniformly over the finitely
many $i\in[n]$,
\[
v_i(\mathbf{p}^\opt+\boldsymbol{\delta})=v_i(\mathbf{p}^\opt)+\inner{\nabla v_i(\mathbf{p}^\opt),\boldsymbol{\delta}}+O(\norm{\boldsymbol{\delta}}^2).
\]
Let $\Lambda:=\max_i\norm{\nabla v_i(\mathbf{p}^\opt)}$.
\emph{(i) The maximum is attained in $S^\opt$.} For $i\in S^\opt$, $v_i(\mathbf{p}^\opt)=d$, so
$v_i(\mathbf{p})-d=\inner{\nabla v_i(\mathbf{p}^\opt),\boldsymbol{\delta}}+O(\norm{\boldsymbol{\delta}}^2)$. For $i\notin S^\opt$,
$v_i(\mathbf{p}^\opt)\le d-\gsc$ (\Cref{def:nondeg}), so $v_i(\mathbf{p})\le d-\gsc+
\Lambda\norm{\boldsymbol{\delta}}+O(\norm{\boldsymbol{\delta}}^2)<d$ once $\norm{\boldsymbol{\delta}}$ is small. Since $\max_iv_i(\mathbf{p})\ge d$ always
(\Cref{prop:dict}\ref{it:trace}), the overall maximum is therefore attained within $S^\opt$, and
\[
g(\mathbf{p})=\max_iv_i(\mathbf{p})-d=\max_{i\in S^\opt}\big(v_i(\mathbf{p})-d\big)=\max_{i\in S^\opt}\inner{\nabla
v_i(\mathbf{p}^\opt),\boldsymbol{\delta}}+O(\norm{\boldsymbol{\delta}}^2)=F(\boldsymbol{\delta})+O(\norm{\boldsymbol{\delta}}^2),
\]
the $O(\norm{\boldsymbol{\delta}}^2)$ surviving the (finite) maximum because each branch has a uniform remainder.
\emph{(ii) $F\ge0$ on $K$.} For $\boldsymbol{\delta}\in K$ and small $t>0$, the point $\mathbf{p}^\opt+t\boldsymbol{\delta}$ is feasible:
$\sum_i(\mathbf{p}^\opt+t\boldsymbol{\delta})_i=1$, and on coordinates $j\notin S^\opt$ (where $p^\opt_j=0$ by strict
complementarity) $(\mathbf{p}^\opt+t\boldsymbol{\delta})_j=t\delta_j\ge0$, while on $j\in S^\opt$ the coordinate
$p^\opt_j+t\delta_j$ stays positive for small $t$. Hence $g(\mathbf{p}^\opt+t\boldsymbol{\delta})\ge0$
(\Cref{prop:dict}\ref{it:trace}); by part (i), $g(\mathbf{p}^\opt+t\boldsymbol{\delta})=tF(\boldsymbol{\delta})+O(t^2)$, so dividing by
$t$ and letting $t\downarrow0$ gives $F(\boldsymbol{\delta})\ge0$.
\end{proof}

The next lemma upgrades $F\ge0$ to \emph{strict} positivity on the explicit instance, by hand. Recall
from \Cref{sec:instance} the instance
$\mathbf{A}_\opt$ with rows $\mathbf{a}_1=(2,0)$, $\mathbf{a}_2=(0,1)$, $\mathbf{a}_3=(1,1)$, $\mathbf{a}_4=(1,-1)$, optimal design
$\mathbf{p}^\opt=(\tfrac13,0,\tfrac13,\tfrac13)$, $\mathbf{M}^\opt=\diag(2,\tfrac23)$, contact set $S^\opt=\{1,3,4\}$,
and $\gsc=\tfrac12$. By the Hessian formula (\Cref{prop:dict}\ref{it:hess} and \Cref{app:dict}),
$\nabla v_i(\mathbf{p}^\opt)_j=-(\mathbf{a}_i^\top(\mathbf{M}^\opt)^{-1}\mathbf{a}_j)^2=:-B_{ij}$, and the relevant rows of $\mathbf{B}$ are
rational:
\begin{equation}
\label{eq:Brows}
\mathbf{B}_{1,:}=(4,\,0,\,1,\,1),\qquad
\mathbf{B}_{3,:}=(1,\,\tfrac94,\,4,\,1),\qquad
\mathbf{B}_{4,:}=(1,\,\tfrac94,\,1,\,4).
\end{equation}

\begin{lemma}[The linearized gap is strictly positive on the feasible cone of $\mathbf{A}_\opt$]
\label{lem:Fpos}
For $\mathbf{A}_\opt$, with $K=\{\boldsymbol{\delta}\in\R^4:\sum_i\delta_i=0,\ \delta_2\ge0\}$:
$F(\boldsymbol{\delta})=\max_{i\in\{1,3,4\}}\big(-(\mathbf{B}\boldsymbol{\delta})_i\big)>0$ for every $\boldsymbol{\delta}\in K\setminus\{0\}$.
Consequently $\gamma_{\min}:=\min\{F(\boldsymbol{\delta}):\boldsymbol{\delta}\in K,\norm{\boldsymbol{\delta}}_2=1\}>0$.
\end{lemma}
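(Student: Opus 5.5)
The plan is to argue by contradiction, using one weighted-sum identity and then the invertibility of a $3\times3$ block. Suppose some $\boldsymbol{\delta}\in K\setminus\{0\}$ has $F(\boldsymbol{\delta})\le0$, i.e.\ $(\mathbf{B}\boldsymbol{\delta})_i\ge0$ for every $i\in\{1,3,4\}$. Average these three inequalities with the optimal weights $p^\opt_i=\tfrac13$. The key observation is that $\sum_i p^\opt_i B_{ij}=\mathbf{a}_j^\top(\mathbf{M}^\opt)^{-1}\big(\sum_ip^\opt_i\mathbf{a}_i\mathbf{a}_i^\top\big)(\mathbf{M}^\opt)^{-1}\mathbf{a}_j=v_j(\mathbf{p}^\opt)$.

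This identity can be checked directly against \eqref{eq:Brows}. The column averages of rows $1,3,4$ are $(2,\tfrac32,2,2)$, which agrees with $v(\mathbf{p}^\opt)$, where $d=2$ and $v_2(\mathbf{p}^\opt)=d-\gsc=\tfrac32$. Hence
\[
\tfrac13\sum_{i\in\{1,3,4\}}(\mathbf{B}\boldsymbol{\delta})_i=\sum_j v_j(\mathbf{p}^\opt)\delta_j=2\sum_j\delta_j-\tfrac12\delta_2=-\tfrac12\delta_2\le0 ,
\]
using $\sum_j\delta_j=0$ and $\delta_2\ge0$. A nonnegative triple with nonpositive average must vanish. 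Therefore $(\mathbf{B}\boldsymbol{\delta})_i=0$ for $i=1,3,4$, and also $\delta_2=0$.

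With $\delta_2=0$, the three equations involve only $(\delta_1,\delta_3,\delta_4)$. Their coefficient matrix is the principal block $\left(\begin{smallmatrix}4&1&1\\1&4&1\\1&1&4\end{smallmatrix}\right)$ of $\mathbf{B}$, which is $3I+J$ and has eigenvalues $6,3,3$. It is therefore nonsingular, so $\delta_1=\delta_3=\delta_4=0$. This gives $\boldsymbol{\delta}=0$, contradicting $\boldsymbol{\delta}\neq0$, and proves $F>0$ on $K\setminus\{0\}$.

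For the consequence, note that $F$ is a maximum of finitely many linear functions and hence continuous. The set $K\cap\{\norm{\boldsymbol{\delta}}_2=1\}$ is compact and nonempty, so $F$ attains its minimum there, and that minimum is strictly positive by the first part. This is $\gamma_{\min}>0$.

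The only real step is spotting that averaging with $\mathbf{p}^\opt$ turns $\mathbf{B}$ into the leverage vector. This is where strict complementarity enters: the slack at the one non-contact row is exactly what makes the sign of $\delta_2$ work in our favour. The remaining arithmetic (the column sums and the eigenvalues of $3I+J$) is routine.
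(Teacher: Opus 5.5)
Your proof is correct and is essentially the paper's argument: summing the three inequalities (your $\mathbf{p}^\opt$-weighted average) forces $\delta_2=0$, and then the contact block eliminates $\delta_1,\delta_3,\delta_4$. In that second step the paper uses $\sum_i\delta_i=0$ to reduce $(\mathbf{B}\boldsymbol{\delta})_i$ to $3\delta_i\ge0$, whereas you use the resulting equalities and invertibility of $3\mathbf{I}+\mathbf 1\mathbf 1^\top$. Your identification of the column averages as $v_j(\mathbf{p}^\opt)$, combined with strict complementarity and positive definiteness of $\widehat{\mathbf{H}}(\mathbf{p}^\opt)$ on $\R^{S^\opt}$ (\Cref{lem:faceSC}), in fact proves $F>0$ on $K\setminus\{0\}$ for every nondegenerate instance, not only for $\mathbf{A}_\opt$.
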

\begin{proof}
Suppose $\boldsymbol{\delta}\in K$ has $F(\boldsymbol{\delta})\le0$, i.e.\ $(\mathbf{B}\boldsymbol{\delta})_1\ge0$, $(\mathbf{B}\boldsymbol{\delta})_3\ge0$,
$(\mathbf{B}\boldsymbol{\delta})_4\ge0$. Summing the three inequalities using \eqref{eq:Brows},
\[
6\delta_1+\tfrac92\delta_2+6\delta_3+6\delta_4\;\ge\;0
\quad\Longleftrightarrow\quad
6\underbrace{(\delta_1+\delta_2+\delta_3+\delta_4)}_{=0}-\tfrac32\delta_2\;\ge\;0
\quad\Longleftrightarrow\quad
\delta_2\le0 ,
\]
so $\delta_2=0$ (as $\delta_2\ge0$ on $K$). With $\delta_2=0$ and $\delta_1+\delta_3+\delta_4=0$, the
three inequalities collapse, using $\delta_1+\delta_3+\delta_4=0$ to eliminate the off-diagonal mass:
\[
(\mathbf{B}\boldsymbol{\delta})_1=4\delta_1+\delta_3+\delta_4=3\delta_1\ge0,\qquad
(\mathbf{B}\boldsymbol{\delta})_3=3\delta_3\ge0,\qquad
(\mathbf{B}\boldsymbol{\delta})_4=3\delta_4\ge0 .
\]
Thus $\delta_1,\delta_3,\delta_4\ge0$ with $\delta_1+\delta_3+\delta_4=0$, forcing
$\delta_1=\delta_3=\delta_4=0$, i.e.\ $\boldsymbol{\delta}=0$. Hence $F>0$ on $K\setminus\{0\}$. The minimum
$\gamma_{\min}$ over the compact set $K\cap\{\norm{\boldsymbol{\delta}}_2=1\}$ of the continuous function $F$ is then
positive. (Numerically $\gamma_{\min}\approx0.386$; \Cref{sec:instance}.)
\end{proof}

\begin{theorem}[Averaging barrier]
\label{thm:avg}
For the explicit instance $\mathbf{A}_\opt$ (with $n=4$, $d=2$, unique nondegenerate
optimum, $\kappa=O(1)$), the \emph{uniform} running average of the CCLY iterates started at
$\mathbf{p}^{(1)}=\tfrac14\mathbf 1$ satisfies
$g(\bar{\mathbf{p}}^{(T)})=\Theta(1/T)$; hence any uniform-averaging certificate uses $\Omega(\eps^{-1})$ leverage
queries on $\mathbf{A}_\opt$.
\end{theorem}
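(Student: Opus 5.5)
The plan is to combine the three ingredients already in place: \Cref{lem:avg} (averaging of an R-linearly convergent sequence), \Cref{lem:gaplin} (first-order expansion of the gap), and \Cref{lem:Fpos} (strict positivity of $F$ on $K$). What remains for $\mathbf{A}_\opt$ is to prove two facts about the CCLY iterates $\mathbf{p}^{(t)}$: they converge R-linearly to $\mathbf{p}^\opt=(\tfrac13,0,\tfrac13,\tfrac13)$, and the summed displacement $\mathbf{C}=\sum_{t\ge1}(\mathbf{p}^{(t)}-\mathbf{p}^\opt)$ is nonzero.

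Given these two facts, the theorem follows directly. \Cref{lem:avg} gives $\boldsymbol{\delta}_T:=\bar{\mathbf{p}}^{(T)}-\mathbf{p}^\opt=\tfrac1T\mathbf{C}+o(1/T)$, with $\norm{\boldsymbol{\delta}_T}=\Theta(1/T)$. Since $\bar{\mathbf{p}}^{(T)}\in\Delta_4$, we have $\boldsymbol{\delta}_T\in K$. \Cref{lem:gaplin} then gives
\[
g(\bar{\mathbf{p}}^{(T)})=F(\boldsymbol{\delta}_T)+O(1/T^2).
\]
$F$ is positively homogeneous, so on $K$ it satisfies $\gamma_{\min}\norm{\boldsymbol{\delta}}\le F(\boldsymbol{\delta})\le\norm{\mathbf{B}}\,\norm{\boldsymbol{\delta}}$, where the lower bound is \Cref{lem:Fpos}. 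Hence $g(\bar{\mathbf{p}}^{(T)})=\Theta(1/T)$, and reaching $g\le\eps d$ requires $T=\Omega(1/\eps)$, with one query per iteration. I will also record that $\kappa=O(1)$ by direct computation on this fixed $4\times2$ instance.

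For R-linear convergence, I will first use \Cref{rem:titterington}: the map $\Phi(\mathbf{p})=(p_iv_i(\mathbf{p})/d)_i$ is monotone and $f(\mathbf{p}^{(t)})\to f^\opt$ from the positive start $\tfrac14\mathbf 1$. By uniqueness of the optimum (\Cref{lem:unique}) and compactness, this gives $\mathbf{p}^{(t)}\to\mathbf{p}^\opt$. Next I will linearize $\Phi$ at $\mathbf{p}^\opt$. The differential is
\[
\partial_j\Phi_i=\frac{\delta_{ij}v_i-p_iB_{ij}}{d}.
\]
In the direction $e_2$ this gives the eigenvalue $v_2(\mathbf{p}^\opt)/d=(d-\gsc)/d=\tfrac34$, because $p^\opt_2=0$. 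On the face coordinates $\{1,3,4\}$ restricted to $\sum\delta=0$, the Jacobian equals $I-\tfrac12\diag(\mathbf{p}^\opt)\mathbf{B}_{S,S}$, which I will compute in closed form. Its eigenvalues in that subspace are $1-\tfrac16\cdot3=\tfrac12$ (double), since $\mathbf{B}_{S,S}$ acts as $3I$ on sum-zero vectors. The spectral radius is therefore $\tfrac34<1$, and the standard Ostrowski-type local argument gives $\norm{\mathbf{p}^{(t)}-\mathbf{p}^\opt}\le C_0\rho^t$ for any $\rho\in(\tfrac34,1)$ once the iterates enter a small neighborhood. Enlarging $C_0$ absorbs the finitely many earlier iterates.

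I expect the main obstacle to be showing $\mathbf{C}\neq0$. My first attempt will use the coordinate $2$. The iteration gives $p^{(t+1)}_2=p^{(t)}_2v_2(\mathbf{p}^{(t)})/2$, which is strictly positive for every $t$ because the start is positive and leverage scores are positive. So $C_2=\sum_tp^{(t)}_2>0$, since $p^\opt_2=0$. This settles $\mathbf{C}\neq0$ with no numerics, which is also why the instance contains an inactive row. If \Cref{lem:avg}'s hypothesis were to fail because the asymptotic rate is slower, the same positivity argument would still yield a lower bound on $\norm{\boldsymbol{\delta}_T}$ of order $1/T$ through $\delta_{T,2}\ge p^{(1)}_2/T$. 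The upper bound $O(1/T)$ needs only summability, so the conclusion holds either way.
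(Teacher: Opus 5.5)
Your proposal is correct and follows the paper's proof essentially step for step: convergence via the multiplicative algorithm's monotonicity and uniqueness, R-linearity from a Jacobian of spectral radius $\tfrac34$ plus Ostrowski, and $\mathbf{C}\neq0$ via $C_2>0$. One wording fix: $\mathbf{e}_2$ is a \emph{left} eigenvector of $D\Phi(\mathbf{p}^\opt)$, not a right one (its second row is $\tfrac34\mathbf{e}_2^\top$ because $p^\opt_2=0$), and this block-triangular structure is what makes the spectrum $\{\tfrac34\}\cup\{0,\tfrac12,\tfrac12\}$. Your final sandwich $\gamma_{\min}\norm{\boldsymbol{\delta}}\le F(\boldsymbol{\delta})\le\norm{\mathbf{B}}\norm{\boldsymbol{\delta}}$ on $K$ is a valid variant of the paper's $g(\bar{\mathbf{p}}^{(T)})=F(\mathbf{C})/T+o(1/T)$, which additionally pins down the limiting constant. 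Your side remark $\bar p^{(T)}_2\ge p^{(1)}_2/T$, together with $\boldsymbol{\delta}_T\to0$ and the CCLY upper bound, would in fact give $\Theta(1/T)$ without the R-linear step at all.
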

\begin{proof}
The upper bound $g(\bar{\mathbf{p}}^{(T)})=O(1/T)$ is \Cref{thm:ccly}. The lower bound has four steps.

\emph{Step 1: the iterates converge to $\mathbf{p}^\opt$.} By \Cref{rem:titterington} the CCLY iteration is
the map $\Phi(\mathbf{p})=(p_iv_i(\mathbf{p})/d)_i$, the classical multiplicative algorithm; from the positive start
$\tfrac14\mathbf1$ all iterates remain strictly positive (each $v_i(\mathbf{p})>0$ when $\mathbf{M}(\mathbf{p})\succ0$), and
$f(\mathbf{p}^{(t)})$ decreases monotonically to $f^\opt$~\cite{titterington1976,yu2010}. Let $\mathbf{q}$ be any
accumulation point of $\{\mathbf{p}^{(t)}\}\subset\Delta_4$ (compact). Since $f$ is lower semicontinuous on
$\Delta_4$ (it is continuous where $\mathbf{M}\succ0$ and $=+\infty$ where $\det\mathbf{M}=0$, and
$\det\mathbf{M}(\cdot)$ is continuous), $f(\mathbf{q})\le\lim_tf(\mathbf{p}^{(t)})=f^\opt$, so $\mathbf{q}$ is optimal. On
$\mathbf{A}_\opt$ the optimal design is unique (\Cref{lem:unique}; nondegeneracy is verified in closed form in
\Cref{sec:instance}), so $\mathbf{q}=\mathbf{p}^\opt$: the bounded sequence has a unique accumulation point, hence
$\mathbf{p}^{(t)}\to\mathbf{p}^\opt$.

\emph{Step 2: the convergence is R-linear.} $\Phi$ is a rational map, defined and $C^\infty$ on the
open neighborhood $\{\mathbf{p}\in\R^4:\mathbf{M}(\mathbf{p})\succ0\}$ of $\mathbf{p}^\opt$, with $\Phi(\mathbf{p}^\opt)=\mathbf{p}^\opt$ (as
$v_i(\mathbf{p}^\opt)=d$ on $\supp\mathbf{p}^\opt$). Its Jacobian at $\mathbf{p}^\opt$,
\[
D\Phi(\mathbf{p}^\opt)_{ij}=\frac{\delta_{ij}\,v_j(\mathbf{p}^\opt)}{d}-\frac{p^\opt_i\,B_{ij}}{d},
\]
is computed exactly in \Cref{sec:instance}: its full spectrum is
$\{0,\tfrac12,\tfrac12,\tfrac34\}$, so its spectral radius is $\tfrac34<1$. By Ostrowski's
theorem~\cite[10.1.3--10.1.4]{ortega1970}, $\mathbf{p}^\opt$ is a point of attraction and there are a
neighborhood $U\ni\mathbf{p}^\opt$ and, for any $\rho'\in(\tfrac34,1)$, a constant $c_{\rho'}$ such that
iterates entering $U$ satisfy $\norm{\mathbf{p}^{(t)}-\mathbf{p}^\opt}\le c_{\rho'}(\rho')^{t}$ thereafter. (As
$D\Phi(\mathbf{p}^\opt)$ is not normal, single-step error ratios need not be monotone; the conclusion is
R-linear convergence at any rate above the spectral radius, which is exactly the hypothesis of
\Cref{lem:avg}.) By Step~1
the sequence enters $U$ at some finite time $T_0$; absorbing the first $T_0$ terms into the constant,
$\norm{\mathbf{p}^{(t)}-\mathbf{p}^\opt}\le C_0(\rho')^{t}$ for all $t\ge1$: the convergence is R-linear.

\emph{Step 3: the summed displacement $\mathbf{C}$ is a nonzero element of $K$.}
\Cref{lem:avg} applies and gives
$\boldsymbol{\delta}_T:=\bar{\mathbf{p}}^{(T)}-\mathbf{p}^\opt=\tfrac1T\mathbf{C}+\mathbf{e}_T$ with $\norm{\mathbf{e}_T}=o(1/T)$, where
$\mathbf{C}=\sum_{t\ge1}(\mathbf{p}^{(t)}-\mathbf{p}^\opt)$. Each summand has zero coordinate sum, so $\sum_iC_i=0$; and the
second coordinate satisfies $C_2=\sum_{t\ge1}p^{(t)}_2>0$, because $p^\opt_2=0$ while every iterate
keeps $p^{(t)}_2>0$ (Step~1). Hence $\mathbf{C}\in K\setminus\{0\}$, and \Cref{lem:Fpos} gives
$\gamma:=F(\mathbf{C})>0$.

\emph{Step 4: assemble.} The map
$F(\boldsymbol{\delta})=\max_{i\in S^\opt}\inner{\nabla v_i(\mathbf{p}^\opt),\boldsymbol{\delta}}$ is positively homogeneous and
$\Lambda$-Lipschitz (a maximum of finitely many linear functionals of norm $\le\Lambda=\max_i\norm{\nabla
v_i(\mathbf{p}^\opt)}$), so
\[
F(\boldsymbol{\delta}_T)=F\!\big(\tfrac1T\mathbf{C}\big)+\big(F(\boldsymbol{\delta}_T)-F(\tfrac1T\mathbf{C})\big)
=\tfrac1TF(\mathbf{C})\ \pm\ \Lambda\norm{\mathbf{e}_T}=\tfrac{\gamma}{T}+o(1/T).
\]
Moreover $\boldsymbol{\delta}_T\in K$ for every $T$ (its inactive coordinate $\bar p^{(T)}_2\ge0$ and
$\sum_i(\boldsymbol{\delta}_T)_i=0$) and $\norm{\boldsymbol{\delta}_T}=\Theta(1/T)\to0$, so \Cref{lem:gaplin} applies:
\[
g(\bar{\mathbf{p}}^{(T)})=F(\boldsymbol{\delta}_T)+O(\norm{\boldsymbol{\delta}_T}^2)=\tfrac{\gamma}{T}+o(1/T)+O(1/T^2)=\tfrac{\gamma}{T}+
o(1/T)=\Theta(1/T).
\]
Therefore $g(\bar{\mathbf{p}}^{(T)})\le\eps d$ forces $T\ge\frac{\gamma}{\eps d}(1-o(1))=\Omega(1/\eps)$.
\end{proof}

\begin{remark}
That averaging degrades a linear rate to $1/T$ is classical; what is specific to the John ellipsoid is
that averaging is used \emph{precisely} to make the $\ell_\infty$ certificate of \Cref{def:je} provable
by convexity (\Cref{thm:ccly}), and uniformly across~\cite{cohen2019,cao2025,woodruff2025}. The
per-iteration speedups of~\cite{cao2025,woodruff2025} therefore optimize an orthogonal axis. The barrier
is specific to the \emph{uniform} average used throughout this line: suffix or weighted averaging and
periodic restart all preserve the geometric last-iterate rate (\Cref{sec:upper}). We also note what the
theorem does \emph{not} claim: it is a lower bound for a certification rule (uniform averaging of these
iterates), not for the oracle model; oracle lower bounds remain open (\Cref{sec:discussion}).
\end{remark}

\section{The geometry of the optimal face, quantitatively}
\label{sec:face}

Both accuracy phases run, after a warm start, on the optimal face, the smooth stratum of
\Cref{sec:designs}; this section is the shared geometry
beneath them, and beneath the identification handoff that reaches the face. It collects everything they
need: the rank-deficiency of $\nabla^2f$ is exactly harmless (\Cref{lem:flat}); on the face the Hessian
block is positive definite everywhere (\Cref{lem:faceSC}); the optimal design is the
\emph{unconstrained} minimizer of the facial restriction, which is a closed, strictly convex,
self-concordant function (\Cref{lem:min}); and, the workhorse, within an explicit sublevel set
$\Cnull$, the information matrix, all leverage scores, and the full facial Hessian block are within
absolute constant factors of their values at $\mathbf{p}^\opt$ (\Cref{lem:c0}). The proofs of the toolkit
lemmas (\Cref{lem:Msand,lem:hesssand,lem:scineq,lem:levcomp,lem:legendre}) are elementary
(Frobenius-versus-spectral-norm arithmetic plus one integration) and are given in
\Cref{app:sctoolkit}; they are standard self-concordance theory specialized to $-\log\det$ (the
general versions are~\cite[Thms.~5.1.8--5.1.9]{nesterov2018}, and the self-concordance of $-\log\det$
itself is~\cite[\S5.4.4, Lem.~5.4.6 and Thm.~5.4.3]{nesterov2018} or~\cite[Ex.~9.5]{bv2004}), reproved
here to keep every constant explicit and the paper self-contained.

\subsection{Harmless rank-deficiency, and uniqueness}

\begin{lemma}[The objective sees only $\mathbf{M}(\mathbf{p})$; flat directions are harmless]
\label{lem:flat}
$f$, the leverage scores $v_i$, and the gap $g$ depend on $\mathbf{p}$ only through $\mathbf{M}(\mathbf{p})$, which is linear in
$\mathbf{p}$. Hence they are \emph{exactly} constant along $\ker:=\{\boldsymbol{\delta}:\sum_i\delta_i\mathbf{a}_i\mathbf{a}_i^\top=0\}$, which
equals $\ker\nabla^2f(\mathbf{p})$ for every $\mathbf{p}$ with $\mathbf{M}(\mathbf{p})\succ0$.
\end{lemma}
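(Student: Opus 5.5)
The plan is to read every claim off the definitions, since each object named in the statement is built from $\mathbf{M}(\mathbf{p})$ alone.

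\emph{Dependence through $\mathbf{M}(\mathbf{p})$.} The objective is $f(\mathbf{p})=-\log\det\mathbf{M}(\mathbf{p})$ and the leverage scores are $v_i(\mathbf{p})=\mathbf{a}_i^\top\mathbf{M}(\mathbf{p})^{-1}\mathbf{a}_i$. By \Cref{prop:dict}\ref{it:fw}, the gap is $g(\mathbf{p})=\max_iv_i(\mathbf{p})-d$. Each is therefore a composition of the linear map $\mathbf{p}\mapsto\mathbf{M}(\mathbf{p})$ with a fixed function on $\Sym^d_{\succ0}$.

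\emph{Constancy along $\ker$.} Take $\boldsymbol{\delta}\in\ker$ and any $\mathbf{p}$ with $\mathbf{M}(\mathbf{p})\succ0$. Linearity gives $\mathbf{M}(\mathbf{p}+t\boldsymbol{\delta})=\mathbf{M}(\mathbf{p})+t\sum_i\delta_i\mathbf{a}_i\mathbf{a}_i^\top=\mathbf{M}(\mathbf{p})$ for every $t$. So $\mathbf{M}$, and with it $f$, every $v_i$ and $g$, is unchanged on the whole line $\mathbf{p}+\R\boldsymbol{\delta}$, and $\mathbf{M}\succ0$ persists along it.

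\emph{Identifying the kernel.} Here I use identity \eqref{eq:hessform} of \Cref{prop:dict}\ref{it:hess}. It gives
\[
\boldsymbol{\delta}^\top\nabla^2f(\mathbf{p})\boldsymbol{\delta}=\fnorm{\mathbf{M}(\mathbf{p})^{-1/2}\mathbf{D}\,\mathbf{M}(\mathbf{p})^{-1/2}}^2,\qquad \mathbf{D}:=\textstyle\sum_i\delta_i\mathbf{a}_i\mathbf{a}_i^\top .
\]
Since $\nabla^2f(\mathbf{p})$ is positive semidefinite, $\boldsymbol{\delta}\in\ker\nabla^2f(\mathbf{p})$ iff the quadratic form vanishes. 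That happens iff $\mathbf{M}(\mathbf{p})^{-1/2}\mathbf{D}\mathbf{M}(\mathbf{p})^{-1/2}=0$, which, as $\mathbf{M}(\mathbf{p})^{-1/2}$ is invertible, holds iff $\mathbf{D}=0$, i.e.\ iff $\boldsymbol{\delta}\in\ker$. This condition does not involve $\mathbf{p}$, so the kernel is the same at every $\mathbf{p}$ with $\mathbf{M}(\mathbf{p})\succ0$.

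No real obstacle is expected. The only point needing care is the equivalence between a PSD form vanishing and membership in the kernel ($\mathbf{x}^\top\mathbf{H}\mathbf{x}=0$ with $\mathbf{H}\succeq0$ implies $\mathbf{H}^{1/2}\mathbf{x}=0$, hence $\mathbf{H}\mathbf{x}=0$). One should also note that $g$ is defined through \Cref{prop:dict}\ref{it:fw}, not through the simplex constraint, so the kernel directions need not preserve $\sum_ip_i=1$ for the constancy claims to hold.
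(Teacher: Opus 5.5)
Your proposal is correct and follows the paper's proof essentially verbatim: $f$, $v_i$ and $g$ depend on $\mathbf{p}$ only through $\mathbf{M}(\mathbf{p})$ via \Cref{prop:dict}, they are constant along $\ker$ by linearity of $\mathbf{p}\mapsto\mathbf{M}(\mathbf{p})$, and the kernel is identified from the quadratic form \eqref{eq:hessform}. The paper simply cites that last step as \Cref{prop:dict}\ref{it:hess}, whereas you re-derive it and spell out that a vanishing quadratic form of a positive semidefinite matrix forces kernel membership. Your side remark about leaving the simplex is also sound: the trace identity $\sum_ip_iv_i(\mathbf{p})=d$ holds for every $\mathbf{p}$ with $\mathbf{M}(\mathbf{p})\succ0$, so $g=\max_iv_i-d$ remains a function of $\mathbf{M}(\mathbf{p})$ off $\Delta_n$ as well.
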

\begin{proof}
By \Cref{prop:dict}, $f$, each $v_i$, and $g=\max_iv_i-d$ are functions of $\mathbf{M}(\mathbf{p})$ alone. For
$\boldsymbol{\delta}\in\ker$ and any $t$,
$\mathbf{M}(\mathbf{p}+t\boldsymbol{\delta})=\mathbf{M}(\mathbf{p})+t\sum_i\delta_i\mathbf{a}_i\mathbf{a}_i^\top=\mathbf{M}(\mathbf{p})$,
so $f,v,g$ take the same value at $\mathbf{p}$ and $\mathbf{p}+t\boldsymbol{\delta}$. The identity $\ker=\ker\nabla^2f(\mathbf{p})$
and its $\mathbf{p}$-independence are \Cref{prop:dict}\ref{it:hess}.
\end{proof}

\begin{lemma}[Uniqueness of the optimal design]
\label{lem:unique}
Under \Cref{def:nondeg}, $\mathbf{p}^\opt$ is the unique optimal design in $\Delta_n$.
\end{lemma}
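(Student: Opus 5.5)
The proof has two steps: first show that every optimal design has the same information matrix, then use the nondegeneracy assumption to turn equality of matrices into equality of weights.

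\emph{Step 1: a common information matrix.} Let $\mathbf{q}\in\Delta_n$ be any optimal design, so $f(\mathbf{q})=f^\opt$. Both $\mathbf{M}(\mathbf{q})$ and $\mathbf{M}^\opt$ are positive definite, since $f<\infty$ forces $\det\mathbf{M}>0$. The map $\mathbf{X}\mapsto-\log\det\mathbf{X}$ is strictly convex on $\Sym^d_{\succ0}$. Consider the midpoint $\tfrac12(\mathbf{q}+\mathbf{p}^\opt)\in\Delta_n$. If $\mathbf{M}(\mathbf{q})\neq\mathbf{M}^\opt$, then linearity of $\mathbf{M}$ and strict convexity give
\[
f\bigl(\tfrac12(\mathbf{q}+\mathbf{p}^\opt)\bigr)<\tfrac12\bigl(f(\mathbf{q})+f(\mathbf{p}^\opt)\bigr)=f^\opt,
\]
which is a contradiction. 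Hence $\mathbf{M}(\mathbf{q})=\mathbf{M}^\opt$, and by \Cref{lem:flat} $\mathbf{q}$ has the same leverage scores as $\mathbf{p}^\opt$: $v_i(\mathbf{q})=v_i(\mathbf{p}^\opt)$ for every $i$.

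\emph{Step 2: the support of $\mathbf{q}$ lies in $S^\opt$.} By \Cref{prop:dict}\ref{it:trace}, $\sum_i q_iv_i(\mathbf{q})=d$. By Step 1, $v_i(\mathbf{q})=v_i(\mathbf{p}^\opt)\le d$ for all $i$, with strict inequality $v_i(\mathbf{q})\le d-\gsc<d$ for $i\notin S^\opt$. Since $\sum_iq_i=1$, the weighted average $\sum_iq_iv_i(\mathbf{q})$ of numbers that are at most $d$ can equal $d$ only if $q_i=0$ wherever $v_i(\mathbf{q})<d$. Therefore $\supp(\mathbf{q})\subseteq S^\opt$. This is where strict complementarity from \Cref{def:nondeg}(i) enters, through the definition of $S^\opt$ and its slack; the argument does not actually need $\gsc>0$ beyond $v_i<d$ off $S^\opt$.

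\emph{Step 3: conclude by linear independence.} Since both $\mathbf{q}$ and $\mathbf{p}^\opt$ are supported in $S^\opt$, the equality $\mathbf{M}(\mathbf{q})=\mathbf{M}^\opt$ reads
\[
\sum_{i\in S^\opt}(q_i-p^\opt_i)\,\mathbf{a}_i\mathbf{a}_i^\top=0.
\]
By \Cref{def:nondeg}(ii) the contact matrices $\{\mathbf{a}_i\mathbf{a}_i^\top\}_{i\in S^\opt}$ are linearly independent in $\Sym^d$, so $q_i=p^\opt_i$ for all $i\in S^\opt$. Combined with Step 2, $\mathbf{q}=\mathbf{p}^\opt$.

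None of these steps is a real obstacle. The only care needed is to justify strict convexity of $-\log\det$ in Step 1. For $\mathbf{H}\neq0$, the second derivative is $\Tr(\mathbf{X}^{-1}\mathbf{H}\mathbf{X}^{-1}\mathbf{H})=\fnorm{\mathbf{X}^{-1/2}\mathbf{H}\mathbf{X}^{-1/2}}^2>0$, which is the matrix-side content of \eqref{eq:hessform}. Applying this along the segment from $\mathbf{M}(\mathbf{q})$ to $\mathbf{M}^\opt$ gives the strict inequality used above.
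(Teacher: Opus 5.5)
Your proof is correct and follows the paper's argument step for step: a common information matrix via strict convexity of $-\log\det$ (you use the midpoint inequality where the paper shows $\varphi(t)=f(\mathbf{p}+t(\mathbf{q}-\mathbf{p}))$ is constant so $\varphi''\equiv0$), then $\supp(\mathbf{q})\subseteq S^\opt$ from $\sum_iq_iv_i=d$, then linear independence of the contact matrices. One small correction to your Step~2 remark: that step needs only $v_i(\mathbf{p}^\opt)\le d$ (Kiefer--Wolfowitz) and the definition of $S^\opt$, so it does not depend on strict complementarity.
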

\begin{proof}
\emph{(i) All optimal designs share one information matrix.} Let $\mathbf{p},\mathbf{q}$ be optimal and
$\mathbf{H}:=\mathbf{M}(\mathbf{q})-\mathbf{M}(\mathbf{p})$. The function $\varphi(t):=f(\mathbf{p}+t(\mathbf{q}-\mathbf{p}))=-\log\det(\mathbf{M}(\mathbf{p})+t\mathbf{H})$ is convex on
$[0,1]$ with equal values $f^\opt$ at $t\in\{0,1\}$, hence constant, hence $\varphi''\equiv0$; but
$\varphi''(t)=\norm{\mathbf{H}}^2_{\mathbf{M}(\mathbf{p})+t\mathbf{H}}$ (\Cref{eq:hessform-matrix} in \Cref{app:sctoolkit}), which
vanishes only if $\mathbf{H}=0$. So $\mathbf{M}(\mathbf{q})=\mathbf{M}(\mathbf{p})=\mathbf{M}^\opt$.
\emph{(ii) Optimal supports lie in $S^\opt$.} For optimal $\mathbf{q}$,
$d=\sum_iq_iv_i(\mathbf{q})=\sum_iq_iv_i(\mathbf{p}^\opt)$ by \Cref{prop:dict}\ref{it:trace} and (i), while
$v_i(\mathbf{p}^\opt)\le d$ for all $i$ (\Cref{thm:kw}); the weighted average equals the maximum only if
$q_i\,(d-v_i(\mathbf{p}^\opt))=0$ for every $i$, i.e.\ $\supp(\mathbf{q})\subseteq\{i:v_i(\mathbf{p}^\opt)=d\}=S^\opt$.
\emph{(iii) Conclude.} $\mathbf{q}-\mathbf{p}^\opt$ is supported on $S^\opt$ and
$\sum_{i\in S^\opt}(q_i-p^\opt_i)\mathbf{a}_i\mathbf{a}_i^\top=\mathbf{M}(\mathbf{q})-\mathbf{M}^\opt=0$; linear independence of the contact
matrices (\Cref{def:nondeg}) forces $\mathbf{q}=\mathbf{p}^\opt$.
\end{proof}

\subsection{The facial restriction and its local norm}
\label{sec:facialrestriction}

Define the affine hull of the optimal face, its tangent space, and the facial domain:
\[
V:=\Big\{\mathbf{p}\in\R^n:\supp(\mathbf{p})\subseteq S^\opt,\ \textstyle\sum_{i}p_i=1\Big\},\qquad
T^\opt:=\Big\{\boldsymbol{\delta}:\supp(\boldsymbol{\delta})\subseteq S^\opt,\ \textstyle\sum_i\delta_i=0\Big\},
\]
\[
D:=\{\mathbf{p}\in V:\ \mathbf{M}(\mathbf{p})\succ0\}\supseteq\relint\Delta_{S^\opt},\qquad h:=f|_D .
\]
Note that $D$ is permitted to contain points with negative coordinates; $h$ and the leverage scores
remain well defined there. We write $\widehat{\mathbf{H}}(\mathbf{p}):=\big(\nabla^2f(\mathbf{p})\big)_{S^\opt\times
S^\opt}=\big((\mathbf{a}_i^\top\mathbf{M}(\mathbf{p})^{-1}\mathbf{a}_j)^2\big)_{i,j\in S^\opt}$ for the \emph{facial Hessian block},
an $m\times m$ matrix; the Hessian of $h$ (as a function on the affine space $V$) is the restriction
of $\widehat{\mathbf{H}}(\mathbf{p})$ to $T^\opt$.

For $\mathbf{X}\succ0$ and $\mathbf{H}\in\Sym^d$ define the \emph{local norm}
$\locnorm{\mathbf{H}}{\mathbf{X}}:=\fnorm{\mathbf{X}^{-1/2}\mathbf{H}\mathbf{X}^{-1/2}}$, and for any weights $\mathbf{p}$ the \emph{Dikin
radius}
\[
\varrho(\mathbf{p})\;:=\;\locnorm{\mathbf{M}(\mathbf{p})-\mathbf{M}^\opt}{\mathbf{M}^\opt}.
\]
By \Cref{eq:hessform}, for $\mathbf{p}\in V$ (so that $\mathbf{p}-\mathbf{p}^\opt\in T^\opt$),
\begin{equation}
\label{eq:rho-id}
\varrho(\mathbf{p})^2=(\mathbf{p}-\mathbf{p}^\opt)^\top\widehat{\mathbf{H}}(\mathbf{p}^\opt)\,(\mathbf{p}-\mathbf{p}^\opt):
\end{equation}
the Dikin radius is exactly the distance to $\mathbf{p}^\opt$ in the local norm of $f$ at $\mathbf{p}^\opt$.

\begin{lemma}[Positive definiteness on the whole facial domain]
\label{lem:faceSC}
Under \Cref{def:nondeg}, for every $\mathbf{p}\in D$ the facial Hessian block is positive definite on all of
$\R^{S^\opt}$: for $\boldsymbol{\delta}\in\R^{S^\opt}\setminus\{0\}$,
\[
\boldsymbol{\delta}^\top\widehat{\mathbf{H}}(\mathbf{p})\,\boldsymbol{\delta}
=\Big\lVert\textstyle\sum_{i\in S^\opt}\delta_i\mathbf{a}_i\mathbf{a}_i^\top\Big\rVert^2_{\mathbf{M}(\mathbf{p})}>0 .
\]
\end{lemma}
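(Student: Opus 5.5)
The plan is to read the lemma off the Hessian identity \eqref{eq:hessform} of \Cref{prop:dict}\ref{it:hess} and then use linear independence of the contact tensors (\Cref{def:nondeg}(ii)).

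\emph{Step 1: the identity.} Fix $\mathbf{p}\in D$; by definition of $D$ we have $\mathbf{M}(\mathbf{p})\succ0$. Although $\mathbf{p}$ may have negative coordinates, the proof of \eqref{eq:hessform} is a pure matrix-calculus computation. It uses only that $\mathbf{M}(\mathbf{p})$ is positive definite, so that $\mathbf{M}(\mathbf{p})^{-1/2}$ exists, and never uses $\mathbf{p}\in\Delta_n$. I will note this explicitly. Alternatively, one can expand $\big(\sum_i\delta_i\mathbf{a}_i^\top\mathbf{M}^{-1}\mathbf{a}_i\big)$-type terms directly. Write $\mathbf{u}_i:=\mathbf{M}(\mathbf{p})^{-1/2}\mathbf{a}_i$. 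Then $(\mathbf{a}_i^\top\mathbf{M}(\mathbf{p})^{-1}\mathbf{a}_j)^2=(\mathbf{u}_i^\top\mathbf{u}_j)^2=\inner{\mathbf{u}_i\mathbf{u}_i^\top,\mathbf{u}_j\mathbf{u}_j^\top}$. Hence, for $\boldsymbol{\delta}$ supported on $S^\opt$,
\[
\boldsymbol{\delta}^\top\widehat{\mathbf{H}}(\mathbf{p})\boldsymbol{\delta}=\Big\lVert\textstyle\sum_{i\in S^\opt}\delta_i\mathbf{u}_i\mathbf{u}_i^\top\Big\rVert_F^2=\Big\lVert\mathbf{M}(\mathbf{p})^{-1/2}\Big(\textstyle\sum_{i\in S^\opt}\delta_i\mathbf{a}_i\mathbf{a}_i^\top\Big)\mathbf{M}(\mathbf{p})^{-1/2}\Big\rVert_F^2,
\]
which is the displayed equality with the local norm $\locnorm{\cdot}{\mathbf{M}(\mathbf{p})}$. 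This self-contained two-line verification removes any worry about the domain.

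\emph{Step 2: strict positivity.} The local norm $\locnorm{\mathbf{H}}{\mathbf{X}}$ is a genuine norm for $\mathbf{X}\succ0$. Indeed, if $\mathbf{X}^{-1/2}\mathbf{H}\mathbf{X}^{-1/2}=0$, then multiplying on both sides by the invertible $\mathbf{X}^{1/2}$ gives $\mathbf{H}=0$. So the quadratic form vanishes only if $\sum_{i\in S^\opt}\delta_i\mathbf{a}_i\mathbf{a}_i^\top=0$. By \Cref{def:nondeg}(ii), the contact matrices are linearly independent in $\Sym^d$, which forces $\boldsymbol{\delta}=0$. So the form is strictly positive on $\R^{S^\opt}\setminus\{0\}$, as claimed.

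There is no real obstacle. The only point needing care is Step 1's applicability off the simplex, which the direct computation handles. The statement gives positive definiteness on all of $\R^{S^\opt}$, not merely on $T^\opt$, because independence does not use the constraint $\sum_i\delta_i=0$.
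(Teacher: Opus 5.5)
Your proposal is correct and follows the paper's own proof essentially verbatim: apply the Hessian identity \eqref{eq:hessform} on the $S^\opt$ coordinates, use invertibility of $\mathbf{M}(\mathbf{p})^{-1/2}$, and conclude from the linear independence in \Cref{def:nondeg}(ii). Your explicit re-derivation of the identity is a small but worthwhile tightening, since \Cref{prop:dict} is stated only for $\mathbf{p}\in\Delta_n$ while $D$ contains points with negative coordinates; the paper addresses negative coordinates only for the independence step.
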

\begin{proof}
The identity is \Cref{eq:hessform} restricted to coordinates in $S^\opt$. As $\mathbf{M}(\mathbf{p})^{-1/2}$ is
invertible, the norm vanishes iff $\sum_{i\in S^\opt}\delta_i\mathbf{a}_i\mathbf{a}_i^\top=0$, which by the linear
independence in \Cref{def:nondeg} forces $\boldsymbol{\delta}=0$. The independence hypothesis concerns only the
fixed matrices $\{\mathbf{a}_i\mathbf{a}_i^\top\}_{i\in S^\opt}$, not $\mathbf{p}$, so the conclusion holds at \emph{every}
$\mathbf{p}\in D$---including points with negative coordinates.
\end{proof}

\begin{definition}[Facial condition number at the optimum]
\label{def:kappa}
Let
\[
\muopt:=\lambda_{\min}\Big(\widehat{\mathbf{H}}(\mathbf{p}^\opt)\big|_{T^\opt}\Big)>0,\qquad
\Lopt:=\lambda_{\max}\big(\widehat{\mathbf{H}}(\mathbf{p}^\opt)\big)\ge\muopt,\qquad
\kappa:=\Lopt/\muopt .
\]
(The lower constant is taken on the tangent $T^\opt$, where the optimization lives; the upper constant
is the operator norm of the \emph{full} $m\times m$ block, which is what controls the leverage-score
map $\mathbf{p}\mapsto(v_i(\mathbf{p}))_{i\in S^\opt}$ and hence the Frank--Wolfe gap. Taking the full block on top
only enlarges $\kappa$, and is what makes the gap conversion in \Cref{lem:gapconv} correct.)
\end{definition}

\begin{lemma}[The facial problem is unconstrained, closed, self-concordant]
\label{lem:min}
Under \Cref{def:nondeg}:
\begin{enumerate}[label=\emph{(\roman*)},leftmargin=*,itemsep=1pt]
\item $D$ is open in $V$ and convex; $h$ is $C^\infty$ and strictly convex on $D$; and $h$ is closed
(its sublevel sets $\{h\le c\}$ are closed in $\R^n$).
\item $\inner{\nabla f(\mathbf{p}^\opt),\boldsymbol{\delta}}=0$ for every $\boldsymbol{\delta}\in T^\opt$; consequently $\mathbf{p}^\opt$ is
the unique minimizer of $h$ over all of $D$---the simplex constraints are inactive for the facial
problem.
\item For every $\mathbf{p}\in D$:\quad $h(\mathbf{p})\;\ge\;f^\opt+\wsc\big(\varrho(\mathbf{p})\big)$.
\end{enumerate}
\end{lemma}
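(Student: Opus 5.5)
I would prove the three parts in order, working entirely with the map $\mathbf{p}\mapsto\mathbf{M}(\mathbf{p})$ restricted to the affine space $V$.

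\emph{Part (i).} $D$ is the preimage of the open convex cone $\Sym^d_{\succ0}$ under the affine map $\mathbf{p}\mapsto\mathbf{M}(\mathbf{p})$ on $V$. Hence it is open in $V$ and convex. Smoothness of $h$ is inherited from $-\log\det$ on $\Sym^d_{\succ0}$. Strict convexity follows from \Cref{lem:faceSC}: the Hessian of $h$ is the restriction of $\widehat{\mathbf{H}}(\mathbf{p})$ to $T^\opt\subseteq\R^{S^\opt}$, and that block is positive definite at every $\mathbf{p}\in D$. For closedness, I extend $h$ by $+\infty$ outside $D$ on $V$ and take a sequence $\mathbf{p}_k\in\{h\le c\}$ with $\mathbf{p}_k\to\mathbf{p}$. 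Then $\mathbf{p}\in V$, since $V$ is closed in $\R^n$, and $\det\mathbf{M}(\mathbf{p}_k)\ge e^{-c}$. Continuity of $\det\mathbf{M}(\cdot)$ gives $\det\mathbf{M}(\mathbf{p})\ge e^{-c}>0$. Since $\mathbf{M}(\mathbf{p})$ is a limit of positive definite matrices, it is positive semidefinite, and being nonsingular it is positive definite. So $\mathbf{p}\in D$, and continuity of $h$ gives $h(\mathbf{p})\le c$.

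\emph{Part (ii).} By \Cref{prop:dict}\ref{it:grad}, $\inner{\nabla f(\mathbf{p}^\opt),\boldsymbol{\delta}}=-\sum_{i\in S^\opt}v_i(\mathbf{p}^\opt)\delta_i$. On $S^\opt$ every $v_i(\mathbf{p}^\opt)=d$, so this equals $-d\sum_i\delta_i=0$ for $\boldsymbol{\delta}\in T^\opt$. Thus $\mathbf{p}^\opt$, which lies in $D$, is a stationary point of the convex function $h$ on the open set $D\subseteq V$, hence a global minimizer over $D$. Strict convexity makes it the unique one. No simplex constraint is used anywhere, which is the point of the statement.

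\emph{Part (iii).} This is the only quantitative step and the one I expect to need care. I write $\mathbf{H}:=\mathbf{M}(\mathbf{p})-\mathbf{M}^\opt$ and $\mathbf{E}:=(\mathbf{M}^\opt)^{-1/2}\mathbf{H}(\mathbf{M}^\opt)^{-1/2}$, with eigenvalues $\lambda_1,\dots,\lambda_d$. Then $\varrho(\mathbf{p})^2=\sum_j\lambda_j^2$, and $\mathbf{p}\in D$ forces every $\lambda_j>-1$. We get
\[
h(\mathbf{p})-f^\opt=-\log\det(\mathbf{I}+\mathbf{E})=\sum_j\big(\lambda_j-\log(1+\lambda_j)\big)-\Tr\mathbf{E}.
\]
The linear term vanishes: $\Tr\mathbf{E}=\inner{\mathbf{H},(\mathbf{M}^\opt)^{-1}}=\sum_{i\in S^\opt}(p_i-p^\opt_i)v_i(\mathbf{p}^\opt)=d\sum_i(p_i-p^\opt_i)=0$. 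This is (ii) again. It remains to prove the scalar inequality $\sum_j\psi(\lambda_j)\ge\wsc(\norm{\boldsymbol\lambda}_2)$, where $\psi(t)=t-\log(1+t)$ for $t>-1$.

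I would use the standard self-concordance argument. First, $\psi(t)\ge\wsc(|t|)$ for all $t>-1$: for $t\ge0$ this is equality, and for $t\in(-1,0)$ it reads $-t-\log(1+t)\ge -t-\log(1-t)$, which is immediate. Second, I need superadditivity $\sum_j\wsc(|\lambda_j|)\ge\wsc(\norm{\boldsymbol\lambda}_2)$. This holds because $\wsc(\sqrt{s})$ is convex in $s$ on $s\ge 0$ and vanishes at $0$, so it is superadditive in $s=\lambda_j^2$.

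The fallback for the convexity of $\wsc(\sqrt s)$ is to integrate $\varphi''(t)=\norm{\mathbf{H}}^2_{\mathbf{M}^\opt+t\mathbf{H}}\ge\varrho^2/(1+t\varrho)^2$ twice along the segment, using $\mathbf{M}^\opt+t\mathbf{H}\preceq(1+t\norm{\mathbf{E}}_{op})\mathbf{M}^\opt$ and $\norm{\mathbf{E}}_{op}\le\varrho$. With $\varphi'(0)=0$ by (ii), this yields $\wsc(\varrho)$ directly and is probably the cleanest route. This integration is exactly the ``one integration'' the paper advertises in \Cref{app:sctoolkit}.
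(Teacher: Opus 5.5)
Your parts (i) and (ii) follow the paper's proof. Your closedness argument via $\det\mathbf{M}(\mathbf{p}_k)\ge e^{-c}$ is a direct version of its contradiction argument. What you call the ``fallback'' for (iii) is in fact a complete, independent proof of (iii), and it is exactly the paper's route. The paper applies \Cref{lem:scineq}\ref{it:sclower} with $\mathbf{X}=\mathbf{M}^\opt$, $\mathbf{Y}=\mathbf{M}(\mathbf{p})$. That lemma is proved by your Taylor formula along $\mathbf{M}^\opt+t\mathbf{H}$ with $\varphi''(t)\ge\varrho^2/(1+t\varrho)^2$, and the paper then kills the linear term as in (ii).

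Your primary spectral route, however, mis-justifies its key step, and only the fallback makes the proposal a proof. Set $g(s):=\wsc(\sqrt s)$. Then $g'(s)=\frac{1}{2(1+\sqrt s)}$, which is decreasing, so $g$ is \emph{concave}, not convex. Moreover, a convex $g$ with $g(0)=0$ would give $g(\sum_js_j)\ge\sum_jg(s_j)$, the \emph{opposite} of the inequality $\sum_j\wsc(|\lambda_j|)\ge\wsc(\norm{\boldsymbol\lambda}_2)$ you need. The two mistakes cancel. The correct reason is that a concave $g$ with $g(0)=0$ is \emph{sub}additive (e.g.\ $2g(1)\approx0.614>g(2)\approx0.533$). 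Your convex/superadditive reasoning is the right one for the mirror upper bound, where $\wscs(\sqrt s)$ is indeed convex.

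Step 1 is also misstated. For $t\in(-1,0)$, the claim $\psi(t)\ge\wsc(|t|)$ reads $t-\log(1+t)\ge-t-\log(1-t)$, not $-t-\log(1+t)\ge\cdots$. With $s=-t$ it becomes $\log\frac{1+s}{1-s}\ge2s$. This is true, since $\tfrac12\log\frac{1+s}{1-s}=s+\tfrac{s^3}3+\cdots$, but it is not the one-line log comparison you wrote.

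Once repaired, the spectral route is a legitimate alternative. It gives the exact identity $h(\mathbf{p})-f^\opt=\sum_j\big(\lambda_j-\log(1+\lambda_j)\big)$ and a purely scalar argument. The paper's integration instead packages the bound as a matrix lemma, whose upper half (\Cref{lem:scineq}\ref{it:scupper}) it reuses in the Newton analysis.
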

\begin{proof}
\emph{(i)} $D$ is the preimage of the open convex cone $\Sym^d_{\succ0}$ under the affine map
$\mathbf{p}\mapsto\mathbf{M}(\mathbf{p})$ restricted to $V$, hence open in $V$ and convex; $h$ is a composition of
$C^\infty$ maps there. Strict convexity: \Cref{lem:faceSC} makes $\nabla^2h\succ0$ on $T^\opt$
throughout $D$. Closedness: let $\mathbf{p}_k\in\{h\le c\}$ with $\mathbf{p}_k\to\bar{\mathbf{p}}$. Then
$\bar{\mathbf{p}}\in V$ ($V$ is closed) and $\mathbf{M}(\bar{\mathbf{p}})=\lim_k\mathbf{M}(\mathbf{p}_k)$ is a limit of positive-definite
matrices, hence positive semidefinite. If $\mathbf{M}(\bar{\mathbf{p}})$ were singular, then
$\det\mathbf{M}(\mathbf{p}_k)\to\det\mathbf{M}(\bar{\mathbf{p}})=0$ by continuity of $\det$, so
$h(\mathbf{p}_k)=-\log\det\mathbf{M}(\mathbf{p}_k)\to+\infty$, contradicting $h(\mathbf{p}_k)\le c$. Hence
$\mathbf{M}(\bar{\mathbf{p}})\succ0$, i.e.\ $\bar{\mathbf{p}}\in D$, and $h(\bar{\mathbf{p}})\le c$ by continuity: the sublevel
set is closed. (Self-concordance of $h$ is used only through the explicit inequalities of
\Cref{app:sctoolkit}, so we do not invoke the abstract definition.)

\emph{(ii)} By the definition of the contact set, $v_i(\mathbf{p}^\opt)=d$ for every $i\in S^\opt$.
Hence, by \Cref{prop:dict}\ref{it:grad}, the restriction of $\nabla f(\mathbf{p}^\opt)$
to the coordinates $S^\opt$ is the constant vector $-d\,\mathbf 1$, and for $\boldsymbol{\delta}\in T^\opt$,
$\inner{\nabla f(\mathbf{p}^\opt),\boldsymbol{\delta}}=-d\sum_{i\in S^\opt}\delta_i=0$. Since $h$ is convex on the convex
set $D$ and its directional derivatives at $\mathbf{p}^\opt$ vanish on $T^\opt$, $\mathbf{p}^\opt$ is a global
minimizer of $h$ on $D$; uniqueness follows from strict convexity.

\emph{(iii)} Apply the matrix-space lower bound of \Cref{lem:scineq}\ref{it:sclower} with
$\mathbf{X}=\mathbf{M}^\opt$, $\mathbf{Y}=\mathbf{M}(\mathbf{p})$ (both $\succ0$), and $F(\cdot)=-\log\det(\cdot)$:
\[
h(\mathbf{p})=F(\mathbf{M}(\mathbf{p}))\ \ge\ F(\mathbf{M}^\opt)+\inner{-(\mathbf{M}^\opt)^{-1},\,\mathbf{M}(\mathbf{p})-\mathbf{M}^\opt}+\wsc\big(\varrho(\mathbf{p})\big).
\]
The linear term vanishes: writing $\boldsymbol{\delta}=\mathbf{p}-\mathbf{p}^\opt\in T^\opt$,
\[
\inner{(\mathbf{M}^\opt)^{-1},\,\mathbf{M}(\mathbf{p})-\mathbf{M}^\opt}
=\sum_{i\in S^\opt}\delta_i\,\mathbf{a}_i^\top(\mathbf{M}^\opt)^{-1}\mathbf{a}_i
=\sum_{i\in S^\opt}\delta_i\,v_i(\mathbf{p}^\opt)=d\sum_{i\in S^\opt}\delta_i=0 . \qedhere
\]
\end{proof}

\subsection{The self-concordance toolkit}

The next four lemmas are proved in \Cref{app:sctoolkit}. Throughout, $\mathbf{X},\mathbf{Y}\succ0$ are in $\Sym^d$
and $\locnorm{\cdot}{\mathbf{X}}$ is the local norm above.

\begin{lemma}[Spectral sandwich]
\label{lem:Msand}
Let $\mathbf{H}\in\Sym^d$ and $r:=\locnorm{\mathbf{H}}{\mathbf{X}}$. Then $-r\mathbf{X}\preceq\mathbf{H}\preceq r\mathbf{X}$; in particular
$(1-r)\mathbf{X}\preceq\mathbf{X}+\mathbf{H}\preceq(1+r)\mathbf{X}$, and $\mathbf{X}+\mathbf{H}\succ0$ whenever $r<1$.
\end{lemma}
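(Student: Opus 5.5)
The plan is to conjugate by $\mathbf{X}^{-1/2}$ and reduce the claim to an elementary eigenvalue bound. Put $\mathbf{G}:=\mathbf{X}^{-1/2}\mathbf{H}\mathbf{X}^{-1/2}\in\Sym^d$. By definition of the local norm, $r=\fnorm{\mathbf{G}}$.

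\emph{Step 1 (Frobenius dominates spectral).} Since $\mathbf{G}$ is symmetric, it is orthogonally diagonalizable with real eigenvalues $\lambda_1,\dots,\lambda_d$. Its Frobenius norm is $\fnorm{\mathbf{G}}^2=\sum_k\lambda_k^2$. Hence every eigenvalue satisfies $|\lambda_k|\le\fnorm{\mathbf{G}}=r$, which gives $-r\mathbf{I}\preceq\mathbf{G}\preceq r\mathbf{I}$.

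\emph{Step 2 (undo the congruence).} Congruence by the invertible symmetric matrix $\mathbf{X}^{1/2}$ preserves the Loewner order, since $\mathbf{A}\preceq\mathbf{B}$ implies $\mathbf{X}^{1/2}\mathbf{A}\mathbf{X}^{1/2}\preceq\mathbf{X}^{1/2}\mathbf{B}\mathbf{X}^{1/2}$. Indeed, $\mathbf{y}^\top\mathbf{X}^{1/2}(\mathbf{B}-\mathbf{A})\mathbf{X}^{1/2}\mathbf{y}=\mathbf{z}^\top(\mathbf{B}-\mathbf{A})\mathbf{z}$ with $\mathbf{z}=\mathbf{X}^{1/2}\mathbf{y}$. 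Applying this to Step 1 and using $\mathbf{X}^{1/2}\mathbf{G}\mathbf{X}^{1/2}=\mathbf{H}$ and $\mathbf{X}^{1/2}\mathbf{I}\mathbf{X}^{1/2}=\mathbf{X}$ yields $-r\mathbf{X}\preceq\mathbf{H}\preceq r\mathbf{X}$.

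\emph{Step 3 (consequences).} Adding $\mathbf{X}$ to each side gives $(1-r)\mathbf{X}\preceq\mathbf{X}+\mathbf{H}\preceq(1+r)\mathbf{X}$. If $r<1$, then $(1-r)\mathbf{X}\succ0$ because $\mathbf{X}\succ0$. For $\mathbf{y}\ne0$ this gives $\mathbf{y}^\top(\mathbf{X}+\mathbf{H})\mathbf{y}\ge(1-r)\mathbf{y}^\top\mathbf{X}\mathbf{y}>0$, so $\mathbf{X}+\mathbf{H}\succ0$.

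There is no real obstacle here. The only point needing care is the direction of the norm comparison: the spectral norm is at most the Frobenius norm, not conversely. That is exactly why the sandwich holds with the local norm $r$ on the right-hand side.
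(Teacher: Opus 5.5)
Your proof is correct and matches the paper's argument step for step: conjugate by $\mathbf{X}^{-1/2}$, bound the eigenvalues of the symmetric matrix $\mathbf{X}^{-1/2}\mathbf{H}\mathbf{X}^{-1/2}$ by its Frobenius norm $r$, undo the congruence by $\mathbf{X}^{1/2}$, and add $\mathbf{X}$. The one addition is that you spell out the one-line check that congruence preserves the Loewner order, which the paper only asserts.
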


\begin{lemma}[Local-norm comparison]
\label{lem:hesssand}
If $(1-r)\mathbf{X}\preceq\mathbf{Y}\preceq(1+r)\mathbf{X}$ for some $r\in[0,1)$, then for every $\mathbf{H}\in\Sym^d$,
\[
(1+r)^{-2}\,\locnorm{\mathbf{H}}{\mathbf{X}}^2\;\le\;\locnorm{\mathbf{H}}{\mathbf{Y}}^2\;\le\;(1-r)^{-2}\,\locnorm{\mathbf{H}}{\mathbf{X}}^2 .
\]
The left inequality requires only $\mathbf{Y}\preceq(1+r)\mathbf{X}$ (any $r\ge0$); the right only
$\mathbf{Y}\succeq(1-r)\mathbf{X}$.
\end{lemma}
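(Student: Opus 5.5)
The plan is to reduce to a statement about positive definite matrices by congruence with $\mathbf{H}$. Set $\mathbf{G}:=\mathbf{X}^{1/2}\mathbf{Y}^{-1}\mathbf{X}^{1/2}\succ0$ and $\mathbf{K}:=\mathbf{X}^{-1/2}\mathbf{H}\mathbf{X}^{-1/2}\in\Sym^d$, so that $\locnorm{\mathbf{H}}{\mathbf{X}}^2=\fnorm{\mathbf{K}}^2$. By cyclicity of the trace,
\[
\locnorm{\mathbf{H}}{\mathbf{Y}}^2=\Tr\big(\mathbf{Y}^{-1}\mathbf{H}\mathbf{Y}^{-1}\mathbf{H}\big)=\Tr\big(\mathbf{G}\mathbf{K}\mathbf{G}\mathbf{K}\big).
\]
The hypotheses translate directly under congruence by $\mathbf{X}^{-1/2}$ and inversion, which is operator-monotone decreasing. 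The bound $\mathbf{Y}\preceq(1+r)\mathbf{X}$ gives $\mathbf{G}\succeq(1+r)^{-1}\mathbf{I}$, and $\mathbf{Y}\succeq(1-r)\mathbf{X}$ gives $\mathbf{G}\preceq(1-r)^{-1}\mathbf{I}$. This already shows that each side of the claim uses only one half of the sandwich, as stated.

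The key step is the two-sided bound
\[
\lambda_{\min}(\mathbf{G})^2\fnorm{\mathbf{K}}^2\le\Tr(\mathbf{G}\mathbf{K}\mathbf{G}\mathbf{K})\le\lambda_{\max}(\mathbf{G})^2\fnorm{\mathbf{K}}^2 .
\]
To prove it, diagonalize $\mathbf{G}=\mathbf{U}\diag(g_1,\dots,g_d)\mathbf{U}^\top$ with $\mathbf{U}$ orthogonal, and put $\widetilde{\mathbf{K}}=\mathbf{U}^\top\mathbf{K}\mathbf{U}$, which is symmetric with $\fnorm{\widetilde{\mathbf{K}}}=\fnorm{\mathbf{K}}$. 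Then
\[
\Tr(\mathbf{G}\mathbf{K}\mathbf{G}\mathbf{K})=\sum_{i,j}g_ig_j\widetilde K_{ij}\widetilde K_{ji}=\sum_{i,j}g_ig_j\widetilde K_{ij}^2 .
\]
Each weight satisfies $\lambda_{\min}^2\le g_ig_j\le\lambda_{\max}^2$, and the coefficients $\widetilde K_{ij}^2$ are nonnegative and sum to $\fnorm{\mathbf{K}}^2$. Symmetry of $\mathbf{K}$ is essential here, since it makes $\widetilde K_{ij}\widetilde K_{ji}$ a square.

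Combining the two steps gives the left inequality from $\lambda_{\min}(\mathbf{G})\ge(1+r)^{-1}$, valid for any $r\ge0$ since only $\mathbf{Y}\preceq(1+r)\mathbf{X}$ was used. The right inequality follows from $\lambda_{\max}(\mathbf{G})\le(1-r)^{-1}$, which needs $r<1$ so that $(1-r)\mathbf{X}\succ0$ and the inversion is legitimate.

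I expect no real obstacle. The only point needing care is the diagonalization step, where one must check that symmetry turns the weighted sum into a sum of squares. An alternative is the factorization $\Tr(\mathbf{G}\mathbf{K}\mathbf{G}\mathbf{K})=\fnorm{\mathbf{G}^{1/2}\mathbf{K}\mathbf{G}^{1/2}}^2$ followed by two applications of $\fnorm{\mathbf{A}\mathbf{B}}\le\norm{\mathbf{A}}_2\fnorm{\mathbf{B}}$. However, that handles the upper bound only, and the lower bound would need the same argument applied to inverses. I therefore prefer the eigenbasis computation, which gives both bounds at once.
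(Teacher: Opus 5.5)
Your proof is correct. It shares the paper's reduction but proves the core inequality differently.

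In your notation, with $\mathbf{K}=\mathbf{X}^{-1/2}\mathbf{H}\mathbf{X}^{-1/2}$, the paper writes $\mathbf{Y}^{-1/2}\mathbf{H}\mathbf{Y}^{-1/2}=\mathbf{S}\mathbf{K}\mathbf{S}^\top$ for the non-symmetric congruence $\mathbf{S}=\mathbf{Y}^{-1/2}\mathbf{X}^{1/2}$. It reads both hypotheses off the spectrum of $\mathbf{S}^\top\mathbf{S}=\mathbf{X}^{1/2}\mathbf{Y}^{-1}\mathbf{X}^{1/2}$, which is exactly your $\mathbf{G}$. So the spectral inputs, and the bookkeeping of which half of the sandwich each side uses, are identical.

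The paper then bounds $\fnorm{\mathbf{S}\mathbf{K}\mathbf{S}^\top}\le\sigma_{\max}(\mathbf{S})^2\fnorm{\mathbf{K}}$ by two applications of $\fnorm{\mathbf{A}\mathbf{B}}\le\norm{\mathbf{A}}_{\mathrm{op}}\fnorm{\mathbf{B}}$. It gets the lower bound by running the same estimate on $\mathbf{K}=\mathbf{S}^{-1}(\mathbf{S}\mathbf{K}\mathbf{S}^\top)\mathbf{S}^{-\top}$. That is precisely the alternative you set aside, including the inverse step you anticipated. The lower bound there is no harder than the upper, since $\fnorm{\mathbf{A}\mathbf{B}}\ge\sigma_{\min}(\mathbf{A})\fnorm{\mathbf{B}}$ for invertible $\mathbf{A}$.

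Your identity $\Tr(\mathbf{G}\mathbf{K}\mathbf{G}\mathbf{K})=\sum_{i,j}g_ig_j\widetilde K_{ij}^2$ replaces those two norm inequalities with one exact formula. It shows $\locnorm{\mathbf{H}}{\mathbf{Y}}^2/\locnorm{\mathbf{H}}{\mathbf{X}}^2$ is a weighted average of the products $g_ig_j$. This gives both directions at once, and shows that $\lambda_{\min}(\mathbf{G})^2$ and $\lambda_{\max}(\mathbf{G})^2$ are attained: take $\mathbf{K}$ to be the rank-one projector onto an extreme eigenvector of $\mathbf{G}$.

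The paper's version avoids diagonalization and uses only norm submultiplicativity. Yours says more about sharpness. Either is a complete proof of the lemma.
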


\begin{lemma}[Integrated self-concordance bounds for $-\log\det$]
\label{lem:scineq}
Let $F(\cdot)=-\log\det(\cdot)$ on $\Sym^d_{\succ0}$, with gradient $\nabla F(\mathbf{X})=-\mathbf{X}^{-1}$ in the
trace inner product, and let $r:=\locnorm{\mathbf{Y}-\mathbf{X}}{\mathbf{X}}$.
\begin{enumerate}[label=\emph{(\alph*)},leftmargin=*,itemsep=1pt]
\item\label{it:sclower} $F(\mathbf{Y})\ \ge\ F(\mathbf{X})+\inner{\nabla F(\mathbf{X}),\mathbf{Y}-\mathbf{X}}+\wsc(r)$\quad (no restriction on $r$);
\item\label{it:scupper} if $r<1$:\quad $F(\mathbf{Y})\ \le\ F(\mathbf{X})+\inner{\nabla F(\mathbf{X}),\mathbf{Y}-\mathbf{X}}+\wscs(r)$.
\end{enumerate}
\end{lemma}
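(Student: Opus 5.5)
The plan is to reduce both bounds to a scalar problem along the segment from $\mathbf{X}$ to $\mathbf{Y}$, working in coordinates where $\mathbf{X}$ becomes the identity. Set $\mathbf{H}:=\mathbf{Y}-\mathbf{X}$ and $\mathbf{Z}:=\mathbf{X}^{-1/2}\mathbf{H}\mathbf{X}^{-1/2}\in\Sym^d$. Let $\lambda_1,\dots,\lambda_d$ be the eigenvalues of $\mathbf{Z}$, so that $\sum_k\lambda_k^2=\fnorm{\mathbf{Z}}^2=r^2$.

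Since $\mathbf{Y}=\mathbf{X}^{1/2}(\mathbf{I}+\mathbf{Z})\mathbf{X}^{1/2}\succ0$, every $\lambda_k>-1$. Then
\[
F(\mathbf{Y})-F(\mathbf{X})=-\log\det(\mathbf{I}+\mathbf{Z})=-\sum_k\log(1+\lambda_k),
\]
and the linear term is
\[
\inner{\nabla F(\mathbf{X}),\mathbf{H}}=-\Tr(\mathbf{X}^{-1}\mathbf{H})=-\Tr\mathbf{Z}=-\sum_k\lambda_k .
\]
Both claims therefore reduce to bounds on the Bregman remainder
\[
\Delta:=\sum_k\big(\lambda_k-\log(1+\lambda_k)\big)=\sum_k\psi(\lambda_k),\qquad \psi(s):=s-\log(1+s).
\]
The upper bound (b) is to show $\Delta\le\wscs(r)$ when $r<1$. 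The lower bound (a) is to show $\Delta\ge\wsc(r)$.

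\textbf{Part (b).} If $r<1$, then $|\lambda_k|\le r<1$ for every $k$. Expanding $\psi(s)=\sum_{j\ge2}(-1)^j s^j/j$ gives
\[
\psi(s)\le\sum_{j\ge2}|s|^j/j=\wscs(|s|).
\]
Summing, and using $\sum_k|\lambda_k|^j\le(\sum_k\lambda_k^2)^{j/2}=r^j$ for $j\ge2$ (monotonicity of $\ell_p$ norms), we get $\Delta\le\sum_{j\ge2}r^j/j=\wscs(r)$.

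\textbf{Part (a).} This is the main obstacle, because the eigenvalues can be large and positive, where the power series is unavailable. I would first try the integral representation
\[
\psi(s)=\int_0^1\frac{t\,s^2}{1+ts}\,dt,
\]
so that
\[
\Delta=\int_0^1 t\sum_k\frac{\lambda_k^2}{1+t\lambda_k}\,dt .
\]
The goal is $\sum_k\lambda_k^2/(1+t\lambda_k)\ge r^2/(1+tr)$. This holds termwise-in-aggregate: each $\lambda_k\le|\lambda_k|\le r$ gives $1+t\lambda_k\le 1+tr$, and every term is positive because $1+t\lambda_k>0$ for $t\in[0,1]$ and $\lambda_k>-1$. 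Integrating,
\[
\Delta\ge\int_0^1\frac{t\,r^2}{1+tr}\,dt=\psi(r)=\wsc(r).
\]
This also identifies the required inequality as an instance of the scalar integral formula with $s=r$, and it is the "one integration" the paper advertises. No restriction on $r$ is needed, since only $\lambda_k>-1$ (positive definiteness of $\mathbf{Y}$) was used.

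Two points remain to check. The first is the representation $\psi(s)=\int_0^1 ts^2/(1+ts)\,dt$, which follows by differentiating in $s$ or by direct integration. The second is that in (b) the hypothesis $r<1$ is exactly what makes $\wscs(r)$ finite and the series converge.
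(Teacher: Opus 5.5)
Your proof is correct, but it takes a different route from the paper's.

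The paper stays in matrix space. It expands $F$ along $\mathbf{Y}_t=\mathbf{X}+t\mathbf{H}$ with the second-order integral remainder $\int_0^1(1-t)\locnorm{\mathbf{H}}{\mathbf{Y}_t}^2\,dt$. It then bounds the integrand below by $r^2/(1+tr)^2$, and above (when $tr<1$) by $r^2/(1-tr)^2$, using the spectral sandwich (\Cref{lem:Msand}) and the local-norm comparison (\Cref{lem:hesssand}). Finally it evaluates the two scalar integrals.

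You instead diagonalize $\mathbf{Z}=\mathbf{X}^{-1/2}\mathbf{H}\mathbf{X}^{-1/2}$, which turns the Bregman remainder into exactly $\sum_k\psi(\lambda_k)$. For (a) you use the first-order (gradient-difference) remainder $\psi(s)=\int_0^1 ts^2/(1+ts)\,dt$ together with the one-sided bound $\lambda_k\le r$; this is just the eigenvalue form of $\mathbf{Y}_t\preceq(1+tr)\mathbf{X}$. For (b) you dispense with integration altogether, using the power series of $\psi$ and $\sum_k|\lambda_k|^j\le r^j$ for $j\ge2$.

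Your route buys elementarity and transparency. It needs neither \Cref{lem:hesssand} nor its singular-value argument. It also shows at once that both bounds are sharp: a rank-one $\mathbf{Z}$ with eigenvalue $r$ gives equality in (a), and one with eigenvalue $-r$ gives equality in (b).

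The paper's route buys generality and economy within the paper. It is the standard self-concordance argument, not tied to the spectral structure of $-\log\det$. Its ingredients (\Cref{lem:Msand,lem:hesssand}) are needed anyway for the uniform Hessian comparison in \Cref{lem:c0}, so the integrated bounds come almost for free.
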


\begin{lemma}[Leverage comparison]
\label{lem:levcomp}
If $(1-r)\mathbf{X}\preceq\mathbf{Y}\preceq(1+r)\mathbf{X}$ with $r\in[0,1)$, then for every $\mathbf{a}\in\R^d$,
\[
(1+r)^{-1}\,\mathbf{a}^\top\mathbf{X}^{-1}\mathbf{a}\;\le\;\mathbf{a}^\top\mathbf{Y}^{-1}\mathbf{a}\;\le\;(1-r)^{-1}\,\mathbf{a}^\top\mathbf{X}^{-1}\mathbf{a} .
\]
\end{lemma}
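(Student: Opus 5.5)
The statement to prove is the leverage comparison (\Cref{lem:levcomp}). The plan is to invert the Loewner sandwich and read off the quadratic form.

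First, from $(1-r)\mathbf{X}\preceq\mathbf{Y}\preceq(1+r)\mathbf{X}$ with $r\in[0,1)$, both outer matrices are positive definite, so $\mathbf{Y}\succ0$. Matrix inversion is operator-antitone on the positive-definite cone, which gives
\[
(1+r)^{-1}\mathbf{X}^{-1}\preceq\mathbf{Y}^{-1}\preceq(1-r)^{-1}\mathbf{X}^{-1}.
\]
Evaluating the quadratic form at $\mathbf{a}$ then yields both inequalities at once.

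The only step needing justification is antitonicity. I will prove it by congruence, since the statement is congruence-invariant:
\begin{enumerate}
\item Conjugate by $\mathbf{X}^{-1/2}$. With $\mathbf{Z}:=\mathbf{X}^{-1/2}\mathbf{Y}\mathbf{X}^{-1/2}$, the hypothesis becomes $(1-r)\mathbf{I}\preceq\mathbf{Z}\preceq(1+r)\mathbf{I}$.
\item Read this as a statement about eigenvalues: every eigenvalue of $\mathbf{Z}$ lies in $[1-r,1+r]$.
\item Then every eigenvalue of $\mathbf{Z}^{-1}$ lies in $[(1+r)^{-1},(1-r)^{-1}]$. This is the same as $(1+r)^{-1}\mathbf{I}\preceq\mathbf{Z}^{-1}\preceq(1-r)^{-1}\mathbf{I}$.
\item Note that $\mathbf{Z}^{-1}=\mathbf{X}^{1/2}\mathbf{Y}^{-1}\mathbf{X}^{1/2}$.
\item Conjugate back by $\mathbf{X}^{-1/2}$ to recover the displayed sandwich on $\mathbf{Y}^{-1}$.
\end{enumerate}

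Each side uses only one half of the hypothesis, mirroring \Cref{lem:hesssand}. The left inequality needs only $\mathbf{Y}\preceq(1+r)\mathbf{X}$, and then holds for any $r\ge0$ given $\mathbf{Y}\succ0$. The right inequality needs only $\mathbf{Y}\succeq(1-r)\mathbf{X}$. I expect no real obstacle beyond stating the antitonicity argument cleanly.
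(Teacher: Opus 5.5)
Your proof is correct and takes essentially the paper's route. Both prove that inversion is order-reversing by a congruence argument and then evaluate the quadratic form at $\mathbf{a}$; the only difference is that the paper states the general fact for $\mathbf{A}\succeq\mathbf{B}\succ0$ and applies it to each half of the sandwich, while you conjugate once by $\mathbf{X}^{-1/2}$ and read off both bounds from the spectrum of $\mathbf{Z}$.
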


\begin{lemma}[Properties of $\wsc,\wscs$]
\label{lem:legendre}
$\wsc(t)=t-\log(1+t)$ and $\wscs(t)=-t-\log(1-t)$ are increasing, strictly convex, and vanish to second
order at $0$; they are Legendre conjugates: $\wscs(t)=\sup_{s\ge0}\{ts-\wsc(s)\}$ for $t\in[0,1)$, with
maximizer $s=t/(1-t)$. Moreover $\wscs(t)\le\frac{t^2}{2(1-t)}\le\tfrac23t^2$ for $t\in[0,\tfrac14]$,
and $\wsc(\tfrac14)=\tfrac14-\log\tfrac54>\tfrac1{38}$.
\end{lemma}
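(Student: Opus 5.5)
Each of the four claims of \Cref{lem:legendre} reduces to one-variable calculus, so the plan is to check them in order using explicit derivatives.

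\emph{Monotonicity, convexity, and second-order vanishing.} We have $\wsc'(t)=1-\frac1{1+t}=\frac{t}{1+t}\ge0$ and $\wsc''(t)=(1+t)^{-2}>0$. Likewise $\wscs'(t)=-1+\frac1{1-t}=\frac{t}{1-t}\ge0$ and $\wscs''(t)=(1-t)^{-2}>0$ on $[0,1)$. Both functions and their first derivatives vanish at $0$, and both second derivatives equal $1$ there. This gives the first sentence of the lemma.

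\emph{Conjugacy.} Fix $t\in[0,1)$ and maximize $\psi(s):=ts-\wsc(s)$ over $s\ge0$. The function $\psi$ is strictly concave, with $\psi'(s)=t-\frac{s}{1+s}$. This derivative vanishes exactly at $s=t/(1-t)\ge0$, so that point is the maximizer. Substituting it, with $1+s=\frac1{1-t}$, gives
\[
\psi\Big(\tfrac{t}{1-t}\Big)=\frac{t^2}{1-t}-\frac{t}{1-t}-\log(1-t)=-t-\log(1-t)=\wscs(t).
\]

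\emph{The quadratic bound.} Expand $-\log(1-t)$ as a power series: $\wscs(t)=\sum_{k\ge2}t^k/k$. Each term with $k\ge2$ satisfies $t^k/k\le t^k/2$, hence
\[
\wscs(t)\le\tfrac12\sum_{k\ge2}t^k=\frac{t^2}{2(1-t)}.
\]
For $t\le\tfrac14$ we have $\frac1{2(1-t)}\le\frac1{2\cdot3/4}=\tfrac23$, which yields $\wscs(t)\le\tfrac23t^2$.

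\emph{The numerical constant.} This is the only step needing care, and only because of arithmetic. We need $\log\tfrac54<\tfrac14-\tfrac1{38}=\tfrac{17}{76}\approx0.2237$. The alternating series gives the certified upper bound $\log(1+x)\le x-\frac{x^2}2+\frac{x^3}3$ for $x\ge0$. At $x=\tfrac14$ this is
\[
\tfrac14-\tfrac1{32}+\tfrac1{192}=\tfrac{43}{192}\approx0.22396,
\]
which is slightly too large. We therefore take one more term: $\log(1+x)\le x-\frac{x^2}2+\frac{x^3}3-\frac{x^4}4+\frac{x^5}5$. The two additional terms at $x=\tfrac14$ are $-\frac1{1024}$ and $+\frac1{5120}$. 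The bound becomes $\approx0.22396-0.000977+0.000195\approx0.22318$, which is below $\tfrac{17}{76}\approx0.22368$.

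This is rigorous: the series for $\log(1+x)$ at $x=\tfrac14$ has terms of decreasing magnitude, so any partial sum ending with a positive term is an upper bound. A reader who prefers exact rationals can compare $\tfrac{43}{192}-\tfrac1{1024}+\tfrac1{5120}$ with $\tfrac{17}{76}$ after clearing denominators. Hence $\wsc(\tfrac14)>\tfrac1{38}$.
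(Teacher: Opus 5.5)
Your proof is correct and follows the paper's argument essentially line for line. Both use derivatives for monotonicity and convexity, the stationary point $s=t/(1-t)$ of the concave function $ts-\wsc(s)$ for the conjugacy, and the power series $\wscs(t)=\sum_{k\ge2}t^k/k$ with $1/k\le\tfrac12$ for the quadratic bound. The only difference is the final constant: the paper simply quotes $\wsc(\tfrac14)=0.02685\ldots$, whereas you certify it with an alternating-series upper bound on $\log\tfrac54$, and your arithmetic is right ($\tfrac{857}{3840}<\tfrac{17}{76}$).
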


\subsection{The strict-complementarity penalty and localization}

\begin{lemma}[Lower bound with penalty: strict complementarity $+$ curvature]
\label{lem:lower}
For every $\mathbf{p}\in\Delta_n$ (if $\mathbf{M}(\mathbf{p})$ is singular both sides are interpreted as $f(\mathbf{p})=+\infty$):
\[
f(\mathbf{p})\;\ge\;f^\opt\;+\;\gsc\sum_{i\notin S^\opt}p_i\;+\;\wsc\big(\varrho(\mathbf{p})\big).
\]
\end{lemma}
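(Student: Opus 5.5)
The plan is to mirror the proof of \Cref{lem:min}(iii), but on all of $\Delta_n$ rather than on $V$. We keep the first-order term that previously vanished, and show it now contributes exactly the strict-complementarity penalty.

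If $\mathbf{M}(\mathbf{p})$ is singular there is nothing to prove, so assume $\mathbf{M}(\mathbf{p})\succ0$. Apply \Cref{lem:scineq}\ref{it:sclower} with $F=-\log\det$, $\mathbf{X}=\mathbf{M}^\opt$ and $\mathbf{Y}=\mathbf{M}(\mathbf{p})$. Both matrices are positive definite, and part (a) places no restriction on $r$. By definition $r=\locnorm{\mathbf{M}(\mathbf{p})-\mathbf{M}^\opt}{\mathbf{M}^\opt}=\varrho(\mathbf{p})$. This gives
\[
f(\mathbf{p})\ \ge\ f^\opt-\inner{(\mathbf{M}^\opt)^{-1},\,\mathbf{M}(\mathbf{p})-\mathbf{M}^\opt}+\wsc\big(\varrho(\mathbf{p})\big).
\]

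Next we evaluate the linear term. Because $\mathbf{M}$ is linear in $\mathbf{p}$, we have
\[
\inner{(\mathbf{M}^\opt)^{-1},\mathbf{M}(\mathbf{p})}=\sum_i p_i\,\mathbf{a}_i^\top(\mathbf{M}^\opt)^{-1}\mathbf{a}_i=\sum_i p_i v_i(\mathbf{p}^\opt).
\]
By \Cref{prop:dict}\ref{it:trace}, $\inner{(\mathbf{M}^\opt)^{-1},\mathbf{M}^\opt}=\Tr(\mathbf{I})=d$. Since $\sum_ip_i=1$, the term equals
\[
-\sum_i p_i\big(v_i(\mathbf{p}^\opt)-d\big)=\sum_i p_i\big(d-v_i(\mathbf{p}^\opt)\big).
\]
For $i\in S^\opt$ the summand is zero because $v_i(\mathbf{p}^\opt)=d$. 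For $i\notin S^\opt$ we have $d-v_i(\mathbf{p}^\opt)\ge\gsc$ by the definition of the slack, and $p_i\ge0$. Hence the linear term is at least $\gsc\sum_{i\notin S^\opt}p_i$, and combining this with the display above proves the claim.

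There is no real obstacle. The only points needing care are that \Cref{lem:scineq}\ref{it:sclower} holds for arbitrary $r$ (so we need no localization), and that the sign of the linear term is favorable. The latter uses $p_i\ge0$, which is exactly where membership in the simplex enters, in contrast with the facial domain $D$ of \Cref{lem:min}. It also uses $v_i(\mathbf{p}^\opt)\le d$ off the contact set, which is the Kiefer--Wolfowitz condition quantified by $\gsc$ (only $\gsc\ge0$, i.e. optimality, is needed for the inequality as stated).
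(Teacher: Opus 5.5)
Your proof is correct and is essentially the paper's own argument. You apply \Cref{lem:scineq}\ref{it:sclower} with $\mathbf{X}=\mathbf{M}^\opt$ and $\mathbf{Y}=\mathbf{M}(\mathbf{p})$, so $r=\varrho(\mathbf{p})$ and no localization is needed. You then use $\sum_ip_i=1$ and $v_i(\mathbf{p}^\opt)=d$ on $S^\opt$ to rewrite the linear term as $\sum_{i\notin S^\opt}\big(d-v_i(\mathbf{p}^\opt)\big)p_i\ge\gsc\sum_{i\notin S^\opt}p_i$, exactly as the paper does.
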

\begin{proof}
Assume $\mathbf{M}(\mathbf{p})\succ0$. Apply \Cref{lem:scineq}\ref{it:sclower} with $\mathbf{X}=\mathbf{M}^\opt$, $\mathbf{Y}=\mathbf{M}(\mathbf{p})$:
\[
f(\mathbf{p})\ \ge\ f^\opt+\inner{-(\mathbf{M}^\opt)^{-1},\,\mathbf{M}(\mathbf{p})-\mathbf{M}^\opt}+\wsc(\varrho(\mathbf{p})).
\]
For the linear term, using $\Tr\big((\mathbf{M}^\opt)^{-1}\mathbf{a}_i\mathbf{a}_i^\top\big)=v_i(\mathbf{p}^\opt)$,
$\Tr\big((\mathbf{M}^\opt)^{-1}\mathbf{M}^\opt\big)=d$, $\sum_ip_i=1$, and $v_i(\mathbf{p}^\opt)=d$ for $i\in S^\opt$:
\[
\inner{-(\mathbf{M}^\opt)^{-1},\,\mathbf{M}(\mathbf{p})-\mathbf{M}^\opt}
=d-\sum_{i}p_i\,v_i(\mathbf{p}^\opt)
=\sum_{i\notin S^\opt}\big(d-v_i(\mathbf{p}^\opt)\big)\,p_i
\;\ge\;\gsc\sum_{i\notin S^\opt}p_i ,
\]
the middle equality because
$\sum_ip_iv_i(\mathbf{p}^\opt)=d\big(1-\sum_{i\notin S^\opt}p_i\big)+\sum_{i\notin S^\opt}v_i(\mathbf{p}^\opt)p_i$,
and the final inequality from the definition of $\gsc$ (\Cref{def:nondeg}).
\end{proof}

\begin{remark}
Dropping the $\wsc$ term recovers the classical convexity penalty
$f(\mathbf{p})-f^\opt\ge\gsc\cdot(\text{inactive mass})$; the curvature term is what converts a small
objective gap into a small \emph{Dikin radius}, and it is the engine of everything below.
\end{remark}

\begin{definition}[The localization constant $c_0$, the facial sublevel set, and the Dikin ball]
\label{def:c0}
Set
\[
\bar r\;:=\;\min\Big\{\tfrac12,\ \ \frac{\gsc}{2d},\ \ \frac{\pmin\sqrt{\muopt}}{2}\Big\},
\qquad
c_0\;:=\;\wsc(\bar r)\;=\;\bar r-\log(1+\bar r),
\]
\[
\Cnull\;:=\;\{\mathbf{p}\in V:\ h(\mathbf{p})\le f^\opt+c_0\},
\qquad
\bar{\mathcal D}\;:=\;\{\mathbf{p}\in V:\ \varrho(\mathbf{p})\le\tfrac12\}\quad(\text{the facial Dikin ball}).
\]
\end{definition}

\begin{lemma}[Localization]
\label{lem:c0}
Under \Cref{def:nondeg}, with $c_0,\bar r,\Cnull,\bar{\mathcal D}$ as in \Cref{def:c0}:
\begin{enumerate}[label=\emph{(\alph*)},leftmargin=*,itemsep=2pt]
\item\label{it:c0-slack} \emph{(Separation of contact and non-contact scores.)} Every $\mathbf{p}\in\Delta_n$
with $\mathbf{M}(\mathbf{p})\succ0$ and $f(\mathbf{p})\le f^\opt+c_0$ satisfies
\[
v_j(\mathbf{p})< d-\tfrac\gsc2\quad\text{for every }j\notin S^\opt,
\qquad\text{and}\qquad
v_i(\mathbf{p})\ge d-\tfrac\gsc2\quad\text{for every }i\in S^\opt .
\]
\item\label{it:c0-interior} \emph{(Interiority.)} $\Cnull$ is convex, compact, contained in
$\bar{\mathcal D}$, and
$\Cnull\subseteq\{\mathbf{p}:p_i\ge\pmin/2\ \forall i\in S^\opt\}\subseteq\relint\Delta_{S^\opt}$: every point
of the facial sublevel set is a genuine design, with all contact weights bounded away from $0$.
\item\label{it:c0-cond} \emph{(Uniform conditioning on the Dikin ball.)} $\bar{\mathcal D}$ is convex,
$\mathbf{M}(\mathbf{p})\succeq\tfrac12\mathbf{M}^\opt\succ0$ on it (so $\bar{\mathcal D}\subseteq D$), and for every
$\mathbf{p}\in\bar{\mathcal D}$ and $\boldsymbol{\delta}\in\R^{S^\opt}$,
\[
\tfrac49\,\boldsymbol{\delta}^\top\widehat{\mathbf{H}}(\mathbf{p}^\opt)\boldsymbol{\delta}
\;\le\;\boldsymbol{\delta}^\top\widehat{\mathbf{H}}(\mathbf{p})\boldsymbol{\delta}
\;\le\;4\,\boldsymbol{\delta}^\top\widehat{\mathbf{H}}(\mathbf{p}^\opt)\boldsymbol{\delta} .
\]
Consequently, with $\mu_{\mathrm{loc}}:=\tfrac49\muopt$ and $L_{\mathrm{loc}}:=4\Lopt$
(so $\kappa_{\mathrm{loc}}:=L_{\mathrm{loc}}/\mu_{\mathrm{loc}}=9\kappa$): $h$ is
$\mu_{\mathrm{loc}}$-strongly convex on $\bar{\mathcal D}$, and for all $\mathbf{p},\mathbf{q}\in\bar{\mathcal D}$,
$\norm{\mathbf{v}_{S^\opt}(\mathbf{p})-\mathbf{v}_{S^\opt}(\mathbf{q})}_2\le L_{\mathrm{loc}}\norm{\mathbf{p}-\mathbf{q}}_2$ and
$\norm{P_{T^\opt}\big(\nabla_{S^\opt} f(\mathbf{p})-\nabla_{S^\opt} f(\mathbf{q})\big)}_2\le
L_{\mathrm{loc}}\norm{\mathbf{p}-\mathbf{q}}_2$, where
$\mathbf{v}_{S^\opt}=(v_i)_{i\in S^\opt}$ and $P_{T^\opt}$ is the orthogonal projector onto $T^\opt$.
\end{enumerate}
\end{lemma}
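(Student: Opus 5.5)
I will prove the three parts in the order (c), (a), (b), because (c) supplies the uniform constants that the other two parts consume. The main engine is \Cref{lem:lower}, applied at a point with $f(\mathbf{p})\le f^\opt+c_0$. Since $c_0=\wsc(\bar r)$ and $\wsc$ is increasing, it gives at once
\[
\wsc(\varrho(\mathbf{p}))\le c_0,\ \text{hence}\ \varrho(\mathbf{p})\le\bar r\le\tfrac12,
\qquad
\gsc\sum_{i\notin S^\opt}p_i\le c_0 .
\]

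\emph{Part (c).} Convexity of $\bar{\mathcal D}$ holds because $\varrho$ is a norm of an affine function of $\mathbf{p}$. For $\mathbf{p}\in\bar{\mathcal D}$, \Cref{lem:Msand} applied with $\mathbf{X}=\mathbf{M}^\opt$ and $\mathbf{H}=\mathbf{M}(\mathbf{p})-\mathbf{M}^\opt$ gives $\tfrac12\mathbf{M}^\opt\preceq\mathbf{M}(\mathbf{p})\preceq\tfrac32\mathbf{M}^\opt$. Next I apply \Cref{lem:hesssand} with $r=\tfrac12$ to $\mathbf{H}=\sum_{i\in S^\opt}\delta_i\mathbf{a}_i\mathbf{a}_i^\top$, and use the identity of \Cref{lem:faceSC}. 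This yields the Hessian sandwich with constants $(3/2)^{-2}=\tfrac49$ and $(1/2)^{-2}=4$.

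The strong convexity of $h$ then follows from the lower bound restricted to $T^\opt$. The Lipschitz bounds follow from the mean-value theorem along segments, which stay inside the convex set $\bar{\mathcal D}$. The Jacobian of $\mathbf{v}_{S^\opt}$ is $-\widehat{\mathbf{H}}(\mathbf{p})$, whose operator norm is at most $4\Lopt$ by the upper sandwich on the full block. Projecting onto $T^\opt$ does not increase the norm.

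\emph{Part (a).} From $\varrho(\mathbf{p})\le\bar r$ and \Cref{lem:Msand}, we get $(1-\bar r)\mathbf{M}^\opt\preceq\mathbf{M}(\mathbf{p})\preceq(1+\bar r)\mathbf{M}^\opt$. This holds for the full $\mathbf{p}\in\Delta_n$, because $\varrho$ is defined for any weights. \Cref{lem:levcomp} then gives:
\begin{itemize}
\item for $j\notin S^\opt$, $v_j(\mathbf{p})\le(d-\gsc)/(1-\bar r)$;
\item for $i\in S^\opt$, $v_i(\mathbf{p})\ge d/(1+\bar r)$.
\end{itemize}
With $\bar r\le\gsc/(2d)$ and $\bar r\le\tfrac12$, both comparisons reduce to elementary inequalities:
\begin{itemize}
\item For $i\in S^\opt$: $d/(1+\bar r)\ge d(1-\bar r)\ge d-\gsc/2$.
\item For $j\notin S^\opt$: I need $(d-\gsc)<(1-\bar r)(d-\gsc/2)$. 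This reduces to $\bar r(d-\gsc/2)<\gsc/2$, which holds since $\bar r d\le\gsc/2$ and $\gsc>0$. The strict version needs a little care, but $\bar r(d-\gsc/2)<\bar r d\le\gsc/2$ suffices.
\end{itemize}

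\emph{Part (b).} Convexity of $\Cnull$ comes from convexity of $h$ on the convex set $D$. Closedness comes from \Cref{lem:min}(i). Points of $V$ have zero inactive mass, so the penalty bound gives $\Cnull\subseteq\{\varrho\le\bar r\}\subseteq\bar{\mathcal D}$. Boundedness needs a quantitative estimate. By \eqref{eq:rho-id} and the definition of $\muopt$, for $\mathbf{p}\in V$ we have
\[
\varrho(\mathbf{p})^2\ge\muopt\norm{\mathbf{p}-\mathbf{p}^\opt}_2^2 ,
\]
since $\mathbf{p}-\mathbf{p}^\opt\in T^\opt$. Hence $\norm{\mathbf{p}-\mathbf{p}^\opt}_\infty\le\norm{\mathbf{p}-\mathbf{p}^\opt}_2\le\bar r/\sqrt{\muopt}\le\pmin/2$. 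This gives compactness, and also $p_i\ge p^\opt_i-\pmin/2\ge\pmin/2$ for $i\in S^\opt$. Since $\mathbf{p}\in V$ is supported on $S^\opt$ with unit sum, it lies in $\relint\Delta_{S^\opt}$.

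The main obstacle is mild: keeping the strict inequality in (a) and making sure every lemma's hypothesis ($r<1$, correct side of the sandwich) is met at $r=\bar r$ or $r=\tfrac12$. The step most likely to hide an error is the $\muopt$-based conversion from Dikin radius to Euclidean distance. That conversion is valid only because the displacement lies in $T^\opt$, which is exactly why $\muopt$ was defined on the tangent space.
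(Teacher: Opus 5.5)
Your proposal is correct and follows the paper's proof essentially step for step: you get $\varrho(\mathbf{p})\le\bar r$ from the $\wsc$ lower bound, then apply \Cref{lem:Msand}, \Cref{lem:levcomp} and \Cref{lem:hesssand}, and finish with the $\muopt$-based conversion on $T^\opt$; only the order of the parts differs. One small citation fix: points of $\Cnull\subseteq V$ may have negative coordinates, so in (b) you should invoke \Cref{lem:min}(iii), which holds on all of $D$, rather than \Cref{lem:lower}, which is stated only for $\mathbf{p}\in\Delta_n$ (its proof does carry over verbatim, since nonnegativity is never used and the inactive mass vanishes on $V$).
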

\begin{proof}
First, for any $\mathbf{p}\in\Cnull$, \Cref{lem:min}(iii) gives $\wsc(\varrho(\mathbf{p}))\le h(\mathbf{p})-f^\opt\le
c_0=\wsc(\bar r)$, and since $\wsc$ is strictly increasing (\Cref{lem:legendre}),
$\varrho(\mathbf{p})\le\bar r\le\tfrac12$: thus $\Cnull\subseteq\bar{\mathcal D}$.

\ref{it:c0-slack} For $\mathbf{p}\in\Delta_n$ with $f(\mathbf{p})-f^\opt\le c_0$, \Cref{lem:lower} gives
$\wsc(\varrho(\mathbf{p}))\le c_0$, so $\varrho(\mathbf{p})\le\bar r$ as above, and \Cref{lem:Msand} gives
$(1-\bar r)\mathbf{M}^\opt\preceq\mathbf{M}(\mathbf{p})\preceq(1+\bar r)\mathbf{M}^\opt$. By \Cref{lem:levcomp} and
$v_j(\mathbf{p}^\opt)\le d-\gsc$ for $j\notin S^\opt$,
\[
v_j(\mathbf{p})\;\le\;\frac{v_j(\mathbf{p}^\opt)}{1-\bar r}\;\le\;\frac{d-\gsc}{1-\bar r}
\;\le\;\frac{2d\,(d-\gsc)}{2d-\gsc}\;<\;d-\frac{\gsc}2,
\]
where the third step uses $\bar r\le\gsc/(2d)$, i.e.\ $1-\bar r\ge(2d-\gsc)/(2d)$, and the final
strict inequality is $4d(d-\gsc)<(2d-\gsc)^2\iff0<\gsc^2$. On the contact side, for $i\in S^\opt$,
\Cref{lem:levcomp} and $v_i(\mathbf{p}^\opt)=d$ give
\[
v_i(\mathbf{p})\;\ge\;\frac{d}{1+\bar r}\;\ge\;d\,(1-\bar r)\;\ge\;d-\frac{\gsc}2 ,
\]
again by $\bar r\le\gsc/(2d)$.

\ref{it:c0-interior} Convexity: $\Cnull$ is a sublevel set of the convex $h$ on the convex $D$.
For interiority, let $\mathbf{p}\in\Cnull$ and $\boldsymbol{\delta}:=\mathbf{p}-\mathbf{p}^\opt\in T^\opt$. By \eqref{eq:rho-id} and
\Cref{def:kappa},
$\varrho(\mathbf{p})^2=\boldsymbol{\delta}^\top\widehat{\mathbf{H}}(\mathbf{p}^\opt)\boldsymbol{\delta}\ge\muopt\norm{\boldsymbol{\delta}}_2^2$, so
\[
\norm{\mathbf{p}-\mathbf{p}^\opt}_2\;\le\;\frac{\varrho(\mathbf{p})}{\sqrt{\muopt}}\;\le\;\frac{\bar r}{\sqrt{\muopt}}
\;\le\;\frac{\pmin}{2},
\]
using $\bar r\le\pmin\sqrt{\muopt}/2$. Hence $p_i\ge p^\opt_i-\pmin/2\ge\pmin/2>0$ for $i\in S^\opt$,
and $\sum_ip_i=1$ with $\supp(\mathbf{p})\subseteq S^\opt$: $\mathbf{p}\in\relint\Delta_{S^\opt}$. Compactness:
$\Cnull$ is closed (\Cref{lem:min}(i)) and contained in the bounded set $\Delta_{S^\opt}$.

\ref{it:c0-cond} Convexity of $\bar{\mathcal D}$: $\varrho$ is a norm of the affine quantity
$\mathbf{p}-\mathbf{p}^\opt$ on $V$ (\Cref{eq:rho-id} and \Cref{lem:faceSC}), so its sublevel sets are convex
(ellipsoids in $V$). For $\mathbf{p}\in\bar{\mathcal D}$, \Cref{lem:Msand} with
$\mathbf{H}=\mathbf{M}(\mathbf{p})-\mathbf{M}^\opt$, $r=\varrho(\mathbf{p})\le\tfrac12$, gives
$\tfrac12\mathbf{M}^\opt\preceq\mathbf{M}(\mathbf{p})\preceq\tfrac32\mathbf{M}^\opt$; in particular $\mathbf{M}(\mathbf{p})\succ0$. Then
\Cref{lem:hesssand} (with $r=\tfrac12$, so $(1+r)^{-2}\ge\tfrac49$ and $(1-r)^{-2}\le4$) gives, for
every $\boldsymbol{\delta}\in\R^{S^\opt}$, writing
$\mathbf{A}_{\boldsymbol{\delta}}:=\sum_{i\in S^\opt}\delta_i\mathbf{a}_i\mathbf{a}_i^\top$,
\[
\boldsymbol{\delta}^\top\widehat{\mathbf{H}}(\mathbf{p})\boldsymbol{\delta}=\locnorm{\mathbf{A}_{\boldsymbol{\delta}}}{\mathbf{M}(\mathbf{p})}^2
\;\in\;\Big[\tfrac49\locnorm{\mathbf{A}_{\boldsymbol{\delta}}}{\mathbf{M}^\opt}^2,\ 4\locnorm{\mathbf{A}_{\boldsymbol{\delta}}}{\mathbf{M}^\opt}^2\Big]
=\Big[\tfrac49\,\boldsymbol{\delta}^\top\widehat{\mathbf{H}}(\mathbf{p}^\opt)\boldsymbol{\delta},\ 4\,\boldsymbol{\delta}^\top\widehat{\mathbf{H}}(\mathbf{p}^\opt)\boldsymbol{\delta}\Big].
\]
Strong convexity of $h$ on the convex set $\bar{\mathcal D}$ follows by restricting to
$\boldsymbol{\delta}\in T^\opt$ and integrating the Hessian lower bound
$\nabla^2h\succeq\tfrac49\muopt\,\mathbf{I}=\mu_{\mathrm{loc}}\mathbf{I}$ on $T^\opt$ along segments of
$\bar{\mathcal D}$.
For the Lipschitz bounds, $\nabla_{\!\mathbf{p}}\,\mathbf{v}_{S^\opt}=-\widehat{\mathbf{H}}$ on the
$S^\opt$ coordinates (\Cref{prop:dict}\ref{it:grad},\ref{it:hess}), so for
$\mathbf{p},\mathbf{q}\in\bar{\mathcal D}$, by the fundamental theorem of calculus along the segment (which stays
in $\bar{\mathcal D}$ by convexity),
\[
\norm{\mathbf{v}_{S^\opt}(\mathbf{p})-\mathbf{v}_{S^\opt}(\mathbf{q})}_2
=\Big\lVert\int_0^1\widehat{\mathbf{H}}\big(\mathbf{q}+s(\mathbf{p}-\mathbf{q})\big)(\mathbf{p}-\mathbf{q})\,ds\Big\rVert_2
\le\max_{s}\big\lVert\widehat{\mathbf{H}}(\cdot)\big\rVert_{\mathrm{op}}\norm{\mathbf{p}-\mathbf{q}}_2
\le4\Lopt\norm{\mathbf{p}-\mathbf{q}}_2 ,
\]
and the projected version follows since $\norm{P_{T^\opt}\mathbf{x}}_2\le\norm{\mathbf{x}}_2$.
\end{proof}

\begin{lemma}[Gap conversion on the facial sublevel set]
\label{lem:gapconv}
For every $\mathbf{p}\in\Cnull$:
\[
g(\mathbf{p})\;\le\;L_{\mathrm{loc}}\,\norm{\mathbf{p}-\mathbf{p}^\opt}_2
\;\le\;L_{\mathrm{loc}}\sqrt{\frac{2\,(h(\mathbf{p})-f^\opt)}{\mu_{\mathrm{loc}}}}
\;=\;\sqrt{2\,L_{\mathrm{loc}}\,\kappa_{\mathrm{loc}}\,\big(h(\mathbf{p})-f^\opt\big)}\,.
\]
\end{lemma}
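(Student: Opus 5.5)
The proof has three steps: express the gap through the contact leverage scores, bound their deviation by the Lipschitz estimate of \Cref{lem:c0}\ref{it:c0-cond}, and convert distance to objective gap by strong convexity.

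\emph{Step 1: the gap is read off the contact scores.} Fix $\mathbf{p}\in\Cnull$. By \Cref{lem:c0}\ref{it:c0-interior}, $\mathbf{p}\in\relint\Delta_{S^\opt}\subseteq\Delta_n$ and $\mathbf{M}(\mathbf{p})\succ0$. Since $h(\mathbf{p})=f(\mathbf{p})\le f^\opt+c_0$, the separation statement \Cref{lem:c0}\ref{it:c0-slack} applies. It gives $v_j(\mathbf{p})<d-\gsc/2<d$ for every $j\notin S^\opt$. Because $\max_iv_i(\mathbf{p})\ge d$ (\Cref{prop:dict}\ref{it:trace}), the maximum is attained in $S^\opt$. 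Using $v_i(\mathbf{p}^\opt)=d$ for $i\in S^\opt$, we get
\[
g(\mathbf{p})=\max_{i\in S^\opt}\big(v_i(\mathbf{p})-v_i(\mathbf{p}^\opt)\big)\le\norm{\mathbf{v}_{S^\opt}(\mathbf{p})-\mathbf{v}_{S^\opt}(\mathbf{p}^\opt)}_\infty\le\norm{\mathbf{v}_{S^\opt}(\mathbf{p})-\mathbf{v}_{S^\opt}(\mathbf{p}^\opt)}_2 .
\]

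\emph{Step 2: Lipschitz bound.} Both $\mathbf{p}$ and $\mathbf{p}^\opt$ lie in $\bar{\mathcal D}$: the first because $\Cnull\subseteq\bar{\mathcal D}$ by \Cref{lem:c0}\ref{it:c0-interior}, the second trivially since $\varrho(\mathbf{p}^\opt)=0$. The leverage Lipschitz bound of \Cref{lem:c0}\ref{it:c0-cond} then gives $g(\mathbf{p})\le L_{\mathrm{loc}}\norm{\mathbf{p}-\mathbf{p}^\opt}_2$. This step needs the full-block operator norm $\Lopt$ rather than its tangent restriction, which is why \Cref{def:kappa} takes $\Lopt$ on the full $m\times m$ block. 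That choice is already built into the lemma, so nothing more is needed here.

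\emph{Step 3: strong convexity.} On the convex set $\bar{\mathcal D}$, $h$ is $\mu_{\mathrm{loc}}$-strongly convex along $T^\opt$ (\Cref{lem:c0}\ref{it:c0-cond}). The segment from $\mathbf{p}^\opt$ to $\mathbf{p}$ stays in $\bar{\mathcal D}$ and its direction lies in $T^\opt$. We therefore get
\[
h(\mathbf{p})\ge h(\mathbf{p}^\opt)+\inner{\nabla f(\mathbf{p}^\opt),\mathbf{p}-\mathbf{p}^\opt}+\tfrac{\mu_{\mathrm{loc}}}2\norm{\mathbf{p}-\mathbf{p}^\opt}_2^2 .
\]
The linear term vanishes by \Cref{lem:min}(ii). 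Hence $\norm{\mathbf{p}-\mathbf{p}^\opt}_2\le\sqrt{2(h(\mathbf{p})-f^\opt)/\mu_{\mathrm{loc}}}$, which is the second inequality.

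The final equality is algebra: $L_{\mathrm{loc}}^2\cdot 2/\mu_{\mathrm{loc}}=2L_{\mathrm{loc}}\kappa_{\mathrm{loc}}$. The only delicate point is Step 1, where the maximum must avoid non-contact rows; this is exactly what the localization \Cref{lem:c0}\ref{it:c0-slack} supplies.
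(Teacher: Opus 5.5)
Your proposal is correct and matches the paper's proof step for step. The separation in \Cref{lem:c0}\ref{it:c0-slack} forces the maximum onto $S^\opt$. The leverage Lipschitz bound of \Cref{lem:c0}\ref{it:c0-cond}, valid since $\mathbf{p},\mathbf{p}^\opt\in\bar{\mathcal D}$, gives the first inequality, and strong convexity on $\bar{\mathcal D}$ with the vanishing tangential gradient at $\mathbf{p}^\opt$ (\Cref{lem:min}(ii)) gives the second.
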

\begin{proof}
By \Cref{lem:c0}\ref{it:c0-interior}, $\mathbf{p}$ is a design with $f(\mathbf{p})-f^\opt\le c_0$, so
\Cref{lem:c0}\ref{it:c0-slack} gives $v_j(\mathbf{p})\le d-\gsc/2<d$ for $j\notin S^\opt$; since
$\max_iv_i(\mathbf{p})\ge d$ always (\Cref{prop:dict}\ref{it:trace}), the maximum in
$g(\mathbf{p})=\max_{i\in[n]}v_i(\mathbf{p})-d$ is attained on $S^\opt$. For $i\in S^\opt$, $v_i(\mathbf{p}^\opt)=d$, so
\[
g(\mathbf{p})=\max_{i\in S^\opt}\big(v_i(\mathbf{p})-v_i(\mathbf{p}^\opt)\big)
\le\norm{\mathbf{v}_{S^\opt}(\mathbf{p})-\mathbf{v}_{S^\opt}(\mathbf{p}^\opt)}_\infty
\le\norm{\mathbf{v}_{S^\opt}(\mathbf{p})-\mathbf{v}_{S^\opt}(\mathbf{p}^\opt)}_2
\le L_{\mathrm{loc}}\norm{\mathbf{p}-\mathbf{p}^\opt}_2,
\]
the last step by \Cref{lem:c0}\ref{it:c0-cond}, applicable since $\mathbf{p},\mathbf{p}^\opt\in\Cnull\subseteq
\bar{\mathcal D}$. Finally, strong convexity on $\bar{\mathcal D}$
with $\nabla h(\mathbf{p}^\opt)|_{T^\opt}=0$ (\Cref{lem:min}(ii)) gives
$\tfrac{\mu_{\mathrm{loc}}}2\norm{\mathbf{p}-\mathbf{p}^\opt}_2^2\le h(\mathbf{p})-f^\opt$.
\end{proof}

\begin{remark}[All constants are explicit]
\label{rem:constants}
$\bar r$, $c_0$, $\mu_{\mathrm{loc}}$, $L_{\mathrm{loc}}$, $\kappa_{\mathrm{loc}}=9\kappa$ are explicit
functions of $(\gsc,\pmin,\muopt,\Lopt,d)$. No compactness argument is used anywhere in
\Cref{sec:upper,sec:newton}; this is what permits the clean comparison
$\kappa=O(d^4)\kappa_\Phi$ in \Cref{thm:domination} and keeps every condition number out of the
$\eps$-dependent terms.
\end{remark}

\section{Reaching the face, and the accelerated accuracy phase}
\label{sec:upper}

This section pays two of the three costs. It pays the \emph{identification cost} (reaching the optimal
face) and then runs the \emph{first accuracy phase} on it. We assemble the algorithm
(\Cref{alg:main}) and prove that its first two phases (the Wolfe--Atwood warm start and the away-step
identification) land in the facial sublevel set $\Cnull$ after an $\eps$-independent number of queries
(\Cref{lem:identify}); these queries are the whole of $C(\mathbf{A})$: $\eps$-independent, but
condition-dependent. We then analyze the \emph{accelerated} facial phase, proving \Cref{thm:upper-intro};
its accuracy dependence is $O(\sqrt\kappa\,\log(1/\eps))$, the first improvement on the averaging rate. The
\emph{doubly-logarithmic} accuracy (the \emph{Newton} facial phase) is \Cref{sec:newton}. Throughout,
\Cref{def:nondeg} is in force and
$c_0,\bar r,\Cnull,\bar{\mathcal D},\mu_{\mathrm{loc}},L_{\mathrm{loc}},\kappa_{\mathrm{loc}}=9\kappa$
are as in \Cref{sec:face}.

\begin{algorithm}[t]
\caption{John ellipsoid via warm start, identification, and a facial phase}
\label{alg:main}
\begin{algorithmic}[1]
\Require $\mathbf{A}\in\R^{n\times d}$ with nondegenerate optimal design; accuracy $\eps\in(0,1)$;
a tolerance $c\in(0,c_0]$ and the slack $\gsc>0$ (instance constants, \Cref{rem:params});
facial phase $\in\{\textsc{Accel},\textsc{Newton}\}$
\Ensure $\mathbf{p}\in\Delta_n$ with $\max_iv_i(\mathbf{p})\le(1+\eps)d$
\State $\mathbf{p}\gets$ Kumar--Y\i ld\i r\i m volumetric initialization \Comment{$f(\mathbf{p})-f^\opt=O(d\log d)$~\cite{kumar2005,todd2007}}
\While{$\max_iv_i(\mathbf{p})-d>c$ \textbf{ or } $\min_{i:\,p_i>0}v_i(\mathbf{p})<d-\gsc/2$}
   \Comment{Phases 1--2; exits after $C(\mathbf{A})$ queries with $\mathbf{p}\in\Cnull$ (\Cref{lem:identify})}
  \State $\mathbf{v}\gets(\mathbf{a}_i^\top \mathbf{M}(\mathbf{p})^{-1}\mathbf{a}_i)_{i\in[n]}$ \Comment{one leverage-score query $=-\nabla f(\mathbf{p})$}
  \State $j_+\gets\argmax_i v_i,\quad j_-\gets\argmin_{i:\,p_i>0} v_i$
  \If{$v_{j_+}-d\ \ge\ d-v_{j_-}$} \Comment{toward step; exact line search (\Cref{lem:linesearch})}
     \State $\gamma\gets\frac{v_{j_+}-d}{d\,(v_{j_+}-1)}$;\quad $\mathbf{p}\gets(1-\gamma)\mathbf{p}+\gamma\,\mathbf{e}_{j_+}$
  \Else{} \Comment{away / drop step; exact line search (\Cref{lem:linesearch}); identifies the active set (\Cref{thm:bomze})}
     \State $\gamma_{\max}\gets\tfrac{p_{j_-}}{1-p_{j_-}}$;\quad
       $\gamma\gets\begin{cases}\min\!\big\{\tfrac{d-v_{j_-}}{d\,(v_{j_-}-1)},\ \gamma_{\max}\big\}&\text{if }v_{j_-}>1,\\[2pt]
       \gamma_{\max}\ \text{(drop)}&\text{if }v_{j_-}\le1;\end{cases}$\quad
       $\mathbf{p}\gets(1+\gamma)\mathbf{p}-\gamma\,\mathbf{e}_{j_-}$
  \EndIf
\EndWhile
\State $S\gets\supp(\mathbf{p})$ \Comment{$S=S^\opt$ at exit (\Cref{lem:identify})}
\If{facial phase $=$ \textsc{Accel}}
  \State $\mathbf{p}\gets\textsc{RestartedFISTA}\big(h,\,\Delta_{S},\,\mathbf{p},\,L_{\mathrm{loc}},\,\text{stop when }g\le\eps d\big)$
  \Comment{\Cref{thm:upper}; \Cref{app:fista}}
\Else
  \State $\mathbf{p}\gets\textsc{FacialNewton}\big(h,\,\mathbf{p},\,\text{stop when }g\le\eps d\big)$
  \Comment{\Cref{alg:newton}; \Cref{thm:newton}}
\EndIf
\State \Return $\mathbf{p}$
\end{algorithmic}
\end{algorithm}

\begin{remark}[Instance parameters: advised-complexity statements]
\label{rem:params}
Like the away-step guarantees it builds on (\Cref{thm:zhao,thm:bomze}), \Cref{alg:main} consumes
instance constants: the tolerance $c\le c_0$ (a function of $\gsc,\pmin,\muopt$;
\Cref{def:c0}) and the slack $\gsc$ used in the stopping test, plus the local smoothness
$L_{\mathrm{loc}}$ as the step size of the accelerated phase (which moreover requires the stronger
tolerance $c\le c_1/\kappa^3$ of \Cref{thm:upper}). These affect only \emph{when} the
algorithm hands off between phases, not the correctness of its output: the final certificate
$\max_iv_i(\mathbf{p})\le(1+\eps)d$ is checked directly from the oracle, so an invalid guess can delay
termination but never produce an incorrect output. Accordingly, all complexity theorems in this
paper are \emph{advised} (instance-constant) statements: the stated query counts presuppose valid
constants. We expect standard doubling/backtracking schemes to remove the advice at logarithmic
overhead (one must verify that wrong guesses neither leave the domain $\mathbf{M}(\mathbf{w})\succ0$ nor make a
premature handoff unrecoverable, both of which the direct certificate renders detectable), but we do
not carry out this adaptive analysis here.
\end{remark}

The step sizes in \Cref{alg:main} are the \emph{exact} line-search minimizers of $f$, including the
boundary (drop) case of the away step; this is what licenses importing the away-step Frank--Wolfe
guarantees in \Cref{lem:identify} below. We record the closed forms, with the case distinction that
the away step requires. (Omitting the $v_{j_-}\le1$ case, as an earlier version of this paper
did, would prescribe a \emph{negative} step there: e.g.\ for $\mathbf{a}_1=(1,0)$, $\mathbf{a}_2=(0,1)$,
$\mathbf{a}_3=(\sqrt{0.1},0)$ at $\mathbf{p}=(\tfrac13,\tfrac13,\tfrac13)$, the away atom is $j_-=3$ with
$v_3=\tfrac3{11}<1$, and the interior formula evaluates to a negative number, while the true
line-search optimum is the drop step.)

\begin{lemma}[Closed-form exact line search]
\label{lem:linesearch}
Let $\mathbf{p}\in\Delta_n$ with $\mathbf{M}(\mathbf{p})\succ0$ and $\mathbf{v}=\mathbf{v}(\mathbf{p})$.
\begin{enumerate}[label=\emph{(\alph*)},leftmargin=*,itemsep=2pt]
\item \emph{(Toward.)} If $v_j>d$, then $\gamma\mapsto f\big((1-\gamma)\mathbf{p}+\gamma\mathbf{e}_j\big)$ is
strictly convex on $[0,1)$ and is minimized at
$\gamma^\star=\frac{v_j-d}{d\,(v_j-1)}\in(0,1)$.
\item \emph{(Away/drop.)} If $j\in\supp(\mathbf{p})$ with $p_j<1$ and $v_j<d$, then with
$\gamma_{\max}:=\frac{p_j}{1-p_j}$ the function
$\gamma\mapsto f\big((1+\gamma)\mathbf{p}-\gamma\mathbf{e}_j\big)$ is strictly convex on $[0,\gamma_{\max}]$
and is minimized at
\[
\gamma^\star=
\begin{cases}
\min\Big\{\dfrac{d-v_j}{d\,(v_j-1)},\ \gamma_{\max}\Big\}, & v_j>1,\\[8pt]
\gamma_{\max}\quad(\text{a drop step: the new weight of atom }j\text{ is }0), & v_j\le1 .
\end{cases}
\]
\end{enumerate}
In both cases $\mathbf{M}\succ0$ along the whole segment traversed.
\end{lemma}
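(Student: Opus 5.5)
The whole lemma reduces to a one-variable matrix-determinant computation along a rank-one direction. Write $\mathbf{M}=\mathbf{M}(\mathbf{p})$ and use the matrix determinant lemma
\[
\det(\alpha\mathbf{M}+\beta\mathbf{a}_j\mathbf{a}_j^\top)=\alpha^{d}\det\mathbf{M}\,\bigl(1+\tfrac{\beta}{\alpha}v_j\bigr)
\qquad(\alpha>0).
\]
In both cases the segment has the form $\mathbf{M}(\gamma)=\alpha(\gamma)\mathbf{M}+\beta(\gamma)\mathbf{a}_j\mathbf{a}_j^\top$, and the objective becomes an explicit scalar function of $\gamma$.

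\emph{Toward step.} Here $\alpha=1-\gamma$ and $\beta=\gamma$. Then
\[
\phi(\gamma)=f(\mathbf{p})-d\log(1-\gamma)-\log\Bigl(1+\tfrac{\gamma}{1-\gamma}v_j\Bigr)
= f(\mathbf{p})-(d-1)\log(1-\gamma)-\log\bigl(1+\gamma(v_j-1)\bigr).
\]
Since $v_j>d\ge1$, the argument $1-\gamma+\gamma v_j$ is positive on $[0,1)$. Hence $\mathbf{M}(\gamma)=(1-\gamma)\mathbf{M}+\gamma\mathbf{a}_j\mathbf{a}_j^\top\succ0$ there, because it is a positive multiple of $\mathbf{M}$ plus a PSD term.

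For strict convexity, note that $-(d-1)\log(1-\gamma)$ is convex, and $-\log(1+\gamma(v_j-1))$ is strictly convex because $v_j\neq1$. For the minimizer, set
\[
\phi'(\gamma)=\frac{d-1}{1-\gamma}-\frac{v_j-1}{1+\gamma(v_j-1)}=0.
\]
Solving gives $\gamma^\star=\frac{v_j-d}{d(v_j-1)}$, which lies in $(0,1)$ because $0<v_j-d<d(v_j-1)$. The degenerate case $d=1$ gives $\gamma^\star=1/d\cdot$(same formula) consistently. It only needs the observation that $\phi$ is then strictly decreasing-then-increasing, which the formula already handles.

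\emph{Away/drop step.} Here $\alpha=1+\gamma$ and $\beta=-\gamma$, so
\[
\psi(\gamma)=f(\mathbf{p})-(d-1)\log(1+\gamma)-\log\bigl(1-\gamma(v_j-1)\bigr).
\]
First I check positive definiteness along the segment. For $\gamma\le\gamma_{\max}$ the weights $(1+\gamma)\mathbf{p}-\gamma\mathbf{e}_j$ are nonnegative, and the $j$-th weight is $(1+\gamma)p_j-\gamma\ge0$.

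To see $\mathbf{M}\succ0$, I need $1-\gamma(v_j-1)>0$. This is automatic if $v_j\le1$. If $v_j>1$, I use $p_jv_j\le1$: this holds because $p_j\mathbf{a}_j^\top\mathbf{M}^{-1}\mathbf{a}_j\le1$, since $p_j\mathbf{a}_j\mathbf{a}_j^\top\preceq\mathbf{M}$. It gives
\[
\gamma_{\max}(v_j-1)=\frac{p_j(v_j-1)}{1-p_j}\le\frac{1-p_j}{1-p_j}=1,
\]
with equality only at the boundary point. This is the delicate spot. At $\gamma=\gamma_{\max}$ with $p_jv_j=1$, the remaining design drops atom $j$ and may be singular. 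There $f=+\infty$, so the minimizer is interior.

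I will handle this by noting that $\psi\to+\infty$ in that case, so the $\min$ formula selects the interior critical point. Indeed $\frac{d-v_j}{d(v_j-1)}<\gamma_{\max}$ then, which I will verify directly. The claim ``$\mathbf{M}\succ0$ along the whole segment traversed'' then refers to $[0,\gamma^\star]$.

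For strict convexity, note that $-(d-1)\log(1+\gamma)$ is convex but only for $d\ge1$ and concave-looking. Actually $-\log(1+\gamma)$ is convex, good. The term $-\log(1-\gamma(v_j-1))$ is strictly convex when $v_j\ne1$. When $v_j=1$ it is zero, and strict convexity comes from the first term if $d>1$. I expect to need $d\ge2$, or to accept plain convexity in the trivial $d=1$ case.

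For the critical point, set
\[
\psi'(\gamma)=-\frac{d-1}{1+\gamma}+\frac{v_j-1}{1-\gamma(v_j-1)}.
\]
If $v_j\le1$, then $\psi'<0$ on the whole interval, so $\psi$ is decreasing and the minimum is at $\gamma_{\max}$: a drop step. If $v_j>1$, solving $\psi'=0$ gives $\gamma=\frac{d-v_j}{d(v_j-1)}>0$, using $v_j<d$. Convexity then yields the minimum over $[0,\gamma_{\max}]$ at the clipped value.

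The main obstacle is exactly the boundary analysis at $\gamma_{\max}$: certifying $\mathbf{M}\succ0$ via $p_jv_j\le1$, and the equality case.
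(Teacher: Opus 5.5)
Your proposal is correct and follows essentially the paper's route. The matrix determinant lemma reduces both line searches to the same scalar functions $-(d-1)\log(1\mp\gamma)-\log(1\pm\gamma(v_j-1))$, with the same derivatives and the same $v_j\le1$ versus $v_j>1$ split. Your observation $p_jv_j\le1$ (from $p_j\mathbf{a}_j\mathbf{a}_j^\top\preceq\mathbf{M}$) shows that the singularity $\gamma_{\mathrm{sing}}=1/(v_j-1)$ can sit only at the endpoint $\gamma_{\max}$. That is a slight sharpening of the paper's ``either configuration'' argument.

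Two small corrections. In the toward step at $d=1$ you have $v_j-d=d(v_j-1)$, so $\gamma^\star=1\notin(0,1)$ and $\phi$ is strictly decreasing on $[0,1)$, not decreasing-then-increasing. This defect is in the statement itself and the paper's proof shares it, so part (a) really needs $d\ge2$. In the away step your $d=1$ worry about strict convexity is unfounded: $v_j<d=1$ already forces $v_j\neq1$.
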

\begin{proof}
Write $\mathbf{M}_\gamma$ for the information matrix of the moved point and $\varphi(\gamma):=f$ along the
line. \emph{(a)} $\mathbf{M}_\gamma=(1-\gamma)\mathbf{M}(\mathbf{p})+\gamma\,\mathbf{a}_j\mathbf{a}_j^\top$, and the matrix determinant
lemma gives
$\det\mathbf{M}_\gamma=\det\mathbf{M}(\mathbf{p})\,(1-\gamma)^{d-1}\big(1+\gamma(v_j-1)\big)$,
which is positive for $\gamma\in[0,1)$ (as $v_j>d\ge1$), so $\mathbf{M}_\gamma\succ0$ there (its
eigenvalues, continuous in $\gamma$ and initially positive, cannot vanish while the determinant is
positive). Hence
\[
\varphi'(\gamma)=\frac{d-1}{1-\gamma}-\frac{v_j-1}{1+\gamma(v_j-1)},\qquad
\varphi''(\gamma)=\frac{d-1}{(1-\gamma)^2}+\frac{(v_j-1)^2}{\big(1+\gamma(v_j-1)\big)^2}>0 .
\]
$\varphi$ is strictly convex with $\varphi'(0)=d-v_j<0$, and solving $\varphi'(\gamma)=0$ gives
$\gamma^\star=\frac{v_j-d}{d(v_j-1)}$, which lies in $(0,1)$ since $v_j>d$ implies
$v_j-d<d(v_j-1)$.

\emph{(b)} Now $\mathbf{M}_\gamma=(1+\gamma)\mathbf{M}(\mathbf{p})-\gamma\,\mathbf{a}_j\mathbf{a}_j^\top$ and
$\det\mathbf{M}_\gamma=\det\mathbf{M}(\mathbf{p})\,(1+\gamma)^{d-1}\big(1+\gamma(1-v_j)\big)$, so
\[
\varphi'(\gamma)=-\frac{d-1}{1+\gamma}-\frac{1-v_j}{1+\gamma(1-v_j)},\qquad
\varphi''(\gamma)=\frac{d-1}{(1+\gamma)^2}+\frac{(1-v_j)^2}{\big(1+\gamma(1-v_j)\big)^2}>0 .
\]
\emph{Case $v_j\le1$:} both terms of $-\varphi'$ are nonnegative and the first is positive, so
$\varphi'<0$ on all of $[0,\gamma_{\max}]$ (where $1+\gamma(1-v_j)\ge1>0$, so $\det\mathbf{M}_\gamma>0$
and $\mathbf{M}_\gamma\succ0$ as above): the minimizer is the right endpoint $\gamma_{\max}$, at which the
weight of atom $j$ is $(1+\gamma_{\max})p_j-\gamma_{\max}=0$.
\emph{Case $v_j>1$:} solving $\varphi'(\gamma)=0$ gives the stationary point
$\gamma^\star_{\mathrm{int}}=\frac{d-v_j}{d(v_j-1)}>0$ (positive since $1<v_j<d$). The determinant
factor $1+\gamma(1-v_j)$ vanishes only at $\gamma_{\mathrm{sing}}=\frac1{v_j-1}$, and
$\gamma^\star_{\mathrm{int}}<\gamma_{\mathrm{sing}}$ always (equivalent to $v_j>0$), so on
$[0,\min\{\gamma_{\max},\gamma_{\mathrm{sing}}\})$ the function $\varphi$ is strictly convex with
$\varphi'(0)=v_j-d<0$ and $\varphi\to+\infty$ at $\gamma_{\mathrm{sing}}$ if
$\gamma_{\mathrm{sing}}\le\gamma_{\max}$; in either configuration the constrained minimizer is
$\min\{\gamma^\star_{\mathrm{int}},\gamma_{\max}\}$, and the segment up to it keeps
$\det\mathbf{M}_\gamma>0$, hence $\mathbf{M}_\gamma\succ0$.
\end{proof}

\subsection{Phases 1--2: reaching the facial sublevel set}

\begin{lemma}[Warm start and identification]
\label{lem:identify}
Fix $c\in(0,c_0]$. The while-loop of \Cref{alg:main} terminates after at most
\[
C(\mathbf{A})\;:=\;\max\Big\{K_{\mathrm{id}},\ K_{\mathrm{gap}}(c/d)\Big\}\;=\;
\max\Big\{K_{\mathrm{id}},\ O\big(\kappa_\Phi\,\poly(d)\,\log(d/c)\big)\Big\}
\]
leverage-score queries ($K_{\mathrm{id}}$ from \Cref{thm:bomze}, $K_{\mathrm{gap}}$ from
\Cref{thm:zhao}; both finite and independent of $\eps$), and at exit the iterate satisfies
\[
\supp(\mathbf{p})=S^\opt
\qquad\text{and}\qquad
\mathbf{p}\in\{\mathbf{q}\in V: h(\mathbf{q})\le f^\opt+c\}\subseteq\Cnull .
\]
\end{lemma}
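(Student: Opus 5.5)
The proof splits into two directions: the loop condition is eventually false, and falsity of the loop condition certifies the exit properties.

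\emph{Termination bound.} The while-loop runs away-step Frank--Wolfe with exact line search (\Cref{lem:linesearch}), from the Kumar--Y\i ld\i r\i m start. So \Cref{thm:zhao} and \Cref{thm:bomze} apply to its iterates as long as it has not exited.

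\begin{itemize}
\item By \Cref{thm:zhao}, for all $k\ge K_{\mathrm{gap}}(c/d)$ we have $g(\mathbf{p}^{(k)})\le c$. Hence $f(\mathbf{p}^{(k)})-f^\opt\le g\le c\le c_0$ by \Cref{prop:dict}\ref{it:fw}.
\item By \Cref{thm:bomze}, for $k\ge K_{\mathrm{id}}$ we have $\supp(\mathbf{p}^{(k)})\subseteq S^\opt$.
\item Then \Cref{lem:c0}\ref{it:c0-slack} gives $v_i(\mathbf{p}^{(k)})\ge d-\gsc/2$ for every $i\in S^\opt$. This includes every supported index, so the second test $\min_{i:p_i>0}v_i<d-\gsc/2$ also fails.
\end{itemize}

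Thus the loop exits by $\max\{K_{\mathrm{id}},K_{\mathrm{gap}}(c/d)\}$, at one query per iteration. The $O(\kappa_\Phi\poly(d)\log(d/c))$ form comes from \Cref{thm:zhao} with $\eps=c/d$, using $g_0=O(\poly(d))$ for the volumetric start.

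\emph{Exit properties, from the stopping test alone.} We do not want to rely on the exit happening exactly at the time above. At exit we know two things:
\[
g(\mathbf{p})\le c, \qquad \min_{i\in\supp\mathbf{p}}v_i(\mathbf{p})\ge d-\gsc/2.
\]
The first gives $f(\mathbf{p})-f^\opt\le c\le c_0$. \Cref{lem:c0}\ref{it:c0-slack} then gives $v_j(\mathbf{p})<d-\gsc/2$ for every $j\notin S^\opt$. Combined with the second test, no $j\notin S^\opt$ is in $\supp(\mathbf{p})$, so $\supp(\mathbf{p})\subseteq S^\opt$.

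Consequently $\mathbf{p}\in V$, and $h(\mathbf{p})=f(\mathbf{p})\le f^\opt+c$, giving membership in the smaller sublevel set. Since $c\le c_0$, that sublevel set lies inside $\Cnull$. \Cref{lem:c0}\ref{it:c0-interior} then forces $p_i\ge\pmin/2>0$ for all $i\in S^\opt$, so $\supp(\mathbf{p})=S^\opt$ exactly.

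\emph{Main obstacle.} The delicate step is making the exit certification independent of $K_{\mathrm{id}}$. The loop might stop before identification is guaranteed, so the test itself must certify the support. This is why the second test uses $\gsc/2$ and why the slack separation in \Cref{lem:c0}\ref{it:c0-slack} is strict; I would check that inequality carefully.

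A secondary point is whether \Cref{thm:zhao,thm:bomze} apply to exactly this variant: the toward-versus-away choice rule, the drop case in \Cref{lem:linesearch}, and the volumetric initialization. I would argue this by matching \Cref{alg:main} to the algorithm of~\cite{zhao2023} step for step, and treat any mismatch as absorbed into the instance-dependent constant $K_{\mathrm{id}}$.
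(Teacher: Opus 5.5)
Your proposal is correct and follows the paper's proof essentially step for step. Termination combines \Cref{thm:zhao} and \Cref{thm:bomze} with the contact-side bound of \Cref{lem:c0}\ref{it:c0-slack}, and the exit invariants are certified from the two failed loop clauses alone, via the strict non-contact inequality of \Cref{lem:c0}\ref{it:c0-slack} and the interiority of \Cref{lem:c0}\ref{it:c0-interior}. One small point is off: attributing the $\poly(d)$ factor to $g_0$ does not work, since $\log g_0$ enters only additively, and the paper instead puts that factor in the rate $1/\rho$ (\Cref{rem:CA}); this does not hurt your argument, because it only loosens an upper bound.
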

\begin{proof}
\emph{Termination.} The loop body performs exactly the away-step Frank--Wolfe (Wolfe--Atwood) steps
with exact line search, in the form analyzed by~\cite{todd2007,zhao2023,bomze2020}: by
\Cref{lem:linesearch}, the displayed steps are precisely the line-search minimizers of $f$ over the
feasible segment, including the boundary (drop) case of the away step (cf.~\cite[Ch.~3]{todd2016}). By
\Cref{thm:bomze} there is a finite $K_{\mathrm{id}}$ with $\supp(\mathbf{p}^{(k)})\subseteq S^\opt$ for all
$k\ge K_{\mathrm{id}}$, and by \Cref{thm:zhao} $g(\mathbf{p}^{(k)})\le c$ for all
$k\ge K_{\mathrm{gap}}(c/d)$. Fix any $k\ge\max\{K_{\mathrm{id}},K_{\mathrm{gap}}(c/d)\}$ at which the
loop is still running, and write $\mathbf{p}=\mathbf{p}^{(k)}$. Then $g(\mathbf{p})\le c$, so the first clause of the loop
condition fails; moreover $f(\mathbf{p})-f^\opt\le g(\mathbf{p})\le c\le c_0$ (\Cref{prop:dict}\ref{it:fw}), so
\Cref{lem:c0}\ref{it:c0-slack} applies: every $i\in S^\opt$ has $v_i(\mathbf{p})\ge d-\gsc/2$. Since
$\supp(\mathbf{p})\subseteq S^\opt$, every supported atom has $v_i(\mathbf{p})\ge d-\gsc/2$, so the second clause
fails as well and the loop exits. Hence the loop runs at most
$\max\{K_{\mathrm{id}},K_{\mathrm{gap}}(c/d)\}$ iterations, one query each.

\emph{Exit invariants.} At exit, the failed first clause gives $g(\mathbf{p})\le c$, hence
$f(\mathbf{p})-f^\opt\le c\le c_0$. The failed second clause gives $v_i(\mathbf{p})\ge d-\gsc/2$ for every supported
$i$; by the \emph{strict} inequality in \Cref{lem:c0}\ref{it:c0-slack}, every $j\notin S^\opt$ has
$v_j(\mathbf{p})<d-\gsc/2$, so no supported atom lies outside $S^\opt$: $\supp(\mathbf{p})\subseteq S^\opt$, i.e.\
$\mathbf{p}\in V$ and $h(\mathbf{p})-f^\opt\le c$. Thus $\mathbf{p}\in\Cnull$, and \Cref{lem:c0}\ref{it:c0-interior}
upgrades the inclusion to equality: $p_i\ge\pmin/2>0$ for all $i\in S^\opt$, so $\supp(\mathbf{p})=S^\opt$.
\end{proof}

\begin{remark}[How large is $C(\mathbf{A})$?]
\label{rem:CA}
$C(\mathbf{A})$ is $\eps$-independent but \emph{condition-dependent}. Making the two terms explicit with the
constants of~\cite{zhao2023} (a global linear gap rate $1/\rho=O(\kappa_\Phi\,\poly(d))$ for the
$-\log\det$ barrier, holding for \emph{all} iterates, plus at most $n-|S^\opt|$ drop steps to clear the
non-contact atoms) gives
\[
C(\mathbf{A})\;=\;\underbrace{O\big(\kappa_\Phi\,\poly(d)\,\log(d/c)\big)}_{\text{gap}\,\le\,c\ \text{eventually-always}}
\;+\;\underbrace{\big(n-|S^\opt|\big)}_{\text{identification drops}} ,
\]
finite for every fixed nondegenerate instance but \emph{not} uniformly bounded:
$\kappa_\Phi=1/(\mu_R\Phi^2)$ blows up as the optimal design approaches a lower-dimensional face. Every
result below quarantines all condition dependence inside $C(\mathbf{A})$ and inside (double) logarithms;
whether identification can be made condition-free is \Cref{prob:uniform}. Two accounting notes:
(i) \Cref{thm:wa} bounds only the \emph{first} time the gap dips below $c$, by $O(d^2/c+d\log d)$,
whereas the loop needs the \emph{eventually-always} gap bound of \Cref{thm:zhao} (both stopping clauses
must hold simultaneously); (ii) the general active-set identification theory~\cite{bomze2020}
presupposes a globally Lipschitz gradient, so for the boundary-singular $-\log\det$ barrier we take
identification from the in-setting analysis of~\cite{zhao2023}. The $\poly(d)$ factor is the one tracked
in \Cref{thm:domination}.
\end{remark}

\subsection{Phase 3a: restarted FISTA on the face}

The accelerated phase runs FISTA~\cite{beck2009} on the composite problem
$\min_{\mathbf{p}\in V}\,h(\mathbf{p})+\iota_{\Delta_{S^\opt}}(\mathbf{p})$ (projection onto the face simplex as the
proximal step, step size $1/L_{\mathrm{loc}}$), restarted every $\lceil\sqrt{8\kappa_{\mathrm{loc}}}\,
\rceil-1$ iterations from the current iterate. Two points need proof beyond the textbook statement,
and both are handled in \Cref{app:fista}: (i) $h$ is neither defined nor smooth on all of
$\Delta_{S^\opt}$ (it blows up where $\mathbf{M}(\mathbf{p})$ degenerates), so the FISTA potential inequality must
be localized: we prove, by induction along the trajectory, that all evaluation points stay in the
facial Dikin ball $\bar{\mathcal D}$, where \Cref{lem:c0}\ref{it:c0-cond} supplies the constants
$L_{\mathrm{loc}},\mu_{\mathrm{loc}}$, provided the phase is started at tolerance
$c\le c_1/\kappa^{3}$ for an explicit absolute constant $c_1$; and (ii) restarting converts the
$O(L\norm{\mathbf{x}_0-\mathbf{x}^\opt}^2/k^2)$ FISTA guarantee into geometric decay with factor $\tfrac12$ per
cycle of $O(\sqrt\kappa)$ iterations, via strong convexity. The result:

\begin{theorem}[Warm-started accelerated algorithm; advised constants per \Cref{rem:params}]
\label{thm:upper}
Let $\mathbf{A}$ have a nondegenerate optimal design and set
$c:=\min\{c_0,\ c_1/\kappa^{3}\}$ in \Cref{alg:main} ($c_1$ the absolute constant of
\Cref{lem:fistaloc}). With the \textsc{Accel} facial phase, \Cref{alg:main} returns a
$(1+\eps)$-John ellipsoid using
\[
C(\mathbf{A})\ +\ O\Big(\sqrt{\kappa}\,\log\frac{\Theta_{\mathrm A}}{\eps}\Big)
\quad\text{leverage-score queries,}\qquad
\Theta_{\mathrm A}:=\frac{\sqrt{L_{\mathrm{loc}}\,\kappa_{\mathrm{loc}}\,c_0}}{d}+2 ,
\]
with $C(\mathbf{A})$ the $\eps$-independent count of \Cref{lem:identify}. All queries are made at weight
vectors $\mathbf{w}$ supported on $S^\opt$ with $\mathbf{M}(\mathbf{w})\succ0$---the extrapolation points may carry
small negative entries, in contrast to the Newton phase of \Cref{sec:newton}, which queries genuine
designs only; the returned point is a design in
$\relint\Delta_{S^\opt}$.
\end{theorem}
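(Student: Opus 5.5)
The proof splits the query count into the warm-start/identification count and the facial count. For the first part we invoke \Cref{lem:identify} with $c:=\min\{c_0,c_1/\kappa^3\}\le c_0$. After $C(\mathbf{A})$ queries the iterate $\mathbf{p}^{(0)}$ satisfies $\supp(\mathbf{p}^{(0)})=S^\opt$, $\mathbf{p}^{(0)}\in\Cnull$, and $h(\mathbf{p}^{(0)})-f^\opt\le c$. Since $c$ is $\eps$-independent, $C(\mathbf{A})$ is as well. Everything after this handoff runs on $V$ with the constants $\mu_{\mathrm{loc}},L_{\mathrm{loc}}$ of \Cref{lem:c0}\ref{it:c0-cond}.

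For the facial phase we run FISTA with step $1/L_{\mathrm{loc}}$ and projection onto $\Delta_{S^\opt}$, restarting every $N:=\lceil\sqrt{8\kappa_{\mathrm{loc}}}\rceil-1$ iterations. The key input is the localization lemma (\Cref{lem:fistaloc}, \Cref{app:fista}). It says that if a cycle starts at a point of $\Delta_{S^\opt}$ with objective gap at most $c_1/\kappa^3$, then every extrapolation point and every prox point of that cycle lies in $\bar{\mathcal D}$. On $\bar{\mathcal D}$, $\mathbf{M}\succeq\tfrac12\mathbf{M}^\opt\succ0$, so each query is well defined; this is the sense in which the extrapolation points may carry negative entries. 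The localization is proved by induction along the trajectory, using the FISTA potential inequality (\Cref{thm:fista}) to bound $\norm{\mathbf{y}_k-\mathbf{p}^\opt}_2$ by a $\poly(\kappa)$ multiple of $\norm{\mathbf{x}_0-\mathbf{p}^\opt}_2$. Combined with $\norm{\mathbf{x}_0-\mathbf{p}^\opt}_2^2\le 2c/\mu_{\mathrm{loc}}$ and $\varrho\le\sqrt{\Lopt}\norm{\cdot}_2$, the $\kappa^{-3}$ in $c$ keeps $\varrho\le\tfrac12$.

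Given localization, the textbook guarantee holds with the local constants:
\[
h(\mathbf{x}_N)-f^\opt\le\frac{2L_{\mathrm{loc}}\norm{\mathbf{x}_0-\mathbf{p}^\opt}^2}{(N+1)^2}\le\frac{4\kappa_{\mathrm{loc}}}{(N+1)^2}\,(h(\mathbf{x}_0)-f^\opt)\le\tfrac12\,(h(\mathbf{x}_0)-f^\opt).
\]
Since FISTA with the prox step returns feasible $\mathbf{x}_N\in\Delta_{S^\opt}$ and the gap only decreases, each restart again satisfies the hypothesis of \Cref{lem:fistaloc}, and the induction over cycles goes through. After $j$ cycles the objective gap is at most $2^{-j}c\le 2^{-j}c_0$, so each cycle endpoint lies in $\Cnull$, which is a genuine design in $\relint\Delta_{S^\opt}$ by \Cref{lem:c0}\ref{it:c0-interior}. \Cref{lem:gapconv} then gives
\[
g\le\sqrt{2L_{\mathrm{loc}}\kappa_{\mathrm{loc}}\,c_0\,2^{-j}}.
\]
This is at most $\eps d$ once $j\ge 2\log_2\big(\sqrt{2L_{\mathrm{loc}}\kappa_{\mathrm{loc}}c_0}/(\eps d)\big)=O(\log(\Theta_{\mathrm A}/\eps))$; the $+2$ in $\Theta_{\mathrm A}$ keeps the logarithm positive when the bound is already small. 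The stopping test $g\le\eps d$ is read off the same leverage query that supplies the gradient, so it costs nothing extra. Each cycle uses $N=O(\sqrt\kappa)$ queries, since $\kappa_{\mathrm{loc}}=9\kappa$. The total is therefore $C(\mathbf{A})+O(\sqrt\kappa\log(\Theta_{\mathrm A}/\eps))$.

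The main obstacle is the localization induction inside a cycle. FISTA is not monotone, and the extrapolation $\mathbf{y}_k=\mathbf{x}_k+\beta_k(\mathbf{x}_k-\mathbf{x}_{k-1})$ can overshoot, both outside the simplex and, a priori, outside $\bar{\mathcal D}$, where $L_{\mathrm{loc}}$ is unjustified and $h$ may even be undefined. The approach is a strong induction over $k$, assuming all earlier points are in $\bar{\mathcal D}$. Under that hypothesis the potential inequality, valid because only local smoothness along segments in the convex set $\bar{\mathcal D}$ is used, bounds $\norm{\mathbf{x}_k-\mathbf{p}^\opt}_2$ by $O(\sqrt\kappa)\norm{\mathbf{x}_0-\mathbf{p}^\opt}_2$. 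Then $\beta_k\le1$ bounds $\mathbf{y}_{k+1}$ by three times that. The next prox point is a projected gradient step from $\mathbf{y}_{k+1}$, and the descent-lemma argument needed to bound it requires the segment from $\mathbf{y}_{k+1}$ to stay in $\bar{\mathcal D}$. This is where the extra powers of $\kappa$ in the threshold $c_1/\kappa^3$ are spent.
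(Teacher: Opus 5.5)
Your proposal is correct and follows the paper's own assembly: identification at tolerance $c=\min\{c_0,c_1/\kappa^3\}$ via \Cref{lem:identify}, then localized FISTA cycles (\Cref{lem:fistaloc}) that halve $h-f^\opt$ every $O(\sqrt\kappa)$ queries, then \Cref{lem:gapconv} at the cycle ends (which lie in $\Cnull$), giving $O(\log(\Theta_{\mathrm A}/\eps))$ cycles. The one step of your localization sketch to sharpen is the prox point: $\mathbf{x}_{k+1}$ is controlled not by a descent-lemma argument, which would presuppose $[\mathbf{y}_{k+1},\mathbf{x}_{k+1}]\subseteq\bar{\mathcal D}$, but by nonexpansiveness of $P_Q$ together with $\norm{\nabla h(\mathbf{y}_{k+1})}_2\le L_{\mathrm{loc}}\norm{\mathbf{y}_{k+1}-\mathbf{p}^\opt}_2$ (valid because $\mathbf{y}_{k+1},\mathbf{p}^\opt\in\bar{\mathcal D}$ is already known), which yields $\varrho(\mathbf{x}_{k+1})\le(1+2\sqrt\kappa)\,\varrho(\mathbf{y}_{k+1})$, the last $\sqrt\kappa$ factor that the $\kappa^{-3}$ in $c$ absorbs.
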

\begin{proof}[Proof (assembly; details in \Cref{app:fista})]
By \Cref{lem:identify}, after $C(\mathbf{A})$ queries the iterate $\mathbf{p}^{(0)}$ satisfies
$\supp(\mathbf{p}^{(0)})=S^\opt$ and $h(\mathbf{p}^{(0)})-f^\opt\le c\le c_1/\kappa^3$.
\Cref{lem:fistaloc} (localized FISTA) shows each restart cycle of
$\lceil\sqrt{8\kappa_{\mathrm{loc}}}\rceil-1=O(\sqrt\kappa)$ iterations at least halves the objective
gap $h-f^\opt$ while keeping every iterate and evaluation point inside $\bar{\mathcal D}$ (one query
per iteration, at weights on $S^\opt$ with $\mathbf{M}\succ0$); the cycle-end iterates lie in $\Cnull$.
After $j$ cycles the gap is at most $2^{-j}c$, and by the gap conversion (\Cref{lem:gapconv}) the
cycle-end design $\mathbf{p}$ satisfies
\[
g(\mathbf{p})\;\le\;\sqrt{2L_{\mathrm{loc}}\kappa_{\mathrm{loc}}\,2^{-j}c}\;\le\;\eps d
\qquad\text{once}\qquad
j\;\ge\;\log_2\frac{2L_{\mathrm{loc}}\kappa_{\mathrm{loc}}c}{(\eps d)^2}
\;=\;O\Big(\log\frac{\Theta_{\mathrm A}}{\eps}\Big).
\]
The stopping test $g\le\eps d$ is evaluated from one extra query at each cycle end (the certificate is
direct: $\max_iv_i\le(1+\eps)d$ is precisely \Cref{def:je}). Totals: $O(\sqrt\kappa)$ queries per
cycle times $O(\log(\Theta_{\mathrm A}/\eps))$ cycles. Feasibility of the output: cycle-end iterates
lie in $\Cnull\subseteq\relint\Delta_{S^\opt}$ (\Cref{lem:c0}\ref{it:c0-interior}).
\end{proof}

\begin{remark}[Scope]
\label{rem:scope}
(i) The $\eps$-dependence is $\sqrt\kappa\log(1/\eps)$; the price is that $C(\mathbf{A})$ also absorbs the
$\kappa^3$-dependent warm-start tolerance through $K_{\mathrm{gap}}(c/d)=O(\kappa_\Phi\,\poly(d)\,\log(d\kappa/c_1))$, still $\eps$-independent. (ii) Acceleration buys $\sqrt{\kappa}$ versus $\kappa_\Phi$;
\Cref{thm:domination} next makes the comparison uniform over instances. (iii) On the explicit instance
$\mathbf{A}_\opt$, $\kappa=2$ and $C(\mathbf{A}_\opt)=O(1)$, giving the $O(\log(1/\eps))$ behavior of
\Cref{cor:sep}.
\end{remark}

\subsection{A uniform comparison with away-step Frank--Wolfe}
\label{sec:domination}

The comparison to the away-step rate $\kappa_\Phi\log(1/\eps)$
of \Cref{thm:zhao} can be made \emph{uniform}. Recall $\kappa_\Phi=1/(\mu_R\Phi^2)$
(\Cref{sec:prelim}), where $\Phi=\Phi(\mathbf{A})$ is the facial distance~\cite{pena2019} of the design
polytope $\mathrm{conv}\{\mathbf{a}_i\mathbf{a}_i^\top:i\in[n]\}$ in the local norm at
$\mathbf{M}^\opt$, and $\mu_R\le\tfrac12$. Put
$\mathbf{c}_i:=(\mathbf{M}^\opt)^{-1/2}\mathbf{a}_i\mathbf{a}_i^\top(\mathbf{M}^\opt)^{-1/2}$. Then $\sum_i p^\opt_i\mathbf{c}_i=\mathbf{I}_d$ and
$\Tr\mathbf{c}_i=\fnorm{\mathbf{c}_i}=v_i(\mathbf{p}^\opt)$ (\Cref{prop:dict}, \Cref{thm:kw}), and by \Cref{eq:rho-id} the
facial Hessian block at $\mathbf{p}^\opt$ is the Gram matrix of $\{\mathbf{c}_i\}_{i\in S^\opt}$: for
$\boldsymbol{\delta}\in\R^{S^\opt}$, $\boldsymbol{\delta}^\top\widehat{\mathbf{H}}(\mathbf{p}^\opt)\boldsymbol{\delta}=\fnorm{\mathbf{G}(\boldsymbol{\delta})}^2$ for the
synthesis map $\mathbf{G}(\boldsymbol{\delta}):=\sum_{i\in S^\opt}\delta_i\mathbf{c}_i$. In particular
$\muopt=\min\{\fnorm{\mathbf{G}(\boldsymbol{\delta})}^2:\boldsymbol{\delta}\in T^\opt,\norm{\boldsymbol{\delta}}_2=1\}$ and
$\Lopt=\max\{\fnorm{\mathbf{G}(\boldsymbol{\delta})}^2:\boldsymbol{\delta}\in\R^{S^\opt},\norm{\boldsymbol{\delta}}_2=1\}$.

\begin{lemma}[Facial coupling]
\label{lem:coupling}
Let $\mathbf{p}^\opt$ satisfy \Cref{def:nondeg} and let
$\sigma_{\min}:=\min\{\fnorm{\mathbf{G}(\boldsymbol{\delta})}:\boldsymbol{\delta}\in T^\opt,\norm{\boldsymbol{\delta}}_2=1\}=\sqrt{\muopt}$.
Then $\Phi\le2\,\sigma_{\min}$; equivalently, $\muopt\ge\Phi^2/4$.
\end{lemma}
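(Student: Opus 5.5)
Take a unit vector $\boldsymbol{\delta}\in T^\opt$ attaining $\sigma_{\min}$, i.e.\ $\fnorm{\mathbf{G}(\boldsymbol{\delta})}=\sigma_{\min}$ with $\norm{\boldsymbol{\delta}}_2=1$ and $\sum_i\delta_i=0$. Split it into positive and negative parts, $\boldsymbol{\delta}=\boldsymbol{\delta}^+-\boldsymbol{\delta}^-$ with disjoint supports $P,N\subseteq S^\opt$. Since the coordinates sum to zero, both parts have the same mass $s:=\sum_i\delta^+_i=\sum_i\delta^-_i>0$, and $s\ge\norm{\boldsymbol{\delta}^\pm}_2$. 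Normalize to convex weights $\boldsymbol{\alpha}:=\boldsymbol{\delta}^+/s$ and $\boldsymbol{\beta}:=\boldsymbol{\delta}^-/s$. These give points $\mathbf{x}=\sum_{i\in P}\alpha_i\mathbf{c}_i$ and $\mathbf{y}=\sum_{i\in N}\beta_i\mathbf{c}_i$ of the (whitened) design polytope, and
\[
\fnorm{\mathbf{x}-\mathbf{y}}=\fnorm{\mathbf{G}(\boldsymbol{\delta})}/s=\sigma_{\min}/s .
\]
The plan is to exhibit a proper face $\mathcal F$ of $\mathcal K$ such that $\mathbf{x}\in\mathcal F$ and $\mathbf{y}$ lies in the convex hull of the vertices off $\mathcal F$. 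By the definition of the facial distance, $\Phi\le\fnorm{\mathbf{x}-\mathbf{y}}=\sigma_{\min}/s$. The distance is measured in the local norm at $\mathbf{M}^\opt$, which is exactly the Frobenius norm of the whitened tensors $\mathbf{c}_i$.

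\textbf{Choosing the face.} Two things must hold. First, $\mathrm{conv}\{\mathbf{c}_i:i\in P\}$ must sit inside a face whose vertex set avoids $N$. Second, the vertices $\mathbf{c}_i$, $i\in N$, must be genuine vertices lying off that face. By nondegeneracy, the contact tensors $\{\mathbf{c}_i\}_{i\in S^\opt}$ are linearly independent, so $\mathrm{conv}\{\mathbf{c}_i:i\in S^\opt\}$ is a simplex. It is exactly the exposed face of $\mathcal K$ cut out by $\mathbf{X}\mapsto\inner{\mathbf{X},\mathbf{I}}$, since $\Tr\mathbf{c}_i=v_i(\mathbf{p}^\opt)\le d$ with equality iff $i\in S^\opt$. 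Every subset of a simplex's vertices spans a face of the simplex, and a face of a face is a face of $\mathcal K$. So $\mathcal F:=\mathrm{conv}\{\mathbf{c}_i:i\in P\}$ is a proper face of $\mathcal K$ (proper because $N\neq\emptyset$). Moreover, the vertices $\mathbf{c}_i$, $i\in N$, do not lie on $\mathcal F$, again by affine independence. Hence $\mathbf{y}\in\mathrm{conv}(\text{vertices off }\mathcal F)$, as required. The main thing to check carefully is that Peña's facial distance, as normalized in \Cref{thm:zhao}, is indeed a minimum over such pairs, namely a face and the hull of the complementary vertices. This is how it is defined in \Cref{sec:prelim}, so I would just cite it there.

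\textbf{Bounding $s$ from below.} It remains to show $s\ge\tfrac12$, which gives $\Phi\le\sigma_{\min}/s\le2\sigma_{\min}$. Since $\norm{\boldsymbol{\delta}^+}_2^2+\norm{\boldsymbol{\delta}^-}_2^2=1$, one of the two parts, say $\boldsymbol{\delta}^+$, has $\norm{\boldsymbol{\delta}^+}_2\ge1/\sqrt2$. Then $s=\norm{\boldsymbol{\delta}^+}_1\ge\norm{\boldsymbol{\delta}^+}_2\ge1/\sqrt2\ge\tfrac12$. This is the slack that produces the factor $2$; in fact the argument gives $\sqrt2$. Squaring yields $\muopt=\sigma_{\min}^2\ge\Phi^2/4$.

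The step I expect to need care is the face identification: ensuring the partition $P,N$ really does correspond to a face/complement pair in Peña's definition, including the edge case where $\mathcal K$ has vertices outside $S^\opt$. Those extra vertices are harmless, since they only enlarge the complementary hull, which can only decrease the distance being minimized.
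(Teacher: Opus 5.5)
Your proof is correct and follows the paper's argument essentially step for step. Both use the trace-exposed simplex face $\mathrm{conv}\{\mathbf{c}_i:i\in S^\opt\}$, split a minimizing zero-sum witness into disjoint positive and negative sub-faces, and read off $\Phi\le\sigma_{\min}/s$. The only difference is your bound on $s$: the paper uses $s=\tfrac12\norm{\boldsymbol{\delta}}_1\ge\tfrac12$, while your $s\ge\max\{\norm{\boldsymbol{\delta}^+}_2,\norm{\boldsymbol{\delta}^-}_2\}\ge1/\sqrt2$ gives the slightly sharper $\Phi\le\sqrt2\,\sigma_{\min}$.
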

\begin{proof}[Proof idea (full proof in \Cref{app:coupling})]
By \Cref{thm:kw}, $v_i(\mathbf{p}^\opt)\le d$ with equality exactly on $S^\opt$, so the trace functional
$\mathbf{X}\mapsto\inner{\mathbf{X},\mathbf{I}}$ exposes $\mathrm{conv}\{\mathbf{c}_i:i\in S^\opt\}$ as a genuine \emph{face} of the design
polytope---a simplex, by the linear independence in \Cref{def:nondeg}. Take a unit zero-sum witness
$\boldsymbol{\delta}$ with $\fnorm{\mathbf{G}(\boldsymbol{\delta})}=\sigma_{\min}$, and split it into nonnegative parts
$\boldsymbol{\delta}=\boldsymbol{\delta}^+-\boldsymbol{\delta}^-$. With $s=\sum_i\delta_i^+=\tfrac12\norm{\boldsymbol{\delta}}_1\ge\tfrac12$, the points
$s^{-1}\sum_i\delta_i^+\mathbf{c}_i$ and $s^{-1}\sum_i\delta_i^-\mathbf{c}_i$ lie in disjoint sub-faces and are
$\fnorm{\mathbf{G}(\boldsymbol{\delta})}/s\le2\sigma_{\min}$ apart; the facial distance, a minimum over faces, is at most
this separation. The exposed-face property (which fails without \Cref{thm:kw}) is what makes an internal
near-dependence visible to $\Phi$.
\end{proof}

\begin{theorem}[Uniform comparison with the away-step rate]
\label{thm:domination}
Under \Cref{def:nondeg}, the facial condition number at the optimum satisfies
\[
\kappa\;=\;\frac{\Lopt}{\muopt}\;\le\;\frac{4\binom{d+1}{2}d^2}{\Phi^2}
\;\le\;2\binom{d+1}{2}d^2\,\kappa_\Phi\;=\;O(d^4)\,\kappa_\Phi .
\]
Consequently, after the same condition-dependent setup, the accelerated accuracy phase improves the
away-step dependence from linear in $\kappa_\Phi$ (\Cref{thm:zhao}) to $\sqrt\kappa$ with
$\kappa=O(d^4)\kappa_\Phi$: up to the $\eps$-independent term $C(\mathbf{A})$ and a $\poly(d)$ factor, the
query count of \Cref{alg:main} is never worse, and is quadratically better whenever
$\kappa\asymp\kappa_\Phi$.
\end{theorem}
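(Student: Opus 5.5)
Since $\kappa=\Lopt/\muopt$, the first inequality splits into an upper bound $\Lopt\le\binom{d+1}{2}d^2$ and the lower bound $\muopt\ge\Phi^2/4$ of \Cref{lem:coupling}. Dividing gives $\kappa\le 4\binom{d+1}{2}d^2/\Phi^2$. The second inequality then follows from the definition $\kappa_\Phi=1/(\mu_R\Phi^2)$ and $\mu_R\le\tfrac12$, since these give $1/\Phi^2=\mu_R\kappa_\Phi\le\tfrac12\kappa_\Phi$. Finally, $\binom{d+1}{2}d^2=O(d^4)$, and the statement about query counts is read off by substituting into \Cref{thm:upper} and comparing with \Cref{thm:zhao}.

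For $\Lopt$, I work with the Gram representation recalled before \Cref{lem:coupling}. Here $\widehat{\mathbf{H}}(\mathbf{p}^\opt)$ is the $m\times m$ Gram matrix of $\{\mathbf{c}_i\}_{i\in S^\opt}$ in the Frobenius inner product, so its largest eigenvalue is at most its trace:
\[
\Lopt\le\Tr\widehat{\mathbf{H}}(\mathbf{p}^\opt)=\sum_{i\in S^\opt}\fnorm{\mathbf{c}_i}^2 .
\]
Each $\mathbf{c}_i$ is rank one and positive semidefinite, so $\fnorm{\mathbf{c}_i}=\Tr\mathbf{c}_i=v_i(\mathbf{p}^\opt)=d$ for $i\in S^\opt$. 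By \Cref{def:nondeg}(ii), $m\le\binom{d+1}{2}$. Together these give $\Lopt\le m d^2\le\binom{d+1}{2}d^2$, which is exactly the factor in the statement. An alternative is Cauchy--Schwarz: for a unit vector $\boldsymbol{\delta}$, $\fnorm{\mathbf{G}(\boldsymbol{\delta})}\le\norm{\boldsymbol{\delta}}_1\max_i\fnorm{\mathbf{c}_i}\le\sqrt m\,d$, which yields the same bound.

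The only step with real content is $\muopt\ge\Phi^2/4$, which I take from \Cref{lem:coupling}. It depends on the trace functional exposing $\mathrm{conv}\{\mathbf{c}_i:i\in S^\opt\}$ as a face of the design polytope; that face is a simplex by linear independence, so a near-dependence among the $\mathbf{c}_i$ shows up as a small facial distance. If I had to reprove it, the delicate point would be checking that the two normalized pieces $s^{-1}\sum_i\delta_i^\pm\mathbf{c}_i$ really lie on a face and on the convex hull of the complementary vertices, as the definition of $\Phi$ requires. The first piece lies on the face spanned by $\supp(\boldsymbol{\delta}^+)$. The second lies on the hull of vertices outside that face, which holds because the vertices are affinely independent within the simplex face. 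With this, every remaining step is the arithmetic above.
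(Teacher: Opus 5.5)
Your proposal is correct and follows the paper's proof essentially step for step: $\muopt\ge\Phi^2/4$ from \Cref{lem:coupling}, $\Lopt\le\sum_{i\in S^\opt}\fnorm{\mathbf{c}_i}^2\le m\,d^2\le\binom{d+1}{2}d^2$, and $1/\Phi^2=\mu_R\kappa_\Phi\le\kappa_\Phi/2$. Your bound $\lambda_{\max}\le\Tr\widehat{\mathbf{H}}(\mathbf{p}^\opt)$ is the paper's Cauchy--Schwarz step in another form, since both give the same quantity $\sum_{i\in S^\opt}\fnorm{\mathbf{c}_i}^2$.
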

\begin{proof}
\emph{Lower constant.} \Cref{lem:coupling} gives $\muopt\ge\Phi^2/4$.
\emph{Upper constant.} For any unit $\boldsymbol{\delta}\in\R^{S^\opt}$, by Cauchy--Schwarz,
\[
\fnorm{\mathbf{G}(\boldsymbol{\delta})}=\Big\lVert\sum_{i\in S^\opt}\delta_i\mathbf{c}_i\Big\rVert_F
\le\norm{\boldsymbol{\delta}}_2\Big(\sum_{i\in S^\opt}\fnorm{\mathbf{c}_i}^2\Big)^{1/2},
\quad\text{so}\quad
\Lopt\le\sum_{i\in S^\opt}\fnorm{\mathbf{c}_i}^2\le m\,d^2\le\binom{d+1}{2}d^2,
\]
using $\fnorm{\mathbf{c}_i}=v_i(\mathbf{p}^\opt)=d$ for $i\in S^\opt$ (each $\mathbf{c}_i$ is rank one, so its Frobenius
norm equals its trace) and $m\le\binom{d+1}{2}$ (\Cref{def:nondeg}).
\emph{Assembly.} $\kappa=\Lopt/\muopt\le\binom{d+1}{2}d^2\cdot4/\Phi^2$, and the definition
$\kappa_\Phi=1/(\mu_R\Phi^2)$ (\Cref{sec:prelim}) with $\mu_R\le\tfrac12$ gives
$1/\Phi^2=\mu_R\kappa_\Phi\le\kappa_\Phi/2$. Note both constants
are evaluated \emph{at} $\mathbf{p}^\opt$ (this is exactly how $\kappa$ is defined,
\Cref{def:kappa}), and the algorithm's working constants differ from them only by the absolute
factor $9$ (\Cref{lem:c0}\ref{it:c0-cond}), so no instance-dependent slack is hidden in the
comparison. The exponent of $d$ is loose (numerically $C(d)=O(d)$).
\end{proof}

\begin{corollary}[Separation]
\label{cor:sep}
On $\mathbf{A}_\opt$ (\Cref{sec:instance}), which is nondegenerate with $\kappa=2$ (and full-simplex
reduced-Hessian condition number $O(1)$, since $n-1=\binom{d+1}{2}$ leaves no flat directions),
\Cref{alg:main} returns a $(1+\eps)$-John ellipsoid in $O(\log(1/\eps))$ queries with the
\textsc{Accel} phase, and in $O(\log\log(1/\eps))$ Newton iterations with the \textsc{Newton} phase
(\Cref{thm:newton}), while every uniform-averaging certificate needs $\Theta(\eps^{-1})$ queries
(\Cref{thm:avg}). The averaged-iterate and last-iterate complexities of the John ellipsoid are
exponentially separated.
\end{corollary}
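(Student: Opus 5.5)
The corollary is an assembly of earlier results on the single instance $\mathbf{A}_\opt$. The plan has three parts: verify the instance data, instantiate the two upper bounds, and quote the averaging lower bound.

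\emph{Instance data.} First I would confirm the closed-form data of \Cref{sec:instance}:
\begin{itemize}
\item $\mathbf{p}^\opt=(\tfrac13,0,\tfrac13,\tfrac13)$ and $\mathbf{M}^\opt=\diag(2,\tfrac23)$;
\item leverage scores $v_1=v_3=v_4=2=d$ and $v_2=\tfrac32$, so that $S^\opt=\{1,3,4\}=\supp\mathbf{p}^\opt$ and $\gsc=\tfrac12$;
\item $\pmin=\tfrac13$.
\end{itemize}
For linear independence, the three contact matrices $\mathbf{a}_i\mathbf{a}_i^\top$ are $\diag(4,0)$, $\begin{pmatrix}1&1\\1&1\end{pmatrix}$ and $\begin{pmatrix}1&-1\\-1&1\end{pmatrix}$. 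These are plainly independent in the $3$-dimensional space $\Sym^2$, so \Cref{def:nondeg} holds. Since $m=3=\binom{d+1}{2}=n-1$, the kernel of \Cref{lem:flat} restricted to $T^\opt$ is trivial, and the full-simplex reduced Hessian has no flat directions.

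\emph{Computing $\kappa$.} By \eqref{eq:Brows}, the facial block is $\widehat{\mathbf{H}}(\mathbf{p}^\opt)=\begin{pmatrix}4&1&1\\1&4&1\\1&1&4\end{pmatrix}$. Its eigenvalues are $6$ on $\mathbf 1$ and $3$ on $\mathbf 1^\perp=T^\opt$. This gives $\muopt=3$, $\Lopt=6$ and hence $\kappa=2$.

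\emph{Upper bounds.} All of $c_0$, $\Theta_{\mathrm A}$ and the corresponding Newton constant are then explicit absolute numbers by \Cref{rem:constants}. For $C(\mathbf{A}_\opt)=O(1)$ I would invoke \Cref{lem:identify}, since $K_{\mathrm{id}}$ and $K_{\mathrm{gap}}(c/d)$ depend only on the fixed instance and not on $\eps$. One point needs care here: the statement "$C(\mathbf{A}_\opt)=O(1)$" means "independent of $\eps$", because $\mathbf{A}_\opt$ is a single fixed matrix and any finite instance quantity is a constant. I would state this explicitly rather than try to bound $\Phi$ for $\mathbf{A}_\opt$.

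With this in hand, \Cref{thm:upper} gives $O(1)+O(\sqrt2\log(\Theta_{\mathrm A}/\eps))=O(\log(1/\eps))$ queries. Likewise \Cref{thm:newton}, taken as given, gives $O(1)+O(\log\log(1/\eps))$ Newton iterations, each costing $O(d^2)=O(1)$ queries.

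\emph{Lower bound and separation.} The averaging lower bound is exactly \Cref{thm:avg}: $g(\bar{\mathbf{p}}^{(T)})=\gamma/T+o(1/T)$ with $\gamma>0$ by \Cref{lem:Fpos}. The matching upper bound comes from \Cref{thm:ccly} with $n=4$, $d=2$. Together these give $\Theta(\eps^{-1})$.

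The word "every" in "every uniform-averaging certificate" should be read as the rule of \Cref{thm:avg}: the uniform average of the CCLY iterates from the uniform start. Finally, comparing $\Theta(1/\eps)$ against $O(\log(1/\eps))$ yields the exponential separation.

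The main obstacle is not mathematical but one of bookkeeping. One must make sure that the hidden constants ($c_1/\kappa^3$ tolerance, $C(\mathbf{A}_\opt)$, and $\Theta$) are genuinely $\eps$-free. Beyond that, the only real work is the eigenvalue computation of the $3\times3$ block and the check of the contact data.
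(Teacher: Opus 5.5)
Your proposal is correct and follows the paper's own route: the exact data of \Cref{sec:instance} give nondegeneracy and $\kappa=2$ (from $\widehat{\mathbf{H}}(\mathbf{p}^\opt)=3\mathbf{I}_3+\mathbf 1\mathbf 1^\top$), and $C(\mathbf{A}_\opt)=O(1)$ is read as $\eps$-independence for a fixed instance, as in \Cref{rem:scope}(iii); the three rates are then \Cref{thm:upper}, \Cref{thm:newton}, and \Cref{thm:avg}, with \Cref{thm:ccly} supplying the matching upper bound. One small tightening for the parenthetical claim: the count $n-1=\binom{d+1}{2}$ alone does not rule out a flat direction on the full simplex tangent, but pairing a kernel vector $\boldsymbol{\delta}$ with $(\mathbf{M}^\opt)^{-1}$ gives $\sum_i\delta_iv_i(\mathbf{p}^\opt)=0$, which together with $\sum_i\delta_i=0$ forces $\delta_2=0$ (as $v_2(\mathbf{p}^\opt)=\tfrac32\neq d$), and then $\boldsymbol{\delta}=0$ by independence of the contact matrices.
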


\section{The facial Newton phase: a condition-free accuracy dependence}
\label{sec:newton}

This is the conceptual peak of the paper. On its smooth stratum (\Cref{sec:designs}) the identified
problem is an \emph{unconstrained} self-concordant minimization, and Newton's method is what that
geometry asks for: affine-invariant, with no condition number in its rate. The accuracy cost accordingly
drops from the accelerated phase's $\log(1/\eps)$ (\Cref{sec:upper}) to $\log\log(1/\eps)$, every
condition number confined inside the double logarithm. The one thing that might obstruct it, extracting
the facial Hessian from a first-order oracle, instead dissolves: a single rank-one identity
(\Cref{prop:smrecover}) recovers it \emph{exactly} from leverage-score queries, so the second-order
method runs in the first-order oracle.

This section proves \Cref{thm:newton-intro}. Recall from \Cref{sec:face} that $\mathbf{p}^\opt$ is the
\emph{unconstrained} minimizer of $h=f|_D$ over the affine hull $D$ of the optimal face
(\Cref{lem:min}); the analysis recovers its Hessian from the oracle (\Cref{prop:smrecover}), runs
damped Newton on it (\Cref{alg:newton}), and turns the Newton decrement into the John guarantee
(\Cref{thm:newton}).

Throughout this section, for $\mathbf{p}\in D$ and $\boldsymbol{\delta}\in T^\opt$ we write
$\norm{\boldsymbol{\delta}}_{\mathbf{p}}:=\big(\boldsymbol{\delta}^\top\widehat{\mathbf{H}}(\mathbf{p})\,\boldsymbol{\delta}\big)^{1/2}
=\locnorm{\sum_{i\in S^\opt}\delta_i\mathbf{a}_i\mathbf{a}_i^\top}{\mathbf{M}(\mathbf{p})}$ for the local norm of $h$ at $\mathbf{p}$
(a genuine norm by \Cref{lem:faceSC}); $\varrho(\cdot)=\norm{\cdot-\mathbf{p}^\opt}_{\mathbf{p}^\opt}$ as before.

\subsection{Exact Hessian recovery from leverage-score queries}

The recovery rests on the standard rank-one inverse-update identity, restated here for
self-containedness.

\begin{fact}[Sherman--Morrison--Woodbury identity, rank-one case; see, e.g., {\cite[\S3.1, App.~A.3]{todd2016}}]
\label{fact:smw}
For $\mathbf{N}\succ0$, $\mathbf{a}\in\R^d$, and $t>0$,
\[
\big(\mathbf{N}+t\,\mathbf{a}\mathbf{a}^\top\big)^{-1}
=\mathbf{N}^{-1}-\frac{t\,\mathbf{N}^{-1}\mathbf{a}\mathbf{a}^\top\mathbf{N}^{-1}}{1+t\,\mathbf{a}^\top\mathbf{N}^{-1}\mathbf{a}}.
\]
\end{fact}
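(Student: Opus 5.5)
The plan is a direct verification: multiply the proposed inverse by $\mathbf{N}+t\,\mathbf{a}\mathbf{a}^\top$ and check that the product is the identity. Since $\mathbf{N}\succ0$ and $t>0$, the matrix $\mathbf{N}+t\,\mathbf{a}\mathbf{a}^\top$ is positive definite, hence invertible. Likewise $\beta:=\mathbf{a}^\top\mathbf{N}^{-1}\mathbf{a}\ge0$, so the denominator $1+t\beta\ge1$ never vanishes. It therefore suffices to show that the right-hand side is a one-sided inverse; for square matrices this gives the two-sided inverse.

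Write $\mathbf{u}:=\mathbf{N}^{-1}\mathbf{a}$, so that the candidate is $\mathbf{R}:=\mathbf{N}^{-1}-\frac{t}{1+t\beta}\mathbf{u}\mathbf{u}^\top$. Expanding $(\mathbf{N}+t\,\mathbf{a}\mathbf{a}^\top)\mathbf{R}$ gives four terms: $\mathbf{I}$, then $t\,\mathbf{a}\mathbf{u}^\top$, then $-\frac{t}{1+t\beta}\mathbf{a}\mathbf{u}^\top$ (using $\mathbf{N}\mathbf{u}=\mathbf{a}$), and finally $-\frac{t^2\beta}{1+t\beta}\mathbf{a}\mathbf{u}^\top$ (using $\mathbf{a}^\top\mathbf{u}=\beta$). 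All terms except the first are multiples of $\mathbf{a}\mathbf{u}^\top$. Their combined coefficient is
\[
t-\frac{t+t^2\beta}{1+t\beta}=t-t=0,
\]
so the product equals $\mathbf{I}$.

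There is no real obstacle. The only points needing care are the nonvanishing of the denominator, which follows from $\mathbf{N}^{-1}\succ0$ and $t>0$, and keeping track of the scalar $\beta=\mathbf{a}^\top\mathbf{N}^{-1}\mathbf{a}$ when collapsing the rank-one products. This is the step to which the later leverage-score identity of \Cref{prop:smrecover} reduces, after sandwiching between $\mathbf{a}_i^\top$ and $\mathbf{a}_i$ with $\mathbf{N}=\mathbf{M}(\mathbf{p})$ and $\mathbf{a}=\mathbf{a}_j$.
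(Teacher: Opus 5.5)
Your direct verification is correct and complete: expanding $(\mathbf{N}+t\,\mathbf{a}\mathbf{a}^\top)\mathbf{R}$ leaves $\mathbf{I}$ plus a multiple of $\mathbf{a}\mathbf{u}^\top$ with coefficient $t-\frac{t(1+t\beta)}{1+t\beta}=0$, and a one-sided inverse of a square matrix is two-sided. The paper gives no proof of its own and simply imports the identity as standard from Todd's monograph, so your argument supplies the usual textbook check; its only implicit ingredient is the symmetry of $\mathbf{N}$ (part of $\mathbf{N}\succ0$), which is what justifies $\mathbf{a}^\top\mathbf{N}^{-1}=\mathbf{u}^\top$ and $\mathbf{N}^{-1}\mathbf{a}\mathbf{a}^\top\mathbf{N}^{-1}=\mathbf{u}\mathbf{u}^\top$.
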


\begin{proposition}[Rank-one probes recover the facial Hessian exactly]
\label{prop:smrecover}
Let $\mathbf{w}$ be any weights with $\mathbf{M}(\mathbf{w})\succ0$, let $j\in[n]$ and $t>0$. Then
\begin{equation}
\label{eq:smrecover}
\big(\mathbf{a}_i^\top\mathbf{M}(\mathbf{w})^{-1}\mathbf{a}_j\big)^2
=\Big(v_i(\mathbf{w})-v_i(\mathbf{w}+t\mathbf{e}_j)\Big)\cdot\frac{1+t\,v_j(\mathbf{w})}{t}
\qquad\text{for every }i\in[n].
\end{equation}
Consequently, for a design $\mathbf{p}\in\Delta_n$ the facial Hessian block
$\widehat{\mathbf{H}}(\mathbf{p})=\big((\mathbf{a}_i^\top\mathbf{M}(\mathbf{p})^{-1}\mathbf{a}_j)^2\big)_{i,j\in S^\opt}$ is computed
\emph{exactly} from the $m+1\le\binom{d+1}{2}+1$ leverage-score queries at the designs
\[
\mathbf{p}\quad\text{and}\quad\tfrac12(\mathbf{p}+\mathbf{e}_j),\ j\in S^\opt,
\]
via \eqref{eq:smrecover} with $t=1$ and the homogeneity $v_i(\mathbf{w}/2)=2v_i(\mathbf{w})$.
\end{proposition}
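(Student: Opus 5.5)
The identity \eqref{eq:smrecover} comes straight from \Cref{fact:smw}, and the consequence from homogeneity of the leverage scores. First, $\mathbf{M}(\mathbf{w}+t\mathbf{e}_j)=\mathbf{M}(\mathbf{w})+t\,\mathbf{a}_j\mathbf{a}_j^\top$, which is $\succ0$ because $\mathbf{M}(\mathbf{w})\succ0$ and $t>0$; so the probed leverage scores are well defined. Set $\mathbf{N}:=\mathbf{M}(\mathbf{w})$ and $\mathbf{a}:=\mathbf{a}_j$ in \Cref{fact:smw}, and sandwich the resulting inverse between $\mathbf{a}_i^\top$ and $\mathbf{a}_i$. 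This gives
\[
v_i(\mathbf{w}+t\mathbf{e}_j)=v_i(\mathbf{w})-\frac{t\,(\mathbf{a}_i^\top\mathbf{N}^{-1}\mathbf{a}_j)^2}{1+t\,v_j(\mathbf{w})},
\]
where we used the symmetry of $\mathbf{N}^{-1}$, so that $(\mathbf{a}_i^\top\mathbf{N}^{-1}\mathbf{a}_j)(\mathbf{a}_j^\top\mathbf{N}^{-1}\mathbf{a}_i)$ is a square, and $\mathbf{a}_j^\top\mathbf{N}^{-1}\mathbf{a}_j=v_j(\mathbf{w})$. The denominator satisfies $1+tv_j(\mathbf{w})\ge1>0$, since $v_j\ge0$. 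We can therefore rearrange, multiplying by $(1+tv_j(\mathbf{w}))/t$, and this is exactly \eqref{eq:smrecover}. It holds for every $i\in[n]$ at once, so the single query at $\mathbf{w}+t\mathbf{e}_j$ returns the whole column $j$ of the entrywise-squared matrix.

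For the consequence, take $\mathbf{w}=\mathbf{p}\in\Delta_n$ with $\mathbf{M}(\mathbf{p})\succ0$ and $t=1$. The point $\mathbf{p}+\mathbf{e}_j$ is not a design, since its weights sum to $2$. Its normalization $\tfrac12(\mathbf{p}+\mathbf{e}_j)$ is a design, because it is a convex combination of two points of $\Delta_n$. By the $(-1)$-homogeneity recorded in the oracle paragraph of \Cref{sec:prelim}, $v_i(\mathbf{p}+\mathbf{e}_j)=\tfrac12 v_i(\tfrac12(\mathbf{p}+\mathbf{e}_j))$. So one query at $\mathbf{p}$ supplies $v_i(\mathbf{p})$ and $v_j(\mathbf{p})$, and one query at $\tfrac12(\mathbf{p}+\mathbf{e}_j)$ supplies $v_i(\mathbf{p}+\mathbf{e}_j)$ for every $i$. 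Plugging these into \eqref{eq:smrecover} gives $(\mathbf{a}_i^\top\mathbf{M}(\mathbf{p})^{-1}\mathbf{a}_j)^2=(v_i(\mathbf{p})-v_i(\mathbf{p}+\mathbf{e}_j))(1+v_j(\mathbf{p}))$, with no approximation. Letting $j$ range over $S^\opt$ fills in all of $\widehat{\mathbf{H}}(\mathbf{p})$ using $m$ probe queries plus the base query. By \Cref{def:nondeg}, $m\le\binom{d+1}{2}$, which gives the stated count $m+1$.

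There is no real obstacle here. The only points needing care are bookkeeping. First, the probe must be positive definite, which holds because adding a PSD rank-one term preserves $\succ0$. Second, the denominator $1+tv_j$ must be nonzero, which holds because leverage scores are nonnegative. Third, the queried points must be genuine designs in $\Delta_n$, which the renormalization by $\tfrac12$ ensures, at the cost of one factor of $2$ from homogeneity. Finally, we should note that symmetry of $\widehat{\mathbf{H}}$ is not needed: each column is read off directly, though the redundancy could serve as a consistency check.
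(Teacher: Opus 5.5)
Your proof is correct and follows the paper's argument essentially line for line. You apply \Cref{fact:smw} with $\mathbf{N}=\mathbf{M}(\mathbf{w})$ and $\mathbf{a}=\mathbf{a}_j$, sandwich by $\mathbf{a}_i$ and rearrange, then use that $\tfrac12(\mathbf{p}+\mathbf{e}_j)$ is a design together with $(-1)$-homogeneity to read off $v_i(\mathbf{p}+\mathbf{e}_j)$. Your extra bookkeeping (positive definiteness of the probed information matrix and positivity of $1+t\,v_j(\mathbf{w})$) is implicit in the paper's version and harmless.
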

\begin{proof}
Since $\mathbf{M}(\mathbf{w}+t\mathbf{e}_j)=\mathbf{M}(\mathbf{w})+t\,\mathbf{a}_j\mathbf{a}_j^\top\succ0$, \Cref{fact:smw} (with $\mathbf{N}=\mathbf{M}(\mathbf{w})$ and $\mathbf{a}=\mathbf{a}_j$) gives
\[
\big(\mathbf{M}(\mathbf{w})+t\,\mathbf{a}_j\mathbf{a}_j^\top\big)^{-1}
=\mathbf{M}(\mathbf{w})^{-1}-\frac{t\,\mathbf{M}(\mathbf{w})^{-1}\mathbf{a}_j\mathbf{a}_j^\top\mathbf{M}(\mathbf{w})^{-1}}{1+t\,\mathbf{a}_j^\top\mathbf{M}(\mathbf{w})^{-1}\mathbf{a}_j}.
\]
Sandwiching by $\mathbf{a}_i^\top(\cdot)\mathbf{a}_i$ yields
$v_i(\mathbf{w}+t\mathbf{e}_j)=v_i(\mathbf{w})-t(\mathbf{a}_i^\top\mathbf{M}(\mathbf{w})^{-1}\mathbf{a}_j)^2/(1+tv_j(\mathbf{w}))$, which rearranges to
\eqref{eq:smrecover}. For the second claim: $\tfrac12(\mathbf{p}+\mathbf{e}_j)$ is a convex combination of two
designs, hence a design; by homogeneity, $v_i(\mathbf{p}+\mathbf{e}_j)=\tfrac12\,v_i\big(\tfrac12(\mathbf{p}+\mathbf{e}_j)\big)$,
so the query at $\tfrac12(\mathbf{p}+\mathbf{e}_j)$ delivers $v_i(\mathbf{p}+\mathbf{e}_j)$ for all $i$, and \eqref{eq:smrecover}
with $t=1$ delivers the column $j$ of $\widehat{\mathbf{H}}(\mathbf{p})$ (its entries for all $i\in S^\opt$
simultaneously). The recovery is an algebraic identity---exact, not approximate.
\end{proof}

\begin{remark}
The same $n+1$ design queries would deliver the \emph{entire} $n\times n$ Hessian
$\nabla^2f(\mathbf{p})$; the point of the facial phase is that only the $m\times m$ contact block (of size
at most $\binom{d+1}{2}$, independent of $n$) is ever needed. The probes can of course also be
answered directly from $\mathbf{A}$ in $O(md^2+d^\omega+m^2d)$ arithmetic without extra oracle calls
(\Cref{prop:cost}); \Cref{prop:smrecover} is what makes the phase well defined in the \emph{pure query
model}, so that the open problems of \Cref{sec:discussion} are statements about one oracle.
\end{remark}

\subsection{The algorithm}

\begin{algorithm}[t]
\caption{\textsc{FacialNewton} (damped Newton on the affine hull of the optimal face)}
\label{alg:newton}
\begin{algorithmic}[1]
\Require $\mathbf{p}\in\relint\Delta_{S^\opt}$ with $h(\mathbf{p})-f^\opt\le c_0$ (from \Cref{lem:identify});
accuracy $\eps$
\Ensure $\mathbf{p}\in\relint\Delta_{S^\opt}$ with $\max_iv_i(\mathbf{p})\le(1+\eps)d$
\Loop
  \State $\mathbf{v}\gets(v_i(\mathbf{p}))_{i\in[n]}$ \Comment{one design query; $\nabla h(\mathbf{p})|_{S^\opt}=-\mathbf{v}_{S^\opt}$}
  \If{$\max_iv_i-d\le\eps d$} \Return $\mathbf{p}$ \Comment{the John guarantee, certified directly}
  \EndIf
  \State $\widehat{\mathbf{H}}\gets$ exact facial Hessian via \Cref{prop:smrecover}
         \Comment{$m$ design queries at $\tfrac12(\mathbf{p}+\mathbf{e}_j)$, $j\in S^\opt$}
  \State solve $\begin{pmatrix}\widehat{\mathbf{H}}&\mathbf 1\\ \mathbf 1^\top&0\end{pmatrix}
         \begin{pmatrix}\mathbf{n}\\ \nu\end{pmatrix}
         =\begin{pmatrix}\mathbf{v}_{S^\opt}\\ 0\end{pmatrix}$,
         \quad $\lambda\gets\big(\mathbf{v}_{S^\opt}^\top\mathbf{n}\big)^{1/2}$
         \Comment{Newton direction $\mathbf{n}\in T^\opt$ and decrement; $O(m^3)$ arithmetic}
  \State $\mathbf{p}\gets\mathbf{p}+\dfrac{\mathbf{n}}{1+\lambda}$ \textbf{ if } $\lambda>\tfrac14$
         \textbf{ else } $\mathbf{p}\gets\mathbf{p}+\mathbf{n}$
         \Comment{damped step, then full steps}
\EndLoop
\end{algorithmic}
\end{algorithm}

The linear system in \Cref{alg:newton} is the KKT system of
$\min_{\boldsymbol{\delta}\in T^\opt}\{\inner{\nabla h(\mathbf{p}),\boldsymbol{\delta}}+\tfrac12\boldsymbol{\delta}^\top\widehat{\mathbf{H}}(\mathbf{p})\boldsymbol{\delta}\}$:
since $\nabla h(\mathbf{p})|_{S^\opt}=-\mathbf{v}_{S^\opt}(\mathbf{p})$, its solution $\mathbf{n}=\mathbf{n}_{\mathbf{p}}$ is the
\emph{descent} Newton direction, the system is nonsingular ($\widehat{\mathbf{H}}\succ0$ on
$T^\opt=\ker\mathbf 1^\top$ by \Cref{lem:faceSC}), and
\[
\lambda(\mathbf{p})^2:=\mathbf{n}_{\mathbf{p}}^\top\widehat{\mathbf{H}}(\mathbf{p})\,\mathbf{n}_{\mathbf{p}}
=\mathbf{n}_{\mathbf{p}}^\top\big(\mathbf{v}_{S^\opt}-\nu\mathbf 1\big)=\mathbf{v}_{S^\opt}^\top\mathbf{n}_{\mathbf{p}}
\]
(using $\mathbf 1^\top\mathbf{n}_{\mathbf{p}}=0$) is the squared \emph{Newton decrement}
$\lambda(\mathbf{p})=\norm{\mathbf{n}_{\mathbf{p}}}_{\mathbf{p}}$, computed exactly from the same data. Equivalently,
$\lambda(\mathbf{p})=\sup\{-\inner{\nabla h(\mathbf{p}),\boldsymbol{\delta}}:\boldsymbol{\delta}\in T^\opt,\norm{\boldsymbol{\delta}}_{\mathbf{p}}\le1\}$ is
the dual local norm of the gradient, whence $|\inner{\nabla h(\mathbf{p}),\boldsymbol{\delta}}|\le\lambda(\mathbf{p})
\norm{\boldsymbol{\delta}}_{\mathbf{p}}$ for all $\boldsymbol{\delta}\in T^\opt$.

\subsection{Damped descent and quadratic contraction}

The analysis rests on $h$ being \emph{standard self-concordant} on $D$: it is the composition of
$-\log\det$ (standard self-concordant on $\Sym^d_{\succ0}$~\cite[\S5.4.4, Lem.~5.4.6,
Thm.~5.4.3]{nesterov2018}, see also~\cite[Ex.~9.5]{bv2004}) with the affine map
$\mathbf{p}\mapsto\mathbf{M}(\mathbf{p})$ restricted to $V$, and affine substitution preserves standard
self-concordance~\cite[Thm.~5.1.2]{nesterov2018}. On such a function Newton's method has
\emph{affine-invariant}, condition-free convergence constants, and under strict complementarity the
simplex constraints are invisible (\Cref{lem:min}(ii)), so the classical theory applies to the facial
problem unobstructed. We therefore import a single classical result, the local quadratic convergence of
the full Newton step, and derive the two other inequalities we need from the toolkit of \Cref{sec:face},
so that their constants are visibly absolute.

\begin{theorem}[{Quadratic contraction of the Newton decrement;
\cite[Thm.~5.2.2(1), eq.~(5.2.6)]{nesterov2018}, \cite[eq.~(9.55)]{bv2004}}]
\label{thm:newtonquad}
Let $h$ be standard self-concordant with nondegenerate Hessian on an open convex domain, and let
$\mathbf{p}$ be a point with $\lambda(\mathbf{p})<1$. Then the full Newton iterate $\mathbf{p}^+=\mathbf{p}+\mathbf{n}_{\mathbf{p}}$ lies in
the domain and
\[
\lambda(\mathbf{p}^+)\;\le\;\Big(\frac{\lambda(\mathbf{p})}{1-\lambda(\mathbf{p})}\Big)^2 .
\]
\end{theorem}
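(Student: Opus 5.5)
The plan is to give the standard self-concordance argument (Nesterov's proof), working throughout in the tangent space of the affine domain. Gradients are read as linear functionals on that space, for instance $T^\opt$ for the facial problem. For a point $\mathbf{x}$ in the domain, write $\mathbf{H}_{\mathbf{x}}:=\nabla^2h(\mathbf{x})$, which is nondegenerate on the tangent space by hypothesis. Write $\norm{\mathbf{u}}_{\mathbf{x}}=(\mathbf{u}^\top\mathbf{H}_{\mathbf{x}}\mathbf{u})^{1/2}$ for the local norm and $\norm{\cdot}^*_{\mathbf{x}}$ for its dual norm. Then the Newton direction satisfies $\mathbf{H}_{\mathbf{p}}\mathbf{n}_{\mathbf{p}}=-\nabla h(\mathbf{p})$ on the tangent space, and $\lambda(\mathbf{p})=\norm{\mathbf{n}_{\mathbf{p}}}_{\mathbf{p}}=\norm{\nabla h(\mathbf{p})}^*_{\mathbf{p}}$. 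Set $\lambda:=\lambda(\mathbf{p})<1$ and $\mathbf{p}^+=\mathbf{p}+\mathbf{n}_{\mathbf{p}}$.

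\emph{Step 1: Dikin ellipsoid and Hessian comparison.} I will show that if $r:=\norm{\mathbf{y}-\mathbf{x}}_{\mathbf{x}}<1$, then $\mathbf{y}$ lies in the domain and
\[
(1-r)^2\mathbf{H}_{\mathbf{x}}\preceq\mathbf{H}_{\mathbf{y}}\preceq(1-r)^{-2}\mathbf{H}_{\mathbf{x}} .
\]
\begin{itemize}
\item For a fixed direction $\mathbf{u}=\mathbf{y}-\mathbf{x}$, set $\phi(t)=\norm{\mathbf{u}}_{\mathbf{x}+t\mathbf{u}}^{-1}$. Standard self-concordance, $|D^3h[\mathbf{u},\mathbf{u},\mathbf{u}]|\le2\norm{\mathbf{u}}^3$, gives $|\phi'|\le1$. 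Hence $\norm{\mathbf{u}}_{\mathbf{x}+t\mathbf{u}}\le r/(1-tr)$, which stays finite for $t\le1$.
\item Closedness of $h$ (its blow-up at the boundary) then keeps the whole segment inside the domain.
\item For an arbitrary direction $\mathbf{w}$, set $\psi(t)=\norm{\mathbf{w}}^2_{\mathbf{x}+t\mathbf{u}}$. The bilinear form of the self-concordance inequality gives $|\psi'|\le2\norm{\mathbf{u}}_{\mathbf{x}+t\mathbf{u}}\psi\le\frac{2r}{1-tr}\psi$.
\item Integrating $\log\psi$ from $0$ to $1$ yields the two-sided Hessian bound with factor $(1-r)^{\pm2}$.
\end{itemize}
This step is the main obstacle, because it is the only place where the abstract third-derivative definition is used. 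In particular, the multilinear form of the self-concordance bound, $|D^3h[\mathbf{u},\mathbf{w},\mathbf{w}]|\le2\norm{\mathbf{u}}\norm{\mathbf{w}}^2$, must be justified; I would cite it as the standard consequence of the diagonal bound for symmetric trilinear forms. Applying Step 1 with $\mathbf{x}=\mathbf{p}$ and $\mathbf{y}=\mathbf{p}+t\mathbf{n}_{\mathbf{p}}$, where $r=t\lambda$, shows that $\mathbf{p}^+$ lies in the domain.

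\emph{Step 2: bounding the new gradient.} Since $\mathbf{H}_{\mathbf{p}}\mathbf{n}_{\mathbf{p}}=-\nabla h(\mathbf{p})$,
\[
\nabla h(\mathbf{p}^+)=\int_0^1\big(\mathbf{H}_{\mathbf{p}+t\mathbf{n}_{\mathbf{p}}}-\mathbf{H}_{\mathbf{p}}\big)\mathbf{n}_{\mathbf{p}}\,dt=:\mathbf{G}\,\mathbf{n}_{\mathbf{p}} .
\]
By Step 1 with $r=t\lambda$, each integrand matrix lies between $((1-t\lambda)^2-1)\mathbf{H}_{\mathbf{p}}$ and $((1-t\lambda)^{-2}-1)\mathbf{H}_{\mathbf{p}}$. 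Integrating over $t\in[0,1]$ sandwiches $\mathbf{G}$ as
\[
-\big(\lambda-\tfrac{\lambda^2}{3}\big)\mathbf{H}_{\mathbf{p}}\preceq\mathbf{G}\preceq\frac{\lambda}{1-\lambda}\mathbf{H}_{\mathbf{p}} .
\]
Therefore $\norm{\mathbf{H}_{\mathbf{p}}^{-1/2}\mathbf{G}\mathbf{H}_{\mathbf{p}}^{-1/2}}_{\mathrm{op}}\le\max\{\lambda-\lambda^2/3,\ \lambda/(1-\lambda)\}=\frac{\lambda}{1-\lambda}$. It follows that $\norm{\nabla h(\mathbf{p}^+)}^*_{\mathbf{p}}\le\frac{\lambda}{1-\lambda}\norm{\mathbf{n}_{\mathbf{p}}}_{\mathbf{p}}=\frac{\lambda^2}{1-\lambda}$.

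\emph{Step 3: change of base point.} Step 1 with $r=\lambda$ gives $\mathbf{H}_{\mathbf{p}^+}\succeq(1-\lambda)^2\mathbf{H}_{\mathbf{p}}$. Inverting this bound on the tangent space gives $\norm{\cdot}^*_{\mathbf{p}^+}\le(1-\lambda)^{-1}\norm{\cdot}^*_{\mathbf{p}}$. Combining with Step 2,
\[
\lambda(\mathbf{p}^+)=\norm{\nabla h(\mathbf{p}^+)}^*_{\mathbf{p}^+}\le\frac{\lambda^2}{(1-\lambda)^2},
\]
which is the claim.
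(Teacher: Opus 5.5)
Your proof is correct. It is precisely the standard argument behind the result, which the paper does not reprove but imports from \cite[Thm.~5.2.2(1)]{nesterov2018}: Dikin-ellipsoid Hessian comparison, the sandwich $-(\lambda-\tfrac{\lambda^2}{3})\mathbf{H}_{\mathbf{p}}\preceq\mathbf{G}\preceq\tfrac{\lambda}{1-\lambda}\mathbf{H}_{\mathbf{p}}$, and a change of base point for the dual norm, with closedness (established for the paper's $h$ in \Cref{lem:min}(i)) correctly invoked to keep $\mathbf{p}^+$ in the domain.

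For the paper's particular $h$, your Step~1 can be read off from \Cref{lem:Msand,lem:hesssand}, which give $\mathbf{M}(\mathbf{y})\succ0$ and the slightly sharper $(1+r)^{-2}\mathbf{H}_{\mathbf{x}}\preceq\mathbf{H}_{\mathbf{y}}\preceq(1-r)^{-2}\mathbf{H}_{\mathbf{x}}$ directly, with no need for the trilinear polarization lemma.
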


We apply \cref{thm:newtonquad} to $h$ in an affine chart of $V$; every quantity in the statement is affine-invariant.

\begin{lemma}[Damped decrease; suboptimality from the decrement]
\label{lem:newtonstep}
Let $\mathbf{p}\in D$ with decrement $\lambda=\lambda(\mathbf{p})$.
\begin{enumerate}[label=\emph{(\alph*)},leftmargin=*,itemsep=2pt]
\item\label{it:damped} \emph{(Damped step.)} $\mathbf{p}^+:=\mathbf{p}+\mathbf{n}_{\mathbf{p}}/(1+\lambda)\in D$ and
$h(\mathbf{p}^+)\le h(\mathbf{p})-\wsc(\lambda)$.
\item\label{it:fullmono} \emph{(Full step for small decrement.)} If $\lambda\le\tfrac14$, then
$\mathbf{p}^+:=\mathbf{p}+\mathbf{n}_{\mathbf{p}}\in D$ and $h(\mathbf{p}^+)\le h(\mathbf{p})-\tfrac13\lambda^2\le h(\mathbf{p})$.
\item\label{it:subopt} \emph{(Suboptimality.)} If $\lambda<1$, then
$h(\mathbf{p})-f^\opt\le\wscs(\lambda)$; in particular $h(\mathbf{p})-f^\opt\le\tfrac23\lambda^2$ when
$\lambda\le\tfrac14$.
\end{enumerate}
\end{lemma}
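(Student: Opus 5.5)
The plan is to transfer all three claims to matrix space. The key observation is that for $\mathbf{p}\in D$ and $\boldsymbol{\delta}\in T^\opt$ we have $h(\mathbf{p}+s\boldsymbol{\delta})=F(\mathbf{M}(\mathbf{p})+s\mathbf{H})$, where $F=-\log\det$ and $\mathbf{H}:=\sum_{i\in S^\opt}\delta_i\mathbf{a}_i\mathbf{a}_i^\top$. Moreover, $\norm{\boldsymbol{\delta}}_{\mathbf{p}}=\locnorm{\mathbf{H}}{\mathbf{M}(\mathbf{p})}$ by \eqref{eq:hessform}. Once this identification is in place, \Cref{lem:Msand} and \Cref{lem:scineq} with $\mathbf{X}=\mathbf{M}(\mathbf{p})$ apply directly. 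Throughout, $\mathbf{n}=\mathbf{n}_{\mathbf{p}}$ is the Newton direction, so $\inner{\nabla h(\mathbf{p}),\mathbf{n}}=-\lambda^2$ and $\norm{\mathbf{n}}_{\mathbf{p}}=\lambda$.

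\emph{Part \ref{it:damped}.} Take the step $s\mathbf{n}$ with $s=1/(1+\lambda)$. The local-norm length of this step is $r=\lambda/(1+\lambda)<1$, so \Cref{lem:Msand} gives $\mathbf{M}(\mathbf{p}^+)\succ0$, i.e.\ $\mathbf{p}^+\in D$. \Cref{lem:scineq}\ref{it:scupper} then yields
\[
h(\mathbf{p}^+)\le h(\mathbf{p})-\frac{\lambda^2}{1+\lambda}+\wscs\Big(\frac{\lambda}{1+\lambda}\Big).
\]
Set $u=\lambda/(1+\lambda)$, so that $1-u=1/(1+\lambda)$. A direct computation gives $\wscs(u)=-u+\log(1+\lambda)$. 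Hence the right-hand decrement equals
\[
\frac{\lambda^2}{1+\lambda}+\frac{\lambda}{1+\lambda}-\log(1+\lambda)=\lambda-\log(1+\lambda)=\wsc(\lambda),
\]
which is the claim. This is the classical exact computation.

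\emph{Part \ref{it:fullmono}.} With $\lambda\le\tfrac14$ the full step has local length $\lambda<1$, so again $\mathbf{p}+\mathbf{n}\in D$. \Cref{lem:scineq}\ref{it:scupper} gives $h(\mathbf{p}^+)\le h(\mathbf{p})-\lambda^2+\wscs(\lambda)$. \Cref{lem:legendre} supplies $\wscs(\lambda)\le\tfrac23\lambda^2$, so the decrease is at least $\tfrac13\lambda^2$.

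\emph{Part \ref{it:subopt}.} I would use the lower bound \Cref{lem:scineq}\ref{it:sclower} at base point $\mathbf{M}(\mathbf{p})$ for an arbitrary $\mathbf{q}\in D$. Setting $\boldsymbol{\delta}=\mathbf{q}-\mathbf{p}\in T^\opt$, the dual-norm inequality $|\inner{\nabla h(\mathbf{p}),\boldsymbol{\delta}}|\le\lambda\norm{\boldsymbol{\delta}}_{\mathbf{p}}$ gives
\[
h(\mathbf{q})\ge h(\mathbf{p})-\lambda t+\wsc(t),\qquad t=\norm{\boldsymbol{\delta}}_{\mathbf{p}}.
\]
Minimizing over $t\ge0$ via the Legendre duality of \Cref{lem:legendre} gives $h(\mathbf{q})\ge h(\mathbf{p})-\wscs(\lambda)$, valid because $\lambda<1$. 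Taking $\mathbf{q}=\mathbf{p}^\opt$, which lies in $D$ with $h(\mathbf{p}^\opt)=f^\opt$ by \Cref{lem:min}(ii), yields the bound. The quadratic form $\tfrac23\lambda^2$ again follows from \Cref{lem:legendre}.

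The only delicate point is the bookkeeping that identifies the facial local norm with the matrix local norm, and the gradient pairing with $\inner{-\mathbf{M}(\mathbf{p})^{-1},\mathbf{H}}$. This is immediate from \Cref{prop:dict}\ref{it:grad} and the fact that $\inner{\mathbf{M}(\mathbf{p})^{-1},\mathbf{a}_i\mathbf{a}_i^\top}=v_i(\mathbf{p})$. Beyond that, the algebraic identity in \ref{it:damped} is the main computation to get exactly right.
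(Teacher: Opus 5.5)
Your proposal is correct and follows essentially the same route as the paper. The paper likewise pulls each claim back to matrix space through $\mathbf{p}\mapsto\mathbf{M}(\mathbf{p})$ and uses \Cref{lem:Msand} for membership in $D$. For (a)--(b) it applies \Cref{lem:scineq}\ref{it:scupper}, with the same exact evaluation $\wscs\big(\tfrac{\lambda}{1+\lambda}\big)=-\tfrac{\lambda}{1+\lambda}+\log(1+\lambda)$ in (a). For (c) it applies \Cref{lem:scineq}\ref{it:sclower}, the dual-norm bound, and the Legendre conjugacy of \Cref{lem:legendre}, then takes $\mathbf{q}=\mathbf{p}^\opt$.
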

\begin{proof}
Write $\mathbf{n}=\mathbf{n}_{\mathbf{p}}$, so $\norm{\mathbf{n}}_{\mathbf{p}}=\lambda$ and
$\inner{\nabla h(\mathbf{p}),\mathbf{n}}=-\lambda^2$.

\ref{it:damped} The step $\boldsymbol{\delta}=\mathbf{n}/(1+\lambda)$ has $\norm{\boldsymbol{\delta}}_{\mathbf{p}}=\lambda/(1+\lambda)<1$,
so \Cref{lem:Msand} (at base $\mathbf{M}(\mathbf{p})$, with
$\mathbf{H}=\mathbf{M}(\mathbf{p}^+)-\mathbf{M}(\mathbf{p})=\sum_i\delta_i\mathbf{a}_i\mathbf{a}_i^\top$, $r=\norm{\boldsymbol{\delta}}_{\mathbf{p}}$) gives
$\mathbf{M}(\mathbf{p}^+)\succeq(1-r)\mathbf{M}(\mathbf{p})\succ0$: $\mathbf{p}^+\in D$. By the upper integrated bound
(\Cref{lem:scineq}\ref{it:scupper}, at base $\mathbf{X}=\mathbf{M}(\mathbf{p})$, $\mathbf{Y}=\mathbf{M}(\mathbf{p}^+)$, pulled back through
the affine map as in \Cref{lem:min}),
\[
h(\mathbf{p}^+)\ \le\ h(\mathbf{p})+\Big\langle\nabla h(\mathbf{p}),\tfrac{\mathbf{n}}{1+\lambda}\Big\rangle
+\wscs\Big(\tfrac{\lambda}{1+\lambda}\Big)
= h(\mathbf{p})-\frac{\lambda^2}{1+\lambda}-\frac{\lambda}{1+\lambda}+\log(1+\lambda)
= h(\mathbf{p})-\wsc(\lambda),
\]
where we evaluated
$\wscs\big(\tfrac{\lambda}{1+\lambda}\big)
=-\tfrac{\lambda}{1+\lambda}-\log\big(1-\tfrac{\lambda}{1+\lambda}\big)
=-\tfrac{\lambda}{1+\lambda}+\log(1+\lambda)$ and combined
$\tfrac{\lambda^2+\lambda}{1+\lambda}=\lambda$.

\ref{it:fullmono} Now $\boldsymbol{\delta}=\mathbf{n}$, $\norm{\boldsymbol{\delta}}_{\mathbf{p}}=\lambda\le\tfrac14<1$, so
$\mathbf{p}^+\in D$ as before, and the same upper bound gives
\[
h(\mathbf{p}^+)\le h(\mathbf{p})-\lambda^2+\wscs(\lambda)\le h(\mathbf{p})-\lambda^2+\tfrac23\lambda^2
=h(\mathbf{p})-\tfrac13\lambda^2,
\]
using $\wscs(\lambda)\le\tfrac23\lambda^2$ for $\lambda\le\tfrac14$ (\Cref{lem:legendre}).

\ref{it:subopt} For any $\mathbf{q}\in D$, the lower integrated bound
(\Cref{lem:scineq}\ref{it:sclower}, base $\mathbf{M}(\mathbf{p})$) reads
$h(\mathbf{q})\ge h(\mathbf{p})+\inner{\nabla h(\mathbf{p}),\mathbf{q}-\mathbf{p}}+\wsc(\norm{\mathbf{q}-\mathbf{p}}_{\mathbf{p}})$. By the dual-norm
characterization of the decrement,
$\inner{\nabla h(\mathbf{p}),\mathbf{q}-\mathbf{p}}\ge-\lambda\norm{\mathbf{q}-\mathbf{p}}_{\mathbf{p}}$, so
\[
h(\mathbf{q})\;\ge\;h(\mathbf{p})+\inf_{r\ge0}\big\{\wsc(r)-\lambda r\big\}
\;=\;h(\mathbf{p})-\sup_{r\ge0}\big\{\lambda r-\wsc(r)\big\}
\;=\;h(\mathbf{p})-\wscs(\lambda),
\]
the last step by the Legendre conjugacy of \Cref{lem:legendre} (finite precisely because
$\lambda<1$). Taking $\mathbf{q}=\mathbf{p}^\opt$ (the minimizer, \Cref{lem:min}) gives
$f^\opt\ge h(\mathbf{p})-\wscs(\lambda)$. The numerical form for $\lambda\le\tfrac14$ is
\Cref{lem:legendre} again.
\end{proof}

\subsection{The main theorem}

\begin{theorem}[Facial Newton phase]
\label{thm:newton}
Let $\mathbf{A}$ have a nondegenerate optimal design, and let \Cref{alg:newton} be started at any
$\mathbf{p}^{(0)}\in\relint\Delta_{S^\opt}$ with $h(\mathbf{p}^{(0)})-f^\opt\le c_0$ (as delivered by
\Cref{lem:identify}, given the instance constants of \Cref{rem:params}; \Cref{alg:newton} itself
consumes no instance constants: its stepping rule is driven by the computed decrement and its
stopping rule by the certificate). Set
\[
\tau\;:=\;\min\Big\{\tfrac14,\ \ \frac{\eps d}{\sqrt{\tfrac43L_{\mathrm{loc}}\kappa_{\mathrm{loc}}}}\Big\}
\;=\;\min\Big\{\tfrac14,\ \ \frac{\eps d}{4\sqrt{3\,\Lopt\,\kappa}}\Big\}.
\]
Then:
\begin{enumerate}[label=\emph{(\roman*)},leftmargin=*,itemsep=2pt]
\item\label{it:nwt-iters} \Cref{alg:newton} returns a point $\mathbf{p}\in\relint\Delta_{S^\opt}$ with
$\max_iv_i(\mathbf{p})\le(1+\eps)d$ after at most
\[
N\;\le\;\underbrace{3}_{\text{damped steps}}
\;+\;\underbrace{\Big\lceil\log_2\log_2\tfrac{1}{2\tau}\Big\rceil^{\!+}}_{\text{full steps}}
\;+\;\underbrace{1}_{\text{terminal test}}
\;=\;O\big(1\big)+O\Big(\log\log\frac{\Theta_{\mathrm N}}{\eps}\Big)
\quad\text{iterations},
\]
where $\Theta_{\mathrm N}:=4\sqrt{3\Lopt\kappa}/d$ and $\lceil x\rceil^+:=\max\{\lceil
x\rceil,0\}$. Every condition-type quantity enters only inside the double logarithm.
\item\label{it:nwt-queries} Each iteration uses at most $m+1\le\binom{d+1}{2}+1$ leverage-score
queries, all at genuine designs in $\Delta_n$; hence the phase uses
$O\big(d^2\log\log(\Theta_{\mathrm N}/\eps)\big)$ queries in total.
\item\label{it:nwt-feasible} All iterates remain in $\Cnull\subseteq\relint\Delta_{S^\opt}$: the
method never leaves the relative interior of the optimal face, and never consults any oracle beyond
leverage scores.
\end{enumerate}
\end{theorem}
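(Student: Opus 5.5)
The plan is to track the iterates through the two classical regimes of damped Newton and to use the sublevel set $\Cnull$ as an invariant throughout. \textbf{Invariant first.} By \Cref{lem:newtonstep}\ref{it:damped},\ref{it:fullmono}, every step (damped if $\lambda>\tfrac14$, full otherwise) is well defined in $D$ and does not increase $h$. Hence, starting from $h(\mathbf{p}^{(0)})-f^\opt\le c_0$, all iterates stay in $\Cnull$, which by \Cref{lem:c0}\ref{it:c0-interior} lies in $\relint\Delta_{S^\opt}$. This gives \ref{it:nwt-feasible}. It also shows that every probe $\tfrac12(\mathbf{p}+\mathbf{e}_j)$ of \Cref{prop:smrecover} is a genuine design. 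So each iteration costs one gradient query plus $m$ Hessian probes, at most $\binom{d+1}{2}+1$ queries in total, which is \ref{it:nwt-queries} once \ref{it:nwt-iters} is proved.

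\textbf{Damped phase.} Each damped step has $\lambda>\tfrac14$, so by \Cref{lem:newtonstep}\ref{it:damped} it decreases $h$ by at least $\wsc(\tfrac14)>\tfrac1{38}$. The total budget is $h-f^\opt\le c_0=\wsc(\bar r)\le\wsc(\tfrac12)$. Checking $\wsc(\tfrac12)/\wsc(\tfrac14)<4$ numerically ($\approx0.0945/0.0269\approx3.5$) bounds the number of damped steps by $3$. I also need to ensure that once $\lambda\le\tfrac14$ the decrement never climbs back above $\tfrac14$. For this I use \Cref{thm:newtonquad}: $\lambda^+\le(\lambda/(1-\lambda))^2\le(4\lambda/3)^2\le\lambda/3$ for $\lambda\le\tfrac14$. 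Equivalently, $2\lambda^+\le(2\cdot\tfrac43\lambda)^2\cdot\tfrac12$, which I will massage into the clean form $2\lambda_{j+1}\le(2\lambda_j)^2$ using $(1-\lambda)^{-2}\le\tfrac{16}9\le 2$. So after at most three damped steps we are permanently in the full-step regime, and $2\lambda_j\le(2\lambda_0)^{2^j}\le 2^{-2^j}$.

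\textbf{Converting the decrement into the John guarantee.} This is the step I expect to need the most care. By \Cref{lem:newtonstep}\ref{it:subopt}, $\lambda\le\tfrac14$ gives $h-f^\opt\le\tfrac23\lambda^2$. Since the iterate lies in $\Cnull$, \Cref{lem:gapconv} applies and yields
\[
g(\mathbf{p})\le\sqrt{2L_{\mathrm{loc}}\kappa_{\mathrm{loc}}\cdot\tfrac23\lambda^2}=\lambda\sqrt{\tfrac43L_{\mathrm{loc}}\kappa_{\mathrm{loc}}}.
\]
Hence $\lambda\le\tau$ implies $g\le\eps d$. The identity $L_{\mathrm{loc}}\kappa_{\mathrm{loc}}=36\Lopt\kappa$ gives the second expression for $\tau$. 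Finally, $2^{-2^j}\le 2\tau$ as soon as $j\ge\log_2\log_2\tfrac1{2\tau}$. Adding the $\le3$ damped steps and one terminal certificate query gives the bound on $N$. Since $1/\tau\le\max\{4,\Theta_{\mathrm N}/\eps\}$, this is $O(\log\log(\Theta_{\mathrm N}/\eps))$.

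The main subtlety is that the stopping rule is the certificate $g\le\eps d$, not the decrement. The algorithm may therefore stop earlier, which is harmless, and it never needs $\tau$ or any other instance constant.
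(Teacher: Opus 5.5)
Your proposal is correct and follows the paper's proof essentially step for step. Both arguments use $\Cnull$-invariance from the monotonicity of $h$, at most three damped steps from the budget $c_0\le\wsc(\tfrac12)<4\,\wsc(\tfrac14)$, and the contraction $2\lambda^+\le(2\lambda)^2$ via \Cref{thm:newtonquad}; both then use \Cref{lem:newtonstep}\ref{it:subopt} together with \Cref{lem:gapconv} to turn $\lambda\le\tau$ into $g\le\eps d$.

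One harmless arithmetic slip: $(4\lambda/3)^2\le\lambda/3$ fails near $\lambda=\tfrac14$ (there it gives $\tfrac19>\tfrac1{12}$). You only need $\lambda^+\le\tfrac{16}{9}\lambda^2\le\tfrac19\le\tfrac14$, which does hold.
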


\begin{proof}
\emph{Well-posedness and invariance.} The starting point lies in $\Cnull$. We claim every iterate
does. Indeed, each iteration either takes a damped step ($\lambda>\tfrac14$), which decreases $h$ by
at least $\wsc(\lambda)>0$ (\Cref{lem:newtonstep}\ref{it:damped}), or a full step
($\lambda\le\tfrac14$), which decreases $h$ as well (\Cref{lem:newtonstep}\ref{it:fullmono}); both
keep the iterate in $D$. Since $h$ never increases, $h(\mathbf{p}^{(k)})-f^\opt\le c_0$ for all $k$, i.e.\
$\mathbf{p}^{(k)}\in\Cnull$; by \Cref{lem:c0}\ref{it:c0-interior} each $\mathbf{p}^{(k)}$ is a design in
$\relint\Delta_{S^\opt}$ with $p_i\ge\pmin/2$. In particular every query the algorithm makes, at
$\mathbf{p}^{(k)}$ and at $\tfrac12(\mathbf{p}^{(k)}+\mathbf{e}_j)$, is at a genuine design
(\Cref{prop:smrecover}), proving the ``all queries at designs'' part of \ref{it:nwt-queries} and
\ref{it:nwt-feasible}. The Newton system is nonsingular at every iterate (\Cref{lem:faceSC}), and the
recovered Hessian is exact (\Cref{prop:smrecover}), so $\mathbf{n}_{\mathbf{p}}$ and $\lambda(\mathbf{p})$ are the exact
Newton direction and decrement of $h$ at each iterate.

\emph{Phase 1: at most $3$ damped steps.} While $\lambda(\mathbf{p}^{(k)})>\tfrac14$, each step decreases
$h$ by at least $\wsc(\tfrac14)=\tfrac14-\log\tfrac54>0.0268$
(\Cref{lem:newtonstep}\ref{it:damped}; $\wsc$ increasing). The total decrease available is
$h(\mathbf{p}^{(0)})-f^\opt\le c_0\le\wsc(\tfrac12)<0.0946$. If four damped steps occurred, $h$ would
decrease by more than $4\times0.0268=0.1072>c_0$, impossible. Hence at most $3$ damped steps occur,
and they occur before any full step.

\emph{Phase 2: quadratic convergence of the decrement.} Once $\lambda(\mathbf{p}^{(k)})\le\tfrac14$, the
algorithm takes full steps. By \Cref{thm:newtonquad} (applicable: $h$ is standard self-concordant on
$D$ with nondegenerate Hessian on $T^\opt$, as established above and in
\Cref{lem:faceSC,lem:min}), each full step satisfies
$\lambda^+\le\big(\lambda/(1-\lambda)\big)^2\le\tfrac{16}9\lambda^2\le2\lambda^2$, and also
$\lambda^+\le(\tfrac{1/4}{3/4})^2=\tfrac19\le\tfrac14$: the iteration stays in the full-step regime.
From $2\lambda^+\le(2\lambda)^2$ and $2\lambda\le\tfrac12$ at entry, after $j$ full steps
$2\lambda\le(\tfrac12)^{2^{j}}$, so $\lambda\le\tau$ holds as soon as
$(\tfrac12)^{2^j}\le2\tau$, i.e.\ after $j=\lceil\log_2\log_2\tfrac1{2\tau}\rceil^+$ full steps
(zero if $\tau=\tfrac14$).

\emph{Termination and correctness.} The algorithm stops at the first iterate with
$g(\mathbf{p})=\max_iv_i(\mathbf{p})-d\le\eps d$---a direct evaluation of \Cref{def:je} from the same query that
supplies the gradient, so the output is correct by construction, and feasible by the invariance
above. It remains to bound \emph{when} this test must pass. Suppose $\lambda(\mathbf{p})\le\tau$ at some
iterate $\mathbf{p}$ (which happens within $3+\lceil\log_2\log_2\tfrac1{2\tau}\rceil^+$ steps by Phases
1--2). By \Cref{lem:newtonstep}\ref{it:subopt}, $h(\mathbf{p})-f^\opt\le\tfrac23\tau^2$, and since
$\mathbf{p}\in\Cnull$, the gap conversion (\Cref{lem:gapconv}) gives
\[
g(\mathbf{p})\;\le\;\sqrt{2L_{\mathrm{loc}}\kappa_{\mathrm{loc}}\cdot\tfrac23\tau^2}
\;=\;\tau\sqrt{\tfrac43L_{\mathrm{loc}}\kappa_{\mathrm{loc}}}\;\le\;\eps d
\]
by the choice of $\tau$. So the stopping test passes at this iterate at the latest, and the
iteration count is as claimed in \ref{it:nwt-iters} (the final ``$+1$'' accounts for the terminal
certificate query; $L_{\mathrm{loc}}\kappa_{\mathrm{loc}}=36\Lopt\kappa$ gives the stated
$\Theta_{\mathrm N}$).

\emph{Query count.} Each iteration queries $\mathbf{v}(\mathbf{p})$ once and, if it does not stop, the $m$
probe designs of \Cref{prop:smrecover}: at most $m+1\le\binom{d+1}{2}+1=O(d^2)$ queries per
iteration; multiplying by \ref{it:nwt-iters} gives \ref{it:nwt-queries}.
\end{proof}

\begin{corollary}[Total complexity of the John ellipsoid in the leverage-score model]
\label{cor:total}
Under \Cref{def:nondeg}, \Cref{alg:main} with the \textsc{Newton} facial phase (advised constants
per \Cref{rem:params}) computes a
$(1+\eps)$-John ellipsoid using
\[
C(\mathbf{A})\;+\;O\Big(d^2\,\log\log\frac{\Theta_{\mathrm N}}{\eps}\Big)
\qquad\text{leverage-score queries,}
\]
with $C(\mathbf{A})$ the $\eps$-independent identification cost of \Cref{lem:identify} (run at tolerance
$c=c_0$; the $\kappa^3$-tightening of \Cref{thm:upper} is not needed for this phase). Once the
optimal face is identified and the iterate is in the facial sublevel set, the accuracy phase costs
$O(d^2\log\log(\Theta_{\mathrm N}/\eps))$ queries, with condition-type quantities only inside the
double logarithm.
\end{corollary}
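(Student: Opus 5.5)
The corollary follows by composing \Cref{lem:identify} with \Cref{thm:newton}. We run \Cref{alg:main} with the \textsc{Newton} facial phase and handoff tolerance $c:=c_0$. Two things need checking: that the while-loop delivers a point satisfying the hypotheses of \Cref{thm:newton}, and that the query counts of the two phases add up as stated.

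\emph{Step 1: identification.} Apply \Cref{lem:identify} with $c=c_0$, which is admissible since $c\in(0,c_0]$. After at most $C(\mathbf{A})=\max\{K_{\mathrm{id}},K_{\mathrm{gap}}(c_0/d)\}$ leverage-score queries the loop exits. At exit $\supp(\mathbf{p})=S^\opt$ and $h(\mathbf{p})-f^\opt\le c_0$, so $\mathbf{p}\in\Cnull$. By \Cref{lem:c0}\ref{it:c0-interior}, $\mathbf{p}\in\relint\Delta_{S^\opt}$. This count is $\eps$-independent because both $K_{\mathrm{id}}$ (\Cref{thm:bomze}) and $K_{\mathrm{gap}}(c_0/d)$ (\Cref{thm:zhao}) depend only on the instance. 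Note that the FISTA-specific tightening $c\le c_1/\kappa^3$ from \Cref{thm:upper} is never invoked. The Newton analysis needs only $h(\mathbf{p}^{(0)})-f^\opt\le c_0$, since the damped-step budget in Phase~1 of \Cref{thm:newton} uses $c_0\le\wsc(\tfrac12)$, which holds because $\bar r\le\tfrac12$ and $\wsc$ is increasing.

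\emph{Step 2: accuracy.} Also at exit, the index set $S$ computed in \Cref{alg:main} equals $S^\opt$, so \Cref{alg:newton} is called on the correct face with a valid starting point. \Cref{thm:newton}\ref{it:nwt-iters}--\ref{it:nwt-queries} then gives correctness of the output, namely $\max_iv_i(\mathbf{p})\le(1+\eps)d$, certified directly. It uses at most $(m+1)\bigl(4+\lceil\log_2\log_2\tfrac1{2\tau}\rceil^+\bigr)$ queries. Since $m\le\binom{d+1}{2}$ and $\log_2\tfrac1{2\tau}=O(\log(\Theta_{\mathrm N}/\eps))$ by the definition of $\tau$, this is $O(d^2\log\log(\Theta_{\mathrm N}/\eps))$. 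Here we read the bound as $O(d^2)$ whenever the double logarithm is bounded, treating $\log\log$ as at least a constant. Summing the two phases gives the stated total.

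\emph{Main obstacle.} The only point requiring care is the interface between the phases: the exit state of the loop must match the precondition of \Cref{alg:newton} exactly, with no hidden dependence on the accelerated phase's tolerance. This is what \Cref{lem:identify} provides. Its stopping rule is checkable from the oracle using only the advised constants $c_0$ and $\gsc$ (\Cref{rem:params}), and its conclusion $\mathbf{p}\in\Cnull$ is precisely the hypothesis of \Cref{thm:newton}. The rest is bookkeeping of query counts.
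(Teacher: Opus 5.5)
Your proposal is correct and is essentially the argument the paper intends; the paper states \Cref{cor:total} without a separate proof, as the composition of \Cref{lem:identify} at $c=c_0$ with \Cref{thm:newton}. You land in $\Cnull\subseteq\relint\Delta_{S^\opt}$ with $\supp(\mathbf{p})=S^\opt$, and then multiply the $O(1)+O(\log\log(\Theta_{\mathrm N}/\eps))$ Newton iterations by $m+1=O(d^2)$ queries each. Your interface checks are the right ones: $c_0=\wsc(\bar r)\le\wsc(\tfrac12)$ funds the damped-step budget, and the $\kappa^3$ tightening of \Cref{thm:upper} is never used.
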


\begin{remark}[Comparison with interior-point methods]
\label{rem:ipm}
Second-order interior-point methods~\cite{nesterov1994,khachiyantodd1993,anstreicher2002}, and the
Newton-type design algorithms of e.g.~\cite{lupong2013}, solve Newton systems in all $n$ weight
variables and achieve $\log(1/\eps)$ globally, at $\widetilde O(n^3)$-type cost per step. The facial
Newton phase is a different beast: it works in the $m\le\binom{d+1}{2}$ \emph{identified contact
coordinates}, consumes only leverage-score queries, and its iteration count is doubly logarithmic
because the warm start lands inside the Dikin ball, where Newton's method is already in its quadratic
regime after $O(1)$ damped steps. The price is the identification term $C(\mathbf{A})$, which the IPM line
does not pay; the trade is exactly the content of \Cref{prob:uniform}.
\end{remark}

\section{Discussion and open problems}
\label{sec:discussion}

\paragraph{What the modern line should change.}
The input-sparsity~\cite{cao2025} and lazy-update/streaming~\cite{woodruff2025} algorithms reduce the
cost of \emph{one} leverage-score computation while keeping the averaged $O(\eps^{-1}\log(n/d))$ outer
loop. By \Cref{thm:avg-intro} that iteration count is a property of the certification rule, not of the
oracle; a last-iterate method (\Cref{alg:main}, or away-step Frank--Wolfe~\cite{ahipasaoglu2008,zhao2023})
needs only $C(\mathbf{A})+O(\sqrt\kappa\log(\Theta/\eps))$ queries, and with the facial Newton phase only
$C(\mathbf{A})+O(d^2\log\log(\Theta/\eps))$. Running a last-iterate method \emph{on top of} the
cheap, sketched per-iteration oracle would replace the $\eps^{-1}$ factor accordingly.
One caveat is intrinsic: averaging is robust to the noise of a sketched oracle, whereas the last-iterate
and Newton phases as analyzed here consume \emph{exact} leverage scores (the Hessian recovery of
\Cref{prop:smrecover} is exact algebra, and would have to be redone with error propagation under a
sketched oracle). Establishing noise-tolerant versions is a concrete next step.

\paragraph{Where the problem now stands.}
The whole paper is one statement: the leverage-score complexity of the John ellipsoid separates into
\emph{certification}, \emph{identification}, and \emph{accuracy}, and two of the three are now settled.
\emph{Certification} by uniform averaging is an obstruction, costing $\Theta(1/\eps)$ (\Cref{thm:avg}),
but an avoidable one---it is the price of the averaged certificate, not of the oracle. \emph{Accuracy}
is not an obstruction: after identification, $\log\log(1/\eps)$ iterations suffice (\Cref{thm:newton}),
with no condition number multiplying the $\eps$-dependent term in either facial phase. What remains is
\emph{identification}: the cost $C(\mathbf{A})$ of reaching the optimal face is $\eps$-independent but
condition-dependent (through $K_{\mathrm{id}}$ and $\kappa_\Phi$ in \Cref{lem:identify}), and
nondegeneracy (\Cref{def:nondeg}) is assumed throughout the upper bounds. The open problems below are
the three frontiers this leaves: condition-free identification (\Cref{prob:uniform}), degeneracy and
singular faces (\Cref{prob:degenerate}), and stability under an approximate oracle (the caveat above).
We also note what an earlier version of this work left open and is now closed: whether the facial
Hessian is simulable within the pure leverage-score oracle. \Cref{prop:smrecover} answers this
exactly, with $O(d^2)$ queries per Newton step.

\begin{problem}[Condition-free identification]
\label{prob:uniform}
Is there a leverage-score algorithm that, on every instance with a nondegenerate optimal design,
identifies the contact set $S^\opt$ (equivalently: reaches the facial sublevel set $\Cnull$) in
$\poly(d)\cdot\polylog(1/\gsc,1/\pmin,\kappa)$ queries---or, ideally, in $\poly(d)$ queries with no
condition dependence at all? Combined with \Cref{thm:newton}, a positive answer would give a uniform
$\poly(d)\cdot\log\log(1/\eps)$ algorithm. Alternatively, is there a family of instances on which
every leverage-score algorithm needs $\eps^{-\Omega(1)}$ queries---or
$\mathrm{cond}(\mathbf{A})^{\Omega(1)}$ queries for identification?
\end{problem}

\noindent Two structural comments on \Cref{prob:uniform}. On the upper-bound side, identification is
exactly where the facial geometry (pyramidal width $\Phi$, slack $\gsc$, minimal weight $\pmin$)
enters; all of it sits in the warm-start term, and none of it is information-theoretically visible to
the certificate $g(\mathbf{p})\le\eps d$ itself. On the lower-bound side, the Hessian structure of
\Cref{prop:dict}\ref{it:hess} is suggestive: $\nabla^2f$ is the Gram matrix of rank-one symmetric
tensors (a Veronese configuration), and we were unable to realize within it the ``flat valley with
the optimum at its end'' geometry that Nesterov-type first-order lower bounds require; conversely,
every instance we tried is solved by a first-order method in $\polylog(1/\eps)$ queries after warm
start. A first-order lower bound for the John ellipsoid, in $\eps$ or in the condition
number, would be the first of its kind.

\begin{problem}[Degeneracy]
\label{prob:degenerate}
The upper bounds assume strict complementarity and independent contact matrices
(\Cref{def:nondeg}); these hold generically but not always. Quantify the cost of degeneracy: e.g.,
does a $\log\log(1/\eps)$ accuracy phase survive when the contact matrices are dependent (so the
optimal design is non-unique and the facial problem has a nontrivial solution set), with Newton
replaced by a method that quotients out the flat directions of \Cref{lem:flat}?
\end{problem}

\noindent\Cref{rem:nondeg} records why this exceptional set is measure-zero yet natural (it contains
the symmetric designs), and why a generic perturbation does not dispose of \Cref{prob:degenerate} for
free: it merely relocates the cost into $C(\mathbf{A})$ through the vanishing of $\gsc$ and $\Phi$.

\paragraph{Summary.}
The historical $\eps^{-1}$ of the leverage-score line is an artifact of averaging; the same oracle,
pointed at the last iterate, is geometric; and once the optimal face is known, the problem is, in the
affine-invariant sense that Newton's method makes precise, \emph{easy}: three damped steps and a
doubly logarithmic number of full Newton steps, all driven by leverage scores. The open problem is no
longer the accuracy; it is the geometry of finding the face.

\bibliography{refs}

@article{ahipasaoglu2008,
  author  = {Selin Damla Ahipa\c{s}ao\u{g}lu and Peng Sun and Michael J. Todd},
  title   = {Linear convergence of a modified {Frank--Wolfe} algorithm for computing minimum-volume enclosing ellipsoids},
  journal = {Optimization Methods and Software}, volume = {23}, number = {1}, pages = {5--19}, year = {2008}}

@inproceedings{alaoui2015,
  author    = {Ahmed El Alaoui and Michael W. Mahoney},
  title     = {Fast randomized kernel ridge regression with statistical guarantees},
  booktitle = {Advances in Neural Information Processing Systems (NeurIPS)}, year = {2015}, eprint = {1411.0306}, eprinttype = {arxiv}}

@article{alizadeh1997,
  author  = {Farid Alizadeh and Jean-Pierre A. Haeberly and Michael L. Overton},
  title   = {Complementarity and nondegeneracy in semidefinite programming},
  journal = {Mathematical Programming}, volume = {77}, number = {1}, pages = {111--128}, year = {1997}}

@inproceedings{allenzhu2017,
  author    = {Zeyuan Allen-Zhu and Yuanzhi Li and Aarti Singh and Yining Wang},
  title     = {Near-optimal design of experiments via regret minimization},
  booktitle = {International Conference on Machine Learning (ICML)}, year = {2017},
  note      = {Journal version in \emph{Mathematical Programming} 186:439--478, 2021}, eprint = {1711.05174}, eprinttype = {arxiv}}

@inproceedings{alman2025,
  author    = {Josh Alman and Ran Duan and Virginia {Vassilevska Williams} and Yinzhan Xu and Zixuan Xu and Renfei Zhou},
  title     = {More asymmetry yields faster matrix multiplication},
  booktitle = {ACM-SIAM Symposium on Discrete Algorithms (SODA)}, year = {2025}, eprint = {2404.16349}, eprinttype = {arxiv}}

@inproceedings{almanwilliams2021,
  author    = {Josh Alman and Virginia Vassilevska Williams},
  title     = {A refined laser method and faster matrix multiplication},
  booktitle = {ACM-SIAM Symposium on Discrete Algorithms (SODA)}, year = {2021}, eprint = {2010.05846}, eprinttype = {arxiv}}

@article{anstreicher2002,
  author  = {Kurt M. Anstreicher},
  title   = {Improved complexity for maximum volume inscribed ellipsoids},
  journal = {SIAM Journal on Optimization}, volume = {13}, number = {2}, pages = {309--320}, year = {2002}}

@article{avron2013,
  author  = {Haim Avron and Christos Boutsidis},
  title   = {Faster subset selection for matrices and applications},
  journal = {SIAM Journal on Matrix Analysis and Applications}, volume = {34}, number = {4}, pages = {1464--1499}, year = {2013}}

@article{beck2009,
  author  = {Amir Beck and Marc Teboulle},
  title   = {A fast iterative shrinkage-thresholding algorithm for linear inverse problems},
  journal = {SIAM Journal on Imaging Sciences}, volume = {2}, number = {1}, pages = {183--202}, year = {2009}}

@article{bomze2020,
  author  = {Immanuel M. Bomze and Francesco Rinaldi and Damiano Zeffiro},
  title   = {Active set complexity of the away-step {Frank--Wolfe} algorithm},
  journal = {SIAM Journal on Optimization}, volume = {30}, number = {3}, pages = {2470--2500}, year = {2020}}

@misc{braverman2016,
  author = {Vladimir Braverman and Dan Feldman and Harry Lang and Adiel Statman and Samson Zhou},
  title  = {New frameworks for offline and streaming coreset constructions},
  year   = {2016}, note = {arXiv:1612.00889}}

@inproceedings{braverman2018,
  author = {Vladimir Braverman and Petros Drineas and Cameron Musco and Christopher Musco and Jalaj Upadhyay and David P. Woodruff and Samson Zhou},
  title  = {Near-optimal linear algebra in the online and sliding window models},
  booktitle = {IEEE Symposium on Foundations of Computer Science (FOCS)}, year = {2020}, eprint = {1805.03765}, eprinttype = {arxiv}}

@book{bv2004,
  author = {Stephen Boyd and Lieven Vandenberghe},
  title  = {Convex Optimization}, publisher = {Cambridge University Press}, year = {2004}}

@inproceedings{calandriello2017,
  author    = {Daniele Calandriello and Alessandro Lazaric and Michal Valko},
  title     = {Distributed adaptive sampling for kernel matrix approximation},
  booktitle = {International Conference on Artificial Intelligence and Statistics (AISTATS)}, year = {2017}, eprint = {1803.10172}, eprinttype = {arxiv}}

@inproceedings{cao2025,
  author    = {Yang Cao and Xiaoyu Li and Zhao Song and Xin Yang and Tianyi Zhou},
  title     = {Faster algorithm for structured {John} ellipsoid computation},
  booktitle = {Advances in Neural Information Processing Systems (NeurIPS)}, year = {2025}, eprint = {2211.14407}, eprinttype = {arxiv}}

@article{chen2018,
  author  = {Yuansi Chen and Raaz Dwivedi and Martin J. Wainwright and Bin Yu},
  title   = {Fast {MCMC} sampling algorithms on polytopes},
  journal = {Journal of Machine Learning Research}, volume = {19}, pages = {1--86}, year = {2018}}

@inproceedings{cherapanamjeri2022,
  author    = {Yeshwanth Cherapanamjeri and Sandeep Silwal and David P. Woodruff and Samson Zhou},
  title     = {Optimal algorithms for linear algebra in the current matrix multiplication time},
  booktitle = {ACM-SIAM Symposium on Discrete Algorithms (SODA)}, year = {2023}, eprint = {2211.09964}, eprinttype = {arxiv}}

@inproceedings{clarkson2013,
  author    = {Kenneth L. Clarkson and David P. Woodruff},
  title     = {Low-rank approximation and regression in input sparsity time},
  booktitle = {ACM Symposium on Theory of Computing (STOC)}, year = {2013}, eprint = {1207.6365}, eprinttype = {arxiv}}

@inproceedings{cohen2015,
  author    = {Michael B. Cohen and Richard Peng},
  title     = {$\ell_p$ row sampling by {Lewis} weights},
  booktitle = {ACM Symposium on Theory of Computing (STOC)}, year = {2015}}

@inproceedings{cohen2015uniform,
  author    = {Michael B. Cohen and Yin Tat Lee and Cameron Musco and Christopher Musco and Richard Peng and Aaron Sidford},
  title     = {Uniform sampling for matrix approximation},
  booktitle = {Innovations in Theoretical Computer Science (ITCS)}, year = {2015}, eprint = {1408.5099}, eprinttype = {arxiv}}

@inproceedings{cohen2016online,
  author    = {Michael B. Cohen and Cameron Musco and Jakub Pachocki},
  title     = {Online row sampling},
  booktitle = {International Conference on Approximation Algorithms for Combinatorial Optimization Problems (APPROX)}, year = {2016}, eprint = {1604.05448}, eprinttype = {arxiv}}

@inproceedings{cohen2019,
  author    = {Michael B. Cohen and Ben Cousins and Yin Tat Lee and Xin Yang},
  title     = {A near-optimal algorithm for approximating the {John} ellipsoid},
  booktitle = {Conference on Learning Theory (COLT)}, year = {2019}, eprint = {1905.11580}, eprinttype = {arxiv}}

@inproceedings{cohenmusco2017,
  author    = {Michael B. Cohen and Cameron Musco and Christopher Musco},
  title     = {Input sparsity time low-rank approximation via ridge leverage score sampling},
  booktitle = {ACM-SIAM Symposium on Discrete Algorithms (SODA)}, year = {2017}, eprint = {1511.07263}, eprinttype = {arxiv}}

@inproceedings{derezinski2018,
  author    = {Micha\l{} Derezi\'nski and Manfred K. Warmuth and Daniel Hsu},
  title     = {Leveraged volume sampling for linear regression},
  booktitle = {Advances in Neural Information Processing Systems (NeurIPS)}, year = {2018}, eprint = {1802.06749}, eprinttype = {arxiv}}

@article{derezinski2021,
  author  = {Micha\l{} Derezi\'nski and Michael W. Mahoney},
  title   = {Determinantal point processes in randomized numerical linear algebra},
  journal = {Notices of the American Mathematical Society}, volume = {68}, number = {1}, pages = {34--45}, year = {2021}, eprint = {2005.03185}, eprinttype = {arxiv}}

@article{dragomir2021,
  author  = {Radu-Alexandru Dragomir and Adrien Taylor and Alexandre d'Aspremont and J\'er\^ome Bolte},
  title   = {Optimal complexity and certification of {Bregman} first-order methods},
  journal = {Mathematical Programming}, volume = {194}, pages = {41--83}, year = {2022}, eprint = {1911.08510}, eprinttype = {arxiv}}

@inproceedings{drineas2006,
  author    = {Petros Drineas and Michael W. Mahoney and S. Muthukrishnan},
  title     = {Sampling algorithms for $\ell_2$ regression and applications},
  booktitle = {ACM-SIAM Symposium on Discrete Algorithms (SODA)}, year = {2006}}

@article{drineas2011,
  author  = {Petros Drineas and Michael W. Mahoney and S. Muthukrishnan and Tam\'as Sarl\'os},
  title   = {Faster least squares approximation},
  journal = {Numerische Mathematik}, volume = {117}, number = {2}, pages = {219--249}, year = {2011}, eprint = {0710.1435}, eprinttype = {arxiv}}

@article{drineas2012,
  author  = {Petros Drineas and Malik Magdon-Ismail and Michael W. Mahoney and David P. Woodruff},
  title   = {Fast approximation of matrix coherence and statistical leverage},
  journal = {Journal of Machine Learning Research}, volume = {13}, pages = {3475--3506}, year = {2012}, eprint = {1109.3843}, eprinttype = {arxiv}}

@article{drineasmahoney2016,
  author  = {Petros Drineas and Michael W. Mahoney},
  title   = {{RandNLA}: randomized numerical linear algebra},
  journal = {Communications of the ACM}, volume = {59}, number = {6}, pages = {80--90}, year = {2016}}

@inproceedings{fazel2021,
  author    = {Maryam Fazel and Yin Tat Lee and Swati Padmanabhan and Aaron Sidford},
  title     = {Computing {Lewis} weights to high precision},
  booktitle = {ACM-SIAM Symposium on Discrete Algorithms (SODA)}, year = {2022}, eprint = {2110.15563}, eprinttype = {arxiv}}

@inproceedings{feldman2011,
  author    = {Dan Feldman and Michael Langberg},
  title     = {A unified framework for approximating and clustering data},
  booktitle = {ACM Symposium on Theory of Computing (STOC)}, year = {2011}, eprint = {1106.1379}, eprinttype = {arxiv}}

@misc{gustafson2018,
  author = {Adam Gustafson and Hariharan Narayanan},
  title  = {{John}'s walk}, year = {2018}, note = {arXiv:1803.02032}}

@article{gutman2018,
  author  = {David H. Gutman and Javier F. Pe\~na},
  title   = {A unified framework for {Bregman} proximal methods: subgradient, gradient, and accelerated gradient schemes},
  journal = {Mathematical Programming}, year = {2018}, note = {Published 2023; arXiv v3, 2019}, eprint = {1812.10198}, eprinttype = {arxiv}}

@article{halko2011,
  author  = {Nathan Halko and Per-Gunnar Martinsson and Joel A. Tropp},
  title   = {Finding structure with randomness: probabilistic algorithms for constructing approximate matrix decompositions},
  journal = {SIAM Review}, volume = {53}, number = {2}, pages = {217--288}, year = {2011}}

@article{hanzely2021,
  author  = {Filip Hanzely and Peter Richt\'arik and Lin Xiao},
  title   = {Accelerated {Bregman} proximal gradient methods for relatively smooth convex optimization},
  journal = {Computational Optimization and Applications}, volume = {79}, pages = {405--440}, year = {2021}}

@misc{harris2024,
  author = {Elizabeth Harris and Ali Eshragh and Bishnu Lamichhane and Jordan Shaw-Carmody and Elizabeth Stojanovski},
  title  = {Efficient data-driven leverage score sampling algorithm for the minimum volume covering ellipsoid problem in big data},
  year   = {2024}, note = {arXiv:2411.03617}}

@article{khachiyan1996,
  author  = {Leonid G. Khachiyan},
  title   = {Rounding of polytopes in the real number model of computation},
  journal = {Mathematics of Operations Research}, volume = {21}, number = {2}, pages = {307--320}, year = {1996}}

@article{khachiyantodd1993,
  author  = {Leonid G. Khachiyan and Michael J. Todd},
  title   = {On the complexity of approximating the maximal inscribed ellipsoid for a polytope},
  journal = {Mathematical Programming}, volume = {61}, pages = {137--159}, year = {1993}}

@article{kiefer1960,
  author  = {Jack Kiefer and Jacob Wolfowitz},
  title   = {The equivalence of two extremum problems},
  journal = {Canadian Journal of Mathematics}, volume = {12}, pages = {363--366}, year = {1960}}

@article{koutis2016,
  author  = {Ioannis Koutis and Alex Levin and Richard Peng},
  title   = {Faster spectral sparsification and numerical algorithms for {SDD} matrices},
  journal = {ACM Transactions on Algorithms}, volume = {12}, number = {2}, pages = {17}, year = {2016}, eprint = {1209.5821}, eprinttype = {arxiv}}

@article{kumar2005,
  author  = {Piyush Kumar and E. Alper Y\i ld\i r\i m},
  title   = {Minimum-volume enclosing ellipsoids and core sets},
  journal = {Journal of Optimization Theory and Applications}, volume = {126}, number = {1}, pages = {1--21}, year = {2005}}

@inproceedings{lacoste2015,
  author    = {Simon Lacoste-Julien and Martin Jaggi},
  title     = {On the global linear convergence of {Frank--Wolfe} optimization variants},
  booktitle = {Advances in Neural Information Processing Systems (NeurIPS)}, year = {2015}}

@inproceedings{li2013,
  author    = {Mu Li and Gary L. Miller and Richard Peng},
  title     = {Iterative row sampling},
  booktitle = {IEEE Symposium on Foundations of Computer Science (FOCS)}, year = {2013}, eprint = {1211.2713}, eprinttype = {arxiv}}

@inproceedings{lsw2015,
  author    = {Yin Tat Lee and Aaron Sidford and Sam Chiu-wai Wong},
  title     = {A faster cutting plane method and its implications for combinatorial and convex optimization},
  booktitle = {IEEE Symposium on Foundations of Computer Science (FOCS)}, year = {2015}}

@article{lu2018,
  author  = {Haihao Lu and Robert M. Freund and Yurii Nesterov},
  title   = {Relatively smooth convex optimization by first-order methods, and applications},
  journal = {SIAM Journal on Optimization}, volume = {28}, number = {1}, pages = {333--354}, year = {2018}}

@article{lupong2013,
  author  = {Zhaosong Lu and Ting Kei Pong},
  title   = {Computing optimal experimental designs via interior point method},
  journal = {SIAM Journal on Matrix Analysis and Applications}, volume = {34}, number = {4}, pages = {1556--1580}, year = {2013}, eprint = {1009.1909}, eprinttype = {arxiv}}

@inproceedings{madan2019,
  author    = {Vivek Madan and Mohit Singh and Uthaipon Tantipongpipat and Weijun Xie},
  title     = {Combinatorial algorithms for optimal design},
  booktitle = {Conference on Learning Theory (COLT)}, year = {2019}}

@article{mahoney2009,
  author  = {Michael W. Mahoney and Petros Drineas},
  title   = {{CUR} matrix decompositions for improved data analysis},
  journal = {Proceedings of the National Academy of Sciences}, volume = {106}, number = {3}, pages = {697--702}, year = {2009}}

@article{mahoney2011,
  author  = {Michael W. Mahoney},
  title   = {Randomized algorithms for matrices and data},
  journal = {Foundations and Trends in Machine Learning}, volume = {3}, number = {2}, pages = {123--224}, year = {2011}, eprint = {1104.5557}, eprinttype = {arxiv}}

@article{martinsson2020,
  author  = {Per-Gunnar Martinsson and Joel A. Tropp},
  title   = {Randomized numerical linear algebra: foundations and algorithms},
  journal = {Acta Numerica}, volume = {29}, pages = {403--572}, year = {2020}}

@inproceedings{meng2013,
  author    = {Xiangrui Meng and Michael W. Mahoney},
  title     = {Low-distortion subspace embeddings in input-sparsity time and applications to robust linear regression},
  booktitle = {ACM Symposium on Theory of Computing (STOC)}, year = {2013}, eprint = {1210.3135}, eprinttype = {arxiv}}

@article{munteanu2018,
  author  = {Alexander Munteanu and Chris Schwiegelshohn},
  title   = {Coresets---methods and history: a theoretician's design pattern for approximation and streaming algorithms},
  journal = {KI---K\"unstliche Intelligenz}, volume = {32}, number = {1}, pages = {37--53}, year = {2018}}

@misc{murray2023,
  author = {Riley Murray and James Demmel and Michael W. Mahoney and N. Benjamin Erichson and Maksim Melnichenko and Osman Asif Malik and Laura Grigori and Piotr Luszczek and Micha\l{} Derezi\'nski and Miles E. Lopes and Tianyu Liang and Hengrui Luo and Jack Dongarra},
  title  = {Randomized numerical linear algebra: a perspective on the field with an eye to software},
  year   = {2023}, note = {arXiv:2302.11474}}

@inproceedings{musco2017,
  author    = {Cameron Musco and Christopher Musco},
  title     = {Recursive sampling for the {Nystr\"om} method},
  booktitle = {Advances in Neural Information Processing Systems (NeurIPS)}, year = {2017}, eprint = {1605.07583}, eprinttype = {arxiv}}

@inproceedings{nelson2013,
  author    = {Jelani Nelson and Huy L. Nguyen},
  title     = {{OSNAP}: faster numerical linear algebra algorithms via sparser subspace embeddings},
  booktitle = {IEEE Symposium on Foundations of Computer Science (FOCS)}, year = {2013}, eprint = {1211.1002}, eprinttype = {arxiv}}

@article{nemirovski1999,
  author  = {Arkadi Nemirovski},
  title   = {On self-concordant convex--concave functions},
  journal = {Optimization Methods and Software}, volume = {11/12}, pages = {303--384}, year = {1999}}

@book{nesterov1994,
  author = {Yurii Nesterov and Arkadii Nemirovskii},
  title  = {Interior-Point Polynomial Algorithms in Convex Programming}, publisher = {SIAM}, year = {1994}}

@book{nesterov2018,
  author = {Yurii Nesterov},
  title  = {Lectures on Convex Optimization},
  edition = {2}, series = {Springer Optimization and Its Applications 137}, publisher = {Springer}, year = {2018}}

@inproceedings{nikolov2019,
  author    = {Aleksandar Nikolov and Mohit Singh and Uthaipon Tao Tantipongpipat},
  title     = {Proportional volume sampling and approximation algorithms for $A$-optimal design},
  booktitle = {ACM-SIAM Symposium on Discrete Algorithms (SODA)}, year = {2019}, eprint = {1802.08318}, eprinttype = {arxiv}}

@book{ortega1970,
  author = {James M. Ortega and Werner C. Rheinboldt},
  title  = {Iterative Solution of Nonlinear Equations in Several Variables}, publisher = {Academic Press}, year = {1970},
  note   = {Reprinted as SIAM Classics Appl.\ Math.\ 30, 2000}}

@article{pena2019,
  author  = {Javier F. Pe\~na and Daniel Rodr\'iguez},
  title   = {Polytope conditioning and linear convergence of the {Frank--Wolfe} algorithm},
  journal = {Mathematics of Operations Research}, volume = {44}, number = {1}, pages = {1--18}, year = {2019}}

@book{pukelsheim2006,
  author = {Friedrich Pukelsheim},
  title  = {Optimal Design of Experiments}, publisher = {SIAM Classics}, year = {2006}}

@inproceedings{rudi2015,
  author    = {Alessandro Rudi and Raffaello Camoriano and Lorenzo Rosasco},
  title     = {Less is more: {Nystr\"om} computational regularization},
  booktitle = {Advances in Neural Information Processing Systems (NeurIPS)}, year = {2015}, eprint = {1507.04717}, eprinttype = {arxiv}}

@inproceedings{sarlos2006,
  author    = {Tam\'as Sarl\'os},
  title     = {Improved approximation algorithms for large matrices via random projections},
  booktitle = {IEEE Symposium on Foundations of Computer Science (FOCS)}, year = {2006}}

@inproceedings{song2024dp,
  author    = {Xiaoyu Li and Yingyu Liang and Zhenmei Shi and Zhao Song and Junwei Yu},
  title     = {Fast {John} ellipsoid computation with differential privacy optimization},
  booktitle = {Conference on Parsimony and Learning (CPAL)}, year = {2025}, eprint = {2408.06395}, eprinttype = {arxiv}}

@misc{song2024quantum,
  author = {Xiaoyu Li and Zhao Song and Junwei Yu},
  title  = {Quantum speedups for approximating the {John} ellipsoid},
  year   = {2024}, note = {arXiv:2408.14018}}

@misc{song2025,
  author = {Zhao Song and David P. Woodruff and Lichen Zhang},
  title  = {Sublinear time quantum sensitivity sampling},
  year   = {2025}, note = {arXiv:2509.16801}}

@inproceedings{song2025stream,
  author = {Zhao Song and Shenghao Xie and Samson Zhou},
  title  = {Towards sampling data structures for tensor products in turnstile streams},
  booktitle = {International Conference on Learning Representations (ICLR)}, year = {2026}, eprint = {2510.03678}, eprinttype = {arxiv}}

@inproceedings{spielman2008,
  author    = {Daniel A. Spielman and Nikhil Srivastava},
  title     = {Graph sparsification by effective resistances},
  booktitle = {ACM Symposium on Theory of Computing (STOC)}, year = {2008}, eprint = {0803.0929}, eprinttype = {arxiv}}

@inproceedings{titterington1976,
  author    = {D. Michael Titterington},
  title     = {Algorithms for computing {D}-optimal designs on a finite design space},
  booktitle = {Proc.\ 1976 Conference on Information Sciences and Systems},
  publisher = {Johns Hopkins University}, pages = {213--216}, year = {1976}}

@article{todd2007,
  author  = {Michael J. Todd and E. Alper Y\i ld\i r\i m},
  title   = {On {Khachiyan}'s algorithm for the computation of minimum-volume enclosing ellipsoids},
  journal = {Discrete Applied Mathematics}, volume = {155}, pages = {1731--1744}, year = {2007}}

@book{todd2016,
  author = {Michael J. Todd},
  title  = {Minimum-Volume Ellipsoids: Theory and Algorithms}, publisher = {MOS-SIAM}, year = {2016}}

@article{tropp2011,
  author  = {Joel A. Tropp},
  title   = {Improved analysis of the subsampled randomized {Hadamard} transform},
  journal = {Advances in Adaptive Data Analysis}, volume = {3}, number = {1--2}, pages = {115--126}, year = {2011}, eprint = {1011.1595}, eprinttype = {arxiv}}

@inproceedings{vvw2024,
  author    = {Virginia {Vassilevska Williams} and Yinzhan Xu and Zixuan Xu and Renfei Zhou},
  title     = {New bounds for matrix multiplication: from alpha to omega},
  booktitle = {ACM-SIAM Symposium on Discrete Algorithms (SODA)}, year = {2024}, eprint = {2307.07970}, eprinttype = {arxiv}}

@article{wang2017,
  author  = {Yining Wang and Adams Wei Yu and Aarti Singh},
  title   = {On computationally tractable selection of experiments in measurement-constrained regression models},
  journal = {Journal of Machine Learning Research}, volume = {18}, number = {143}, pages = {1--41}, year = {2017}, eprint = {1601.02068}, eprinttype = {arxiv}}

@article{woodruff2014,
  author  = {David P. Woodruff},
  title   = {Sketching as a tool for numerical linear algebra},
  journal = {Foundations and Trends in Theoretical Computer Science}, volume = {10}, number = {1--2}, pages = {1--157}, year = {2014}, eprint = {1411.4357}, eprinttype = {arxiv}}

@inproceedings{woodruff2022,
  author    = {David P. Woodruff and Taisuke Yasuda},
  title     = {Online {Lewis} weight sampling},
  booktitle = {ACM-SIAM Symposium on Discrete Algorithms (SODA)}, year = {2023}, eprint = {2207.08268}, eprinttype = {arxiv}}

@inproceedings{woodruff2025,
  author    = {David P. Woodruff and Taisuke Yasuda},
  title     = {{John} ellipsoids via lazy updates},
  booktitle = {Advances in Neural Information Processing Systems (NeurIPS)}, year = {2024}, eprint = {2501.01801}, eprinttype = {arxiv}}

@article{yu2010,
  author  = {Yaming Yu},
  title   = {Monotonic convergence of a general algorithm for computing optimal designs},
  journal = {The Annals of Statistics}, volume = {38}, number = {3}, pages = {1593--1606}, year = {2010}, eprint = {0905.2646}, eprinttype = {arxiv}}

@article{zhao2023,
  author  = {Renbo Zhao},
  title   = {New analysis of an away-step {Frank--Wolfe} method for minimizing logarithmically-homogeneous barriers},
  journal = {Mathematics of Operations Research}, year = {2023}, eprint = {2305.17808}, eprinttype = {arxiv}}
\bibliographystyle{alpha}

\newpage
\appendix
\begin{center}\textbf{\Huge Appendix}\end{center}
\vspace{0.6em}
\section{Notation and constants}
\label{app:notation}

For reference we collect the recurring symbols: \Cref{tab:notation-core} the core objects,
\Cref{tab:notation-const} the optimal-face quantities and condition numbers. All are defined where they
first appear; the table gives a pointer.

\begin{center}\small
\renewcommand{\arraystretch}{1.0}
\begin{tabular}{@{}l p{0.46\textwidth} l@{}}
\toprule
Symbol & Meaning & Reference\\
\midrule
$\mathbf{A}\in\R^{n\times d}$, rows $\mathbf{a}_i$ & data matrix; polytope $\{\mathbf{x}:|\mathbf{a}_i^\top\mathbf{x}|\le1\}$ & \Cref{sec:intro}\\
$\mathbf{M}(\mathbf{p})=\sum_ip_i\mathbf{a}_i\mathbf{a}_i^\top$ & information matrix & \Cref{sec:prelim}\\
$v_i(\mathbf{p})=\mathbf{a}_i^\top\mathbf{M}(\mathbf{p})^{-1}\mathbf{a}_i$ & leverage score, $=-\nabla f(\mathbf{p})_i$ & \Cref{prop:dict}\\
$f(\mathbf{p})=-\log\det\mathbf{M}(\mathbf{p})$ & $D$-optimal objective & \Cref{sec:prelim}\\
$g(\mathbf{p})=\max_iv_i(\mathbf{p})-d$ & Frank--Wolfe gap; John guarantee $g\le\eps d$ & \Cref{prop:dict}\\
$\Delta_n$ & probability simplex & \Cref{sec:prelim}\\
$\relint$ & relative interior & \Cref{sec:prelim}\\
$\supp$ & support & \Cref{sec:prelim}\\
$\mathbf{p}^\opt$ & optimal design & \Cref{sec:prelim}\\
$\mathbf{M}^\opt=\mathbf{M}(\mathbf{p}^\opt)$ & optimal information matrix & \Cref{sec:prelim}\\
$f^\opt=f(\mathbf{p}^\opt)$ & optimal objective value & \Cref{sec:prelim}\\
$\wsc(t)$ & $t-\log(1+t)$ & \Cref{lem:legendre}\\
$\wscs(t)$ & $-t-\log(1-t)$ & \Cref{lem:legendre}\\
$\omega<2.372$ & matrix-multiplication exponent & \Cref{sec:prelim}\\
$\nnz(\mathbf{A})$ & number of nonzeros of $\mathbf{A}$ & \Cref{sec:prelim}\\
\bottomrule
\end{tabular}
\captionof{table}{Core objects.}\label{tab:notation-core}
\end{center}

\begin{center}\small
\renewcommand{\arraystretch}{1.0}
\begin{tabular}{@{}l p{0.46\textwidth} l@{}}
\toprule
Symbol & Meaning & Reference\\
\midrule
$S^\opt$ & contact set $\{i:v_i(\mathbf{p}^\opt)=d\}$ & \Cref{sec:prelim}\\
$m=|S^\opt|\le\binom{d+1}{2}$ & contact-set size & \Cref{sec:prelim}\\
$F^\opt=\Delta_{S^\opt}$ & optimal face & \Cref{sec:face}\\
$T^\opt$ & tangent space of the face & \Cref{sec:face}\\
$\widehat{\mathbf{H}}(\mathbf{p})$ & facial Hessian block $\big(\nabla^2f(\mathbf{p})\big)_{S^\opt\times S^\opt}$ & \Cref{sec:face}\\
$\varrho(\mathbf{p})$ & Dikin radius (local-norm distance to $\mathbf{p}^\opt$) & \Cref{sec:face}\\
$\gsc=\min_{i\notin S^\opt}(d-v_i(\mathbf{p}^\opt))$ & strict-complementarity slack & \Cref{def:nondeg}\\
$\pmin=\min_{i\in S^\opt}p^\opt_i$ & smallest contact weight & \Cref{def:nondeg}\\
$\muopt$ & facial Hessian minimum eigenvalue & \Cref{def:kappa}\\
$\Lopt$ & facial Hessian maximum eigenvalue & \Cref{def:kappa}\\
$\kappa=\Lopt/\muopt$ & facial condition number & \Cref{def:kappa}\\
$\Phi$ & facial distance & \Cref{thm:zhao}\\
$\kappa_\Phi=1/(\mu_R\Phi^2)$ & away-step condition number & \Cref{thm:zhao}\\
$\mu_{\mathrm{loc}}$ & localized min.\ eigenvalue on $\bar{\mathcal D}$ & \Cref{lem:c0}\\
$L_{\mathrm{loc}}$ & localized max.\ eigenvalue on $\bar{\mathcal D}$ & \Cref{lem:c0}\\
$\kappa_{\mathrm{loc}}=9\kappa$ & localized condition number & \Cref{lem:c0}\\
$\bar r$ & localization radius & \Cref{def:c0}\\
$c_0=\wsc(\bar r)$ & sublevel value & \Cref{def:c0}\\
$C(\mathbf{A})$ & $\eps$-independent warm-start/identification cost & \Cref{lem:identify}\\
$\Theta_{\mathrm A}$ & instance polynomial (accelerated phase) & \Cref{thm:upper}\\
$\Theta_{\mathrm N}$ & instance polynomial (Newton phase) & \Cref{thm:newton}\\
\bottomrule
\end{tabular}
\captionof{table}{Optimal face and condition numbers.}\label{tab:notation-const}
\end{center}

\newpage
\section{Map of the results}
\label{app:map}

\Cref{fig:dag} traces the logical structure of the paper: an arrow $X\to Y$ means that the proof of $Y$
invokes $X$. The three pillars---the averaging barrier (\Cref{sec:avg}), the accelerated phase
(\Cref{sec:upper}), and the facial Newton phase (\Cref{sec:newton})---share the dictionary
(\Cref{prop:dict}); the two last-iterate phases additionally share the facial toolkit of \Cref{sec:face}.
Dashed nodes are imported; bold nodes are the three headline theorems.

\begin{figure}[ht]
\centering
\resizebox{\textwidth}{!}{%
\begin{tikzpicture}[
  >={Stealth[length=2mm]},
  res/.style={draw, rounded corners=2pt, align=center, font=\scriptsize, inner sep=3pt},
  main/.style={draw, rounded corners=2pt, align=center, font=\scriptsize, very thick, inner sep=3.5pt},
  imp/.style={draw, dashed, rounded corners=2pt, align=center, font=\scriptsize, inner sep=3pt},
  ed/.style={->, shorten >=1pt, shorten <=1pt, gray}
]
\node[res]  (dict)   at (0,0)      {Dictionary\\(\ref{prop:dict})};
\node[imp]  (ccly)   at (0,-2)     {CCLY fixed pt.\\(\ref{thm:ccly})};
\node[imp]  (zhao)   at (0,-4)     {Away-step\\(\ref{thm:zhao},\,\ref{thm:bomze})};
\node[res]  (tool)   at (3.6,-1)   {Facial toolkit\\(\S\ref{sec:face})};
\node[res]  (ident)  at (3.6,-3)   {Warm start \&\\identification (\ref{lem:identify})};
\node[res]  (smrec)  at (3.6,-5)   {Hessian recovery\\(\ref{prop:smrecover})};
\node[main] (avg)    at (7.2,0)    {Averaging\\barrier (\ref{thm:avg})};
\node[main] (upper)  at (7.2,-2)   {Accelerated\\(\ref{thm:upper})};
\node[res]  (dom)    at (7.2,-3.3) {Domination\\(\ref{thm:domination})};
\node[main] (newton) at (7.2,-5)   {Facial Newton\\(\ref{thm:newton})};
\node[res]  (sep)    at (10.8,-1)  {Separation\\(\ref{cor:sep})};
\node[res]  (total)  at (10.8,-3.3){Total cost\\(\ref{cor:total})};
\node[res]  (cost)   at (10.8,-5)  {Per-iter.\ cost\\(\ref{prop:cost})};
\draw[ed] (dict) -- (tool);
\draw[ed] (dict) -- (ident);
\draw[ed] (dict) -- (smrec);
\draw[ed] (dict) -- (avg);
\draw[ed] (ccly) to[bend right=20] (avg);
\draw[ed] (zhao) -- (ident);
\draw[ed] (zhao) to[bend right=10] (dom);
\draw[ed] (tool) -- (upper);
\draw[ed] (tool) -- (dom);
\draw[ed] (tool) -- (newton);
\draw[ed] (tool) -- (ident);
\draw[ed] (ident) -- (upper);
\draw[ed] (ident) to[out=-25,in=200] (total);
\draw[ed] (smrec) -- (newton);
\draw[ed] (smrec) to[out=-25,in=210] (cost);
\draw[ed] (avg) -- (sep);
\draw[ed] (upper) -- (sep);
\draw[ed] (newton) to[out=35,in=250] (sep);
\draw[ed] (newton) -- (total);
\draw[ed] (newton) -- (cost);
\end{tikzpicture}}
\caption{Dependency map of the results. An arrow $X\to Y$ indicates that the proof of $Y$ invokes $X$;
dashed nodes are imported, and the three bold nodes are the headline theorems.}
\label{fig:dag}
\end{figure}

\section{Formulation: the John ellipsoid and \texorpdfstring{$D$/$G$}{D/G}-optimal design}
\label{app:formulation}

This appendix collects the classical equivalences that turn the geometric John-ellipsoid problem into the
convex program $\min_{\mathbf{p}\in\Delta_n}f(\mathbf{p})$ of the body~\cite{todd2016,pukelsheim2006,kiefer1960,cohen2019,cohen2015}.

\paragraph{The geometric problem.} For $\mathbf{A}\in\R^{n\times d}$ of rank $d$, the centrally symmetric polytope
is $P=\{\mathbf{x}\in\R^d:\norm{\mathbf{A}\mathbf{x}}_\infty\le1\}=\{\mathbf{x}:|\mathbf{a}_i^\top\mathbf{x}|\le1,\ i\in[n]\}$. Its \emph{John ellipsoid}
is the maximum-volume inscribed ellipsoid (\emph{MVIE}); by central symmetry it is centred at the origin,
and John's theorem gives the sandwiching $E\subseteq P\subseteq\sqrt d\,E$.

\paragraph{Ellipsoids as matrices.} A centred ellipsoid $E=\{\mathbf{x}:\mathbf{x}^\top\mathbf{S}\mathbf{x}\le1\}$ ($\mathbf{S}\succ0$) has
volume $\propto(\det\mathbf{S})^{-1/2}$ and support function $\max_{\mathbf{x}\in E}\mathbf{a}^\top\mathbf{x}=\sqrt{\mathbf{a}^\top\mathbf{S}^{-1}\mathbf{a}}$, so
$E\subseteq P\iff\mathbf{a}_i^\top\mathbf{S}^{-1}\mathbf{a}_i\le1$ for all $i$. Taking $\mathbf{S}=d\,\mathbf{M}(\mathbf{p})$ turns this inscribed
condition into $v_i(\mathbf{p})\le d$; by \Cref{prop:dict}\ref{it:trace} ($\max_iv_i\ge d$, equality iff $\mathbf{p}$ is
optimal) it is met only at the optimum, where $Q=\{\mathbf{x}:\mathbf{x}^\top(d\,\mathbf{M}(\mathbf{p}))\mathbf{x}\le1\}$ is exactly
inscribed---the John ellipsoid. For an approximate $(1+\eps)$-John design (\Cref{def:je}) one has instead
$\mathbf{a}_i^\top\mathbf{S}^{-1}\mathbf{a}_i=v_i/d\le1+\eps$, so the same $Q$ is inscribed only after shrinking by $\sqrt{1+\eps}$:
this is the left half of the sandwiching $\tfrac1{\sqrt{1+\eps}}Q\subseteq P\subseteq\sqrt d\,Q$.

\paragraph{Polar duality: MVIE $=$ MVEE of the rows.} The polar of $P$ is
$P^\circ=\mathrm{conv}\{\pm\mathbf{a}_i:i\in[n]\}$, and polarity acts on ellipsoids by
$\{\mathbf{x}^\top\mathbf{S}\mathbf{x}\le1\}^\circ=\{\mathbf{y}^\top\mathbf{S}^{-1}\mathbf{y}\le1\}$ with the product $\mathrm{vol}(E)\,\mathrm{vol}(E^\circ)$ constant;
hence maximizing inscribed volume in $P$ is the same as minimizing enclosing volume of $P^\circ$. The John
ellipsoid of $P$, the \emph{minimum-volume enclosing ellipsoid} (MVEE) of the rows $\{\pm\mathbf{a}_i\}$ (its
polar), $D$-optimal design, and $\ell_\infty$ Lewis weights are thus \emph{four facets of one optimal
weight $\mathbf{p}^\opt$}, related by polar duality and linear-time reductions~\cite{todd2016,cohen2015}.

\paragraph{$D$-optimal design.} The MVEE of $\{\pm\mathbf{a}_i\}$ is $\{\mathbf{y}:\mathbf{y}^\top\mathbf{M}(\mathbf{p}^\opt)^{-1}\mathbf{y}\le d\}$, where
\[
\mathbf{p}^\opt\in\argmax_{\mathbf{p}\in\Delta_n}\log\det\mathbf{M}(\mathbf{p})=\argmin_{\mathbf{p}\in\Delta_n}f(\mathbf{p})
\]
is the \emph{$D$-optimal design}, maximizing the determinant (``$D$'') of the information matrix
$\mathbf{M}(\mathbf{p})=\sum_ip_i\mathbf{a}_i\mathbf{a}_i^\top$. The containment $\mathbf{a}_i\in$ MVEE reads $v_i(\mathbf{p}^\opt)\le d$, with equality on
the contact set $S^\opt$; by \Cref{prop:dict}\ref{it:trace} this holds, among all $\mathbf{p}\in\Delta_n$, only at
the optimal designs (which all share the same $\mathbf{M}(\mathbf{p}^\opt)$; \Cref{lem:unique}). This is the program the
body solves, and
\Cref{prop:dict}\ref{it:grad} ($\nabla f=-\mathbf{v}$) is why a single leverage-score query is its first-order oracle.

\paragraph{$G$-optimal design and Kiefer--Wolfowitz.} In design terms $v_i(\mathbf{p})$ is the \emph{prediction
variance} at $\mathbf{a}_i$, and the \emph{$G$-optimal design} minimizes the worst one, $\min_{\mathbf{p}}\max_iv_i(\mathbf{p})$.
The Kiefer--Wolfowitz equivalence (\Cref{thm:kw};~\cite{kiefer1960,pukelsheim2006}) states that $D$- and
$G$-optimality coincide, with common optimum $\max_iv_i(\mathbf{p}^\opt)=d$. The $(1+\eps)$-John guarantee
$\max_iv_i\le(1+\eps)d$ (\Cref{def:je}) is therefore a $(1+\eps)$-approximate $G$-optimal certificate, equal
by \Cref{prop:dict}\ref{it:fw} to the Frank--Wolfe gap condition $g(\mathbf{p})\le\eps d$.

\paragraph{$\ell_\infty$ Lewis weights.} Finally, $\mathbf{p}^\opt$ is a rescaling of the \emph{$\ell_\infty$ Lewis
weights} $\mathbf{w}=d\,\mathbf{p}^\opt$ of $\mathbf{A}$~\cite{cohen2015}, and is the fixed point of the leverage map
$p_i\mapsto p_iv_i(\mathbf{p})/d$ (the multiplicative algorithm of \Cref{rem:titterington}, i.e.\ the CCLY
iteration, \Cref{thm:ccly}). This places
the John ellipsoid in the Lewis-weight family used for $\ell_p$ subspace embeddings.

\section{Deferred proofs}
\label{app:proofs}

\subsection{The dictionary\texorpdfstring{ (\Cref{prop:dict})}{}}
\label{app:dict}

\begin{proof}[Proof of \Cref{prop:dict}]
\ref{it:grad} By $\partial_{\mathbf{X}}\log\det \mathbf{X}=\mathbf{X}^{-1}$ and the chain rule, with $\partial_{p_i}\mathbf{M}(\mathbf{p})=\mathbf{a}_i\mathbf{a}_i^\top$,
\[
\partial_{p_i}f(\mathbf{p})=-\Tr\!\big(\mathbf{M}(\mathbf{p})^{-1}\partial_{p_i}\mathbf{M}(\mathbf{p})\big)=-\Tr\!\big(\mathbf{M}(\mathbf{p})^{-1}\mathbf{a}_i\mathbf{a}_i^\top\big)
=-\mathbf{a}_i^\top \mathbf{M}(\mathbf{p})^{-1}\mathbf{a}_i=-v_i(\mathbf{p}).
\]

\ref{it:trace} Summing the trace identity against $\mathbf{p}$,
\[
\sum_ip_iv_i(\mathbf{p})=\sum_ip_i\Tr(\mathbf{M}(\mathbf{p})^{-1}\mathbf{a}_i\mathbf{a}_i^\top)=\Tr\!\Big(\mathbf{M}(\mathbf{p})^{-1}\sum_ip_i\mathbf{a}_i\mathbf{a}_i^\top\Big)
=\Tr\!\big(\mathbf{M}(\mathbf{p})^{-1}\mathbf{M}(\mathbf{p})\big)=\Tr(\mathbf{I}_d)=d.
\]
As $\max_iv_i(\mathbf{p})$ is at least the $\mathbf{p}$-weighted average $d$, we get $\max_iv_i\ge d$; equality forces
$v_i(\mathbf{p})=d$ on $\supp(\mathbf{p})$, which is exactly the first-order condition $\nabla f(\mathbf{p})_i=-d$ for
$i\in\supp(\mathbf{p})$ for $\min_{\Delta_n}f$, i.e.\ optimality.

\ref{it:fw} A linear function on $\Delta_n$ is maximized at a vertex $\mathbf{e}_j$, so by \ref{it:grad},\ref{it:trace},
\[
g(\mathbf{p})=\max_j\inner{-\nabla f(\mathbf{p}),\mathbf{e}_j-\mathbf{p}}=\max_j\Big(v_j(\mathbf{p})-\sum_ip_iv_i(\mathbf{p})\Big)=\max_jv_j(\mathbf{p})-d.
\]
Convexity of $f$ gives
\[
f(\mathbf{p})-f^\opt\le\inner{\nabla f(\mathbf{p}),\mathbf{p}-\mathbf{p}^\opt}=\inner{-\nabla f(\mathbf{p}),\mathbf{p}^\opt-\mathbf{p}}\le
\max_{\mathbf{q}\in\Delta_n}\inner{-\nabla f(\mathbf{p}),\mathbf{q}-\mathbf{p}}=g(\mathbf{p}),
\]
which is the bound $g(\mathbf{p})\ge f(\mathbf{p})-f^\opt\ge0$. For the refined upper bound, let $\mathbf{p}^\opt$ be optimal with
$\mathbf{M}^\opt:=\mathbf{M}(\mathbf{p}^\opt)=\sum_ip^\opt_i\mathbf{a}_i\mathbf{a}_i^\top$. Applying the AM--GM inequality for the eigenvalues of the
positive-definite matrix $\mathbf{M}(\mathbf{p})^{-1/2}\mathbf{M}^\opt \mathbf{M}(\mathbf{p})^{-1/2}$ (i.e.\ $\log\det \mathbf{X}\le d\log(\tfrac1d\Tr \mathbf{X})$),
\begin{align*}
f(\mathbf{p})-f^\opt
&=\log\det \mathbf{M}^\opt-\log\det \mathbf{M}(\mathbf{p})=\log\det\!\big(\mathbf{M}(\mathbf{p})^{-1/2}\mathbf{M}^\opt \mathbf{M}(\mathbf{p})^{-1/2}\big)\\
&\le d\log\!\Big(\tfrac1d\Tr\!\big(\mathbf{M}(\mathbf{p})^{-1}\mathbf{M}^\opt\big)\Big)
 =d\log\!\Big(\tfrac1d\textstyle\sum_i p^\opt_i\,\mathbf{a}_i^\top \mathbf{M}(\mathbf{p})^{-1}\mathbf{a}_i\Big)
 =d\log\!\Big(\tfrac1d\textstyle\sum_i p^\opt_i v_i(\mathbf{p})\Big)\\
&\le d\log\!\Big(\tfrac1d\max_jv_j(\mathbf{p})\Big)=d\log\!\big(1+g(\mathbf{p})/d\big),
\end{align*}
using $\sum_ip^\opt_i=1$ and $\max_jv_j(\mathbf{p})=d+g(\mathbf{p})$; cf.\ \cite[\S2]{todd2016},~\cite{cohen2019}.
Finally $d\log(1+g/d)\le d\cdot(g/d)=g$.

\ref{it:hess} By the matrix-inverse derivative
\[
\partial_{p_j}\mathbf{M}(\mathbf{p})^{-1}=-\mathbf{M}(\mathbf{p})^{-1}\big(\partial_{p_j}\mathbf{M}(\mathbf{p})\big)\mathbf{M}(\mathbf{p})^{-1}=-\mathbf{M}(\mathbf{p})^{-1}\mathbf{a}_j\mathbf{a}_j^\top \mathbf{M}(\mathbf{p})^{-1},
\]
\[
\partial_{p_j}v_i(\mathbf{p})=\mathbf{a}_i^\top\big(\partial_{p_j}\mathbf{M}(\mathbf{p})^{-1}\big)\mathbf{a}_i
=-\mathbf{a}_i^\top \mathbf{M}(\mathbf{p})^{-1}\mathbf{a}_j\mathbf{a}_j^\top \mathbf{M}(\mathbf{p})^{-1}\mathbf{a}_i=-(\mathbf{a}_i^\top \mathbf{M}(\mathbf{p})^{-1}\mathbf{a}_j)^2,
\]
so $\nabla^2f(\mathbf{p})_{ij}=-\partial_{p_j}v_i(\mathbf{p})=(\mathbf{a}_i^\top \mathbf{M}(\mathbf{p})^{-1}\mathbf{a}_j)^2$. Write $\mathbf{b}_i:=\mathbf{M}(\mathbf{p})^{-1/2}\mathbf{a}_i$ and
use $\inner{\mathbf{X},\mathbf{Y}}=\Tr(\mathbf{X}^\top \mathbf{Y})$ on $\Sym^d$: then $(\mathbf{a}_i^\top \mathbf{M}(\mathbf{p})^{-1}\mathbf{a}_j)^2=(\mathbf{b}_i^\top \mathbf{b}_j)^2
=\inner{\mathbf{b}_i\mathbf{b}_i^\top,\mathbf{b}_j\mathbf{b}_j^\top}=\inner{\mathbf{b}_i^{\otimes2},\mathbf{b}_j^{\otimes2}}$, so $\nabla^2f(\mathbf{p})$ is the Gram
matrix of the vectors $\{\mathbf{b}_i^{\otimes2}\}_{i\in[n]}\subset\Sym^d$. A Gram matrix is positive
semidefinite, and its rank is $\dim\mathrm{span}\{\mathbf{b}_i^{\otimes2}\}\le\dim\Sym^d=\binom{d+1}{2}$. For the
kernel and for \eqref{eq:hessform}, for any $\boldsymbol{\delta}\in\R^n$,
\[
\begin{aligned}
\boldsymbol{\delta}^\top\nabla^2f(\mathbf{p})\,\boldsymbol{\delta}
&=\sum_{i,j}\delta_i\delta_j\inner{\mathbf{b}_i^{\otimes2},\mathbf{b}_j^{\otimes2}}
=\Big\langle\textstyle\sum_i\delta_i\mathbf{b}_i\mathbf{b}_i^\top,\sum_j\delta_j\mathbf{b}_j\mathbf{b}_j^\top\Big\rangle\\
&=\Big\|\textstyle\sum_i\delta_i\mathbf{b}_i\mathbf{b}_i^\top\Big\|_F^2
=\Big\|\mathbf{M}(\mathbf{p})^{-1/2}\big(\textstyle\sum_i\delta_i\mathbf{a}_i\mathbf{a}_i^\top\big)\mathbf{M}(\mathbf{p})^{-1/2}\Big\|_F^2,
\end{aligned}
\]
which vanishes iff $\sum_i\delta_i\mathbf{a}_i\mathbf{a}_i^\top=0$ (as $\mathbf{M}(\mathbf{p})^{-1/2}$ is invertible). This condition does
not involve $\mathbf{p}$, so $\ker\nabla^2f(\mathbf{p})$ is the same subspace for every $\mathbf{p}$ with $\mathbf{M}(\mathbf{p})\succ0$.
\end{proof}

\subsection{The self-concordance toolkit\texorpdfstring{ (\Cref{lem:Msand,lem:hesssand,lem:scineq,lem:levcomp,lem:legendre})}{}}
\label{app:sctoolkit}

We first record the derivatives of $F(\cdot)=-\log\det(\cdot)$ along matrix lines. For $\mathbf{X}\succ0$ and
$\mathbf{H}\in\Sym^d$, write $\mathbf{Y}_t:=\mathbf{X}+t\mathbf{H}$ and let $\varphi(t):=F(\mathbf{Y}_t)$ on the (open) set of $t$ with
$\mathbf{Y}_t\succ0$. Using
$\frac{d}{dt}\log\det\mathbf{Y}(t)=\Tr(\mathbf{Y}^{-1}\mathbf{Y}')$ and $\frac{d}{dt}\mathbf{Y}^{-1}=-\mathbf{Y}^{-1}\mathbf{Y}'\mathbf{Y}^{-1}$,
\begin{equation}
\label{eq:hessform-matrix}
\varphi'(t)=-\Tr\big(\mathbf{Y}_t^{-1}\mathbf{H}\big),\qquad
\varphi''(t)=\Tr\big(\mathbf{Y}_t^{-1}\mathbf{H}\mathbf{Y}_t^{-1}\mathbf{H}\big)
=\fnorm{\mathbf{Y}_t^{-1/2}\mathbf{H}\mathbf{Y}_t^{-1/2}}^2=\locnorm{\mathbf{H}}{\mathbf{Y}_t}^2,
\end{equation}
where the middle equality holds because $\Tr(\mathbf{Y}^{-1}\mathbf{H}\mathbf{Y}^{-1}\mathbf{H})
=\Tr\big((\mathbf{Y}^{-1/2}\mathbf{H}\mathbf{Y}^{-1/2})^2\big)$ and the trace of the square of a symmetric matrix is its
squared Frobenius norm. In particular $\varphi'(0)=\inner{\nabla F(\mathbf{X}),\mathbf{H}}$ with
$\nabla F(\mathbf{X})=-\mathbf{X}^{-1}$, and $\varphi''>0$ unless $\mathbf{H}=0$ (used in \Cref{lem:unique}).

\begin{proof}[Proof of \Cref{lem:Msand}]
Let $\mathbf{E}:=\mathbf{X}^{-1/2}\mathbf{H}\mathbf{X}^{-1/2}$, a symmetric matrix with
$\norm{\mathbf{E}}_{\mathrm{op}}\le\fnorm{\mathbf{E}}=r$ (the operator norm of a symmetric matrix is its largest
absolute eigenvalue, while the Frobenius norm is the $\ell_2$ norm of all eigenvalues). Hence
$-r\mathbf{I}\preceq\mathbf{E}\preceq r\mathbf{I}$; conjugating by $\mathbf{X}^{1/2}$ (congruence preserves the semidefinite
order) gives $-r\mathbf{X}\preceq\mathbf{H}\preceq r\mathbf{X}$. Adding $\mathbf{X}$ yields the sandwich for $\mathbf{X}+\mathbf{H}$; if $r<1$
the lower bound $(1-r)\mathbf{X}\succ0$ gives $\mathbf{X}+\mathbf{H}\succ0$.
\end{proof}

\begin{proof}[Proof of \Cref{lem:hesssand}]
Write $\mathbf{G}:=\mathbf{X}^{-1/2}\mathbf{H}\mathbf{X}^{-1/2}$ (so $\locnorm{\mathbf{H}}{\mathbf{X}}=\fnorm{\mathbf{G}}$) and
$\mathbf{S}:=\mathbf{Y}^{-1/2}\mathbf{X}^{1/2}$, so that
\[
\mathbf{Y}^{-1/2}\mathbf{H}\mathbf{Y}^{-1/2}=\mathbf{Y}^{-1/2}\mathbf{X}^{1/2}\,\mathbf{G}\,\mathbf{X}^{1/2}\mathbf{Y}^{-1/2}=\mathbf{S}\mathbf{G}\mathbf{S}^\top .
\]
The singular values of $\mathbf{S}$ satisfy
$\sigma_{\max}(\mathbf{S})^2=\norm{\mathbf{S}\mathbf{S}^\top}_{\mathrm{op}}
=\norm{\mathbf{X}^{1/2}\mathbf{Y}^{-1}\mathbf{X}^{1/2}}_{\mathrm{op}}$ (note $\mathbf{S}\mathbf{S}^\top=\mathbf{Y}^{-1/2}\mathbf{X}\mathbf{Y}^{-1/2}$ and
$\mathbf{X}^{1/2}\mathbf{Y}^{-1}\mathbf{X}^{1/2}$ have the same spectrum, being similar via $\mathbf{X}^{1/2}\mathbf{Y}^{1/2}$-type
conjugation; concretely both equal the spectrum of $\mathbf{Y}^{-1}\mathbf{X}$). From $\mathbf{Y}\succeq(1-r)\mathbf{X}$,
inversion (order-reversing on the positive-definite cone, see the proof of \Cref{lem:levcomp}) gives
$\mathbf{Y}^{-1}\preceq(1-r)^{-1}\mathbf{X}^{-1}$, hence
$\mathbf{X}^{1/2}\mathbf{Y}^{-1}\mathbf{X}^{1/2}\preceq(1-r)^{-1}\mathbf{I}$ and $\sigma_{\max}(\mathbf{S})^2\le(1-r)^{-1}$. Likewise
$\mathbf{Y}\preceq(1+r)\mathbf{X}$ gives $\sigma_{\min}(\mathbf{S})^2\ge(1+r)^{-1}$. Finally, for any symmetric $\mathbf{G}$ and
invertible $\mathbf{S}$,
\[
\sigma_{\min}(\mathbf{S})^2\,\fnorm{\mathbf{G}}\ \le\ \fnorm{\mathbf{S}\mathbf{G}\mathbf{S}^\top}\ \le\ \sigma_{\max}(\mathbf{S})^2\,\fnorm{\mathbf{G}}:
\]
the upper bound is $\fnorm{\mathbf{S}\mathbf{G}\mathbf{S}^\top}\le\sigma_{\max}(\mathbf{S})\fnorm{\mathbf{G}\mathbf{S}^\top}\le
\sigma_{\max}(\mathbf{S})^2\fnorm{\mathbf{G}}$ (multiplying by a matrix scales the Frobenius norm by at most the
operator norm); the lower bound follows by applying the upper one to
$\mathbf{G}=\mathbf{S}^{-1}(\mathbf{S}\mathbf{G}\mathbf{S}^\top)\mathbf{S}^{-\top}$ with the roles reversed, using
$\sigma_{\max}(\mathbf{S}^{-1})=1/\sigma_{\min}(\mathbf{S})$. Squaring and combining the displayed bounds proves
the lemma; tracking which sandwich half was used in each direction proves the one-sidedness claims.
\end{proof}

\begin{proof}[Proof of \Cref{lem:scineq}]
Let $\mathbf{H}:=\mathbf{Y}-\mathbf{X}$, $r=\locnorm{\mathbf{H}}{\mathbf{X}}$, and $\varphi(t)=F(\mathbf{X}+t\mathbf{H})$ as in
\eqref{eq:hessform-matrix}; the segment $\{\mathbf{X}+t\mathbf{H}:t\in[0,1]\}$ lies in the (convex) cone
$\Sym^d_{\succ0}$, so $\varphi$ is smooth on $[0,1]$ and Taylor's theorem with integral remainder
gives
\[
F(\mathbf{Y})=\varphi(1)=\varphi(0)+\varphi'(0)+\int_0^1(1-t)\,\varphi''(t)\,dt
=F(\mathbf{X})+\inner{\nabla F(\mathbf{X}),\mathbf{Y}-\mathbf{X}}+\int_0^1(1-t)\locnorm{\mathbf{H}}{\mathbf{Y}_t}^2dt .
\]
It remains to bound the integrand. Since $\locnorm{t\mathbf{H}}{\mathbf{X}}=tr$, \Cref{lem:Msand} gives
$\mathbf{Y}_t\preceq(1+tr)\mathbf{X}$ for every $t$ (and $\mathbf{Y}_t\succeq(1-tr)\mathbf{X}$, useful when $tr<1$). By the
one-sided halves of \Cref{lem:hesssand},
\[
\locnorm{\mathbf{H}}{\mathbf{Y}_t}^2\;\ge\;\frac{\locnorm{\mathbf{H}}{\mathbf{X}}^2}{(1+tr)^{2}}=\frac{r^2}{(1+tr)^2}
\qquad\text{always},
\qquad
\locnorm{\mathbf{H}}{\mathbf{Y}_t}^2\;\le\;\frac{r^2}{(1-tr)^2}\qquad\text{if }tr<1 .
\]
\ref{it:sclower} follows from
$\int_0^1(1-t)\,\frac{r^2}{(1+tr)^2}\,dt=\wsc(r)$: substituting $u=1+tr$ ($t=(u-1)/r$,
$dt=du/r$, $1-t=(r+1-u)/r$),
\[
\int_0^1(1-t)\frac{r^2}{(1+tr)^2}dt
=\int_1^{1+r}\frac{r+1-u}{u^2}\,du
=(r+1)\Big[\frac{-1}{u}\Big]_1^{1+r}-\big[\log u\big]_1^{1+r}
=r-\log(1+r)=\wsc(r).
\]
\ref{it:scupper} follows for $r<1$ from
$\int_0^1(1-t)\,\frac{r^2}{(1-tr)^2}\,dt=\wscs(r)$: substituting $u=1-tr$ (so $t=(1-u)/r$,
$dt=-du/r$, $1-t=(r-1+u)/r$),
\begin{align*}
\int_0^1(1-t)\frac{r^2}{(1-tr)^2}dt
&=\int_{1-r}^{1}\frac{r-1+u}{u^2}\,du
=(r-1)\Big[\frac{-1}{u}\Big]_{1-r}^{1}+\big[\log u\big]_{1-r}^{1}\\
&=(r-1)\cdot\frac{r}{1-r}-\log(1-r)
=-r-\log(1-r)=\wscs(r),
\end{align*}
the penultimate step using $(r-1)\big({-1}+\tfrac1{1-r}\big)=(r-1)\tfrac{r}{1-r}=-r$.
\end{proof}

\begin{proof}[Proof of \Cref{lem:levcomp}]
We first record that inversion reverses the order on $\Sym^d_{\succ0}$: if $\mathbf{A}\succeq\mathbf{B}\succ0$ then
$\mathbf{B}^{-1/2}\mathbf{A}\mathbf{B}^{-1/2}\succeq\mathbf{I}$, so its inverse satisfies
$\mathbf{B}^{1/2}\mathbf{A}^{-1}\mathbf{B}^{1/2}\preceq\mathbf{I}$, i.e.\ $\mathbf{A}^{-1}\preceq\mathbf{B}^{-1}$. Applying this to
$(1-r)\mathbf{X}\preceq\mathbf{Y}\preceq(1+r)\mathbf{X}$ gives
$(1+r)^{-1}\mathbf{X}^{-1}\preceq\mathbf{Y}^{-1}\preceq(1-r)^{-1}\mathbf{X}^{-1}$, and evaluating the quadratic forms at
$\mathbf{a}$ proves the claim.
\end{proof}

\begin{proof}[Proof of \Cref{lem:legendre}]
Monotonicity and strict convexity: $\wsc'(t)=1-\tfrac1{1+t}=\tfrac{t}{1+t}\ge0$ and
$\wsc''(t)=(1+t)^{-2}>0$; $\wscs'(t)=-1+\tfrac1{1-t}=\tfrac{t}{1-t}\ge0$ and
$\wscs''(t)=(1-t)^{-2}>0$; both vanish with their first derivatives at $0$. Conjugacy: for fixed
$t\in[0,1)$ the concave function $s\mapsto ts-\wsc(s)$ on $s\ge0$ has derivative
$t-\tfrac{s}{1+s}$, which vanishes at $s^\star=\tfrac{t}{1-t}\ge0$; since $ts-\wsc(s)\to-\infty$ as
$s\to\infty$ (because $\wsc(s)=s-\log(1+s)$ grows linearly with slope $1>t$), $s^\star$ is the
maximizer, and
\[
\sup_{s\ge0}\{ts-\wsc(s)\}
=\frac{t^2}{1-t}-\frac{t}{1-t}+\log\Big(1+\frac{t}{1-t}\Big)
=-t-\log(1-t)=\wscs(t).
\]
Quadratic bounds: from the power series $\wscs(t)=\sum_{k\ge2}t^k/k$ (valid for $|t|<1$) and
$1/k\le1/2$,
$\wscs(t)\le\tfrac12\sum_{k\ge2}t^k=\tfrac{t^2}{2(1-t)}$, which at $t\le\tfrac14$ is at most
$\tfrac{t^2}{2\cdot3/4}=\tfrac23t^2$. For the lower bound on $\wsc$: $g(t):=\wsc(t)-t^2/4$ has
$g(0)=0$ and $g'(t)=\tfrac{t}{1+t}-\tfrac t2=\tfrac{t(1-t)}{2(1+t)}\ge0$ on $[0,1]$, so
$\wsc(t)\ge t^2/4$ there; consequently, for $c\le\wsc(1)=1-\log2$, the inverse satisfies
$\wsc^{-1}(c)\le2\sqrt c$. Numerically $\wsc(\tfrac14)=\tfrac14-\log\tfrac54=0.02685\ldots>\tfrac1{38}$
and $\wsc(\tfrac12)=\tfrac12-\log\tfrac32=0.09453\ldots<0.0946$.
\end{proof}

\subsection{The localized FISTA phase\texorpdfstring{ (\Cref{thm:upper})}{}}
\label{app:fista}

Throughout this appendix \Cref{def:nondeg} is in force,
$Q:=\Delta_{S^\opt}$, $h=f|_D$, $\mathbf{x}^\opt:=\mathbf{p}^\opt$, $F:=h+\iota_Q$ (so $F^\opt=f^\opt$), and
$R:=\bar{\mathcal D}$ is the facial Dikin ball of \Cref{def:c0}, on which
\Cref{lem:c0}\ref{it:c0-cond} provides the constants
$L:=L_{\mathrm{loc}}=4\Lopt$ and $\mu:=\mu_{\mathrm{loc}}=\tfrac49\muopt$, with
$\kappa_{\mathrm{loc}}=L/\mu=9\kappa$. All norms are Euclidean on $V$; the gradient of $h$ at
$\mathbf{y}\in R$ is the projected vector $\nabla h(\mathbf{y})=P_{T^\opt}\big({-\mathbf{v}_{S^\opt}(\mathbf{y})}\big)$,
computable from one leverage-score query, and $\nabla h(\mathbf{x}^\opt)=0$ (\Cref{lem:min}(ii)).

\paragraph{The scheme.}
Given $\mathbf{x}_0\in Q$ with $h(\mathbf{x}_0)-f^\opt\le c$, one \emph{FISTA cycle} of length $N$
is~\cite{beck2009}: set $\mathbf{y}_1=\mathbf{x}_0$, $t_1=1$, and for $k=1,\dots,N$:
\[
\mathbf{x}_k=P_Q\Big(\mathbf{y}_k-\tfrac1L\nabla h(\mathbf{y}_k)\Big),\qquad
t_{k+1}=\tfrac{1+\sqrt{1+4t_k^2}}2,\qquad
\mathbf{y}_{k+1}=\mathbf{x}_k+\tfrac{t_k-1}{t_{k+1}}(\mathbf{x}_k-\mathbf{x}_{k-1}),
\]
with $P_Q$ the Euclidean projection onto the face simplex (computable in $O(m\log m)$ time). For
reference, the global guarantee of Beck--Teboulle reads:

\begin{theorem}[{FISTA~\cite[Thm.~4.4]{beck2009}}]
\label{thm:fista}
For the composite problem $\min\,\phi+\psi$ with $\phi$ convex and $L$-smooth \emph{on $\R^k$} and
$\psi$ proper closed convex, the scheme above (with the prox of $\psi$ in place of $P_Q$) satisfies,
for every minimizer $\mathbf{x}^\opt$,
$\ (\phi+\psi)(\mathbf{x}_k)-(\phi+\psi)(\mathbf{x}^\opt)\le\frac{2L\norm{\mathbf{x}_0-\mathbf{x}^\opt}^2}{(k+1)^2}$.
\end{theorem}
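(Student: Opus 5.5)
The proof I would give is the standard Beck--Teboulle potential-function argument. Write $\Psi:=\phi+\psi$, $\Psi^\opt:=\Psi(\mathbf{x}^\opt)$, and $T_L(\mathbf{y}):=\mathrm{prox}_{\psi/L}\big(\mathbf{y}-\tfrac1L\nabla\phi(\mathbf{y})\big)$, so that $\mathbf{x}_k=T_L(\mathbf{y}_k)$.

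\emph{Step 1: the prox-gradient inequality.} For every $\mathbf{y}$ and every $\mathbf{u}$,
\[
\Psi(\mathbf{u})-\Psi(T_L(\mathbf{y}))\;\ge\;\tfrac L2\norm{T_L(\mathbf{y})-\mathbf{y}}^2+L\inner{\mathbf{y}-\mathbf{u},\,T_L(\mathbf{y})-\mathbf{y}} .
\]
To prove it, combine three facts: the descent lemma $\phi(\mathbf{x}^+)\le\phi(\mathbf{y})+\inner{\nabla\phi(\mathbf{y}),\mathbf{x}^+-\mathbf{y}}+\tfrac L2\norm{\mathbf{x}^+-\mathbf{y}}^2$, which uses $L$-smoothness on all of $\R^k$; convexity of $\phi$ at $\mathbf{y}$; and the optimality condition of the prox, $L(\mathbf{y}-\mathbf{x}^+)-\nabla\phi(\mathbf{y})\in\partial\psi(\mathbf{x}^+)$, which yields a subgradient inequality for $\psi$ at $\mathbf{x}^+$. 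Here $\mathbf{x}^+=T_L(\mathbf{y})$. This is the only place the hypotheses on $\phi$ and $\psi$ enter.

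\emph{Step 2: the two instances.} Set $v_k:=\Psi(\mathbf{x}_k)-\Psi^\opt$ and apply Step~1 at $\mathbf{y}=\mathbf{y}_{k+1}$ twice:
\begin{itemize}
\item with $\mathbf{u}=\mathbf{x}_k$, which gives an inequality for $v_k-v_{k+1}$;
\item with $\mathbf{u}=\mathbf{x}^\opt$, which gives one for $-v_{k+1}$.
\end{itemize}
Multiply the first by $t_{k+1}-1\ge0$, add the second, and then multiply through by $t_{k+1}$. The recursion $t_{k+1}^2-t_{k+1}=t_k^2$ (immediate from $t_{k+1}=\tfrac{1+\sqrt{1+4t_k^2}}2$) collapses the objective terms to $t_k^2v_k-t_{k+1}^2v_{k+1}$. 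Applying the identity $\norm{\mathbf{b}-\mathbf{a}}^2+2\inner{\mathbf{b}-\mathbf{a},\mathbf{a}-\mathbf{c}}=\norm{\mathbf{b}-\mathbf{c}}^2-\norm{\mathbf{a}-\mathbf{c}}^2$ then converts the quadratic terms into a telescoping difference. This gives
\[
t_{k+1}^2v_{k+1}+\tfrac L2\norm{\mathbf{u}_{k+1}}^2\;\le\;t_k^2v_k+\tfrac L2\norm{\mathbf{u}_k}^2,
\qquad \mathbf{u}_k:=t_k\mathbf{x}_k-(t_k-1)\mathbf{x}_{k-1}-\mathbf{x}^\opt .
\]
The extrapolation rule for $\mathbf{y}_{k+1}$ is exactly what makes $t_{k+1}\mathbf{y}_{k+1}=t_{k+1}\mathbf{x}_k+(t_k-1)(\mathbf{x}_k-\mathbf{x}_{k-1})$ match the cross term. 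This bookkeeping, choosing the weights and verifying that the cross terms assemble into $\norm{\mathbf{u}_{k+1}}^2-\norm{\mathbf{u}_k}^2$, is the step I expect to be the main obstacle. I would check it by expanding both sides in terms of $\mathbf{x}_k,\mathbf{x}_{k-1},\mathbf{x}_{k+1},\mathbf{x}^\opt$.

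\emph{Step 3: base case and growth of $t_k$.} At $k=1$ we have $\mathbf{y}_1=\mathbf{x}_0$ and $t_1=1$, so $\mathbf{u}_1=\mathbf{x}_1-\mathbf{x}^\opt$. Step~1 with $\mathbf{u}=\mathbf{x}^\opt$, $\mathbf{y}=\mathbf{x}_0$, followed by the same identity, gives
\[
v_1+\tfrac L2\norm{\mathbf{u}_1}^2\le\tfrac L2\norm{\mathbf{x}_0-\mathbf{x}^\opt}^2 .
\]
By induction, $t_{k+1}\ge t_k+\tfrac12$, because $\sqrt{1+4t_k^2}\ge2t_k$; hence $t_k\ge\tfrac{k+1}2$. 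Chaining the monotone potential of Step~2 down to this base case yields
\[
\tfrac{(k+1)^2}4\,v_k\;\le\;t_k^2v_k\;\le\;\tfrac L2\norm{\mathbf{x}_0-\mathbf{x}^\opt}^2 ,
\]
which is the claimed bound $v_k\le 2L\norm{\mathbf{x}_0-\mathbf{x}^\opt}^2/(k+1)^2$.

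The argument never uses anything about $\mathbf{x}^\opt$ other than $\Psi(\mathbf{x}^\opt)=\Psi^\opt$. This matters for the localized version in \Cref{app:fista}: there, Step~1 will be invoked only at points $\mathbf{y}_k$ and $\mathbf{x}_k$ shown inductively to lie in $\bar{\mathcal D}$, where \Cref{lem:c0}\ref{it:c0-cond} supplies the smoothness constant.
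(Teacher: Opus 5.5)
Your proposal is correct and is the Beck--Teboulle potential argument that the paper cites for this theorem and then reproduces, under local hypotheses, in the proof of \Cref{lem:fistaloc} and the remark after it. The steps match one for one: the one-step inequality \eqref{eq:onestep}; its two instances at $\mathbf{z}=\mathbf{x}_k$ and $\mathbf{z}=\mathbf{x}^\opt$, weighted by $t_{k+1}-1$ and $1$; the relation $t_k^2=t_{k+1}^2-t_{k+1}$ with completion of squares to get \eqref{eq:potential}; the base case \eqref{eq:base}; and $t_k\ge(k+1)/2$. Your closing remark about localization is also how the paper proceeds: it verifies \eqref{eq:onestep} by induction along the trajectory inside $\bar{\mathcal D}$.
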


Our $h$ is neither finite nor $L$-smooth on all of $V$ (it blows up where $\mathbf{M}(\mathbf{p})$ degenerates), so
\Cref{thm:fista} does not apply as a black box. The next lemma reproves the Beck--Teboulle potential
argument under \emph{local} hypotheses, with the localization established by induction along the
trajectory. We use the standard facts $t_k\ge(k+1)/2$ and $t_{k+1}^2-t_{k+1}=t_k^2$
(immediate from the recursion), and $0\le\tfrac{t_k-1}{t_{k+1}}<1$.

\begin{lemma}[Localized FISTA cycle]
\label{lem:fistaloc}
Set $c_1:=(1/126)^2$. Suppose $\mathbf{x}_0\in Q$ with
$h(\mathbf{x}_0)-f^\opt\le c\le\min\{c_0,\ c_1/\kappa^{3}\}$. Then for every $k\ge1$ of the cycle:
\begin{enumerate}[label=\emph{(\alph*)},leftmargin=*,itemsep=2pt]
\item\label{it:floc-traj} $\mathbf{x}_k\in Q\cap R$ and $\mathbf{y}_k\in R$ (so every gradient evaluation and
every smoothness segment lies in $R$, and $\mathbf{M}(\mathbf{y}_k)\succ0$: all queries are well defined, at
weights supported on $S^\opt$);
\item\label{it:floc-rate} $h(\mathbf{x}_k)-f^\opt\ \le\ \dfrac{2L\norm{\mathbf{x}_0-\mathbf{x}^\opt}^2}{(k+1)^2}
\ \le\ \dfrac{4L\,c}{\mu\,(k+1)^2}$.
\end{enumerate}
In particular, after $N:=\lceil\sqrt{8\kappa_{\mathrm{loc}}}\rceil-1$ steps,
$h(\mathbf{x}_N)-f^\opt\le\tfrac12\big(h(\mathbf{x}_0)-f^\opt\big)$, and $\mathbf{x}_N\in\Cnull$.
\end{lemma}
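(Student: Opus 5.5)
The plan is to rerun the Beck--Teboulle potential argument for \Cref{thm:fista} on the trajectory, and to prove localization by a simultaneous induction on $k$. The induction statement is $\mathbf{x}_j\in Q\cap R$ for $j<k$ and $\mathbf{y}_k\in R$. The smoothness and convexity inequalities the potential argument needs hold wherever the relevant points lie in $R$, because $R=\bar{\mathcal D}$ is convex and \Cref{lem:c0}\ref{it:c0-cond} supplies $L,\mu$ there. I will use two distance bounds:
\begin{itemize}
\item $\norm{\mathbf{x}_0-\mathbf{x}^\opt}^2\le 2c/\mu$, by strong convexity, since $\mathbf{x}_0\in\Cnull\subseteq R$ because $c\le c_0$;
\item $\varrho(\mathbf{z})\le\sqrt{\Lopt}\,\norm{\mathbf{z}-\mathbf{x}^\opt}_2$ for $\mathbf{z}\in V$, by \eqref{eq:rho-id} and \Cref{def:kappa}.
\end{itemize}
So it suffices to keep every evaluation point within Euclidean distance $1/(2\sqrt{\Lopt})$ of $\mathbf{x}^\opt$.

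\emph{Step 1 (the new iterate stays in $R$).} Assume $\mathbf{y}_k\in R$. Since $\nabla h(\mathbf{x}^\opt)=0$ (\Cref{lem:min}(ii)), $\mathbf{x}^\opt$ is a fixed point of the map $\mathbf{z}\mapsto P_Q(\mathbf{z}-\tfrac1L\nabla h(\mathbf{z}))$. Nonexpansiveness of $P_Q$ and the Lipschitz bound of \Cref{lem:c0}\ref{it:c0-cond} on $R$ then give
\[
\norm{\mathbf{x}_k-\mathbf{x}^\opt}\le\norm{\mathbf{y}_k-\mathbf{x}^\opt}+\tfrac1L\norm{\nabla h(\mathbf{y}_k)-\nabla h(\mathbf{x}^\opt)}\le2\norm{\mathbf{y}_k-\mathbf{x}^\opt}.
\]
This is a crude a priori bound. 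It does not use the descent lemma, so it avoids the circularity of needing $\mathbf{x}_k\in R$ before smoothness along $[\mathbf{y}_k,\mathbf{x}_k]$ can be invoked. Provided the margin is arranged so that $2\norm{\mathbf{y}_k-\mathbf{x}^\opt}\le 1/(2\sqrt{\Lopt})$, we get $\mathbf{x}_k\in R$. The segment $[\mathbf{y}_k,\mathbf{x}_k]$ then lies in $R$ by convexity, and the standard prox-gradient inequality holds at step $k$.

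\emph{Step 2 (potential and rate).} With these inequalities valid, the usual computation shows that
\[
\Psi_k:=t_k^2\big(h(\mathbf{x}_k)-f^\opt\big)+\tfrac L2\norm{t_k\mathbf{x}_k-(t_k-1)\mathbf{x}_{k-1}-\mathbf{x}^\opt}^2
\]
is nonincreasing, using $t_{k+1}^2-t_{k+1}=t_k^2$. Hence $\Psi_k\le\tfrac L2\norm{\mathbf{x}_0-\mathbf{x}^\opt}^2$. Together with $t_k\ge(k+1)/2$ this gives \ref{it:floc-rate}, and the second inequality there follows from the bound $\norm{\mathbf{x}_0-\mathbf{x}^\opt}^2\le2c/\mu$.

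\emph{Step 3 (closing the induction for $\mathbf{y}_{k+1}$).} Since $\mathbf{x}_k\in R$ and $\nabla h(\mathbf{x}^\opt)=0$, strong convexity and the rate give $\norm{\mathbf{x}_k-\mathbf{x}^\opt}^2\le 2(h(\mathbf{x}_k)-f^\opt)/\mu\le 8Lc/\mu^2$. The extrapolation coefficient lies in $[0,1)$, so
\[
\norm{\mathbf{y}_{k+1}-\mathbf{x}^\opt}\le3\max\{\norm{\mathbf{x}_k-\mathbf{x}^\opt},\norm{\mathbf{x}_{k-1}-\mathbf{x}^\opt}\}\lesssim\kappa\sqrt{c/\muopt}.
\]
Multiplying by $\sqrt{\Lopt}$, the Dikin radius of $\mathbf{y}_{k+1}$ is at most an absolute constant times $\kappa^{3/2}\sqrt c$. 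It is also at most the same quantity doubled, with margin, when propagated through Step~1 at the next index. This is $\le\tfrac12$ once $c\le c_1/\kappa^3$ with $c_1=(1/126)^2$: the constant $126$ collects the factors $3$, $2$, $\sqrt{8\cdot 36\cdot\ldots}$ from $L=4\Lopt$ and $\mu=\tfrac49\muopt$. The base case $\mathbf{y}_1=\mathbf{x}_0\in\Cnull\subseteq R$ starts the induction. I expect the main obstacle to be this bookkeeping, namely ordering the induction so that no smoothness inequality is used before its segment is known to lie in $R$, while tracking the constant exactly. Step~1's nonexpansiveness bound is what breaks the circularity.

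\emph{Step 4 (conclusion).} With $N=\lceil\sqrt{8\kappa_{\mathrm{loc}}}\rceil-1$ we have $(N+1)^2\ge8\kappa_{\mathrm{loc}}$. The second bound in \ref{it:floc-rate}, applied with $c$ replaced by $h(\mathbf{x}_0)-f^\opt$, then halves the gap. Finally $\mathbf{x}_N\in V$ with $h(\mathbf{x}_N)-f^\opt\le c\le c_0$, so $\mathbf{x}_N\in\Cnull$.
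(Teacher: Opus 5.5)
Your argument is essentially the paper's. Both proofs run an induction keeping $\mathbf{y}_k$ and $\mathbf{x}_k$ in $R=\bar{\mathcal D}$. Both use a crude prox-point bound (nonexpansiveness of $P_Q$, $\nabla h(\mathbf{x}^\opt)=0$, and the Lipschitz bound of \Cref{lem:c0}\ref{it:c0-cond}) to break the circularity before the descent lemma is needed. Both apply the Beck--Teboulle potential only on stages already validated. Both use strong convexity together with $\tfrac{t_k-1}{t_{k+1}}\in[0,1)$ to restore a sharp bound before the next extrapolation.

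The one bookkeeping difference is that you track Euclidean distances and convert to $\varrho$ once, via $\varrho\le\sqrt{\Lopt}\,\norm{\cdot-\mathbf{x}^\opt}_2$. The paper instead stores $\varrho(\mathbf{y}_{k+1})$ and converts back through $\norm{\mathbf{y}_{k+1}-\mathbf{x}^\opt}\le\varrho(\mathbf{y}_{k+1})/\sqrt{\muopt}$. That round trip produces its factor $1+2\sqrt\kappa$, hence its $\kappa^{3/2}\sqrt c$. Your route avoids the loss: $\norm{\mathbf{y}_{k+1}-\mathbf{x}^\opt}=O(\sqrt{\kappa}\sqrt{c/\muopt})$, not $\kappa\sqrt{c/\muopt}$ as you wrote. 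So $\varrho(\mathbf{x}_{k+1})=O(\kappa\sqrt c)$, and a tolerance $c=O(\kappa^{-2})$ would already suffice.

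The constant, however, is asserted rather than checked, and with your own intermediate bound it does not close. Start from $\norm{\mathbf{x}_k-\mathbf{x}^\opt}^2\le 8Lc/\mu^2$ and apply the factor $3$ (extrapolation), the factor $2$ (prox point), and $\varrho\le\sqrt{\Lopt}\norm{\cdot}_2$ with $L=4\Lopt$, $\mu=\tfrac49\muopt$. This gives $\varrho(\mathbf{x}_{k+1})\le 54\sqrt2\,\kappa\sqrt c\approx 76.4\,\kappa\sqrt c$. Under $c\le c_1/\kappa^3$ that is about $0.61/\sqrt\kappa$, which exceeds $\tfrac12$ whenever $\kappa<1.47$. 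The case $\kappa=1$ does occur: rows $\mathbf{e}_1,\dots,\mathbf{e}_d$ plus strictly inactive rows give $\widehat{\mathbf{H}}(\mathbf{p}^\opt)=d^2\mathbf{I}_d$.

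The repair is one line. For $k\ge1$ you have $(k+1)^2\ge4$, so part (b) gives $h(\mathbf{x}_k)-f^\opt\le Lc/\mu$ and hence $\norm{\mathbf{x}_k-\mathbf{x}^\opt}^2\le 2Lc/\mu^2$. This also holds for $\mathbf{x}_0$, since $2c/\mu\le 2Lc/\mu^2$. Then $\varrho(\mathbf{y}_{k+1})\le\tfrac{27}{\sqrt2}\,\kappa\sqrt c$ and $\varrho(\mathbf{x}_{k+1})\le 27\sqrt2\,\kappa\sqrt c\le 38.2/(126\sqrt\kappa)<\tfrac12$. This closes the induction with the stated $c_1=(1/126)^2$. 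The rest of your proof, including the halving step and $\mathbf{x}_N\in\Cnull$, is fine as written.
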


\begin{proof}
\emph{Step 0: the one-step inequality.} Let $\mathbf{y}\in R$, and let
$\mathbf{x}:=P_Q(\mathbf{y}-\tfrac1L\nabla h(\mathbf{y}))$ satisfy $[\mathbf{y},\mathbf{x}]\subseteq R$. We claim that for every
$\mathbf{z}\in Q$,
\begin{equation}
\label{eq:onestep}
F(\mathbf{z})-F(\mathbf{x})\ \ge\ \tfrac L2\norm{\mathbf{x}-\mathbf{y}}^2+L\inner{\mathbf{y}-\mathbf{x},\mathbf{z}-\mathbf{y}} .
\end{equation}
If $h(\mathbf{z})=+\infty$ this is trivial. Otherwise: (i) the descent lemma
$h(\mathbf{x})\le h(\mathbf{y})+\inner{\nabla h(\mathbf{y}),\mathbf{x}-\mathbf{y}}+\tfrac L2\norm{\mathbf{x}-\mathbf{y}}^2$ holds because $\nabla h$
is $L$-Lipschitz on the convex set $R\supseteq[\mathbf{y},\mathbf{x}]$
(\Cref{lem:c0}\ref{it:c0-cond}); (ii) convexity of $h$ on $D$ gives
$h(\mathbf{z})\ge h(\mathbf{y})+\inner{\nabla h(\mathbf{y}),\mathbf{z}-\mathbf{y}}$; (iii) since $\mathbf{x}$ minimizes the
($L$-strongly convex) function $\mathbf{u}\mapsto\inner{\nabla h(\mathbf{y}),\mathbf{u}}+\tfrac L2\norm{\mathbf{u}-\mathbf{y}}^2$
over $Q$ and $\mathbf{z}\in Q$, first-order optimality gives
$\inner{\nabla h(\mathbf{y})+L(\mathbf{x}-\mathbf{y}),\,\mathbf{z}-\mathbf{x}}\ge0$. Subtracting (i) from (ii) and inserting (iii):
\[
F(\mathbf{z})-F(\mathbf{x})\ \ge\ \inner{\nabla h(\mathbf{y}),\mathbf{z}-\mathbf{x}}-\tfrac L2\norm{\mathbf{x}-\mathbf{y}}^2
\ \ge\ L\inner{\mathbf{y}-\mathbf{x},\mathbf{z}-\mathbf{x}}-\tfrac L2\norm{\mathbf{x}-\mathbf{y}}^2,
\]
and $\inner{\mathbf{y}-\mathbf{x},\mathbf{z}-\mathbf{x}}=\inner{\mathbf{y}-\mathbf{x},\mathbf{z}-\mathbf{y}}+\norm{\mathbf{y}-\mathbf{x}}^2$ rearranges this to
\eqref{eq:onestep}.

\emph{Step 1: the potential identity.} Write $v_k:=F(\mathbf{x}_k)-F^\opt$ and
$\mathbf{u}_k:=t_k\mathbf{x}_k-(t_k-1)\mathbf{x}_{k-1}-\mathbf{x}^\opt$. \emph{Provided the one-step inequality
\eqref{eq:onestep} is available at stage $k{+}1$} (i.e.\ $\mathbf{y}_{k+1}\in R$ and
$[\mathbf{y}_{k+1},\mathbf{x}_{k+1}]\subseteq R$), applying it at $\mathbf{z}=\mathbf{x}_k$ and at $\mathbf{z}=\mathbf{x}^\opt$, multiplying
the former by $(t_{k+1}-1)$, adding, multiplying by $t_{k+1}$, and using
$t_k^2=t_{k+1}^2-t_{k+1}$, yields after the standard completion of squares (using
$t_{k+1}\mathbf{y}_{k+1}-(t_{k+1}-1)\mathbf{x}_k=t_k\mathbf{x}_k-(t_k-1)\mathbf{x}_{k-1}$, which is the definition of
$\mathbf{y}_{k+1}$):
\begin{equation}
\label{eq:potential}
\tfrac2L\,t_k^2v_k-\tfrac2L\,t_{k+1}^2v_{k+1}\ \ge\ \norm{\mathbf{u}_{k+1}}^2-\norm{\mathbf{u}_k}^2 .
\end{equation}
Similarly, \eqref{eq:onestep} at stage $1$ ($\mathbf{y}_1=\mathbf{x}_0$, $\mathbf{z}=\mathbf{x}^\opt$) gives the base
\begin{equation}
\label{eq:base}
\tfrac2L\,t_1^2v_1+\norm{\mathbf{u}_1}^2\ \le\ \norm{\mathbf{x}_0-\mathbf{x}^\opt}^2=:B ,
\end{equation}
since with $t_1=1$, $\mathbf{u}_1=\mathbf{x}_1-\mathbf{x}^\opt$ and
$\tfrac L2\norm{\mathbf{x}_1-\mathbf{x}_0}^2+L\inner{\mathbf{x}_0-\mathbf{x}_1,\mathbf{x}^\opt-\mathbf{x}_0}
=\tfrac L2\big(\norm{\mathbf{x}_1-\mathbf{x}^\opt}^2-\norm{\mathbf{x}_0-\mathbf{x}^\opt}^2\big)$.
Chaining \eqref{eq:potential} from \eqref{eq:base}: as long as every stage up to $k$ was valid,
\begin{equation}
\label{eq:chain}
\tfrac2L\,t_k^2v_k+\norm{\mathbf{u}_k}^2\ \le\ B,
\qquad\text{hence}\qquad
v_k\le\frac{LB}{2t_k^2}\le\frac{2LB}{(k+1)^2} .
\end{equation}

\emph{Step 2: a priori bounds, assuming validity through stage $k$.} By strong convexity of $h$ on
$R$ with minimizer $\mathbf{x}^\opt$ (and $\mathbf{x}_0,\mathbf{x}^\opt\in R$, segment included),
$B=\norm{\mathbf{x}_0-\mathbf{x}^\opt}^2\le\tfrac{2c}{\mu}$. For $j\le k$ (using $\mathbf{x}_j\in R$, and
$v_j\le\tfrac L2B$ from \eqref{eq:chain} with $t_j\ge1$):
\[
\norm{\mathbf{x}_j-\mathbf{x}^\opt}^2\;\le\;\frac{2v_j}{\mu}\;\le\;\frac{LB}{\mu}
\;=\;\kappa_{\mathrm{loc}}\,B\;\le\;\frac{2\kappa_{\mathrm{loc}}\,c}{\mu}.
\]
Converting to the Dikin radius via
$\varrho(\cdot)\le\sqrt{\Lopt}\,\norm{\cdot-\mathbf{x}^\opt}_2$ (from \eqref{eq:rho-id} and
\Cref{def:kappa}) and $\Lopt=\tfrac L4$, $\mu=\tfrac49\muopt$, $\kappa_{\mathrm{loc}}=9\kappa$:
\[
\varrho(\mathbf{x}_j)\ \le\ \sqrt{\Lopt\cdot\frac{2\kappa_{\mathrm{loc}}c}{\mu}}
\ =\ \sqrt{\frac{2\cdot9\kappa\,\Lopt\,c}{(4/9)\muopt}}
\ =\ \sqrt{\frac{81}{2}\,\kappa^2c}\ \le\ 7\kappa\sqrt c\qquad(j\le k).
\]

\emph{Step 3: the induction.} We prove by induction on $k$ that all stages through $k$ are valid and
\ref{it:floc-traj} holds. \emph{Base.} $\mathbf{y}_1=\mathbf{x}_0$: $\varrho(\mathbf{x}_0)\le\sqrt{\Lopt B}\le
\sqrt{\Lopt\cdot2c/\mu}=\sqrt{\tfrac92\kappa c}\le\tfrac3{\sqrt2}\sqrt{\kappa c}\le\tfrac12$ (far
below, since $\kappa c\le c_1/\kappa^2\le c_1$), so $\mathbf{y}_1\in R$. \emph{Inductive step (and base
for $\mathbf{x}_1$).} Suppose $\mathbf{y}_{k+1}$'s ingredients $\mathbf{x}_k,\mathbf{x}_{k-1}$ satisfy
$\varrho\le7\kappa\sqrt c$ (Step 2; for $k=0$ read $\mathbf{x}_0=\mathbf{x}_{-1}$). Then, since
$0\le\tfrac{t_k-1}{t_{k+1}}<1$,
\[
\varrho(\mathbf{y}_{k+1})\ \le\ \varrho(\mathbf{x}_k)+\big(\varrho(\mathbf{x}_k)+\varrho(\mathbf{x}_{k-1})\big)
\ \le\ 21\kappa\sqrt c\ \le\ 21\sqrt{c_1}/\sqrt\kappa\ \le\ \tfrac16,
\]
using $c\le c_1/\kappa^3$ and $21\sqrt{c_1}=\tfrac{21}{126}=\tfrac16$. So $\mathbf{y}_{k+1}\in R$ and the
query at $\mathbf{y}_{k+1}$ is well defined. Next, the prox point: writing
$\mathbf{x}_{k+1}=P_Q(\mathbf{w})$ with $\mathbf{w}=\mathbf{y}_{k+1}-\tfrac1L\nabla h(\mathbf{y}_{k+1})$, the triangle inequality
through $P_Q(\mathbf{y}_{k+1})$ and nonexpansiveness of $P_Q$ give
\[
\norm{\mathbf{x}_{k+1}-\mathbf{y}_{k+1}}\ \le\ \dist(\mathbf{y}_{k+1},Q)+\tfrac1L\norm{\nabla h(\mathbf{y}_{k+1})}
\ \le\ 2\norm{\mathbf{y}_{k+1}-\mathbf{x}^\opt},
\]
where the second step uses $\dist(\mathbf{y}_{k+1},Q)\le\norm{\mathbf{y}_{k+1}-\mathbf{x}^\opt}$ (as $\mathbf{x}^\opt\in Q$)
together with the gradient bound
$\norm{\nabla h(\mathbf{y}_{k+1})}\le L\norm{\mathbf{y}_{k+1}-\mathbf{x}^\opt}$, valid because $\nabla h(\mathbf{x}^\opt)=0$
and $\nabla h$ is $L$-Lipschitz on $R$ by \Cref{lem:c0}\ref{it:c0-cond} (projected version; both
points and the segment lie in $R$). Hence, with $\norm{\mathbf{y}_{k+1}-\mathbf{x}^\opt}\le\varrho(\mathbf{y}_{k+1})/\sqrt{\muopt}$,
\[
\varrho(\mathbf{x}_{k+1})\ \le\ \varrho(\mathbf{y}_{k+1})+\sqrt{\Lopt}\norm{\mathbf{x}_{k+1}-\mathbf{y}_{k+1}}
\ \le\ \varrho(\mathbf{y}_{k+1})\big(1+2\sqrt\kappa\big)\ \le\ 3\sqrt{\kappa}\cdot21\kappa\sqrt c
\ =\ 63\,\kappa^{3/2}\sqrt c\ \le\ \tfrac12,
\]
using $1+2\sqrt\kappa\le3\sqrt\kappa$ ($\kappa\ge1$) and $63\sqrt{c_1}=\tfrac{63}{126}=\tfrac12$
with $c\le c_1/\kappa^3$. So $\mathbf{x}_{k+1}\in R$, the segment $[\mathbf{y}_{k+1},\mathbf{x}_{k+1}]\subseteq R$ by
convexity of $R$, stage $k{+}1$ is valid, the potential chain \eqref{eq:chain} extends to $k+1$, and
Step 2's sharper bound $\varrho(\mathbf{x}_{k+1})\le7\kappa\sqrt c$ is restored. This closes the induction
and proves \ref{it:floc-traj} and \ref{it:floc-rate} (the second inequality in \ref{it:floc-rate} is
$B\le2c/\mu$).

\emph{The halving conclusion.} With $N=\lceil\sqrt{8\kappa_{\mathrm{loc}}}\rceil-1$, i.e.\
$(N+1)^2\ge8\kappa_{\mathrm{loc}}$, \ref{it:floc-rate} and $B\le\tfrac2\mu(h(\mathbf{x}_0)-f^\opt)$ give
\[
h(\mathbf{x}_N)-f^\opt\ \le\ \frac{2L}{(N+1)^2}\cdot\frac{2\,(h(\mathbf{x}_0)-f^\opt)}{\mu}
\ =\ \frac{4\kappa_{\mathrm{loc}}}{(N+1)^2}\,\big(h(\mathbf{x}_0)-f^\opt\big)
\ \le\ \tfrac12\big(h(\mathbf{x}_0)-f^\opt\big).
\]
Finally $h(\mathbf{x}_N)-f^\opt\le c\le c_0$ and $\mathbf{x}_N\in Q\subseteq V$, so $\mathbf{x}_N\in\Cnull$.
\end{proof}

\begin{proof}[Completing the proof of \Cref{thm:upper}]
Run cycles $j=1,2,\dots$, each of length $N=O(\sqrt\kappa)$, restarting from the last iterate of the
previous cycle. By \Cref{lem:fistaloc} (whose hypothesis $h-f^\opt\le c$ at the cycle start holds for
every cycle, the gaps being halved), after $j$ cycles the gap is at most $2^{-j}c$, all queries are
well defined and supported on $S^\opt$, and cycle ends lie in $\Cnull$. The count of cycles needed
for $g\le\eps d$, via \Cref{lem:gapconv}, is as computed in the proof sketch in \Cref{sec:upper};
one extra query per cycle evaluates the certificate. Feasibility of the output point (a cycle-end
iterate in $\Cnull\subseteq\relint\Delta_{S^\opt}$) is \Cref{lem:c0}\ref{it:c0-interior}.
\end{proof}

\begin{remark}
The proof of \eqref{eq:potential} from \eqref{eq:onestep} is verbatim the argument
of~\cite[Lem.~4.1 and Thm.~4.4]{beck2009}; we reproduce the two applications of \eqref{eq:onestep}
and the algebra, since the only point of departure is \emph{where}
\eqref{eq:onestep} is valid. Apply \eqref{eq:onestep} at stage $k{+}1$ with $\mathbf{z}=\mathbf{x}_k$ (giving
$F(\mathbf{z})-F(\mathbf{x}_{k+1})=v_k-v_{k+1}$) and with $\mathbf{z}=\mathbf{x}^\opt$ (giving $-v_{k+1}$); multiply the former
by $(t_{k+1}-1)\ge0$ and add the latter:
\[
(t_{k+1}-1)v_k-t_{k+1}v_{k+1}\ \ge\ \tfrac L2\, t_{k+1}\norm{\mathbf{x}_{k+1}-\mathbf{y}_{k+1}}^2
+L\,\inner{\mathbf{x}_{k+1}-\mathbf{y}_{k+1},\,t_{k+1}\mathbf{y}_{k+1}-(t_{k+1}-1)\mathbf{x}_k-\mathbf{x}^\opt},
\]
where the inner product collects
$(t_{k+1}-1)(\mathbf{x}_k-\mathbf{y}_{k+1})+(\mathbf{x}^\opt-\mathbf{y}_{k+1})$ paired with $\mathbf{y}_{k+1}-\mathbf{x}_{k+1}$, with both
signs flipped. Multiplying by $t_{k+1}$, using $t_k^2=t_{k+1}^2-t_{k+1}$ and the elementary identity
$\norm{\mathbf{a}+\mathbf{b}}^2-\norm{\mathbf{b}}^2=\norm{\mathbf{a}}^2+2\inner{\mathbf{a},\mathbf{b}}$ with
$\mathbf{a}=t_{k+1}(\mathbf{x}_{k+1}-\mathbf{y}_{k+1})$ and $\mathbf{b}=t_{k+1}\mathbf{y}_{k+1}-(t_{k+1}-1)\mathbf{x}_k-\mathbf{x}^\opt$, and
finally the $\mathbf{y}$-update in the form $t_{k+1}\mathbf{y}_{k+1}-(t_{k+1}-1)\mathbf{x}_k=t_k\mathbf{x}_k-(t_k-1)\mathbf{x}_{k-1}$
(so that $\mathbf{b}=\mathbf{u}_k$ and $\mathbf{a}+\mathbf{b}=\mathbf{u}_{k+1}$), gives \eqref{eq:potential}.
\end{remark}

\subsection{The facial-coupling lemma\texorpdfstring{ (\Cref{lem:coupling})}{}}
\label{app:coupling}

We prove $\Phi\le2\,\sigma_{\min}$ (\Cref{lem:coupling}), where
$\mathbf{G}(\boldsymbol{\delta})=\sum_{i\in S^\opt}\delta_i\mathbf{c}_i$ acts on the face tangent
$T^\opt$, $\sigma_{\min}=\min\{\fnorm{\mathbf{G}(\boldsymbol{\delta})}:\boldsymbol{\delta}\in T^\opt,\norm{\boldsymbol{\delta}}_2=1\}$,
$\mathbf{c}_i=(\mathbf{M}^\opt)^{-1/2}\mathbf{a}_i\mathbf{a}_i^\top(\mathbf{M}^\opt)^{-1/2}$, and $\Phi$ is the facial distance of
$P:=\mathrm{conv}\{\mathbf{c}_i:i\in[n]\}$ in the Frobenius metric:
$\Phi=\min_{F}\mathrm{dist}\!\big(F,\mathrm{conv}(V\setminus F)\big)$ over proper nonempty faces $F$ of $P$, the face
entering as a \emph{convex body}~\cite[Thm.~1]{pena2019}. (After the isometry
$\mathbf{X}\mapsto(\mathbf{M}^\opt)^{-1/2}\mathbf{X}(\mathbf{M}^\opt)^{-1/2}$ this is exactly the facial distance of \Cref{thm:zhao} in the
local norm.)

\begin{proof}[Proof of \Cref{lem:coupling}]
Let $\sigma:=\sigma_{\min}$ and take a unit witness $\boldsymbol{\delta}\in T^\opt$, $\norm{\boldsymbol{\delta}}_2=1$, with
$\mathbf{E}:=\mathbf{G}(\boldsymbol{\delta})=\sum_{i\in S^\opt}\delta_i\mathbf{c}_i$ and $\fnorm{\mathbf{E}}=\sigma$.

\emph{Step 1 ($F^\opt:=\mathrm{conv}\{\mathbf{c}_i:i\in S^\opt\}$ is an exposed face of $P$).} On $\Sym^d$ consider the
functional $\mathbf{X}\mapsto\inner{\mathbf{X},\mathbf{I}}=\Tr\mathbf{X}$. For $i\in S^\opt$,
$\Tr\mathbf{c}_i=\norm{(\mathbf{M}^\opt)^{-1/2}\mathbf{a}_i}_2^2=\mathbf{a}_i^\top(\mathbf{M}^\opt)^{-1}\mathbf{a}_i=v_i(\mathbf{p}^\opt)=d$; and for every $i\in[n]$,
$v_i(\mathbf{p}^\opt)\le d$ with equality iff $i\in S^\opt$ (Kiefer--Wolfowitz, \Cref{thm:kw}, together with strict
complementarity, \Cref{def:nondeg}). Thus $\Tr\le d$ on all vertices of $P$, attained exactly on
$\{\mathbf{c}_i:i\in S^\opt\}$, so $\{\mathbf{X}:\Tr\mathbf{X}=d\}$ supports $P$ and $F^\opt=P\cap\{\Tr=d\}$ is an exposed face.

\emph{Step 2 ($F^\opt$ is a simplex).} By \Cref{def:nondeg} the matrices $\{\mathbf{c}_i\}_{i\in S^\opt}$ are linearly
(hence affinely) independent in $\Sym^d$, so $F^\opt$ is an $(m{-}1)$-simplex with $m=|S^\opt|$, and every
subset of its vertices spans a face of $F^\opt$, hence, a face of a face being a face, a face of $P$.

\emph{Step 3 (disjoint-support faces).} Split $\boldsymbol{\delta}=\boldsymbol{\delta}^+-\boldsymbol{\delta}^-$ into its nonnegative parts, with
disjoint supports $S^+,S^-\subseteq S^\opt$. As $\sum_i\delta_i=0$,
$s:=\textstyle\sum_i\delta_i^+=\sum_i\delta_i^-=\tfrac12\norm{\boldsymbol{\delta}}_1\ge\tfrac12\norm{\boldsymbol{\delta}}_2=\tfrac12$. The
points $\mathbf{q}^+:=s^{-1}\sum_{i\in S^+}\delta_i\mathbf{c}_i$ and $\mathbf{q}^-:=s^{-1}\sum_{i\in S^-}(-\delta_i)\mathbf{c}_i$ are genuine
convex combinations of $\{\mathbf{c}_i:i\in S^+\}$ and $\{\mathbf{c}_i:i\in S^-\}$ respectively, and
$\mathbf{q}^+-\mathbf{q}^-=s^{-1}\sum_{i\in S^\opt}\delta_i\mathbf{c}_i=s^{-1}\mathbf{E}$, so $\fnorm{\mathbf{q}^+-\mathbf{q}^-}=\sigma/s\le2\sigma$.

\emph{Step 4 (read off $\Phi$).} $F^+:=\mathrm{conv}\{\mathbf{c}_i:i\in S^+\}$ is a face of $F^\opt$ (Step 2), hence a
proper nonempty face of $P$ ($S^+\subsetneq S^\opt$ since $\boldsymbol{\delta}^-\ne0$, and $S^+\ne\emptyset$ since
$\boldsymbol{\delta}^+\ne0$). The vertices of $P$ not on $F^+$ are $\{\mathbf{c}_i:i\notin S^+\}\supseteq\{\mathbf{c}_i:i\in S^-\}\ni$
the constituents of $\mathbf{q}^-$, so $\mathbf{q}^-\in\mathrm{conv}(V\setminus F^+)$, while $\mathbf{q}^+\in F^+$. Hence
\[
\Phi\ \le\ \mathrm{dist}\!\big(F^+,\,\mathrm{conv}(V\setminus F^+)\big)\ \le\ \fnorm{\mathbf{q}^+-\mathbf{q}^-}\ \le\ 2\sigma .
\]
Squaring yields $\muopt=\sigma_{\min}^2\ge\Phi^2/4$. The exposed-face property of Step~1, which uses
\Cref{thm:kw} and fails for an arbitrary configuration of the $\mathbf{c}_i$, is exactly what makes an internal
near-dependence of the contacts ($\sigma$ small) register as a small facial distance of $P$.
\end{proof}

\subsection{The per-iteration arithmetic of the facial Newton phase\texorpdfstring{ (\Cref{thm:newton})}{}}
\label{app:arith}

The iteration \emph{count} of the facial Newton phase is \Cref{thm:newton}\ref{it:nwt-iters}; we record
here its per-iteration \emph{arithmetic}, deferred from \Cref{sec:newton}.

\begin{proposition}[Per-iteration cost of the facial Newton phase]
\label{prop:cost}
Fix an iterate $\mathbf{p}\in\relint\Delta_{S^\opt}$ and write $m=|S^\opt|\le\binom{d+1}{2}$. Computing the
gradient $\mathbf{v}_{S^\opt}(\mathbf{p})$, the certificate $\max_iv_i(\mathbf{p})$, the facial Hessian
$\widehat{\mathbf{H}}(\mathbf{p})$, and the Newton direction $\mathbf{n}_{\mathbf{p}}$ and decrement $\lambda(\mathbf{p})$ of one
iteration of \Cref{alg:newton} admits, for dense $\mathbf{A}$, two implementations:
\begin{enumerate}[label=\emph{(\roman*)},leftmargin=*,itemsep=2pt]
\item\label{it:cost-oracle} \emph{(Pure oracle.)} The $m+1\le\binom{d+1}{2}+1$ leverage-score queries of
\Cref{prop:smrecover}, costing $O(nd^{\omega+1})$ arithmetic, followed by an $O(m^3)$ linear solve.
\item\label{it:cost-direct} \emph{(One query plus direct linear algebra.)} A single leverage-score query
at $\mathbf{p}$, costing $O(nd^{\omega-1})$, followed by $O(md^2+d^\omega+m^2d+m^3)=O(d^6)$ further arithmetic.
\end{enumerate}
In particular, when $n=\Omega(d^{\,7-\omega})$ the further arithmetic of \ref{it:cost-direct} is within the
cost of its single query, so that implementation runs in $O(nd^{\omega-1})$ per iteration---a factor
$\Theta(d^2)$ below \ref{it:cost-oracle}.
\end{proposition}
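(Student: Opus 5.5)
The plan is to account for the two implementations separately. For both, the solve cost is the same: once $\widehat{\mathbf{H}}(\mathbf{p})$ and $\mathbf{v}_{S^\opt}(\mathbf{p})$ are in hand, the bordered $(m+1)\times(m+1)$ KKT system of \Cref{alg:newton} is solved by dense Gaussian elimination in $O(m^3)$. It is nonsingular by \Cref{lem:faceSC}, since $\widehat{\mathbf{H}}\succ0$ on $\ker\mathbf 1^\top$. The decrement $\lambda=(\mathbf{v}_{S^\opt}^\top\mathbf{n})^{1/2}$ then costs $O(m)$.

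\emph{Part (i).} I would invoke \Cref{prop:smrecover} directly. We make one query at $\mathbf{p}$, giving the certificate and gradient, and $m$ queries at $\tfrac12(\mathbf{p}+\mathbf{e}_j)$. Each dense query costs $O(nd^{\omega-1})$: form $\mathbf{M}$ in $O(nd^{\omega-1})$ by blocking rows into $n/d$ products of $d\times d$ matrices, invert in $O(d^\omega)$, and form $\mathbf{A}\mathbf{M}^{-1}$ and the row-wise inner products in $O(nd^{\omega-1})$. Since $m+1=O(d^2)$, the total is $O(nd^{\omega+1})$. Assembling the entries through \eqref{eq:smrecover} adds only $O(m^2)$ scalar operations, which is dominated by the queries.

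\emph{Part (ii).} The single query at $\mathbf{p}$ already yields $\mathbf{v}(\mathbf{p})$ and the certificate. We then use $\mathbf{p}$ directly: since $\supp(\mathbf{p})=S^\opt$, we have $\mathbf{M}(\mathbf{p})=\sum_{i\in S^\opt}p_i\mathbf{a}_i\mathbf{a}_i^\top$. The remaining steps are as follows.
\begin{enumerate}
\item Form $\mathbf{M}(\mathbf{p})$ from the $m$ contact rows. As a product of a $d\times m$ matrix and an $m\times d$ matrix, this costs $O(md^2)$.
\item Invert $\mathbf{M}(\mathbf{p})$ in $O(d^\omega)$.
\item Compute $\mathbf{B}=\mathbf{A}_{S^\opt}\mathbf{M}^{-1}$ in $O(md^2)$.
\item Compute the Gram matrix $\mathbf{B}\mathbf{A}_{S^\opt}^\top$, whose entries are $\mathbf{a}_i^\top\mathbf{M}^{-1}\mathbf{a}_j$, in $O(m^2d)$, and square it entrywise to obtain $\widehat{\mathbf{H}}(\mathbf{p})$ (\Cref{prop:dict}\ref{it:hess}).
\item Solve the KKT system in $O(m^3)$.
\end{enumerate}
With $m\le\binom{d+1}{2}=O(d^2)$, the dominant term is $m^3=O(d^6)$, while $m^2d=O(d^5)$ and $md^2=O(d^4)$.

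\emph{The ``in particular'' claim.} We need $d^6\le n d^{\omega-1}$, which holds exactly when $n\ge d^{7-\omega}$. Under this condition the per-iteration cost of (ii) is $O(nd^{\omega-1})$. This is a factor $d^2$ below the $O(nd^{\omega+1})$ of (i).

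I expect no step to be a real obstacle; the proof is essentially bookkeeping. The only care needed is in the dense query cost $O(nd^{\omega-1})$ (the fast rectangular product for $\mathbf{A}^\top\mathrm{diag}(\mathbf{p})\mathbf{A}$) and in noting that (ii) needs only the contact rows, which are known since $S=S^\opt$ at handoff (\Cref{lem:identify}).
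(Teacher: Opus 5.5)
Your proposal is correct and follows the paper's accounting essentially line by line. For (i) it uses $m+1$ queries at $O(nd^{\omega-1})$ each. For (ii) it uses one query, then the $d\times m$ solve, the $O(m^2d)$ Gram matrix squared entrywise, and the $O(m^3)$ bordered solve, with $m^3=O(d^6)$ dominating and $d^6\le nd^{\omega-1}\iff n\ge d^{7-\omega}$. The only cosmetic difference is that you rebuild $\mathbf{M}(\mathbf{p})$ from the contact rows (valid because $\supp(\mathbf{p})=S^\opt$), whereas the paper reuses the factorization from the query and mentions recomputing it as an alternative; the bound is unchanged either way.
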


\begin{proof}
Write $\mathbf{A}_{S^\opt}\in\R^{m\times d}$ for the contact-row submatrix and $\mathbf{M}=\mathbf{M}(\mathbf{p})\succ0$. One
leverage-score query returns $(v_i(\mathbf{w}))_{i\in[n]}$ at cost $O(nd^{\omega-1})$ for dense $\mathbf{A}$ (the cost
model of \Cref{sec:prelim}): $\mathbf{M}(\mathbf{w})=\mathbf{A}^\top\!\diag(\mathbf{w})\mathbf{A}$ is a $d\times n$ by $n\times d$ product,
$O(nd^{\omega-1})$; its inverse is $O(d^\omega)\le O(nd^{\omega-1})$; and the scores are the diagonal of
$\mathbf{A}\mathbf{M}(\mathbf{w})^{-1}\mathbf{A}^\top$, read off from $\mathbf{M}(\mathbf{w})^{-1}\mathbf{A}^\top$ ($O(nd^{\omega-1})$) by $n$ inner
products ($O(nd)$).

\ref{it:cost-oracle} By \Cref{prop:smrecover}, the leverage scores at $\mathbf{p}$ and at the $m$ designs
$\tfrac12(\mathbf{p}+\mathbf{e}_j)$, $j\in S^\opt$, determine $\widehat{\mathbf{H}}(\mathbf{p})$ exactly, and the query at $\mathbf{p}$ also
supplies $\mathbf{v}_{S^\opt}(\mathbf{p})$ and $\max_iv_i(\mathbf{p})$: these are $m+1$ queries, $O(nd^{\omega+1})$ in all. The
bordered system of \Cref{alg:newton} is $(m{+}1)$-dimensional and is solved in $O(m^3)$.

\ref{it:cost-direct} A single query at $\mathbf{p}$ gives $\mathbf{v}_{S^\opt}(\mathbf{p})=-\nabla h(\mathbf{p})|_{S^\opt}$, the
certificate, and a factorization of $\mathbf{M}$. With it, $\mathbf{B}:=\mathbf{M}^{-1}\mathbf{A}_{S^\opt}^\top\in\R^{d\times m}$ is $m$
back-solves, $O(md^2)$ (or recompute the factorization in $O(d^\omega)$); the Gram matrix
$\mathbf{G}=\mathbf{A}_{S^\opt}\mathbf{B}=\big(\mathbf{a}_i^\top\mathbf{M}^{-1}\mathbf{a}_j\big)_{i,j\in S^\opt}$ is an $m\times d$ by $d\times m$
product, $O(m^2d)$, and $\widehat{\mathbf{H}}(\mathbf{p})=\mathbf{G}\odot\mathbf{G}$ its entrywise square
(\Cref{prop:dict}\ref{it:hess}), $O(m^2)$. Solving the bordered system for $(\mathbf{n}_{\mathbf{p}},\nu)$ costs
$O(m^3)$ and $\lambda(\mathbf{p})^2=\mathbf{v}_{S^\opt}^\top\mathbf{n}_{\mathbf{p}}$ a further $O(m)$. The arithmetic beyond the
query is thus $O(md^2+d^\omega+m^2d+m^3)$; as $m\le\binom{d+1}{2}=O(d^2)$, the $m^3$ term dominates, giving
$O(d^6)$.

Finally $O(d^6)\le O(nd^{\omega-1})$ exactly when $n\ge d^{\,7-\omega}$, in which case \ref{it:cost-direct}
runs in $O(nd^{\omega-1})$, against $O(nd^{\omega+1})$ for \ref{it:cost-oracle}.
\end{proof}

\section{The explicit instance \texorpdfstring{$\mathbf{A}_\opt$}{A*}: exact data}
\label{sec:instance}

\paragraph{The instance.} Let $\mathbf{A}_\opt=\left(\begin{smallmatrix}2&0\\0&1\\1&1\\1&-1\end{smallmatrix}\right)$,
i.e.\ $\mathbf{a}_1=(2,0)$, $\mathbf{a}_2=(0,1)$, $\mathbf{a}_3=(1,1)$, $\mathbf{a}_4=(1,-1)$ ($n=4$, $d=2$).

\paragraph{Closed-form optimum.} $\mathbf{p}^\opt=(\tfrac13,0,\tfrac13,\tfrac13)$ gives
$\mathbf{M}(\mathbf{p}^\opt)=\diag(2,\tfrac23)$, $\mathbf{M}(\mathbf{p}^\opt)^{-1}=\diag(\tfrac12,\tfrac32)$, and
$\mathbf{v}(\mathbf{p}^\opt)=(2,\tfrac32,2,2)$. Thus $\max_iv_i=2=d$, so $\mathbf{p}^\opt$ is optimal (\Cref{thm:kw}), with contact
set $S^\opt=\{1,3,4\}$ and constraint $2$ strictly inactive ($v_2=\tfrac32<2$, $p^\opt_2=0$,
$\gsc=\tfrac12$). The contact matrices $\mathbf{a}_1\mathbf{a}_1^\top,\mathbf{a}_3\mathbf{a}_3^\top,\mathbf{a}_4\mathbf{a}_4^\top$ are
independent in $\Sym^2$ ($\det=-8\neq0$), so $\mathbf{p}^\opt$ is nondegenerate (\Cref{def:nondeg}), and unique
(\Cref{lem:unique}); $\pmin=\tfrac13$. The full Hessian matrix
$\mathbf{B}:=\big((\mathbf{a}_i^\top(\mathbf{M}^\opt)^{-1}\mathbf{a}_j)^2\big)_{i,j}$ is rational:
\[
\mathbf{B}=\begin{pmatrix}4&0&1&1\\0&\tfrac94&\tfrac94&\tfrac94\\1&\tfrac94&4&1\\1&\tfrac94&1&4\end{pmatrix},
\qquad\text{facial block}\quad
\widehat{\mathbf{H}}(\mathbf{p}^\opt)=\mathbf{B}_{S^\opt\times S^\opt}
=\begin{pmatrix}4&1&1\\1&4&1\\1&1&4\end{pmatrix}=3\,\mathbf{I}_3+\mathbf 1\mathbf 1^\top .
\]
Hence $\widehat{\mathbf{H}}(\mathbf{p}^\opt)$ has eigenvalues $\{6,3,3\}$ (eigenvector $\mathbf 1$ for $6$; the plane
$\mathbf 1^\perp\supseteq T^\opt$ for $3$), so in the convention of \Cref{def:kappa}:
$\muopt=3$ (the restriction to $T^\opt$ is exactly $3\,\mathbf{I}_2$), $\Lopt=6$, and $\kappa=2$. (On the
tangent of the \emph{full} simplex $\Delta_4$ the reduced Hessian has spectrum
$\approx\{0.073,\,3.0,\,3.87\}$---full rank, as $n-1=\binom{d+1}{2}=3$ leaves no flat directions here
(\Cref{lem:flat}). The \emph{facial} quantities are the ones that govern the rates.) We also record
$\sum_{i\in S^\opt}\mathbf{a}_i\mathbf{a}_i^\top=\diag(6,2)=3\,\mathbf{M}^\opt$, the equal-weights identity behind the
constancy of the facial column sums on this instance.

\paragraph{The multiplicative-algorithm Jacobian, exactly.}
The map $\Phi(\mathbf{p})=(p_iv_i(\mathbf{p})/d)_i$ (\Cref{rem:titterington}) has
$D\Phi(\mathbf{p}^\opt)_{ij}=\delta_{ij}v_j(\mathbf{p}^\opt)/d-p^\opt_iB_{ij}/d$, which evaluates to the rational
matrix
\[
D\Phi(\mathbf{p}^\opt)=
\begin{pmatrix}
\tfrac13&0&-\tfrac16&-\tfrac16\\[2pt]
0&\tfrac34&0&0\\[2pt]
-\tfrac16&-\tfrac38&\tfrac13&-\tfrac16\\[2pt]
-\tfrac16&-\tfrac38&-\tfrac16&\tfrac13
\end{pmatrix},
\]
whose columns sum to $0$ (the differential maps into the sum-zero subspace, as it must:
$\sum_ip_iv_i\equiv d$). Its exact eigenpairs are verified by four matrix--vector products:
\[
\begin{array}{llll}
\lambda=0:&(1,0,1,1)^\top,\qquad&
\lambda=\tfrac12:&(0,0,1,-1)^\top\ \text{and}\ (1,0,-\tfrac12,-\tfrac12)^\top,\\[2pt]
\lambda=\tfrac34:&(\tfrac23,1,-\tfrac56,-\tfrac56)^\top .
\end{array}
\]
The three eigenvectors with $\lambda\in\{\tfrac12,\tfrac12,\tfrac34\}$ are sum-zero, so the spectrum
on the simplex tangent is exactly $\{\tfrac12,\tfrac12,\tfrac34\}$ with spectral radius
$\rho=\tfrac34<1$, as used in \Cref{thm:avg} (Step 2); the slow mode is the inactive coordinate,
contracting at $v_2(\mathbf{p}^\opt)/d=\tfrac34$ exactly (visible in the second row of $D\Phi(\mathbf{p}^\opt)$).
The full spectrum $\{0,\tfrac12,\tfrac12,\tfrac34\}$ means Ostrowski's theorem applies in $\R^4$
directly.

\paragraph{The linearized gap on the feasible cone.} The strict positivity $F>0$ on
$K\setminus\{0\}$ is proved by hand in \Cref{lem:Fpos} (main text), using only the rows
\eqref{eq:Brows} of $\mathbf{B}$ above. Numerically, the unit-sphere minimum is
$\gamma_{\min}=\min\{F(\boldsymbol{\delta}):\boldsymbol{\delta}\in
K,\norm{\boldsymbol{\delta}}_2=1\}\approx0.386$, and the limiting direction of the averaged iterates is
$\mathbf{C}=\lim_T T(\bar{\mathbf{p}}^{(T)}-\mathbf{p}^\opt)\approx(0.165,0.988,-0.577,-0.577)$ (unnormalized,
$\norm{\mathbf{C}}_2\approx1.28$) with $\gamma=F(\mathbf{C})\approx0.495$. The two constants measure different
things ($\gamma_{\min}$ is the worst case over unit directions, $\gamma$ the value at the actual
summed displacement, consistently $\gamma\ge\gamma_{\min}\norm{\mathbf{C}}_2$) and both corroborate
\Cref{lem:Fpos} and \Cref{thm:avg} ($C_2>0$ as proved there).

\section{The leverage-score model in context}
\label{app:lev-model}

The oracle studied in this paper (one query at a weighting $\mathbf{p}$ returns the vector
$(v_i(\mathbf{p}))_{i\in[n]}$, $v_i(\mathbf{p})=\mathbf{a}_i^\top\mathbf{M}(\mathbf{p})^{-1}\mathbf{a}_i$) is the central object of
\emph{randomized numerical linear algebra} (RandNLA), surveyed in~\cite{drineasmahoney2016,martinsson2020} with a software-oriented modern account in~\cite{murray2023}. The quantity $p_iv_i(\mathbf{p})$ is the $i$-th
\emph{statistical leverage score}, the $i$-th diagonal entry of the weighted projection matrix
$\mathbf{P}^{1/2}\mathbf{A}\,\mathbf{M}(\mathbf{p})^{-1}\mathbf{A}^\top\mathbf{P}^{1/2}$ ($\mathbf{P}=\diag(\mathbf{p})$), measuring how much row $i$ influences
the column space; $\sum_ip_iv_i(\mathbf{p})=d$ (\Cref{prop:dict}\ref{it:trace}). We collect here the
algorithmic context, largely the work of others, that motivates measuring cost in leverage-score queries.

\paragraph{Leverage scores as importance weights.}
Sampling rows with probability proportional to their leverage scores yields relative-error solutions to
$\ell_2$ regression and low-rank approximation~\cite{drineas2006,drineas2011} (an idea introduced as an
algorithmic primitive, sampling \emph{actual} rows and columns, by the CUR decomposition of Mahoney and
Drineas~\cite{mahoney2009}), and is the engine of the sketching paradigm for matrix
computation~\cite{sarlos2006,mahoney2011,woodruff2014,halko2011}. In the graph setting the leverage scores of
the incidence matrix are exactly the \emph{effective resistances}, and sampling by them is spectral
sparsification~\cite{spielman2008}, computed by iterative leverage refinement~\cite{li2013} or combinatorial
resistance estimates~\cite{koutis2016}---the same principle that, in the $D$-optimal-design form, makes a
near-optimal $\mathbf{p}$ a near-isotropic reweighting of the rows. Closest to the present setting,
leverage-score sampling also serves as a data-reduction front-end for the \emph{minimum-volume covering
ellipsoid}, the enclosing counterpart of the John ellipsoid, in the big-data regime~\cite{harris2024}.

\paragraph{Ridge leverage scores and the kernel setting.}
Regularization replaces $\mathbf{M}(\mathbf{p})$ by $\mathbf{M}(\mathbf{p})+\lambda\mathbf{I}$ and the leverage score by the
\emph{ridge} leverage score, whose sum is the effective dimension; sampling by it gives statistically optimal
kernel ridge regression and Nystr\"om approximation~\cite{alaoui2015,rudi2015}, input-sparsity-time low-rank
approximation~\cite{cohenmusco2017}, recursive landmark selection~\cite{musco2017}, and distributed or
adaptive dictionaries~\cite{calandriello2017}. The leverage-sampling principle also has an \emph{online} form,
in which rows are kept or discarded irrevocably as they stream by~\cite{cohen2016online}.

\paragraph{Volume sampling, determinantal processes, and coresets.}
Beyond independent leverage sampling, \emph{volume sampling} and determinantal point processes select rows
jointly with probability proportional to a squared volume, giving unbiased estimators with
leverage-score-quality tails~\cite{derezinski2018,derezinski2021} and near-optimal subset selection for the
pseudoinverse norm~\cite{avron2013}. Generalizing leverage scores from least squares to arbitrary objectives
yields the \emph{sensitivity} of a datum, the basis of the coreset
framework~\cite{feldman2011,braverman2016,munteanu2018}.

\paragraph{Computing the oracle.}
For dense $\mathbf{A}$ one query costs $O(nd^{\omega-1})$ exactly. What makes the model practical is
\emph{approximation}: relative-error leverage scores in $O(nd\log n)$ time by Johnson--Lindenstrauss
sketching of the pseudoinverse~\cite{drineas2012}, in input-sparsity time $O(\nnz(\mathbf{A}))$ by sparse
subspace embeddings~\cite{clarkson2013,nelson2013,meng2013}, through structured fast
transforms~\cite{tropp2011,drineas2011}, and in current-matrix-multiplication time with optimal
dependence~\cite{cherapanamjeri2022}; recursive (ridge-)leverage sampling refines a uniform sample into a
leverage sample in nearly input-sparsity time~\cite{cohen2015uniform}. It is precisely this
\emph{per-query} cost that the input-sparsity~\cite{cao2025} and lazy-update~\cite{woodruff2025}
John-ellipsoid algorithms reduce---an axis orthogonal to the \emph{query count} studied here.

\paragraph{Optimal design as a tractable program.}
The John ellipsoid is the $D$-optimal instance of a broad family of experimental-design criteria, for which a
rich algorithmic literature obtains near-optimal designs: by convex relaxation and rounding under a
measurement budget~\cite{wang2017}, by regret minimization across the $A/D/E/V/G$ criteria~\cite{allenzhu2017},
by proportional volume sampling for $A$-optimality~\cite{nikolov2019}, and by combinatorial local
search~\cite{madan2019}. These works optimize the same objective measured in \emph{design size}; our averaging
barrier and acceleration phases measure it in \emph{oracle queries}.

\paragraph{Lewis weights and the John ellipsoid.}
The John ellipsoid is the $\ell_\infty$ endpoint of a family: a John design is the $\ell_\infty$
\emph{Lewis-weight} distribution~\cite{cohen2015}, and Lewis weights are the $\ell_p$ analogue of leverage
scores, themselves computed by a short sequence of reweighted leverage-score
evaluations~\cite{fazel2021}. Sampling by Lewis weights gives $\ell_p$ subspace embeddings, including in
the online, streaming, and sliding-window models~\cite{braverman2018,woodruff2022}, with \emph{sensitivity}
sampling~\cite{song2025} and turnstile-stream sampling data structures~\cite{song2025stream} the further
generalizations. All of this optimizes or approximates the \emph{cost}
of a query; our averaging barrier (\Cref{thm:avg}) and doubly logarithmic accuracy phase
(\Cref{thm:newton}) concern the \emph{number} of queries, hold however cheap each query is made, and
extending them to these approximate oracles is the noise-stability question of \Cref{sec:discussion}.

\section{Numerical experiments}
\label{app:experiments}

We validate each of the three results with a small, self-contained reference implementation, on the
explicit instance $\mathbf{A}_\opt$ above and on random instances
$\mathbf{A}=\mathbf{G}\,\diag(\boldsymbol{\sigma})^{1/2}$ ($\mathbf{G}$ standard Gaussian, $\boldsymbol{\sigma}$ geometrically
spaced---its spread controls the facial conditioning). The metric is the number of \emph{leverage-score
evaluations} (oracle queries) to reach a $(1+\eps)$-John ellipsoid $\max_iv_i(\mathbf{p})\le(1+\eps)d$; for the
facial phases we also report the \emph{face-phase iteration count}.

\paragraph{Dictionary and oracle (\Cref{prop:dict,prop:smrecover}).}
The identities of \Cref{prop:dict} hold to finite-difference accuracy ($\nabla f=-\mathbf{v}$ to
$3\times10^{-5}$; $\nabla^2f_{ij}=(\mathbf{a}_i^\top \mathbf{M}^{-1}\mathbf{a}_j)^2$ to $5\times10^{-4}$), the
multiplicative-map Jacobian spectrum to $10^{-6}$, and the Sherman--Morrison Hessian recovery
\eqref{eq:smrecover} to machine precision ($10^{-12}$) on random instances.

\paragraph{Averaging barrier and separation (\Cref{thm:avg,cor:sep}).}
On $\mathbf{A}_\opt$, $T\cdot g(\bar{\mathbf{p}}^{(T)})\to0.495=\gamma$ with measured log--log slope $-1.00$
(\Cref{fig:avglaw}), while the last iterate reaches the double-precision floor in under $100$ steps.
The separation is then stark (\Cref{fig:separation}): the uniform averaged certificate needs
$\{4,25,248,2476,24752,247513\}$ leverage evaluations for $\eps\in\{10^{-1},\dots,10^{-6}\}$ (exactly
$\Theta(\eps^{-1})$, a factor $10$ per decade), whereas CCLY's own last iterate, away-step Frank--Wolfe,
and both facial phases of \Cref{alg:main} are polylogarithmic.

\paragraph{Acceleration, and $\log$ versus $\log\log$ (\Cref{thm:upper,thm:newton}).}
Across random instances of growing conditioning, the face-phase count scales as $\kappa^{0.87}$ for
unaccelerated projected gradient versus $\kappa^{0.43}$ for the accelerated method (\Cref{fig:facial}, right),
the $\sqrt\kappa$ rate of \Cref{thm:upper}. The two facial phases part ways along every axis
(\Cref{fig:loglog}): in \emph{accuracy} the accelerated count grows linearly in $\log(1/\eps)$ while the
facial Newton phase stays doubly-logarithmically flat (\Cref{thm:newton}); in \emph{conditioning} the Newton
count is moreover \emph{condition-free}; and although each Newton step spends $O(d^2)$ leverage queries to
the accelerated phase's one, in \emph{total} queries the Newton phase still overtakes once $\eps$ is small
enough.

\paragraph{End-to-end cost, counting the warm start.}
The face-phase plots above isolate the accuracy term; the honest total also pays the $\eps$-independent
warm-start $C(\mathbf{A})$ of \Cref{lem:identify}. \Cref{fig:global} counts it, all from the uniform start
$\mathbf{1}/n$. \emph{(a)} The uniform-averaging certificate is $\Theta(1/\eps)$, whereas both of our methods
sit at a flat $C(\mathbf{A})$ plus a slow tail---so they overtake averaging only past a crossover, and at
\emph{crude} accuracy the simple average is in fact cheaper, having paid no setup. \emph{(b)} The total is
dominated by the setup: the warm start is a fixed floor and the Newton accuracy phase adds only a
doubly-logarithmic sliver. \emph{(c)} The conditioning lives in the warm start: across instances
$C(\mathbf{A})$ and the accelerated accuracy phase grow with $\kappa$, while the Newton accuracy phase is
flat (condition-free), as \Cref{thm:newton} predicts. \emph{(d)} End-to-end, then, the total still scales
with conditioning---through $C(\mathbf{A})$, not the accuracy term; closing that dependence is the
identification question of \Cref{prob:uniform}.

\paragraph{Reproducibility.}
Every quantity is reproduced by iterating the maps defined in the body and forming the displayed rational
matrices. The claims on $\mathbf{A}_\opt$ are deterministic---the closed forms of $\mathbf{p}^\opt$, $\mathbf{B}$,
$\widehat{\mathbf{H}}(\mathbf{p}^\opt)$, and $D\Phi(\mathbf{p}^\opt)$ are exact, and the separation counts follow from
iterating the CCLY map and the methods of \Cref{alg:main}; only the conditioning sweeps draw random
instances.

\begin{figure}[H]
\centering
\includegraphics[width=\textwidth]{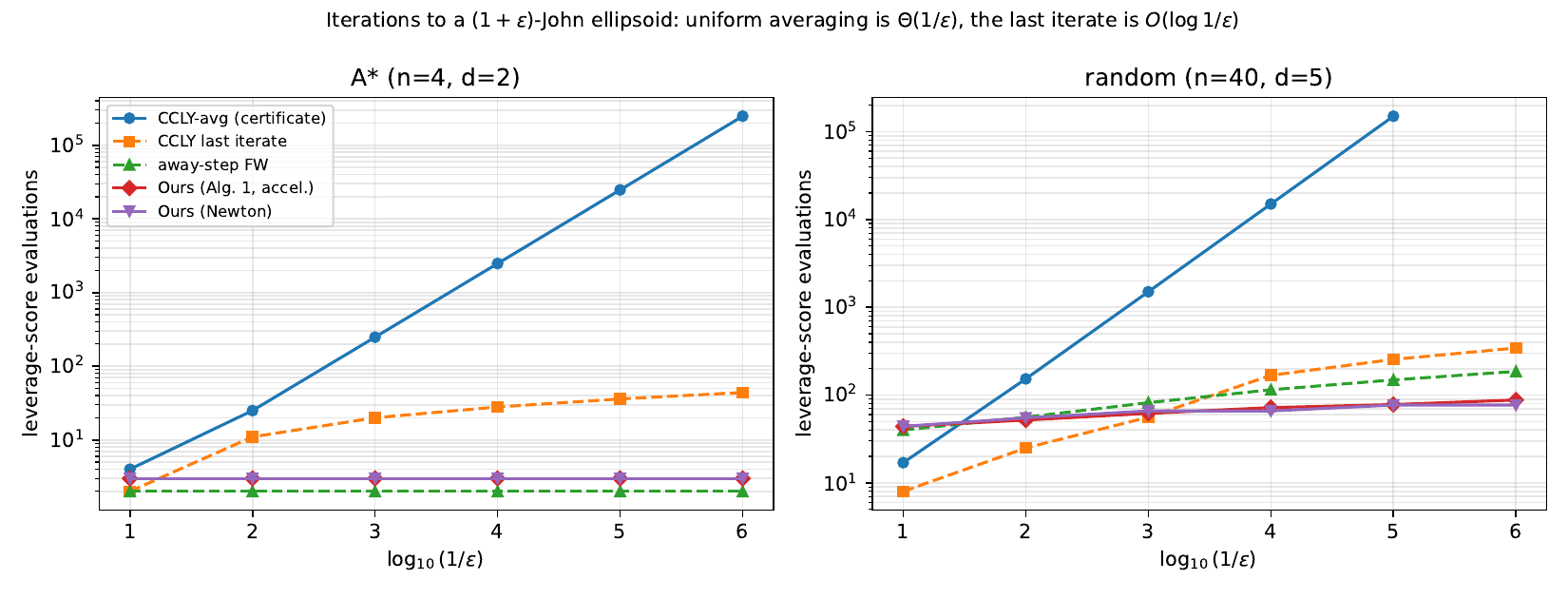}
\caption{The averaged-versus-last-iterate separation of \Cref{cor:sep}, on $\mathbf{A}_\opt$ (left) and a
random instance (right): the uniform-averaging certificate (CCLY-avg) needs $\Theta(\eps^{-1})$ leverage
evaluations, whereas CCLY's own last iterate, away-step Frank--Wolfe, and both facial phases of
\Cref{alg:main} (accelerated and Newton) are polylogarithmic in $1/\eps$.}
\label{fig:separation}
\end{figure}

\begin{figure}[H]
\centering
\includegraphics[width=\textwidth]{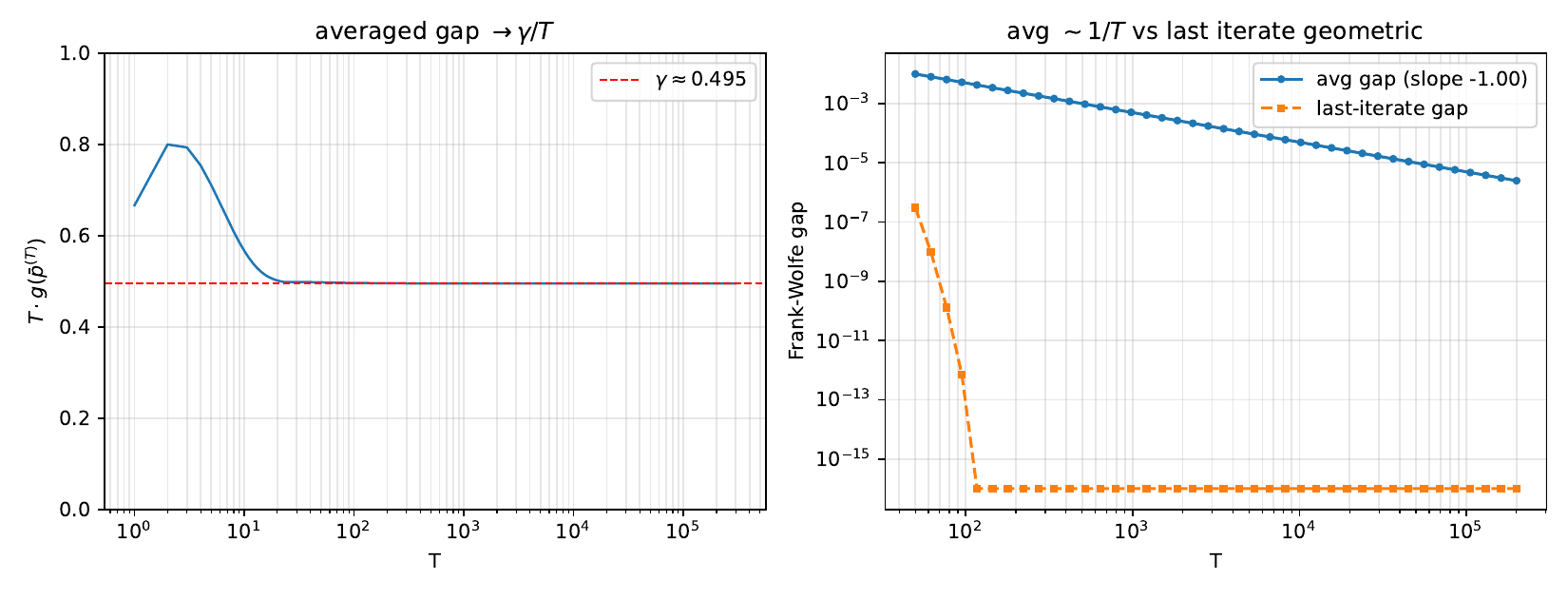}
\caption{The averaging law on $\mathbf{A}_\opt$ (\Cref{thm:avg}): the gap of the uniform running average obeys
$T\cdot g(\bar{\mathbf{p}}^{(T)})\to\gamma\approx0.495$ (so $g\propto1/T$), while the last iterate falls to the
double-precision floor.}
\label{fig:avglaw}
\end{figure}

\begin{figure}[H]
\centering
\begin{minipage}[b]{0.49\textwidth}\centering
\includegraphics[width=\textwidth]{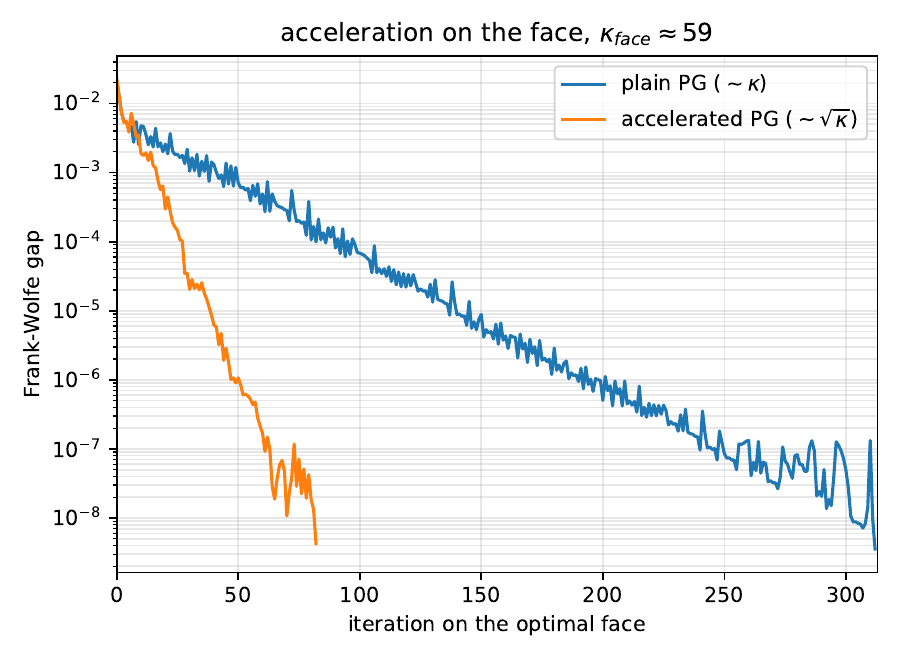}
\end{minipage}\hfill
\begin{minipage}[b]{0.49\textwidth}\centering
\includegraphics[width=\textwidth]{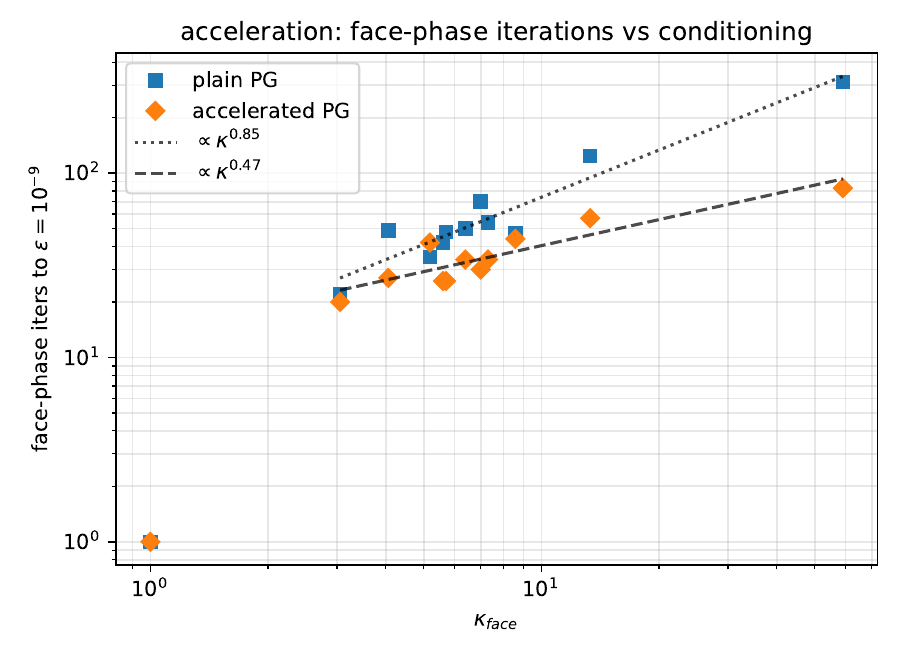}
\end{minipage}
\caption{The accelerated facial phase. \emph{Left:} accelerated versus plain projected gradient on an
instance with $\kappa\approx59$ (gap versus face-phase iteration). \emph{Right:} the face-phase count
versus $\kappa$ across random instances, scaling as $\kappa^{0.43}$ (accelerated) versus $\kappa^{0.87}$
(plain)---the $\sqrt\kappa$ rate of \Cref{thm:upper}.}
\label{fig:facial}
\end{figure}

\begin{figure}[H]
\centering
\includegraphics[width=\textwidth]{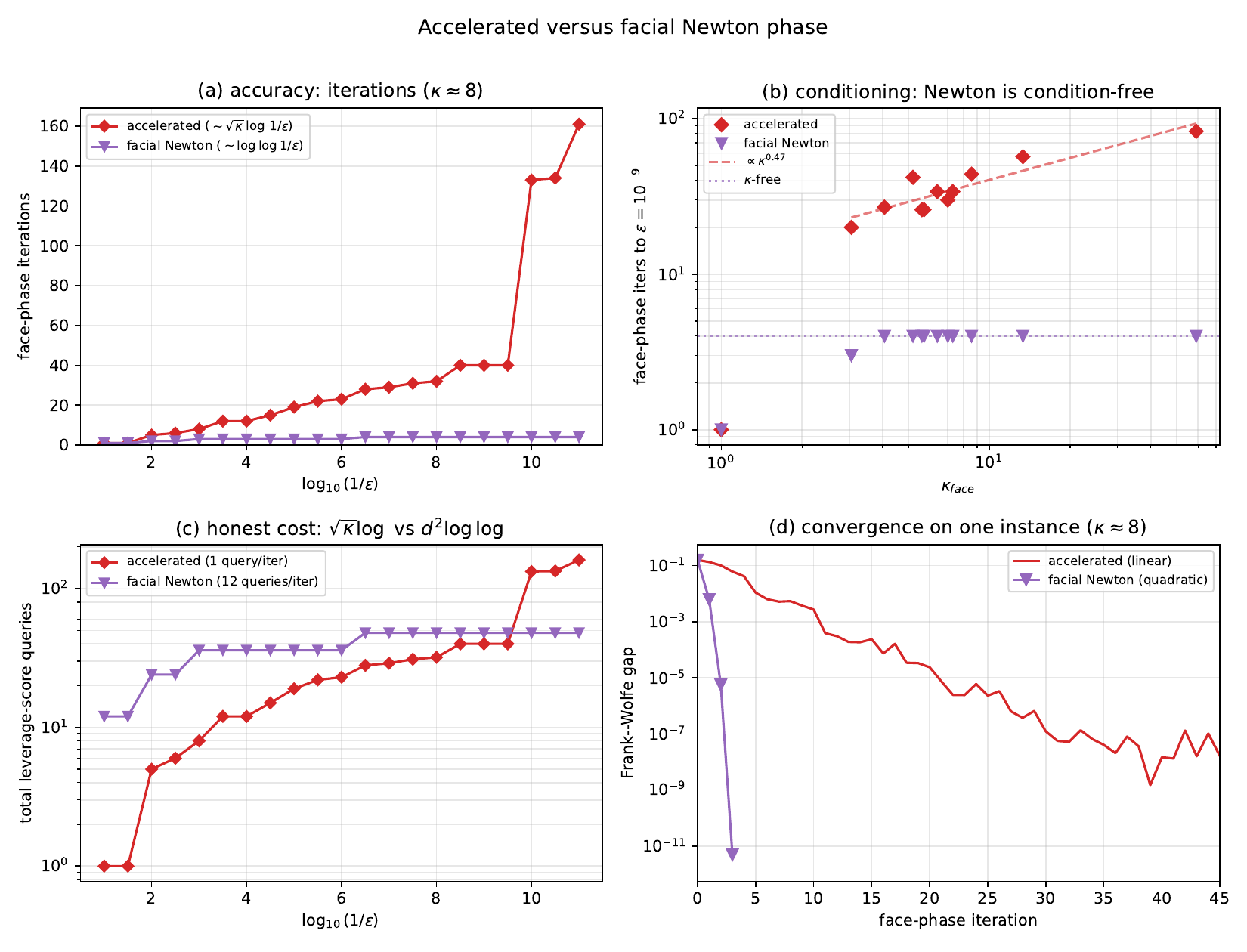}
\caption{The accelerated (\Cref{thm:upper}) versus facial Newton (\Cref{thm:newton}) phase, four views.
\emph{(a)} \emph{Accuracy:} face-phase iterations grow linearly in $\log(1/\eps)$ for the accelerated phase
and stay essentially flat (doubly logarithmic, $\log\log(1/\eps)$) for the Newton phase ($\kappa\approx8$).
\emph{(b)} \emph{Conditioning:} across random instances the accelerated count scales as $\sqrt\kappa$
(fit $\propto\kappa^{0.47}$), whereas the Newton count is \emph{condition-free} (flat), confirming that no
condition number multiplies the $\eps$-dependent term of \Cref{thm:newton}. \emph{(c)} \emph{Honest cost:}
the Newton phase spends $m{+}1=O(d^2)$ leverage queries per iteration against the accelerated phase's one,
so in \emph{total} queries ($\sqrt\kappa\log(1/\eps)$ versus $d^2\log\log(1/\eps)$) the accelerated phase is
cheaper at moderate accuracy and the Newton phase wins only once $\eps$ is small enough. \emph{(d)} The
convergence mechanism on one instance: the accelerated gap decays linearly (geometrically), the Newton gap
quadratically, hitting the floor in a handful of steps.}
\label{fig:loglog}
\end{figure}

\begin{figure}[H]
\centering
\includegraphics[width=\textwidth]{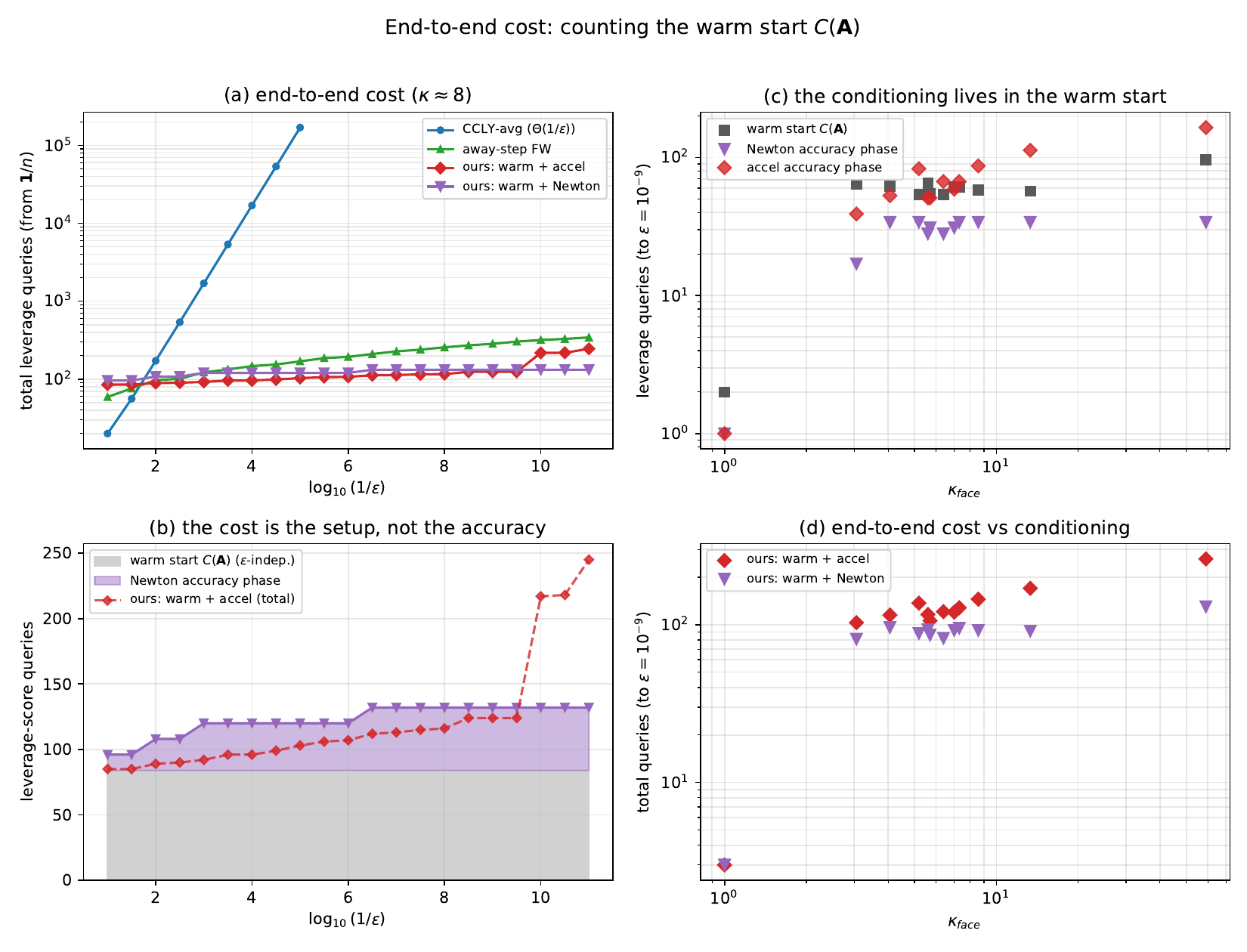}
\caption{End-to-end cost, including the $\eps$-independent warm start $C(\mathbf{A})$ (\Cref{lem:identify}),
all from the uniform start $\mathbf{1}/n$. \emph{(a)} Total leverage queries versus accuracy: CCLY's uniform
average is $\Theta(1/\eps)$ (cheaper only at crude accuracy, before our setup amortizes), whereas
warm\,$+$\,accel and warm\,$+$\,Newton are $C(\mathbf{A})$ plus a $\sqrt\kappa\log(1/\eps)$ resp.\
$d^2\log\log(1/\eps)$ tail. \emph{(b)} The same total decomposed: the warm start is a fixed floor, the Newton
accuracy phase a thin doubly-logarithmic band on top. \emph{(c)} Across random instances the warm start and
the accelerated accuracy phase grow with $\kappa$, while the \emph{Newton} accuracy phase is condition-free
(flat). \emph{(d)} End-to-end totals still grow with $\kappa$---the condition dependence sits in the warm
start, not the accuracy term.}
\label{fig:global}
\end{figure}

\end{document}